\documentclass[11pt]{article}
\usepackage{LFC}

\title{Lexicographic functional calculus and its \\
application to functional calculus calculus}
\author{Evangelos A.\ Nikitopoulos}
\affil{Department of Mathematics, University of Michigan\protect\\
\noindent 530 Church Street, Ann Arbor, MI 48109-1043 (USA)\protect\\
{Email: \texttt{\href{mailto:enikitop@umich.edu}{enikitop@umich.edu}}}}
\date{\vspace{-5ex}}

\begin{document}

\maketitle

\begin{abstract}
Let $\mathcal{A}$ be a unital $C^*$-algebra and $\mathcal{I}$ be a symmetrically normed ideal of $\mathcal{A}$.
I introduce and study lexicographic functional calculus (LFC), a kind of multivariate functional calculus for tuples $(a_1,\ldots,a_m)$ of noncommuting self-adjoint elements of $\mathcal{A}$ ``acting in lexicographic order,'' i.e., from left to right, with an element $b_i \in \mathcal{I}$ ``inserted'' between the action of $a_i$ and $a_{i+1}$ for each $i=1,\ldots,m-1$.
The reason for its introduction is an application to ``functional calculus calculus,'' the differential calculus of maps induced by single-variate (continuous) functional calculus.
Specifically, I prove that if $f \colon \mathbb{R} \to \mathbb{C}$ is sufficiently regular and $a \in \mathcal{A}_{\mathrm{sa}} \coloneqq \{c \in \mathcal{A} : c^*=c\}$, then $f_{a,\mathsmaller{\mathcal{I}}}(b) \coloneqq f(a+b)-f(a) \in \mathcal{I}$ for all $b \in \mathcal{I}_{\mathrm{sa}} \coloneqq \mathcal{I} \cap \mathcal{A}_{\mathrm{sa}}$, the map $f_{a,\mathsmaller{\mathcal{I}}} \colon \mathcal{I}_{\mathrm{sa}} \to \mathcal{I}$ is Fr\'echet $C^k$, and the $k^{\text{th}}$ Fr\'echet derivative of $f_{a,\mathsmaller{\mathcal{I}}}$ may be written in terms of LFC applied to the $k^{\text{th}}$ divided difference of $f$, a function of $k+1$ variables.
This result recovers or vastly generalizes nearly all comparable results in the literature.
For example, it simultaneously recovers the following three highly related results on the regularity of the function $f_{\mathsmaller{\mathcal{A}}} \colon \mathcal{A}_{\mathrm{sa}} \to \mathcal{A}$ defined by $a \mapsto f(a)$:
(1) If $\mathcal{A}$ is commutative and $f \in C^k(\mathbb{R})$, then $f_{\mathsmaller{\mathcal{A}}}$ is Fr\'echet $C^k$;
(2) if $\mathcal{A}$ is finite dimensional and $f \in C^k(\mathbb{R})$, then $f_{\mathsmaller{\mathcal{A}}}$ is Fr\'echet $C^k$; and
(3) if $f \colon \mathbb{R} \to \mathbb{C}$ is ``slightly better than $C^k$,'' e.g., belongs to the homogeneous Besov space $\dot{B}_1^{k,\infty}(\mathbb{R})$, then $f_{\mathsmaller{\mathcal{A}}}$ is Fr\'echet $C^k$ no matter the choice of $\mathcal{A}$.
Before LFC, there was no unifying framework for results (1)--(3);
in particular, there was no single result of which they were all corollaries.

$\,$

\noindent \textbf{Keywords:}
functional calculus, lexicographic functional calculus, multivariate functional calculus, $C^*$-algebra, symmetrically normed ideal, higher Fr\'echet derivatives

$\,$

\noindent \textbf{MSC (2020):} 47A60, 47A13, 26E15, 47L20 (Primary); 46E10, 46E35 (Secondary)
\end{abstract}

\tableofcontents

\section{Introduction}\label{sec.intro}

Let $\cA$ be a unital $C^*$-algebra and $\cA_{\sa} \coloneqq \{a \in \cA : a^*=a\}$ be the set of self-adjoint elements of $\cA$.
Recall that if $a \in \cA_{\sa}$ and $\sigma(a)$ is the spectrum of $a$, then the \textbf{continuous functional calculus for $\boldsymbol{a}$} is the unique unital $\ast$-homomorphism $\Phi_a \colon C(\sigma(a)) \to \cA$ sending the inclusion $\iota_{\sigma(a)} \colon \sigma(a) \to \C$ to $a$.
Write $f(a) \coloneqq \Phi_a(f)$ for all $f \in C(\sigma(a))$.

Now, for each continuous function $f \colon \R \to \C$, write $f_{\scA} \colon \cA_{\sa} \to \cA$ for the map defined via the continuous functional calculus by
\[
f_{\scA}(a) \coloneqq f(a) = (f|_{\sigma(a)})(a) \in \cA \qquad (a \in \cA_{\sa}).
\]
It is elementary to show that the continuity of $f$ implies the continuity of $f_{\scA}$.
It is therefore natural to wonder, e.g., whether $f \in C^k(\R)$ implies $f_{\scA} \in C^k(\cA_{\sa};\cA)$ when $k \in \N$.
Interestingly, this is not generally true.
Take $\cA = B(H)$, where $H$ is an infinite-dimensional complex Hilbert space, in which case $f_{\scA} = f_{\mathsmaller{B(H)}}$ is called the \textbf{operator function induced by $\boldsymbol{f}$}.
By \cite[Thm.\ 1.2.9]{AP2016}, if $f \in C(\R)$ and $f_{\mathsmaller{B(H)}} \in C^1(B(H)_{\sa};B(H))$, then $f$ is locally operator Lipschitz, i.e., for each $r > 0$, $f_{\mathsmaller{B(H)}}|_{\{a \in B(H)_{\sa} : \norm{a} \leq r\}}$ is Lipschitz with respect to the operator norm $\norm{\cdot}$.
Yu.\ B.\ Farforovskaya showed in \cite{Farforovskaya1972,Farforovskaya1976} that there exist $C^1$ functions that are \textit{not} locally operator Lipschitz.
In particular, there exist functions $f \in C^1(\R)$ such that $f_{\mathsmaller{B(H)}} \not\in C^1(B(H)_{\sa};B(H))$.
Please see \cite{Peller1985} and \cite[\S1.2 \& \S1.5]{AP2016} for more information.

Despite the negative result described in the previous paragraph, there is a vast array of positive results on the regularity of operator functions, a few of which I quote below.
The key tool used most frequently to establish and apply such results is called a multiple operator integral (MOI).
Please see A.\ Skripka and A.\ Tomskova's book \cite{ST2019}, as well as the more recent papers \cite{LMS2020,LMM2021,Coine2022,Nikitopoulos2022,vNS2022,Nikitopoulos2023h,Nikitopoulos2023m,Nikitopoulos2023n,CCGP2024,CvNP2024,vNS2025,PS2025,FS2025,Furst2025,CCGP2026,JKN2026}, for information about MOIs and examples of their many applications.
Though MOIs form part of the inspiration for the present work, they do not actually play an important role in this paper's framework and main results.

The state of the art on the question of the smoothness of $f_{\scA}$ is that if $f \colon \R \to \C$ is ``slightly better than $C^k$,'' then $f_{\scA} \in C^k(\cA_{\sa};\cA)$ for all unital $C^*$-algebras $\cA$.
However, for particular choices of $\cA$, one can do better.

\begin{theorem}\label{thm.motivatingthm}
Let $\cA$ be a unital $C^*$-algebra and $f \colon \R \to \C$ be a continuous function.
\begin{enumerate}[font=\normalfont,label=(\roman*)]
    \item If $\cA$ is commutative and $f \in C^k(\R)$, then $f_{\scA} \in C^k(\cA_{\sa};\cA)$.\label{item.commutative}
    \item {\rm (Daletskii--Krein \cite{DK1956})} If $\cA$ is finite dimensional and $f \in C^k(\R)$, then $f_{\scA} \in C^k(\cA_{\sa};\cA)$.\label{item.fd}
    \item {\rm (Peller \cite{Peller2006}, N.\ \cite{Nikitopoulos2023n})} If $f$ belongs to the homogeneous Besov space $\dot{B}_1^{k,\infty}(\R)$ (Definition \ref{def.Besov} below) or the space $C_{\loc}^{k,\e}(\R)$ of $C^k$ functions with locally $\e$-H\"older-continuous $k^{\text{th}}$ derivatives ($\e > 0$), then $f_{\scA} \in C^k(\cA_{\sa};\cA)$.\label{item.PellerN}
\end{enumerate}
\end{theorem}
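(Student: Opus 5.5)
I will treat the three parts separately, since each is a known result and the aim is only to verify it by direct arguments. The common thread is to identify the candidate $k^{\text{th}}$ derivative of $f_{\scA}$ at $a$. It should be the map
\[
(b_1,\ldots,b_k) \mapsto \sum_{\pi \in S_k} \text{``}f^{[k]}(a,\ldots,a)\#(b_{\pi(1)},\ldots,b_{\pi(k)})\text{,''}
\]
where $f^{[k]}$ is the $k^{\text{th}}$ divided difference. In each setting I will give this expression a concrete meaning and then prove Fr\'echet differentiability and continuity by induction on $k$.

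For \ref{item.commutative}, use Gelfand duality to write $\cA \cong C(X)$ with $X$ compact Hausdorff, so that $\cA_{\sa} \cong C(X;\R)$ and $f_{\scA}(a) = f\circ a$. The candidate derivative is $D^kf_{\scA}(a)[b_1,\ldots,b_k] = (f^{(k)}\circ a)\,b_1\cdots b_k$. I will prove this using Taylor's theorem with integral remainder pointwise in $x\in X$, together with the uniform continuity of $f^{(j)}$ on compact intervals. These give
\[
\sup_{x \in X}\bigl|f^{(j)}(a(x)+b(x)) - f^{(j)}(a(x)) - f^{(j+1)}(a(x))b(x)\bigr| = o(\norm{b}),
\]
uniformly on bounded sets. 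Induction on $j\le k-1$ then yields Fr\'echet differentiability of each successive derivative. Continuity of $a\mapsto f^{(k)}\circ a$ is again uniform continuity on compacts.

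For \ref{item.fd}, embed $\cA \hookrightarrow M_n(\C)$ as a $*$-subalgebra, so that $f(a)\in\cA$ for $a \in \cA_{\sa}$. It then suffices to treat $\cA = M_n(\C)$ and restrict, since Fr\'echet derivatives of the restriction land in $\cA$ as limits of difference quotients in the closed subspace $\cA$. For $M_n$ I follow Daletskii--Krein. If $a=\sum_\lambda \lambda P_\lambda$ is the spectral decomposition, the candidate is
\[
\sum_{\lambda_0,\ldots,\lambda_k} f^{[k]}(\lambda_0,\ldots,\lambda_k)\,P_{\lambda_0}b_1P_{\lambda_1}\cdots b_kP_{\lambda_k},
\]
symmetrized over permutations of the $b_i$. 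The key steps are as follows. First, verify the formula for polynomials by direct expansion of $(a+b)^m$. Second, approximate $f\in C^k$ by polynomials $p_j$ with $p_j^{(i)}\to f^{(i)}$ uniformly on a large interval for all $i\le k$. Third, use the finite-dimensional bound $\norm{\text{candidate}} \le C_n \sup|f^{[k]}|\,\prod\norm{b_i}$ on spectra in that interval, where $C_n$ depends only on the dimension (at most $(\#\text{eigenvalues})^{k+1}$ terms). Fourth, note $\sup|f^{[k]}| \le \sup|f^{(k)}|/k!$ by the Hermite--Genocchi formula. Then $D^kp_{j,\scA}\to$ candidate locally uniformly, and the standard theorem on uniform limits of $C^k$ maps gives $f_{\scA}\in C^k$.

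For \ref{item.PellerN}, the same polynomial/approximation scheme is used, but the dimension-dependent constant must be replaced by a dimension-free one. The plan is to represent $f^{[k]}$ as an integral projective tensor product,
\[
f^{[k]}(\lambda_0,\ldots,\lambda_k)=\int_\Sigma \varphi_0(\lambda_0,\sigma)\cdots\varphi_k(\lambda_k,\sigma)\,\rho(d\sigma),
\]
with $\int \prod_i\norm{\varphi_i(\cdot,\sigma)}_\infty\,d|\rho| <\infty$. This is Peller's construction for $\dot B^{k,\infty}_1$, obtained by Littlewood--Paley decomposition into band-limited pieces, for which Fourier-based representations of divided differences have norm $\lesssim 2^{jk}\norm{f_j}_\infty$. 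The $C^{k,\e}_{\loc}$ case reduces to this locally after cutting off, since such functions lie locally in $B^{k,\infty}_1$ modulo smooth errors. Then $\int \varphi_0(a,\sigma)b_1\varphi_1(a,\sigma)\cdots b_k\varphi_k(a,\sigma)\,\rho(d\sigma)$ makes sense in any $C^*$-algebra with bound independent of $\cA$. Repeating the argument of \ref{item.fd} with this algebra-independent bound and Besov-norm approximation by nicer functions finishes the proof. The main obstacle is exactly this step: producing the integral projective decomposition of $f^{[k]}$ with controlled norm, and making the Bochner-type integral converge in a general $C^*$-algebra. I would first try to get it via the Fourier transform of each dyadic block $f_j$ together with Hermite--Genocchi.
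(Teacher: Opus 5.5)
Your (i) is correct and more elementary than the paper, which deduces it from Theorem~\ref{thm.derform} with $\alpha=u$ and the commutative LFC of Theorem~\ref{thm.commLFC}. Your (ii) is essentially the paper's argument (Corollary~\ref{cor.findimLFC}: a spectral-projection sum, a dimension-dependent constant, and polynomial approximation). The gaps are in (iii). Its overall architecture does match the paper's: an integral projective decomposition of $f^{[k]}$, a dimension-free bound, and approximation.

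\textbf{The route you propose for the key estimate fails.} Expanding a dyadic block via its Fourier transform and Hermite--Genocchi, as in \eqref{eq.divdiffWk}, requires $\hat f_j$ to be a finite measure, and it only gives $\beta_{r,k+1}(f_j^{[k]})\le\frac{1}{k!}\int|\xi|^k\,|\hat f_j|(\d\xi)$. A bounded function with spectrum in $[-2^{j+1},2^{j+1}]$ need not have a measure as its Fourier transform. Even when it does, $|\hat f_j|(\R)$ is not controlled by $\norm{f_j}_{L^\infty}$, which is all the $B_1^{k,\infty}$ norm controls. So this handles only Wiener-type functions ($W_k(\R)$; cf.\ Example~\ref{ex.multivarWiener} and Lemma~\ref{lem.sameclosures}). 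The band-limited bound $\lesssim\sigma^k\norm{g}_{L^\infty}$ for $\supp\hat g\subseteq[-\sigma,\sigma]$ is the genuinely hard content of Peller's theorem, which rests on interpolation formulas for band-limited functions rather than Fourier inversion. You should cite it (Theorem~\ref{thm.Peller}), as the paper does.

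The Fourier idea is, however, what you need for a step you left vague. Approximation by polynomials in Besov norm is meaningless, since nonconstant polynomials are not in $B_1^{k,\infty}(\R)$. Instead you must:
\begin{itemize}
\item approximate $f$ by Littlewood--Paley partial sums, using Peller's bound at each order $i\le k$ and splitting off the low frequencies in the homogeneous case;
\item then approximate these smooth functions by polynomials in the local Varopoulos seminorms, via $C^{k+1}(\R)\subseteq W_k(\R)_{\loc}$ and Taylor truncation of $e^{i\lambda\xi}$.
\end{itemize}

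\textbf{``Repeat (ii)'' needs the candidate to be well defined.} Your candidate $\int_\Sigma\varphi_0(a,\sigma)b_1\cdots b_k\varphi_k(a,\sigma)\,\rho(\d\sigma)$ must
\begin{itemize}
\item depend only, and linearly, on $f^{[k]}|_{\sigma(a)^{k+1}}$;
\item agree with the monomial formula for polynomials;
\item be bounded by $\norm{f^{[k]}|_{\sigma(a)^{k+1}}}_{V(\sigma(a),\ldots,\sigma(a))}\prod_i\norm{b_i}$.
\end{itemize}
Only then can you bound the difference between $D^k(p_j)_{\scA}(a)$ and the candidate by a norm of $(p_j-f)^{[k]}$.

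A priori the expression is a function of the chosen decomposition, that is, of an element of $C(\sigma(a))^{\potimes(k+1)}$. The relevant functionals, such as $\varphi_0\otimes\varphi_1\mapsto\langle\varphi_0(a)b_1\varphi_1(a)\xi,\eta\rangle$, are bounded on the projective tensor product but need not extend to $C(\sigma(a)^2)$. So nothing here is automatic.

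The missing idea is that $C(K_1)\potimes\cdots\potimes C(K_m)\to C(K_1\times\cdots\times K_m)$ is injective, by the approximation property of $C(K)$ (Theorem~\ref{thm.inj}, Corollary~\ref{cor.seriesVarop}). Combine this with Theorem~\ref{thm.IPTPV}, which realizes Peller's integral as a Bochner integral in the Varopoulos algebra (strong measurability via Pettis, since $C(K)$ is separable). That also settles your worry about convergence in $\cA$. Then $f_{\sotimes}^{[k]}(a,\ldots,a)=(\Phi_a\potimes\cdots\potimes\Phi_a)\big(f^{[k]}|_{\sigma(a)^{k+1}}\big)$ is well defined with the bound of Lemma~\ref{lem.bdphitensor}. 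Your approximation scheme then runs as in Theorems~\ref{thm.AVaropLFC} and~\ref{thm.derform}.
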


As cited above, item \ref{item.fd} is due to Yu.\ L.\ Daletskii and S.\ G.\ Krein in 1956;
however, please see \cite{Hiai2010,Nikitopoulos2023n} for modern formulations and self-contained proofs.
No reference is given for item \ref{item.commutative} because it is on the level of an exercise to establish.

In each of the situations in Theorem \ref{thm.motivatingthm}, it is possible to write down an explicit formula for the $k$-fold directional derivative
\[
\partial_{b_k}\cdots\partial_{b_1}f_{\scA}(a) = \frac{\d}{\d s_k}\Big|_{s_k = 0}\cdots\frac{\d}{\d s_1}\Big|_{s_1 =0} f_{\scA}(a+s_1b_1+\cdots+s_kb_k).
\]
Interestingly, though the proofs of \ref{item.commutative}--\ref{item.PellerN} in Theorem \ref{thm.motivatingthm} and the aforementioned derivative formulas are strongly related to each other, there is no general result of which they are all corollaries, whence comes one of the main goals of this paper.

\begin{metatheorem}\label{metathm.meta}
There exists a kind of functional calculus for tuples of self-adjoint elements of unital $C^*$-algebras, called lexicographic functional calculus (LFC), that enables the formulation and proof of a general result of which Theorem \ref{thm.motivatingthm} is a corollary.
Moreover, the derivatives $\partial_{b_k}\cdots\partial_{b_1} f_{\scA}(a)$ may be written in terms of LFC.
\end{metatheorem}

The ``general result'' referenced above is Theorem \ref{thm.derform}, which actually answers the question of when the map
\[
\cI_{\sa} \coloneqq \cI \cap \cA_{\sa} \ni b \mapsto f_{a,\scI}(b) \coloneqq f(a+b) - f(a) \in \cI
\]
is well defined and $C^k$, where $\cI$ is a symmetrically normed ideal of $\cA$ (Definition \ref{def.sni} below) and $a \in \cA_{\sa}$ is fixed.
For the sake of exposition, I postpone the discussion of this more general setup to the body of the paper;
please see subsection \ref{subsec.derivatives}, which also discusses known results in this setting.

Subsection \ref{subsec.HFC} motivates LFC and its appearance in ``functional calculus calculus,'' the differential calculus of maps induced by functional calculus, by studying the holomorphic case, which does not suffer any of the pathologies of the real-$C^k$ case.
Then, in subsection~\ref{subsec.results}, I lay out the framework that converts Metatheorem \ref{metathm.meta} into a collection of rigorous theorems and, along the way, describe the structure of the remainder of the paper.

\subsection{Motivation from holomorphic functional calculus calculus}\label{subsec.HFC}

Let $\cB$ be a unital Banach algebra, $U \subseteq \C$ be an open set, and $\cB_U \subseteq \cB$ be the open set of elements $a \in \cB$ such that the spectrum $\sigma(a)$ of $a$ is contained in $U$.
Also, if $X$ and $Y$ are complex Banach spaces and $V \subseteq X$ is an open set, let $\Hol\left(V;Y\right)$ be the set of holomorphic functions from $V$ to $Y$, and write $\Hol\left(V\right) \coloneqq \Hol\left(V;\C\right)$.

Recall that if $a \in \cB_U$, then the (\textbf{Dunford--Riesz}) \textbf{holomorphic functional calculus for $\boldsymbol{a}$} is the unique continuous, unital algebra homomorphism $H_a^U \colon \Hol\left(U\right) \to \cB$ sending the inclusion $\iota_{\mathsmaller{U}} \colon U \to \C$ to $a$;
here and throughout, $\Hol\left(U\right)$ is given the topology of locally uniform convergence.
As the reader may review by consulting \cite[Ch.\ 10]{Rudin1991}, $H_a^U$ is constructed using a Cauchy integral--type formula:
\begin{equation}
    f(a) \coloneqq H_a^U(f) = \frac{1}{2\pi i}\int_{\Gamma} f(z)\,(z-a)^{-1}\,\d z \in \cB \qquad \left(f \in \Hol\left(U\right)\right)\label{eq.HFC}
\end{equation}
for any cycle $\Gamma$ surrounding $\sigma(a)$ in $U$.\footnote{A \textbf{cycle} in $U$ is a finite collection of closed, piecewise $C^1$ curves in $U$.
A cycle $\Gamma$ in $U$ \textbf{surrounds $\boldsymbol{S}$ in $\boldsymbol{U}$} if $W_{\Gamma}(z) = 0$ for all $z \in \C \setminus U$ and $W_{\Gamma}(z) = 1$ for all $z \in S \subseteq U$, where $W_{\Gamma}(z) \in \Z$ is the winding number of $\Gamma$ around $z$.}

Now, for $f \in \Hol\left(U\right)$, let $f_{\scB} \colon \cB_U \to \cB$ be the map $a \mapsto f(a)$ induced by this holomorphic functional calculus.
(I entreat the reader to ignore the fact that this conflicts with the notation in the rest of the paper, aside from the appendix.)
It is not so difficult to see that $f_{\scB} \colon \cB_U \to \cB$ is holomorphic by differentiating under the integral in \eqref{eq.HFC}.
Indeed, if $a \in \cB_U$ and $b \in \cB$, then
\begin{align*}
    \partial_bf_{\scB}(a) & = \frac{1}{2\pi i}\partial_b\int_{\Gamma} f(z)\,(z-a)^{-1}\,\d z \\
    & = \frac{1}{2\pi i}\int_{\Gamma} f(z)\,\partial_b(z-a)^{-1}\,\d z \\
    & = \frac{1}{2\pi i}\int_{\Gamma} f(z)\,(z-a)^{-1}b\,(z-a)^{-1}\,\d z.
\end{align*}
(The technical details are unimportant at this time.)
Repeatedly differentiating under the integral in this way yields the following result.

\begin{theorem}\label{thm.HFCintro}
If $f \in \Hol\left(U\right)$, then $f_{\scB} \in \Hol\left(\cB_U;\cB\right)$.
Furthermore, if $a \in \cB_U$ and $\Gamma$ is a cycle surrounding $\sigma(a)$ in $U$, then
\[
\partial_{b_k}\cdots\partial_{b_1}f_{\scB}(a) = \frac{1}{2\pi i}\sum_{\pi \in S_k}\int_{\Gamma} f(z) \, (z-a)^{-1}b_{\pi(1)}\cdots(z-a)^{-1}b_{\pi(k)}\,(z-a)^{-1} \,\d z
\]
for all $b_1,\ldots,b_k \in \cB$, where $S_k$ is the symmetric group on $k$ letters.
\end{theorem}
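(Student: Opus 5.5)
The plan is to fix $a \in \cB_U$ and a cycle $\Gamma$ surrounding $\sigma(a)$ in $U$, and to work on a small ball around $a$ on which the same $\Gamma$ can be used for every point. Since $\Gamma^* \coloneqq$ the image of $\Gamma$ is compact and disjoint from $\sigma(a)$, the map $(z,c) \mapsto (z-c)^{-1}$ is defined and continuous on $\Gamma^* \times \{c : \norm{c-a} < \delta\}$ for some $\delta > 0$. This follows from upper semicontinuity of the spectrum, or more concretely from the Neumann series: $\sup_{z \in \Gamma^*}\norm{(z-a)^{-1}} < \infty$. Shrinking $\delta$ also gives $\sigma(c) \subseteq \{z : W_\Gamma(z) = 1\}$ for $\norm{c-a}<\delta$, because $\sigma(c)$ lies in a small neighborhood of $\sigma(a)$ and $W_\Gamma$ is locally constant off $\Gamma^*$. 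Hence $\Gamma$ surrounds $\sigma(c)$ in $U$, and \eqref{eq.HFC} gives
\[
f(c) = \frac{1}{2\pi i}\int_\Gamma f(z)\,(z-c)^{-1}\,\d z
\]
on the whole ball.

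Next I would show that $R_z(c) \coloneqq (z-c)^{-1}$ is holomorphic in $c$, with explicit $k$-th derivatives, uniformly in $z \in \Gamma^*$. Write $c = a+h$. Then $(z-a-h)^{-1} = \sum_{n\ge0} \big((z-a)^{-1}h\big)^n (z-a)^{-1}$, which converges uniformly for $z\in\Gamma^*$ and $\norm{h}$ small. This is a power series in $h$ whose $n$-th homogeneous term is $R_z(a)(hR_z(a))^n$. Consequently $c\mapsto R_z(c)$ is holomorphic on the ball. Its $k$-th Fréchet derivative at $a$, applied to $(b_1,\ldots,b_k)$, is the polarization of $k!$ times the $k$-th homogeneous term, namely
\[
\sum_{\pi\in S_k} R_z(a)b_{\pi(1)}R_z(a)\cdots b_{\pi(k)}R_z(a).
\]
One can check this by expanding the term in $h = s_1b_1+\cdots+s_kb_k$ and taking the coefficient of $s_1\cdots s_k$. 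All estimates are uniform in $z\in\Gamma^*$, since $M \coloneqq \sup_{z \in \Gamma^*}\norm{R_z(a)}$ is finite.

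Finally, I would integrate. Since $f$ is bounded on $\Gamma^*$ and the series converges uniformly in $z$, integrating term by term shows that $f(a+h)$ is given by a norm-convergent power series in $h$ for $\norm{h}<1/M$. Its homogeneous terms are $\frac{1}{2\pi i}\int_\Gamma f(z)R_z(a)(hR_z(a))^n\,\d z$, each of which is a continuous $n$-homogeneous polynomial. Therefore $f_{\scB}$ is holomorphic near $a$, and since $a$ was arbitrary, $f_{\scB} \in \Hol(\cB_U;\cB)$. The iterated directional derivative $\partial_{b_k}\cdots\partial_{b_1}f_{\scB}(a)$ equals the symmetric $k$-linear map associated with $k!$ times the $k$-th term. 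By the computation in the previous paragraph, that map is exactly the stated formula.

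The main obstacle is the first step: ensuring that one fixed $\Gamma$ works for every point near $a$. That is the only place where a topological argument is needed (semicontinuity of the spectrum together with local constancy of the winding number), and it is what allows differentiation under the integral.
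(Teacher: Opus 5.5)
Your proof is correct, and it takes a genuinely different route from the paper's.

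\textbf{Your route.} You expand $(z-a-h)^{-1}=\sum_{n\ge0}R_z(a)\big(hR_z(a)\big)^n$ uniformly in $z\in\Gamma^*$ and integrate term by term. You then read off both holomorphy and every derivative from the resulting local power series, by polarizing its homogeneous terms.

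\textbf{The paper's route.} The paper never expands in powers of $h$. It obtains Theorem \ref{thm.HFCintro} as the case $\cI=\cB$, $b=0$ of Theorem \ref{thm.derivinidealHFC}. That proof builds a ``baby'' multivariate holomorphic functional calculus in $\cB^{\potimes(k+1)}$ and identifies $f_{\sotimes}^{[k]}(a_1,\ldots,a_{k+1})$ with $\frac{1}{2\pi i}\int_\Gamma f(z)\,R_z(a_1)\otimes\cdots\otimes R_z(a_{k+1})\,\d z$ (Theorem \ref{thm.mvarHFCdivdiff}). It derives perturbation formulas from the resolvent identity (Proposition \ref{prop.pertformholo}) and proves continuity of $\varphi_{\sotimes}$ in its arguments (Proposition \ref{prop.contpertholo}). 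Finally, it inducts on $k$, showing that $D^kf_{a,\scI}$ is Fr\'echet differentiable with derivative given by the symmetrized $f_{\sotimes}^{[k+1]}$ expression.

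\textbf{What each buys.} Your argument is shorter and more elementary. It needs only upper semicontinuity of the spectrum and standard facts about power series of continuous homogeneous polynomials, and it avoids projective tensor products entirely. It also yields analyticity (the full Taylor expansion at each point) rather than differentiability order by order. It would even handle perturbations from a symmetrically normed ideal, since the coefficient maps are bounded $\cI^n\to\cI$ by the estimate in Proposition \ref{prop.hashonI}. The paper's route instead produces the derivatives in the form $\sum_{\pi\in S_k}f_{\sotimes}^{[k]}(a,\ldots,a)\sh[b_{\pi(1)},\ldots,b_{\pi(k)}]$, which is what motivates lexicographic functional calculus. Its perturbation formulas and ideal-valued derivative formula are reused directly for polynomials in Lemma \ref{lem.pertform} and Theorem \ref{thm.derform}. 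Its method (perturbation formulas plus continuity of a multivariate calculus) is also the one that has an analogue in the real-$C^k$ setting, where no power series exists.

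The topological step you single out as the main obstacle appears in the paper as well. In the proof of Proposition \ref{prop.contpertholo}, a single cycle surrounding the $\delta$-neighborhoods $K_i$ serves for all nearby tuples.
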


The $k^{\text{th}}$ derivative formula above is worth pondering.
Some additional notation, used throughout the paper, is helpful for this purpose.
First, write $\potimes$ for the Banach-space projective tensor product (vid.\ \cite[Ch.\ 2]{Ryan2002}).
Second, if $X_1,\ldots,X_k,X$ are normed vector spaces, write $B_k(X_1 \times \cdots \times X_k ; X)$ for the space of bounded $k$-linear maps from $X_1 \times \cdots \times X_k$ to $X$.
As usual, $B(X_1;X) \coloneqq B(X_1;X)$, $\norm{\cdot}_{X_1 \to X} \coloneqq \norm{\cdot}_{B(X_1;X)}$, and $B(X) \coloneqq B(X;X)$.
Finally, define
\[
\sh_k = \sh \colon \cB^{\potimes(k+1)} \to B_k\big(\cB^k;\cB\big)
\]
to be the bounded (complex-)linear map, written $u\sh_k b = u\sh b \coloneqq \sh(u)[b]$, determined by
\[
(a_1\otimes\cdots\otimes a_{k+1})\sh[b_1,\ldots,b_k] = a_1b_1\cdots a_kb_ka_{k+1}
\]
for all $a_1,b_1,\ldots,a_k,b_k,a_{k+1} \in \cB$.
\pagebreak

Now, let $a \in \cB_U$, and define
\begin{equation}
    \tilde{a}_i \coloneqq 1^{\otimes(i-1)} \otimes a \otimes 1^{\otimes(k+1-i)} \in \cB^{\potimes(k+1)} \qquad (i = 1,\ldots,k+1).\label{eq.atilde}
\end{equation}
In this notation, the formula in Theorem \ref{thm.HFCintro} says
\begin{align*}
    \partial_{b_k}\cdots\partial_{b_1}f_{\scB}(a) & = \sum_{\pi \in S_k}\left(\frac{1}{2\pi i}\int_{\Gamma} f(z) \, (z-a)^{-1}\otimes \cdots \otimes (z-a)^{-1} \,\d z\right)\sh\left[b_{\pi(1)},\ldots,b_{\pi(k)}\right] \\
    & = \sum_{\pi \in S_k}\left(\frac{1}{2\pi i}\int_{\Gamma} f(z) \, (z-\tilde{a}_1)^{-1} \cdots (z-\tilde{a}_{k+1})^{-1} \,\d z\right)\sh\left[b_{\pi(1)},\ldots,b_{\pi(k)}\right]
\end{align*}
for all $b_1,\ldots,b_k \in \cB$.
The element $(2\pi i)^{-1}\int_{\Gamma} f(z)\,(z-\tilde{a}_1)^{-1} \cdots (z-\tilde{a}_{k+1})^{-1} \,\d z$ may be understood in terms of a multivariate version of the holomorphic functional calculus in the unital Banach algebra $\cB^{\potimes(k+1)}$;
please see subsection \ref{sec.multivarHFC}.

Next, let $S \subseteq \C$ and
\[
S_{\neq}^m \coloneqq \{(s_1,\ldots,s_m) \in S^m : s_1,\ldots,s_m \text{ are distinct}\} \qquad (m \in \N).
\]
For a function $g \colon S \to \C$, define $g^{[k]} \colon S_{\neq}^{k+1} \to \C$ ($k \in \N_0$) recursively by $g^{[0]} \coloneqq g$ and
\[
g^{[k]}(\mathbf{s}) = \frac{g^{[k-1]}(s_1,\ldots,s_k) - g^{[k-1]}(s_1,\ldots,s_{k-1},s_{k+1})}{s_k - s_{k+1}} \qquad \big(\mathbf{s} = (s_1,\ldots,s_{k+1}) \in S_{\neq}^{k+1}\big).
\]
The function $g^{[k]}$ is called the \textbf{$\boldsymbol{k^{\text{\textbf{th}}}}$ divided difference of $\boldsymbol{g}$}.
If $S = \R$ and $g \in C^k(\R)$, then $g^{[k]}$ extends uniquely to a continuous function on $\R^{k+1}$ (vid.\ \cite[Prop.\ 2.1.3(ii)]{Nikitopoulos2023n}), notated the same way.
If $f \in \Hol\left(U\right)$, then $f^{[k]}$ extends uniquely to a holomorphic function on $U^{k+1}$, notated the same way.
Indeed, suppose $\Gamma$ is any cycle in $U$ such that $W_{\Gamma}(z) = 0$ for all $z \in\C \setminus U$.
By Cauchy's integral formula and induction on $k$, if $\Gamma^*$ is the trace of $\Gamma$ (the union of the images of the curves comprising $\Gamma$) and $V \coloneqq \{z \in U \setminus \Gamma^* : W_{\Gamma}(z) = 1\}$, then
\begin{equation}
    f^{[k]}(\blambda) = \frac{1}{2\pi i}\int_{\Gamma} \frac{f(z)}{(z-\lambda_1)\cdots(z-\lambda_{k+1})} \,\d z \qquad \big( \blambda = (\lambda_1,\ldots,\lambda_{k+1}) \in V^{k+1}\big).\label{eq.divdiffCIF}
\end{equation}
This formula also exposes the connection to Theorem \ref{thm.HFCintro}.

\begin{theorem}\label{thm.HFCderformintro}
Let $f \in \Hol\left(U\right)$, $\mathbf{a} = (a_1,\ldots,a_{k+1}) \in \cB_U^{k+1}$, and $\tilde{a}_i$ be as in \eqref{eq.atilde} with $a_i$ in place of $a$.
If
\[
f_{\sotimes}^{[k]}(\mathbf{a}) \coloneqq f^{[k]}(\tilde{a}_1,\ldots,\tilde{a}_{k+1}) \in \cB^{\potimes(k+1)}
\]
is defined via the multivariate holomorphic functional calculus, which makes sense because $\cB^{\potimes(k+1)}$ is a unital Banach algebra and $\tilde{a}_i\tilde{a}_j=\tilde{a}_j\tilde{a}_i$ for all $i,j \in \{1,\ldots,k+1\}$, then
\[
f_{\sotimes}^{[k]}(\mathbf{a}) = \frac{1}{2\pi i}\int_{\Gamma} f(z) \, (z-\tilde{a}_1)^{-1}\cdots(z-\tilde{a}_{k+1})^{-1}\,\d z
\]
for any cycle $\Gamma$ surrounding $\sigma(a_1) \cup \cdots \cup \sigma(a_{k+1}) = \sigma(\tilde{a}_1) \cup \cdots \cup \sigma(\tilde{a}_{k+1})$ in $U$.
In particular, by Theorem \ref{thm.HFCintro},
\begin{equation}
    \partial_{b_k}\cdots\partial_{b_1} f_{\scB}(a) = \sum_{\pi \in S_k} f_{\sotimes}^{[k]}(\underbrace{a,\ldots,a}_{\mathsmaller{k+1\,\mathrm{times}}})\sh\big[b_{\pi(1)},\ldots,b_{\pi(k)}\big]\label{eq.derformintro}
\end{equation}
for all $a \in \cB_U$ and $b_1,\ldots,b_k \in \cB$.
\end{theorem}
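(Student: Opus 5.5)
The plan is to compute the multivariate holomorphic functional calculus of $f^{[k]}$ at the commuting tuple $(\tilde{a}_1,\ldots,\tilde{a}_{k+1})$ by an iterated Cauchy integral, and then collapse the iteration using the one-variable formula \eqref{eq.divdiffCIF}. Throughout, I use the standard facts about the multivariate holomorphic functional calculus (subsection \ref{sec.multivarHFC}) for commuting tuples in the unital Banach algebra $\cB^{\potimes(k+1)}$. The joint spectrum of $(\tilde{a}_1,\ldots,\tilde{a}_{k+1})$ is contained in $\sigma(a_1)\times\cdots\times\sigma(a_{k+1})$, and for $g\in\Hol(U^{k+1})$ one has
\[
g(\tilde{a}_1,\ldots,\tilde{a}_{k+1})=\frac{1}{(2\pi i)^{k+1}}\int_{\Gamma_1}\cdots\int_{\Gamma_{k+1}} g(\lambda_1,\ldots,\lambda_{k+1})\,(\lambda_1-\tilde{a}_1)^{-1}\cdots(\lambda_{k+1}-\tilde{a}_{k+1})^{-1}\,\d\lambda_{k+1}\cdots\d\lambda_1
\]
for cycles $\Gamma_i$ surrounding $\sigma(a_i)$ in $U$.

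First I check the spectral claim. Since $\tilde{a}_i$ is the image of $a$ under the unital homomorphism $c\mapsto 1^{\otimes(i-1)}\otimes c\otimes 1^{\otimes(k+1-i)}$, we get $\sigma(\tilde{a}_i)\subseteq\sigma(a_i)$. For the reverse inclusion, I apply $\phi^{\otimes(i-1)}\otimes\mathrm{id}\otimes\phi^{\otimes(k+1-i)}$ for a bounded functional $\phi$ with $\phi(1)=1$. This is a bounded linear map on the projective tensor product, but it is not a homomorphism, so I must argue more carefully. A standard trick is to realize $\cB$ inside $B(\cB)$ by left multiplication and tensor representations. If this proves hard, I only need $\subseteq$ to choose $\Gamma$, and I would remark on equality separately.

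Now fix a cycle $\Gamma$ surrounding $K\coloneqq\bigcup_i\sigma(a_i)$ in $U$. Let $V=\{W_\Gamma=1\}\setminus\Gamma^*$, which is an open set containing $K$. Choose cycles $\Gamma_i$ surrounding $\sigma(a_i)$ inside $V$; these exist by the usual construction, since $K$ is compact in the open set $V$. For $\lambda_i$ on $\Gamma_i^*\subseteq V$, formula \eqref{eq.divdiffCIF} gives
\[
f^{[k]}(\boldsymbol{\lambda})=\frac{1}{2\pi i}\int_\Gamma \frac{f(z)}{\prod_i (z-\lambda_i)}\,\d z.
\]
I substitute this into the multivariate integral and apply Fubini, which is legitimate because all integrands are continuous on compact sets. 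The $\lambda_i$-integrals then factor, since the $\tilde{a}_i$ commute:
\[
\frac{1}{2\pi i}\int_{\Gamma_i}\frac{(\lambda_i-\tilde{a}_i)^{-1}}{z-\lambda_i}\,\d\lambda_i = H^{U_z}_{\tilde{a}_i}\big((z-\cdot)^{-1}\big)=(z-\tilde{a}_i)^{-1}.
\]
This uses the one-variable calculus on $U_z\coloneqq\{\lambda: W_\Gamma(\lambda)=1\}\setminus\Gamma^*\not\ni z$. The function $\lambda\mapsto(z-\lambda)^{-1}$ is holomorphic there, and $\Gamma_i$ surrounds $\sigma(\tilde{a}_i)$ in that set. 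The step I expect to need the most care is exactly this bookkeeping: the winding-number conditions must hold so that both the multivariate calculus and this one-variable evaluation apply. Multiplying the factors gives the stated formula.

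Finally, for \eqref{eq.derformintro}, I take all $a_i=a$. The formula just proven equals $\frac{1}{2\pi i}\int_\Gamma f(z)\,(z-a)^{-1}\otimes\cdots\otimes(z-a)^{-1}\,\d z$, because $(z-\tilde{a}_1)^{-1}\cdots(z-\tilde{a}_{k+1})^{-1}=(z-a)^{-1}\otimes\cdots\otimes(z-a)^{-1}$. The map $\sh$ is bounded and linear, so it commutes with the Bochner integral. Applying it to $[b_{\pi(1)},\ldots,b_{\pi(k)}]$ and summing over $\pi\in S_k$ reproduces the formula of Theorem \ref{thm.HFCintro}.
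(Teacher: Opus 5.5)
Your proposal is correct and is essentially the paper's argument. The paper writes $f^{[k]}|_{W^{k+1}}$ as the $\Hol_0\big(W^{k+1}\big)$-valued Bochner integral \eqref{eq.divdiffholoHol0} of the elementary tensors $\big((z-\iota_{\mathsmaller{W}})^{-1}\big)^{\otimes(k+1)}$ and applies the continuous homomorphism $H_{\mathbf{a}}^{W^{k+1}}$ together with \eqref{eq.tensorHFC}; this is your substitution-and-Fubini computation in packaged form. Your choice of the $\Gamma_i$ inside $W$ (your $V$) plays the role of the restriction observation after Remark \ref{rem.adultmultivarHFC}. The paper also uses Lemma \ref{lem.divdiffinHol0}, i.e.\ $f^{[k]}\in\Hol_0\big(U^{k+1}\big)$, to make the left-hand side meaningful in its calculus on $\Hol_0$, which you tacitly assume when you evaluate that calculus by the iterated Cauchy integral.

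The reverse inclusion $\sigma(a_i)\subseteq\sigma(\tilde a_i)$ that you left open holds because $E\coloneqq\phi^{\otimes(i-1)}\otimes\mathrm{id}\otimes\phi^{\otimes(k+1-i)}$, with $\phi(1)=1$, satisfies $E(\tilde a_i u)=a_iE(u)$ and $E(u\tilde a_i)=E(u)a_i$. It therefore carries a two-sided inverse of $z-\tilde a_i$ to one of $z-a_i$. As you note, only $\sigma(\tilde a_i)\subseteq\sigma(a_i)$ is needed for the formula itself.
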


I prove generalizations of Theorems \ref{thm.HFCintro} and \ref{thm.HFCderformintro} in appendix \ref{app.HFC} for the sake of completeness and because I am unaware of a reference in which the proofs of these independently interesting results appear.
The key takeaway from these results is the fact that the $k^{\text{th}}$ derivative of $f_{\scB}$ can be computed in terms of the function $f^{[k]}$ via a peculiar form of multivariate functional calculus.
Heuristically, to compute the directional derivative $\partial_{b_k}\cdots\partial_{b_1}f_{\scB}(a)$,
\begin{enumerate}
    \item take $\mathbf{a} = (a_1,\ldots,a_{k+1}) \coloneqq (a,\ldots,a)$;
    \item plug $\mathbf{a}$ into $f^{[k]}$ in such a way that the variables act ``in lexicographic order,'' i.e., the first variable acts all the way on the left, the second variable acts immediately to the right of the first, and so on;\label{item.step2}
    \item ``insert'' $b_i$ between the action of $a_i$ and $a_{i+1}$ for all $i=1,\ldots,k$; and\label{item.step3}
    \item symmetrize in $(b_1,\ldots,b_k)$.
\end{enumerate}
Steps \ref{item.step2} and \ref{item.step3} motivate the definition of lexicographic functional calculus, which I introduce, along with several results on its appearance in the derivatives of maps induced by the continuous functional calculus, in the next subsection.

Before moving on, it is worthwhile to address the reasonable question,
Why does the ``real-$C^k$ case'' not work the same as the holomorphic case?
After all, the multivariate continuous functional calculus for commuting self-adjoint---or, more generally, normal---elements of a unital $C^*$-algebra is just as basic as the univariate version, so why can we not formulate and prove \eqref{eq.derformintro} in the ``real-$C^k$'' case as well?
For starters, if $\cA$ is a unital $C^*$-algebra, the Banach $\ast$-algebra $\cA^{\potimes(k+1)}$ is not generally a $C^*$-algebra.
Furthermore, if $f \in C^k(\R)$ and $\mathbf{a} = (a_1,\ldots,a_{k+1}) \in \cA_{\sa}^{k+1}$, then
\[
f_{\sotimes}^{[k]}(\mathbf{a}) = f^{[k]}(a_1 \otimes 1^{\otimes k},\ldots,1^{\otimes k} \otimes a_{k+1}),
\]
defined via the multivariate continuous functional calculus, naturally lives in $\cA^{\otimes_{\nu}(k+1)}$, where $\nu$ is a $C^*$-tensor norm on $\cA^{\otimes(k+1)}$ (vid.\ \cite[Ch.\ 3]{BO2008}).
The issue is that the operation $\sh_k|_{\cA^{\otimes (k+1)}}$ is rarely bounded with respect to such $\nu$;
please see \cite[Thm.\ 4.6]{Quigg1985} and \cite[Prop.\ 3.6]{DS2013} as well as \cite{Eckhardt2014}.
Consequently, the object appearing on the right-hand side in \eqref{eq.derformintro} simply does not make sense in the real-$C^k$ case.

\subsection{Framework and results}\label{subsec.results}

Let $\cA$ be a unital $C^*$-algebra, and recall that $\cA_{\sa} = \{a \in \cA : a^*=a\}$.
The goal of lexicographic functional calculus (LFC) is to give meaning to an object resembling $\varphi_{\sotimes}(\mathbf{a}) \sh b$ for all $\mathbf{a} \in \cA_{\sa}^m$, $b \in \cA^{m-1}$, and many, but usually not all, continuous functions $\varphi  \colon \R^m \to \C$.
As explained in the previous subsection, the reason is that such objects should appear in the formula for the $k^{\text{th}}$ derivative of the map $f_{\scA} \colon \cA_{\sa} \to \cA$ when $f \colon \R \to \C$ is sufficiently~regular.

The key is to limit the collection of functions $\varphi$ for which one attempts to make sense of $\varphi_{\sotimes}(\mathbf{a})\sh b$.
The object I use to impose such limitations is a family
\begin{equation}
    \alpha = (\alpha_{r,m} \colon C([-r,r]^m) \to [0,\infty])_{r > 0, \, m \in \N}\label{eq.alphaintro}
\end{equation}
of possibly infinite norms;
to be clear, a \textbf{possibly infinite norm} $\eta$ on a vector space $V$ over $\F \in \{\R,\C\}$ is a function $\eta \colon V \to [0,\infty]$ such that for all $v,w \in V$ and $c \in \F$, $\eta(cv) = |c|\,\eta(v)$ with the convention $0\cdot \infty = 0$, $\eta(v+w) \leq \eta(v)+\eta(w)$, and $\eta(v) = 0$ if and only if $v=0$.
In this case, write
\[
V_{\eta} \coloneqq \{v \in V : \eta(v) < \infty\} \; \text{ and } \; \eta|_{<\infty} \coloneqq \eta|_{V_{\eta}}.
\]
Clearly, $V_{\eta} \subseteq V$ is a linear subspace, and $\eta|_{<\infty}$ is a norm on $V_{\eta}$.

Let $\alpha$ be a family of possibly infinite norms as in \eqref{eq.alphaintro}.
For $m \in \N$, write
\begin{align*}
    \cC_{\alpha}(\R^m) & \coloneqq \big\{\varphi \in C(\R^m) : \varphi|_{[-r,r]^m} \in C([-r,r]^m)_{\alpha_{r,m}} \text{ for all } r > 0\big\} \\
    & = \big\{\varphi \in C(\R^m) : \alpha_{r,m}\big(\varphi|_{[-r,r]^m}\big) < \infty \text{ for all } r > 0\big\},
\end{align*}
and endow $\cC_{\alpha}(\R^m)$ with the Hausdorff, locally convex topology generated by the seminorms
\[
\cC_{\alpha}(\R^m) \ni \varphi \mapsto \alpha_{r,m}(\varphi)  \in [0,\infty) \qquad (r > 0).
\]
Observe that I abused notation above by writing $\alpha_{r,m}(\varphi)$ instead of $\alpha_{r,m}(\varphi|_{[-r,r]^m})$.
I shall continue to do so in the sequel.

\begin{definition}\label{def.alphacont}
$\alpha$ is \textbf{finite on polynomials} if for all $m \in \N$, the set $\cP_m \coloneqq \C[\lambda_1,\ldots,\lambda_m]$ of $m$-variate complex polynomials, viewed as functions from $\R^m$ to $\C$, is contained in $\cC_{\alpha}(\R^m)$, i.e., $\alpha_{r,m}(P) < \infty$ for all $r > 0$ and $P \in \cP_m$.
In this case, define
\[
C_{\alpha}(\R^m) \coloneqq \overline{\cP_m} \subseteq \cC_{\alpha}(\R^m)
\]
to be the space of \textbf{$\boldsymbol{\alpha}$-continuous functions from $\boldsymbol{\R^m}$ to $\boldsymbol{\C}$}, where the closure above takes place in the afore-described topology on $\cC_{\alpha}(\R^m)$.
\end{definition}

For the purposes of this paper, the most important families of possibly infinite norms are the uniform family and the Varopoulos family.

\begin{example}[Uniform family]\label{ex.uniformfamily}
The family of norms
\[
u = \big(u_{r,m} \coloneqq \norm{\cdot}_{\ell^{\infty}([-r,r]^m)} \colon C([-r,r]^m) \to [0,\infty)\big)_{r > 0, \, m \in \N}
\]
is called the \textbf{uniform family}.
Of course, $u$ is a family of possibly infinite norms that is finite on polynomials.
The space $\cC_u(\R^m)$ is simply the space $C(\R^m)$ with the topology of locally uniform convergence.
By the Stone--Weierstrass theorem, $\cP_m$ is dense in $C(\R^m)$, so $C_u(\R^m) = \cC_u(\R^m) = C(\R^m)$.
\end{example}

Some additional notation is necessary to introduce the Varopoulos family.
If $\Om_1,\ldots,\Om_m$ are compact Hausdorff spaces, then the Banach-space projective tensor product
\[
\big(C(\Om_1) \potimes \cdots \potimes C(\Om_m),\norm{\cdot}_{C(\Om_1) \potimes \cdots \potimes C(\Om_m)}\big)
\]
is naturally isomorphic to a normed subalgebra $(V(\Om_1,\ldots,\Om_m),\norm{\cdot}_{V(\Om_1,\ldots,\Om_m)})$ of the space $C(\Om_1 \times \cdots \times \Om_m)$ called the \textbf{Varopoulos algebra};
please see subsection \ref{subsec.Varopoulos} for details.
If $\Om_1=\cdots=\Om_m \coloneqq \Xi$, then $V(\Xi_{(m)}) \coloneqq V(\Om_1,\ldots,\Om_m)$.
In general, if $S$ is a set and $s \in S$, then $s_{(m)}$ is the element of $S^m$ with all components equal to $s$.

\begin{example}[Varopoulos family]\label{ex.Varopoulosfamily}
The family of possibly infinite norms
\[
\beta = \big(\beta_{r,m} \coloneqq \norm{\cdot}_{V([-r,r]_{(m)})} \colon C([-r,r]^m) \to [0,\infty]\big)_{r > 0, \, m \in \N}\pagebreak
\]
is called the \textbf{Varopoulos family};
to be clear, if $\varphi \in C([-r,r]^m) \setminus V([-r,r]_{(m)})$, then $\beta_{r,m}(\varphi) \coloneqq \infty$.
The Varopoulos family is finite on polynomials, and $\cP_m$ is dense in $\cC_{\beta}(\R^m)$ (vid.\ Corollary \ref{cor.polydenseinVC} below), so $C_{\beta}(\R^m) = \cC_{\beta}(\R^m)$.
Write
\[
VC(\R^m) \coloneqq C_{\beta}(\R^m) = \cC_{\beta}(\R^m).
\]
The elements of $VC(\R^m)$ are the \textbf{Varopoulos-continuous functions from $\boldsymbol{\R^m}$ to $\boldsymbol{\C}$}.
\end{example}

Now comes the construction of ($\alpha$-)lexicographic functional calculus.

\begin{notation}\label{nota.algstuff}
Let $A$ be a unital algebra over $\C$ and $m \geq 2$.
\begin{enumerate}[font=\normalfont,label=(\roman*)]
    \item Extend the $\sh = \sh_{m-1}$ notation from subsection \ref{subsec.HFC} to the algebraic case:
    Write
    \[
    \sh = \sh_{m-1} \colon A^{\otimes m} \to L_{m-1}(A^{m-1};A) = \{(m-1)\text{-linear maps from $A^{m-1}$ to }A\}
    \]
    for the (complex-)linear map, written $u \sh_{m-1} b = u \sh b \coloneqq \sh_{m-1}(u)[b]$, determined by
    \[
    (a_1 \otimes \cdots \otimes a_m)\sh[b_1,\ldots,b_{m-1}] = a_1b_1\cdots a_{m-1}b_{m-1}a_m
    \]
    for all $a_1,b_1,\ldots, a_{m-1},b_{m-1}, a_m \in A$.
    Observe that if $u \in A^{\otimes m}$, $I$ is an ideal of $A$,\footnote{$I \subseteq A$ is an \textbf{ideal} of $A$ if $I$ is a complex-linear subspace of $A$ with the property that $arb \in I$ whenever $a,b \in A$ and $r \in I$.} $i=1,\ldots,m-1$, and $b \in A^{i-1} \times I \times A^{m-1-i}$, then $u \sh b \in I$.\label{item.algsharp}
    \item If $P \in \cP_m$ and $\mathbf{a} = (a_1,\ldots,a_m) \in A^m$, define
    \[
    P_{\sotimes}(\mathbf{a}) \coloneqq P(a_1 \otimes 1^{\otimes(m-1)},1 \otimes a_2 \otimes 1^{\otimes (m-2)},\ldots,1^{\otimes(m-1)}\otimes a_m) \in A^{\otimes m}
    \]
    via the multivariate polynomial functional calculus in the algebra $A^{\otimes m}$.\label{item.Ptensor}
\end{enumerate}
In general, $m$-tuples will be bold and denoted by $\mathbf{a} = (a_1,\ldots,a_m)$, while $(m-1)$-tuples will not be bold and will be denoted by $b = (b_1,\ldots,b_{m-1})$.
When a $k$ is involved, it will usually be the case that $m=k+1$.
\end{notation}

As I encourage the reader to verify, if $\mathbf{a} \in A^m$, $b \in A^{m-1}$, and
\[
P(\blambda) = \sum_{|\delta| \leq d} c_{\delta} \blambda^{\delta} = \sum_{|\delta| \leq d} c_{\delta} \lambda_1^{\delta_1}\cdots\lambda_m^{\delta_m} \in \cP_m
\]
in standard multi-index notation, then
\[
P_{\sotimes}(\mathbf{a})\sh b = \sum_{|\delta| \leq d} c_{\delta} \,a_1^{\delta_1}b_1\cdots a_{m-1}^{\delta_{m-1}} b_{m-1} a_m^{\delta_m} \in A.
\]
The map
\[
\cP_m \ni P \mapsto (\mathbf{a} \mapsto (b \mapsto P_{\sotimes}(\mathbf{a})\sh b)) \in \operatorname{Fun}\left(A^m;L_{m-1}(A^{m-1};A)\right)
\]
is called the (\textbf{$\boldsymbol{m}$-variate}) \textbf{polynomial lexicographic functional calculus} (\textbf{LFC}) \textbf{in $\boldsymbol{A}$}.
Under natural assumptions on $\alpha$, the polynomial LFC in a unital $C^*$-algebra extends to the $\alpha$-continuous functions.

\begin{theorem}[Existence of LFC]\label{thm.LFCintro}
Recall that $\alpha$ is a family of possibly infinite norms as in \eqref{eq.alphaintro} and $\cA$ is a unital $C^*$-algebra.
Also, write
\[
\cA_{\sa,r} \coloneqq \{a \in \cA_{\sa} : \norm{a} \leq r\} \qquad (r \geq 0).
\]
Suppose $\alpha$ is finite on polynomials and that for every $m \geq 2$, $P \in \cP_m$, and $b \in \cA^{m-1}$,
\[
\sup_{\mathbf{a} \in \cA_{\sa,r}^m}\norm{P_{\sotimes}(\mathbf{a})\sh b} \leq \alpha_{r,m}(P)\prod_{i=1}^{m-1} \norm{b_i} \qquad (r > 0).
\]
If $\mathbf{a} \in \cA_{\sa}^m$ and $b \in \cA^{m-1}$, then the map $\cP_m \ni P \mapsto P_{\sotimes}(\mathbf{a})\sh b \in \cA$ extends uniquely to a continuous linear map $\Phi \colon C_{\alpha}(\R^m) \to \cA$, denoted by
\[
\varphi_{\scA,\alpha}(\mathbf{a}) \sh b \coloneqq \Phi(\varphi) \qquad (\varphi \in C_{\alpha}(\R^m)).
\]
The map sending $\varphi \in C_{\alpha}(\R^m)$ to the function $\mathbf{a} \mapsto (b \mapsto \varphi_{\scA,\alpha}(\mathbf{a})\sh b)$ is called the (\textbf{$\boldsymbol{m}$-variate}) \textbf{$\boldsymbol{\alpha}$-lexicographic functional calculus}---($m$-variate) $\alpha$-LFC for short.
\end{theorem}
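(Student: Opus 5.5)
The argument is the standard extension of a bounded linear map from a dense subspace. Fix $\mathbf{a} \in \cA_{\sa}^m$ and $b \in \cA^{m-1}$, and choose $r>0$ with $r \geq \max_i \norm{a_i}$, so that $\mathbf{a} \in \cA_{\sa,r}^m$. Define $\Phi_0 \colon \cP_m \to \cA$ by $\Phi_0(P) \coloneqq P_{\sotimes}(\mathbf{a})\sh b$.

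First I will check that $\Phi_0$ is well defined and linear. This holds because $P \mapsto P_{\sotimes}(\mathbf{a})$ is linear, being evaluation of the polynomial functional calculus in $\cA^{\otimes m}$, and $u \mapsto u \sh b$ is linear.

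Next, the hypothesis gives
\[
\norm{\Phi_0(P)} \leq \alpha_{r,m}(P)\prod_{i=1}^{m-1}\norm{b_i} \qquad (P \in \cP_m).
\]
This single continuous seminorm of the locally convex space $\cC_{\alpha}(\R^m)$ dominates $\norm{\Phi_0(\cdot)}$, so $\Phi_0$ is continuous with respect to the subspace topology on $\cP_m$. Finiteness on polynomials ensures that $\cP_m \subseteq \cC_{\alpha}(\R^m)$, so the seminorm is finite there.

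For the extension, let $\varphi \in C_{\alpha}(\R^m) = \overline{\cP_m}$. The topology is generated by countably many seminorms $\alpha_{n,m}$, $n \in \N$. This uses monotonicity in $r$, or simply taking all $r$ with nets, so I will work with nets to avoid needing monotonicity.

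Pick a net $(P_\iota)$ in $\cP_m$ with $P_\iota \to \varphi$. Then $\alpha_{r,m}(P_\iota - P_\kappa) \to 0$, so by the estimate above applied to the polynomial $P_\iota - P_\kappa$, the net $(\Phi_0(P_\iota))$ is Cauchy in the Banach space $\cA$. Define $\Phi(\varphi)$ as its limit.

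Independence from the chosen net follows by interleaving two nets, or directly from the bound: $\norm{\Phi_0(P_\iota) - \Phi_0(Q_\kappa)} \leq \alpha_{r,m}(P_\iota - Q_\kappa)\prod_i \norm{b_i} \to 0$. Linearity passes to the limit. Taking limits in the estimate gives
\[
\norm{\Phi(\varphi)} \leq \alpha_{r,m}(\varphi)\prod_i\norm{b_i},
\]
using that $\alpha_{r,m}(P_\iota) \to \alpha_{r,m}(\varphi)$ by the triangle inequality for seminorms. Hence $\Phi$ is continuous.

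Uniqueness holds because two continuous maps into the Hausdorff space $\cA$ that agree on the dense set $\cP_m$ agree on its closure.

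There is no real obstacle here. The only point needing care is that $\cC_{\alpha}(\R^m)$ is merely locally convex and not normed, so the Cauchy argument must go through the one seminorm $\alpha_{r,m}$ attached to a radius $r$ dominating $\mathbf{a}$. For this reason I use nets, or the countable family $r\in\N$ if the closure is sequential, instead of sequences.
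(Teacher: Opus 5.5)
Your proof is correct and follows the paper's argument (Theorem~\ref{thm.LFC}\ref{item.LFCinI}): both use the seminorm bound at a radius $r$ dominating $\mathbf{a}$ to get continuity of $P \mapsto P_{\sotimes}(\mathbf{a})\sh b$ on the dense subspace $\cP_m$, then extend to the closure in a complete Hausdorff target; the paper cites Tr\`eves for the extension where you run the net--Cauchy argument by hand. The only difference is that the paper extends into the Fr\'echet space of maps in $\mathbf{a}$ that are bounded on bounded sets, which additionally gives local uniformity in $\mathbf{a}$, whereas you work pointwise in $(\mathbf{a},b)$, which is all this statement requires.
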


The result above is a special case of (a weakening of) Theorem \ref{thm.LFC}\ref{item.LFCinI} from subsection~\ref{subsec.constructLFC}.
The next result synthesizes special cases of some of the development in subsections \ref{subsec.commLFC}--\ref{subsec.MOIfam}, which study examples of LFC in detail.

\begin{theorem}\label{thm.LFCexamplesintro}
Recall that $\cA$ is a unital $C^*$-algebra, $u$ is the uniform family, and $\beta$ is the Varopoulos family.
\begin{enumerate}[label=(\roman*),font=\normalfont]
    \item If $\cA$ is commutative, then $u$ satisfies the hypotheses of Theorem \ref{thm.LFCintro}.\label{item.commLFCintro}
    \item If $\cA$ is finite dimensional, then the family $u^{\scA} \coloneqq ((\dim \cA)^{m-1}u_{r,m})_{r > 0, \, m \in \N}$ satisfies the hypotheses of Theorem \ref{thm.LFCintro}.\label{item.fdLFCintro}
    \item $\beta$ satisfies the hypotheses of Theorem \ref{thm.LFCintro}.\label{item.VaropLFCintro}
\end{enumerate}
In particular, the $u$-LFC in $\cA$ is defined if $\cA$ is commutative, the $u^{\scA}$-LFC in $\cA$ is defined if $\cA$ is finite dimensional, and the $\beta$-LFC---\textbf{Varopoulos LFC}---in $\cA$ is defined no matter the choice of $\cA$.
\end{theorem}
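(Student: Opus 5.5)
We verify the single estimate in Theorem \ref{thm.LFCintro} for each family separately. In every case we fix $m \geq 2$, $P \in \cP_m$, $b \in \cA^{m-1}$, $r>0$ and $\mathbf{a} \in \cA_{\sa,r}^m$, and bound $\norm{P_{\sotimes}(\mathbf{a})\sh b}$. Finiteness on polynomials is already recorded in Examples \ref{ex.uniformfamily} and \ref{ex.Varopoulosfamily}; it holds trivially for $u^{\scA}$.

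\textbf{Item \ref{item.commLFCintro}.} Because $\cA$ is commutative, the expansion $P_{\sotimes}(\mathbf{a})\sh b = \sum_\delta c_\delta\, a_1^{\delta_1}b_1\cdots b_{m-1}a_m^{\delta_m}$ rearranges to $P(a_1,\ldots,a_m)\,b_1\cdots b_{m-1}$. Here $P(a_1,\ldots,a_m)$ is given by the multivariate continuous functional calculus of the commuting self-adjoint tuple $\mathbf{a}$. Its joint spectrum lies in $\prod_i\sigma(a_i) \subseteq [-r,r]^m$, so its norm is at most $\norm{P}_{\ell^\infty([-r,r]^m)}$. One can also see this via Gelfand: $\cA \cong C(X)$, and the element is evaluated pointwise. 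This yields the bound with constant $u_{r,m}(P)$.

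\textbf{Item \ref{item.fdLFCintro}.} Write $\cA = \bigoplus_j M_{n_j}(\C)$, faithfully represented on $H=\bigoplus_j \C^{n_j}$. Diagonalize each $a_i = \sum_{\lambda} \lambda\, E^{i}_{\lambda}$, where the spectral projections lie in $\cA$. Expanding monomials gives
\[
P_{\sotimes}(\mathbf{a})\sh b = \sum_{\lambda_1,\ldots,\lambda_m} P(\lambda_1,\ldots,\lambda_m)\, E^1_{\lambda_1}b_1E^2_{\lambda_2}\cdots b_{m-1}E^m_{\lambda_m}.
\]
Each $\sigma(a_i)$ has at most $N \le \dim H$ points. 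The crude estimate via the triangle inequality costs a factor $N^m$, which is too much. Instead, factor the sum one index at a time: for fixed $\lambda_1,\ldots,\lambda_{m-1}$, the inner sum over $\lambda_m$ contributes $E^m$-blocks that are orthogonal in the range. The cleaner route is to refine all spectral decompositions into rank-one projections from an orthonormal eigenbasis of each $a_i$. The expression then becomes a sum over matrix-unit-like terms whose coefficients are bounded by $\norm{P}_\infty$. Bounding $\sum_{j_1,\ldots,j_m}|\langle\cdots\rangle|$ using the sizes of the $b_i$ entries, we aim for a count of $(\dim\cA)^{m-1}$ rather than a power of $\dim H$. The target is the Schur-multiplier-type bound $\norm{\sum_{\boldsymbol{\lambda}}\varphi(\boldsymbol{\lambda})E_{\lambda_1}b_1\cdots E_{\lambda_m}}\le \norm{\varphi}_\infty \prod_{i=1}^{m-1}(\text{number of } E\text{-terms summed against } b_i)\norm{b_i}$. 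Fixing all but one index and using $\norm{\sum_\lambda E_\lambda x F_\lambda}\le\norm{x}$ handles the outer index for free, so only $m-1$ of the $m$ sums need counting. Each of those counts is at most $\dim \cA$, since the number of distinct eigenvalues is at most the dimension of the maximal abelian subalgebra, which is bounded by $\dim\cA$. This counting is the main obstacle: one must check carefully that the exponent is $m-1$ and not $m$.

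\textbf{Item \ref{item.VaropLFCintro}.} Take any representation $P=\sum_{l} \varphi^{1}_l\otimes\cdots\otimes\varphi^m_l$ with $\varphi^i_l \in C([-r,r])$ whose projective norm sum $\sum_l\prod_i\norm{\varphi^i_l}_\infty$ is close to $\beta_{r,m}(P)$; such representations exist by subsection \ref{subsec.Varopoulos}. We need $P_{\sotimes}(\mathbf{a})\sh b=\sum_l \varphi^1_l(a_1)b_1\cdots b_{m-1}\varphi^m_l(a_m)$. This identity is immediate if the $\varphi^i_l$ are polynomials. In general, the map $(\psi_1,\ldots,\psi_m)\mapsto \psi_1(a_1)b_1\cdots\psi_m(a_m)$ is bounded multilinear on $\prod C([-r,r])$, so it induces a bounded linear map on $C([-r,r])^{\potimes m}$ of norm at most $\prod\norm{b_i}$. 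This map agrees on polynomial tensors with $Q\mapsto Q_{\sotimes}(\mathbf{a})\sh b$, which is itself determined by the values on polynomial elementary tensors. Hence its value at $P$ is $P_{\sotimes}(\mathbf{a})\sh b$, with norm at most $\beta_{r,m}(P)\prod\norm{b_i}$. Injectivity of $C(\Om)^{\potimes m}\to V$, as used in subsection \ref{subsec.Varopoulos}, ensures that the value is well defined.
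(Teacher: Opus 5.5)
Your items (i) and (iii) are correct and follow the paper. Item (i) is the computation in Proposition \ref{prop.ellinflbd}. In item (iii), your multilinear map is $\sh b$ composed with $\Phi_{a_1}\potimes\cdots\potimes\Phi_{a_m}$ (after restricting to the spectra), which is how Lemma \ref{lem.bdphitensor} and Theorem \ref{thm.AVaropLFC} get the bound. Appealing to injectivity is what justifies identifying $P$ with your near-optimal series.

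Item (ii), however, is not proved. You correctly note that the triangle inequality over all $m$ indices costs $(\dim\cA)^m$. The device you invoke to save one factor, $\norm{\sum_\lambda E_\lambda xF_\lambda}\le\norm{x}$, does not apply. It concerns sums in which the two projections flanking $x$ share the summation index. In $\sum_{\blambda}P(\blambda)E^1_{\lambda_1}b_1E^2_{\lambda_2}\cdots b_{m-1}E^m_{\lambda_m}$, adjacent projections carry independent indices. Each pair $(\lambda_i,\lambda_{i+1})$ therefore produces a genuine Schur multiplier, which is not contractive for a merely bounded symbol; that is why a dimensional constant appears at all. The rank-one refinement does not address this either, and you leave the counting as ``the main obstacle.''

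The missing idea is to absorb one index into the single-variable functional calculus. For fixed $\bmu=(\mu_1,\ldots,\mu_{m-1})\in\sigma(a_2)\times\cdots\times\sigma(a_m)$, the sum $\sum_{\lambda\in\sigma(a_1)}P(\lambda,\bmu)E^1_{\lambda}$ equals $P(\cdot,\bmu)(a_1)$. Its norm is $\norm{P(\cdot,\bmu)}_{\ell^{\infty}(\sigma(a_1))}\le\norm{P}_{\ell^{\infty}([-r,r]^m)}$. Hence
\[
P_{\sotimes}(\mathbf{a})\sh b=\sum_{\bmu}P(\cdot,\bmu)(a_1)\,b_1E^2_{\mu_1}b_2\cdots b_{m-1}E^m_{\mu_{m-1}}.
\]
There are at most $\prod_{i=2}^m\#\sigma(a_i)\le(\dim\cA)^{m-1}$ values of $\bmu$, using $\#\sigma(a_i)=\dim C^*(a_i,1)\le\dim\cA$. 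The triangle inequality over $\bmu$ then gives the required bound.

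This is exactly Lemma \ref{lem.betaAuA}. There it appears as the decomposition $\varphi|_{\sigma(a_1)\times\cdots\times\sigma(a_m)}=\sum_{\bmu}\varphi(\cdot,\bmu)|_{\sigma(a_1)}\otimes 1_{\{\mu_1\}}\otimes\cdots\otimes 1_{\{\mu_{m-1}\}}$, which bounds the Varopoulos norm on the finite grid by $(\dim\cA)^{m-1}\norm{\varphi}_{\ell^{\infty}}$. The paper then obtains (ii) from the same tensor argument you used for (iii), run on $C(\sigma(a_i))$ instead of $C([-r,r])$, with no separate operator estimate.
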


Item \ref{item.commLFCintro} is a special case of Theorem \ref{thm.commLFC}, item \ref{item.fdLFCintro} is a special case of Corollary \ref{cor.findimLFC},\footnote{The constant $\dim \cA$ in item \ref{item.fdLFCintro} can be improved substantially;
please see Proposition \ref{prop.optimestim}.} and item \ref{item.VaropLFCintro} is a special case of Corollary \ref{cor.VaropLFC}\ref{item.VaropLFC}.
Each of these results also provides a recipe to evaluate the LFC in each respective context.
Notably, one method I present for evaluating the Varopoulos LFC makes use of a new, independently interesting characterization of the Varopoulos algebra, inspired by MOI theory, developed in subsection \ref{subsec.Varopoulos}.

Finally, let us see how LFC appears in the formulas for the higher derivatives of $f_{\scA}$ when $f \colon \R \to \C$ is sufficiently regular.
Perhaps unsurprisingly, ``sufficiently regular'' will be measured using $\alpha$.
For $k \in \N_0 \cup \{\infty\}$, define
\begin{align*}
    \cC_{\alpha}^k(\R) & \coloneqq \big\{f \in C^k(\R) : f^{[i]} \in \cC_{\alpha}\big(\R^{i+1}\big) \text{ whenever } 0 \leq i < k +1\big\}\\
    & = \left\{f \in C^k(\R) : \alpha_{r,i+1}\big(f^{[i]}\big)  < \infty \text{ whenever } r > 0 \text{ and } 0 \leq i < k+1\right\},
\end{align*}
and endow $\cC_{\alpha}^k(\R)$ with the Hausdorff, locally convex topology induced by the seminorms
\[
\cC_{\alpha}^k(\R) \ni f \mapsto \alpha_{r,i+1}\big(f^{[i]}\big)  \in [0,\infty) \qquad (r > 0, \; 0 \leq i < k+1).
\]
If $\alpha$ is finite on polynomials, then $\cP_1 = \C[\lambda] \subseteq \cC_{\alpha}^k(\R)$ because the $k^{\text{th}}$ divided difference of a polynomial is a polynomial in $k+1$ variables (vid.\ \cite[Ex.\ 2.1.5]{Nikitopoulos2023n}).
This ensures the following definition makes sense.

\begin{definition}\label{def.Ckalphaintro}
Suppose $\alpha$ is finite on polynomials, and write $\cP \coloneqq \cP_1 = \C[\lambda]$.
Define
\[
C_{\alpha}^k(\R) \coloneqq \overline{\cP} \subseteq \cC_{\alpha}^k(\R)
\]
to be the space of \textbf{$\boldsymbol{\alpha}$-$\boldsymbol{C^k}$ functions} (\textbf{from $\boldsymbol{\R}$ to $\boldsymbol{\C}$}), where the closure above takes place in the afore-described topology on $\cC_{\alpha}^k(\R)$.
Also, define
\[
VC^k(\R) \coloneqq C_{\beta}^k(\R)
\]
to be the space of \textbf{Varopoulos-$\boldsymbol{C^k}$ functions} (\textbf{from $\boldsymbol{\R}$ to $\boldsymbol{\C}$}).
\end{definition}

Subsection \ref{subsec.Ckalpha} develops various properties of $\cC_{\alpha}^k(\R)$ and $C_{\alpha}^k(\R)$.
Particularly noteworthy is Theorem \ref{thm.mathbfCkalpha=Ckalpha} therein, which establishes some conditions, satisfied by both the Varopoulos and uniform families, under which $C_{\alpha}^k(\R) = \cC_{\alpha}^k(\R)$.
In particular, $C_u^k(\R) = C^k(\R)$, and $C_{\beta}^k(\R) = \cC_{\beta}^k(\R)$, i.e., $VC^k(\R) = \cC_{\beta}^k(\R)$.
The space $VC^k(\R) = C_{\beta}^k(\R)$ is a concrete realization of the space $C_{\mathrm{nc}}^k(\R)$ of ``noncommutative $C^k$ functions'' introduced and briefly studied by D.\ A.\ Jekel in \cite[Ch.\ 18]{Jekel2020}.
Therein, Jekel defines $C_{\mathrm{nc}}^k(\R)$ as the abstract Fr\'echet-space completion of $\cP$ with respect to seminorms similar to $p \mapsto \beta_{r,i+1}(p^{[i]})$ ($r > 0$, $0 \leq i < k+1$) but formulated more algebraically in terms of Voiculescu's free difference quotients.
The fact that $C_{\beta}^k(\R) = \cC_{\beta}^k(\R)$ gives the new characterization of Jekel's space of noncommutative $C^k$ functions as the space of $f \in C^k(\R)$ such that $f^{[i]} \in VC(\R^{i+1})$ whenever $0 \leq i < k+1$.

Though $VC^k(\R) \subsetneq C^k(\R)$ for all $k \in \N$ (vid.\ Corollary \ref{cor.VCksubsetneqCk} below), it is perhaps unsurprising, considering the exposition, that if $f \colon \R \to \C$ is ``slightly better than $C^k$,'' then $f$ is Varopoulos $C^k$.

\begin{theorem}\label{thm.BesovHolderVaropoulos}
$\dot{B}_1^{k,\infty}(\R) \subseteq VC^k(\R)$, and $C_{\loc}^{k,\e}(\R) \subseteq VC^k(\R)$ for all $\e > 0$.
\end{theorem}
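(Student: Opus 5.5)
The plan is to reduce both inclusions to \emph{local} Varopoulos bounds on the divided differences. By Theorem \ref{thm.mathbfCkalpha=Ckalpha} we have $VC^k(\R) = \cC_{\beta}^k(\R)$, so it suffices to show the following: if $f$ belongs to either space, then $f \in C^k(\R)$ and
\[
\beta_{r,i+1}\big(f^{[i]}\big) = \big\|f^{[i]}|_{[-r,r]^{i+1}}\big\|_{V([-r,r]_{(i+1)})} < \infty \qquad (r>0,\; 0 \leq i \leq k).
\]
We localize first. The restriction $f^{[i]}|_{[-r,r]^{i+1}}$ depends only on $f|_{[-r,r]}$, because the divided differences at distinct points do and we then pass to the continuous extension. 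Fix $\chi \in C_c^\infty(\R)$ with $\chi \equiv 1$ on $[-r,r]$. Then it is enough to bound $\|(\chi f)^{[i]}\|_{V([-r,r]_{(i+1)})}$, with the advantage that $g \coloneqq \chi f$ is compactly supported. Polynomials cause no trouble: their divided differences are polynomials, and $\beta$ is finite on polynomials. Hence the ambiguity of $\dot{B}_1^{k,\infty}(\R)$ modulo polynomials is harmless.

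The core estimate concerns a bounded function $h$ whose Fourier transform is supported in $\{|\xi| \leq \sigma\}$. I would aim to show
\[
\big\|h^{[i]}\big\|_{V([-r,r]_{(i+1)})} \leq C_i\,\sigma^i\,\|h\|_{\ell^\infty(\R)}.
\]
The first thing to try is Peller's representation of divided differences of band-limited functions. This writes $h^{[i]}$ as an integral projective tensor product
\[
h^{[i]}(\blambda) = \int_\Sigma \varphi_1(\lambda_1,s)\cdots\varphi_{i+1}(\lambda_{i+1},s)\,\rho(\d s), \qquad \int_\Sigma \|\varphi_1(\cdot,s)\|_{\ell^\infty}\cdots\|\varphi_{i+1}(\cdot,s)\|_{\ell^\infty}\,|\rho|(\d s) \lesssim \sigma^i\|h\|_{\ell^\infty}.
\]
It comes from the Hermite-type formula $h^{[i]}(\blambda) = \int_{\text{simplex}} h^{(i)}(t_1\lambda_1+\cdots+t_{i+1}\lambda_{i+1})\,\d t$, combined with a Kotelnikov/sampling expansion of $h^{(i)}$; exponentials in a sum split multiplicatively into products of one-variable functions. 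The MOI-inspired characterization of the Varopoulos algebra in subsection \ref{subsec.Varopoulos} then shows that such integral projective tensor products lie in $V([-r,r]_{(i+1)})$ with Varopoulos norm at most the integral projective norm. This transfer is the step I expect to be the main obstacle. One needs the measurability and boundedness conditions of that characterization to match Peller's representation, and the simplex integration has to be absorbed into the measure space $\Sigma$.

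Now apply a Littlewood--Paley decomposition $g = \sum_n g_n$, where $\widehat{g_n}$ is supported in $\{|\xi| \sim 2^n\}$, together with a low-frequency piece. Since $g$ is compactly supported and $g = \chi f$ with $f \in \dot{B}_1^{k,\infty}$, the function $g$ lies in $B_1^{i,\infty}$ for every $i \leq k$. Consequently $\sum_n 2^{ni}\|g_n\|_\infty < \infty$, and the core estimate gives $\sum_n \|g_n^{[i]}\|_V < \infty$. The series converges uniformly on $[-r,r]^{i+1}$ to $g^{[i]}$, and $V$ is a Banach space continuously included in $C$, so $g^{[i]} \in V$. This proves $\dot{B}_1^{k,\infty}(\R) \subseteq VC^k(\R)$. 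The regularity $f \in C^k$ is standard for $\dot{B}_1^{k,\infty}$, up to polynomials.

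For $C^{k,\e}_{\loc}(\R)$, note that $g = \chi f \in C_c^{k,\e}(\R) \subseteq B_{\infty}^{k+\e,\infty}(\R)$, which gives $\|g_n\|_\infty \lesssim 2^{-n(k+\e)}$. Hence $\sum_n 2^{ni}\|g_n\|_\infty < \infty$ for every $i \leq k$, and the previous paragraph applies verbatim. Alternatively, one can note $g \in B_1^{k,\infty}(\R)$ and reduce to the first inclusion.
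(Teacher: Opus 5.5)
Your argument is correct, but it reaches the result by a different route than the paper. It does rely on two imports: Peller's band-limited estimate $\|h^{[i]}\|_{V([-r,r]_{(i+1)})}\lesssim\sigma^i\|h\|_{\ell^\infty(\R)}$, used as a black box, and the standard fact that $\chi f\in B_1^{k,\infty}(\R)$ when $f\in\dot B_1^{k,\infty}(\R)$ and $\chi\in C_c^\infty(\R)$.

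\textbf{How the paper proceeds.} The paper quotes Peller's theorem in its summed form, Theorem \ref{thm.Peller}, valid on all of $B_1^{k,\infty}(\R)$. It converts this to $f^{[k]}\in VC(\R^{k+1})$ via Theorem \ref{thm.IPTPV}, then invokes the characterization $C_\beta^k(\R)=\{f\in C^k(\R):f^{[k]}\in VC(\R^{k+1})\}$ from Theorem \ref{thm.mathbfCkalpha=Ckalpha}. Only the top divided difference is ever examined. For $\dot B_1^{k,\infty}$ it splits off the low frequencies $(\wch\eta+\wch\eta_1)\ast f$. That part is $C^\infty$, hence in $VC^k(\R)$ by Lemma \ref{lem.sameclosures}, and the remainder lies in $B_1^{k,\infty}(\R)$. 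No cutoff and no pointwise-multiplier fact are needed. For H\"older functions it cites $C^{k,\e}_{\loc}(\R)\subseteq B_1^{k,\infty}(\R)_{\loc}$ and uses the closure property in Proposition \ref{prop.Ckalpha}\ref{item.SlocCk}.

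\textbf{How your route differs.} You localize with a cutoff and bound every $f^{[i]}$, $i\le k$, directly. You also redo the Littlewood--Paley summation that constitutes the proof of Peller's theorem, which is redundant once Theorem \ref{thm.Peller} is available. In exchange, the H\"older case becomes a transparent decay estimate $\|g_n\|_\infty\lesssim 2^{-n(k+\e)}$ instead of a citation, and you get explicit local bounds. The multiplier fact you assert without proof is true, but the paper's frequency split avoids needing it.

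\textbf{Where the difficulty actually lies.} The step you flag as the main obstacle is not one. Passing from an integral projective decomposition to the norm of $V([-r,r]_{(i+1)})$ is exactly Theorem \ref{thm.IPTPV}. It needs only that the $\varphi_i(\cdot,\sigma)$ be continuous and product measurable, which Theorem \ref{thm.Peller} provides. The simplex need not be absorbed into $\Sigma$ if you quote Peller rather than rederive him.

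The deep input is the $\sigma^i$ bound itself, and the heuristic you sketch (Hermite formula plus a sampling expansion of $h^{(i)}$) does not yield it if implemented naively. Bounding the sampling atoms one at a time, each contributes a norm of order $\|h^{(i)}\|_\infty$. Summed over all atoms this diverges. After localizing to $[-r,r]$, roughly $r\sigma$ atoms remain, giving an extra factor of order $r\sigma$. That factor would force $g\in B_1^{i+1,\infty}(\R)$ in the summation and break the argument at $i=k$. So the estimate must be taken from Peller, not rebuilt along those lines.
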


This result is proven at the end of subsection \ref{subsec.Ckalpha}.
Jekel showed in \cite[Prop.\ 18.1.5]{Jekel2020} that if a function $f \colon \R \to \C$ has a continuous Fourier transform $\widehat{f}$ satisfying $\int_{\R} (1+|t|^k)\widehat{f}(t)\,\d t < \infty$, then $f \in C_{\mathrm{nc}}^k(\R)$, i.e., $f$ is Varopoulos $C^k$.
Such functions $f$ are very special examples of functions in the Besov space $\dot{B}_1^{k,\infty}(\R)$, so Theorem \ref{thm.BesovHolderVaropoulos} improves Jekel's result substantially.

Next, observe that if $f \in C_{\alpha}^k(\R)$ and $i=1,\ldots,k$, then $f^{[i]} \in C_{\alpha}(\R^{i+1})$.
Consequently, if $\alpha$ satisfies the hypotheses of Theorem \ref{thm.LFCintro}, then $f_{\scA,\alpha}^{[i]}(\mathbf{a})\sh b \in \cA$ is defined, via the $(i+1)$-variate $\alpha$-LFC, for any $\mathbf{a} \in \cA_{\sa}^{i+1}$ and $b \in \cA^i$.
In particular, the right-hand side of the formula below is defined.

\begin{theorem}[Derivative formula involving LFC]\label{thm.derformintro}
Suppose $\alpha_{r,1} = u_{r,1}$ for all $r > 0$ and $\alpha$ satisfies the hypotheses of Theorem \ref{thm.LFCintro}.
If $f \in C_{\alpha}^k(\R)$, then $f_{\scA} \in C^k(\cA_{\sa};\cA)$, and
\[
\partial_{b_k}\cdots\partial_{b_1} f_{\scA}(a) = \sum_{\pi \in S_k} f_{\scA,\alpha}^{[k]}(\underbrace{a,\ldots,a}_{\mathsmaller{k+1\,\mathrm{times}}})\sh\big[b_{\pi(1)},\ldots,b_{\pi(k)}\big]
\]
for all $a,b_1,\ldots,b_k \in \cA_{\sa}$.
\end{theorem}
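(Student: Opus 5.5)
We prove the statement first for polynomials by direct algebra, and then pass to general $f \in C_{\alpha}^k(\R)$ by density, using the continuity estimate of Theorem \ref{thm.LFCintro}. The central tool is the following multilinear map. For $0 \le i \le k$ and $f \in C_{\alpha}^k(\R)$, define
\[
D^i f(a)[b_1,\ldots,b_i] \coloneqq \sum_{\pi \in S_i} f_{\scA,\alpha}^{[i]}(a_{(i+1)})\sh\big[b_{\pi(1)},\ldots,b_{\pi(i)}\big],
\]
with the convention that for $i=0$ this is $f(a)$. Since $\alpha_{r,1} = u_{r,1}$, the $1$-variate $\alpha$-LFC is the usual continuous functional calculus.

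\textbf{Polynomial case.} Let $f = P \in \cP$. It suffices to treat monomials $P(\lambda) = \lambda^n$. Expanding $(a+s_1b_1+\cdots+s_kb_k)^n$ and extracting the coefficient of $s_1\cdots s_k$ gives
\[
\partial_{b_k}\cdots\partial_{b_1}P_{\scA}(a) = \sum_{\pi\in S_k}\;\sum_{\delta_1+\cdots+\delta_{k+1}=n-k} a^{\delta_1}b_{\pi(1)}a^{\delta_2}\cdots b_{\pi(k)}a^{\delta_{k+1}}.
\]
We then use the divided-difference identity $(\lambda^n)^{[k]}(\blambda) = \sum_{|\delta|=n-k}\blambda^{\delta}$ (the paper cites \cite[Ex.\ 2.1.5]{Nikitopoulos2023n} for the fact that divided differences of polynomials are polynomials; the explicit formula follows by induction on $k$). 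Combined with the formula for $P_{\sotimes}(\mathbf{a})\sh b$ in Notation \ref{nota.algstuff}, this shows the right-hand side above equals $D^kP(a)[b_1,\ldots,b_k]$. Since $P_{\scA}$ is a polynomial map on $\cA_{\sa}$, it is $C^\infty$, so the formula holds for every order $i \le k$.

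\textbf{Approximation.} Given $f \in C_{\alpha}^k(\R)$, choose polynomials $P_n \to f$ in the topology of $\cC_{\alpha}^k(\R)$; this is possible by Definition \ref{def.Ckalphaintro}. For fixed $r>0$, the hypothesis of Theorem \ref{thm.LFCintro} extends by continuity from polynomials to all $\varphi \in C_{\alpha}(\R^{i+1})$, giving
\[
\sup_{a \in \cA_{\sa,r}} \norm{\varphi_{\scA,\alpha}(a_{(i+1)})\sh b} \le \alpha_{r,i+1}(\varphi)\prod_j\norm{b_j}.
\]
Applying this with $\varphi = P_n^{[i]} - f^{[i]}$ yields
\[
\sup_{a\in\cA_{\sa,r}}\norm{D^iP_n(a)-D^if(a)}_{B_i} \le i!\,\alpha_{r,i+1}\big(P_n^{[i]}-f^{[i]}\big) \to 0 \qquad (0 \le i \le k).
\]
Thus the derivatives of $(P_n)_{\scA}$ converge uniformly on each bounded set $\cA_{\sa,r}$, which is convex and whose interior in $\cA_{\sa}$ exhausts $\cA_{\sa}$. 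By the standard theorem on uniform limits of $C^k$ maps (proved by induction on $i$ from the fundamental theorem of calculus along segments, $g(a+h)-g(a)=\int_0^1 Dg(a+th)[h]\,\d t$), we conclude that $f_{\scA} \in C^k(\cA_{\sa};\cA)$ with $D^if_{\scA} = D^if$. Evaluating at $i=k$ gives the stated formula.

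\textbf{Main obstacle.} The key step is showing that $a \mapsto D^if(a)$ is continuous into $B_i(\cA_{\sa}^i;\cA)$ in operator norm, since this is required to identify the limit as a genuine $C^k$ map. We obtain it for free: each $D^iP_n$ is continuous, being a polynomial map, and a uniform limit of continuous maps on $\cA_{\sa,r}$ is continuous. The only care needed is that the LFC bound is uniform over $\mathbf{a}\in\cA_{\sa,r}^{i+1}$, which holds by the hypothesis of Theorem \ref{thm.LFCintro}.
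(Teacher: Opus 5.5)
Your proposal is correct and follows the paper's own route, the method of polynomial approximation: verify the formula for polynomials, use the LFC bound to get uniform convergence of all the putative derivatives on bounded sets, and invoke the standard limit theorem for $C^k$ maps (the paper cites \cite[Thm.\ 1.85]{HJ2014}). The differences are minor: you check the polynomial case by direct expansion rather than through the holomorphic Theorem \ref{thm.derivinidealHFC}, and you use a sequence where the paper uses a net; since $\alpha$ is not assumed increasing, $\cC_{\alpha}^k(\R)$ need not be metrizable, so you should either pass to a net or choose $P_n$ to approximate $f$ only in the countably many seminorms $g \mapsto \alpha_{N,i+1}(g^{[i]})$ ($N \in \N$, $0 \leq i \leq k$), which is all your argument actually uses.
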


This result is a special case of Theorem \ref{thm.derform}.
\pagebreak

Finally, let us complete the picture by piecing together why Theorem \ref{thm.motivatingthm} is a corollary of Theorem \ref{thm.derformintro}.
Since, as mentioned after Definition \ref{def.Ckalphaintro}, $C_u^k(\R) = C^k(\R)$ and it is obvious that $C_{u^{\scA}}^k(\R) = C_u^k(\R)$ if $\cA$ is finite dimensional, the first and second items of Theorem~\ref{thm.motivatingthm} are proven by combining Theorem \ref{thm.derformintro} with the first and second items, respectively, of Theorem \ref{thm.LFCexamplesintro}.
Also, since $\dot{B}_1^{k,\infty}(\R) \cup C_{\loc}^{k,\e}(\R) \subseteq VC^k(\R)$ by Theorem \ref{thm.BesovHolderVaropoulos}, the third item of Theorem \ref{thm.motivatingthm} is proven by combining Theorem \ref{thm.derformintro} with the third item of Theorem \ref{thm.LFCexamplesintro}.
As promised, this confirms Metatheorem \ref{metathm.meta}.

\section{Preliminaries}\label{sec.prelims}

\subsection{The Varopoulos algebra}\label{subsec.Varopoulos}

Let $m \in \N$ and $\Om_1,\ldots,\Om_m$ be compact Hausdorff spaces, and write $\Om \coloneqq \Om_1 \times \cdots \times \Om_m$.
In this subsection, I discuss the Varopoulos algebra, a concrete representation of the projective tensor product $C(\Om_1) \potimes \cdots \potimes C(\Om_m)$ named after N.\ Th.\ Varopoulos (vid.\ \cite{Varopoulos1967}).
Please see \cite[Ch.\ 2]{Ryan2002} or \cite[\S1.5]{Nikitopoulos2024} for information about projective tensor products.

\begin{notation}\label{nota.tensfunc}
Let $X,Y,S_1,\ldots,S_m$ be sets, and write $S \coloneqq S_1 \times \cdots \times S_m$.
\begin{enumerate}[font=\normalfont,label=(\roman*)]
    \item $Y^X$ is the set of functions from $X$ to $Y$.\label{item.functionXtoY}
    \item If $f_i \in \C^{S_i}$ for all $i =1,\ldots,m$, define the function $f_1 \otimes \cdots \otimes f_m \colon S \to \C$ by
    \[
    (f_1 \otimes \cdots \otimes f_m)(\mathbf{s}) \coloneqq f_1(s_1)\cdots f_m(s_m) \qquad ( \mathbf{s} = (s_1,\ldots,s_m) \in S).
    \]
    Since the linear map from $\C^{S_1} \otimes \cdots \otimes \C^{S_m}$ to $\C^S$ determined by
    \[
    f_1\otimes \cdots \otimes f_m \mapsto ((s_1,\ldots,s_m) \mapsto f_1(s_1)\cdots f_m(s_m))
    \]
    is injective, the notation above is justified.\label{item.tensorfunc}
    \item If $f \in \C^X$, then $\norm{f}_{\ell^{\infty}(X)} \coloneqq \sup\left\{|f(x)| : x \in X\right\} \in [0,\infty]$.\label{item.ellinfty}
\end{enumerate}
\end{notation}

\begin{definition}\label{def.Varopoulos}
Let $\varphi \in C(\Om)$, and suppose that for each $i =1,\ldots,m$, there exists a sequence $(\varphi_{i,n})_{n \in \N}$ in $C(\Om_i)$ such that
\begin{equation}
    \sum_{n=1}^{\infty} \prod_{i=1}^m\norm{\varphi_{i,n}}_{\ell^{\infty}(\Om_i)} < \infty \; \text{ and } \; \varphi(\boldsymbol{\om}) = \sum_{n=1}^{\infty} (\varphi_{1,n} \otimes \cdots \otimes \varphi_{m,n})(\boldsymbol{\om}) \qquad (\boldsymbol{\om} \in \Om). \label{eq.Vardecomp}
\end{equation}
Define
\[
\norm{\varphi}_{V(\Om_1,\ldots,\Om_m)}\hspace{-0.2mm} \coloneqq\hspace{-0.2mm} \inf\hspace{-0.2mm}\left\{\sum_{n=1}^{\infty} \prod_{i=1}^m\norm{\varphi_{i,n}}_{\ell^{\infty}(\Om_i)}\hspace{-0.2mm} :\hspace{-0.2mm} \text{the\hspace{-0.2mm} sequences } (\varphi_{i,n})_{n \in \N}\hspace{-0.2mm} \in\hspace{-0.2mm} C(\Om_i)^{\N}\hspace{-0.2mm} \text{ satisfy\hspace{-0.2mm} \eqref{eq.Vardecomp}}\hspace{-0.2mm}\right\}\hspace{-0.2mm}.
\]
If no such sequences exist, then $\norm{\varphi}_{V(\Om_1,\ldots,\Om_m)} \coloneqq \infty$.
The \textbf{Varopoulos algebra} is defined to be the set $V(\Om_1,\ldots,\Om_m) \coloneqq \big\{\varphi \in C(\Om) : \norm{\varphi}_{V(\Om_1,\ldots,\Om_m)} < \infty\big\}$.
\end{definition}

By standard arguments, the Varopoulos algebra $V(\Om_1,\ldots,\Om_m)$ is a unital $\ast$-subalgebra of $C(\Om)$, and $\big(V(\Om_1,\ldots,\Om_m),\norm{\cdot}_{V(\Om_1,\ldots,\Om_m)}\big)$ is a unital Banach $\ast$-algebra.
Furthermore, it is clear that $\norm{\varphi}_{\ell^{\infty}(\Om)} \leq \norm{\varphi}_{V(\Om_1,\ldots,\Om_m)}$ for all $\varphi \in C(\Om)$.
In particular, the inclusion of $V(\Om_1,\ldots,\Om_m)$ into $C(\Om)$ is continuous.

\begin{example}[Multivariate polynomials]\label{ex.mvarpoly}
Recall that $\cP_m = \C[\lambda_1,\ldots,\lambda_m]$.
Suppose
\[
P(\blambda) = \sum_{|\delta| \leq d}c_{\delta} \,\blambda^{\delta} \in \cP_m.
\]
If $r_i > 0$ and $\Om_i \coloneqq \{z \in \C : |z| \leq r_i\}$ for all $i =1,\ldots,m$, then
\[
\norm{P|_{\Om_1 \times \cdots \times \Om_m}}_{V(\Om_1,\ldots,\Om_m)} \leq \sum_{|\delta| \leq d} |c_{\delta}| \prod_{i=1}^m\sup_{|\lambda_i| \leq r_i}\big|\lambda_i^{\delta_i}\big| \leq \sum_{|\delta| \leq d} |c_{\delta}| \, r^{|\delta|} < \infty,
\]
where $r \coloneqq \max\left\{r_1,\ldots,r_m\right\}$.
Since $V(\Om_1,\ldots,\Om_m)$ is closed under multiplication and complex conjugation, (the restrictions of) multivariate polynomials in $\blambda$ and $\bar{\blambda}$ belong to $V(\Om_1,\ldots,\Om_m)$.
Actually, such polynomial functions are dense.
\end{example}

\begin{proposition}[Density of $\ast$-polynomials]\label{prop.polydenseVC}
Suppose $\Om_i \subseteq \C$ is a compact set for all $i = 1,\ldots,m$.
The set $\mathrm{P}^*(\Om_1,\ldots,\Om_m) \subseteq V(\Om_1,\ldots,\Om_m)$ of functions of the form
\[
\Om \ni \blambda \mapsto P\big(\blambda,\bar{\blambda}\big) \in \C,
\]
where $P(\lambda_1,\ldots,\lambda_m,\mu_1,\ldots,\mu_m) \in \cP_{2m}$, is dense in $V(\Om_1,\ldots,\Om_m)$. 
\end{proposition}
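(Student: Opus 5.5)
The plan is to reduce density in $V(\Om_1,\ldots,\Om_m)$ to density of $\ast$-polynomials in each factor $C(\Om_i)$ with the uniform norm, using the projective-tensor structure of the Varopoulos norm.

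Fix $\varphi \in V(\Om_1,\ldots,\Om_m)$ and $\e > 0$. By Definition \ref{def.Varopoulos}, choose sequences $(\varphi_{i,n})_{n\in\N}$ in $C(\Om_i)$ satisfying \eqref{eq.Vardecomp} with $\sum_n \prod_i \norm{\varphi_{i,n}}_{\ell^\infty(\Om_i)} < \infty$. Pick $N$ with $\sum_{n>N}\prod_i\norm{\varphi_{i,n}}_{\ell^\infty(\Om_i)} < \e/2$. Since the tail $\sum_{n>N}\varphi_{1,n}\otimes\cdots\otimes\varphi_{m,n}$ is itself a decomposition of the type \eqref{eq.Vardecomp}, the definition of the norm gives
\[
\Big\|\varphi - \sum_{n=1}^N \varphi_{1,n}\otimes\cdots\otimes\varphi_{m,n}\Big\|_{V(\Om_1,\ldots,\Om_m)} < \e/2.
\]
This reduces the problem to approximating a single elementary tensor $g_1\otimes\cdots\otimes g_m$ with $g_i \in C(\Om_i)$.

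Because $\Om_i \subseteq \C$ is compact, the complex Stone--Weierstrass theorem gives polynomials $p_i(\lambda,\bar\lambda)$ that are uniformly close to $g_i$ on $\Om_i$. Writing $\delta_i \coloneqq \norm{g_i - p_i}_{\ell^\infty(\Om_i)}$, telescope:
\[
g_1\otimes\cdots\otimes g_m - p_1\otimes\cdots\otimes p_m = \sum_{j=1}^m p_1\otimes\cdots\otimes p_{j-1}\otimes(g_j-p_j)\otimes g_{j+1}\otimes\cdots\otimes g_m .
\]
Each summand is a one-term decomposition, so its $V$-norm is at most the product of the sup norms of its factors. Hence the $V$-norm of the difference is bounded by $\sum_j \delta_j\prod_{i\neq j}(\norm{g_i}_{\ell^\infty}+\delta_i)$. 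This quantity can be made as small as we like, and applying it to each of the $N$ terms gives an approximant within $\e/2$.

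Finally, $p_1\otimes\cdots\otimes p_m$ is the function $\blambda\mapsto \prod_i p_i(\lambda_i,\bar\lambda_i)$, which is of the form $P(\blambda,\bar\blambda)$ with $P\in\cP_{2m}$. A finite sum of such functions is again of this form, so the approximant lies in $\mathrm{P}^*(\Om_1,\ldots,\Om_m)$; membership of $\mathrm{P}^*(\Om_1,\ldots,\Om_m)$ in $V(\Om_1,\ldots,\Om_m)$ is Example \ref{ex.mvarpoly}.

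There is no serious obstacle. The one point needing care is the tail estimate: the tail series must be a legitimate decomposition, converging pointwise with summable products of norms, so that it bounds the $V$-norm. This follows directly from the definition, since the full series converges absolutely and pointwise.
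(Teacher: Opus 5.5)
Your proof is correct and follows the same argument as the paper's sketch: finite sums of elementary tensors are dense in the Varopoulos algebra, and approximating each factor uniformly by $\ast$-polynomials via Stone--Weierstrass shows that the $\ast$-polynomials are dense among those finite sums. Your tail estimate and telescoping bound just make explicit the two steps the paper's sketch leaves to the reader.
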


\begin{proof}[Sketch of proof]
By definition of $V(\Om_1,\ldots,\Om_m)$, the set
\[
T(\Om_1,\ldots,\Om_m) \coloneqq \Bigg\{\sum_{n=1}^N \varphi_{1,n} \otimes \cdots \otimes \varphi_{m,n} : N \in \N \text{ and } (\varphi_{i,n})_{n=1}^N \in C(\Om_i)^N, \; i = 1,\ldots,m\Bigg\}
\]
is dense in $V(\Om_1,\ldots,\Om_m)$.
By the Stone--Weierstrass theorem, $\mathrm{P}^*(\Om_i)$ is dense in $C(\Om_i)$ for all $i = 1,\ldots,m$.
By approximating the $\varphi_{i,n}$s by elements of $\mathrm{P}^*(\Om_i)$, it follows that $\mathrm{P}^*(\Om_1,\ldots,\Om_m)$ is dense in $T(\Om_1,\ldots,\Om_m)$.
The result follows.
\end{proof}

\begin{corollary}\label{cor.polydenseinVC}
$\cP_m$ is dense in $\cC_{\beta}(\R^m)$, i.e., $\cC_{\beta}(\R^m) = C_{\beta}(\R^m)$, as stated in Example \ref{ex.Varopoulosfamily}.
\end{corollary}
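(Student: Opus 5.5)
We need to show that for every $\varphi \in \cC_{\beta}(\R^m)$ there is a sequence of polynomials $P_n \in \cP_m$ with $\beta_{r,m}(\varphi - P_n) \to 0$ for every $r>0$. Since the seminorms $\beta_{r,m}$ increase with $r$ (restricting a Varopoulos decomposition on $[-r',r']^m$ to $[-r,r]^m$ with $r \leq r'$ does not increase the $\ell^\infty$ norms of the factors), the topology is generated by the countable increasing family $\beta_{N,m}$, $N \in \N$. It therefore suffices to find, for each $N \in \N$, a polynomial $P_N \in \cP_m$ with
\[
\beta_{N,m}(\varphi - P_N) < 1/N.
\]
Indeed, for fixed $r$ and all $N \geq r$ we then get $\beta_{r,m}(\varphi - P_N) \leq \beta_{N,m}(\varphi-P_N) < 1/N$.

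Fix $N$ and set $\Om_i \coloneqq [-N,N] \subseteq \C$, a compact set. By hypothesis $\varphi|_{[-N,N]^m} \in V([-N,N]_{(m)})$. Proposition \ref{prop.polydenseVC} then gives $P \in \cP_{2m}$ such that the function $\blambda \mapsto P(\blambda,\bar{\blambda})$ is within $1/N$ of $\varphi$ in $\norm{\cdot}_{V([-N,N]_{(m)})}$. On $[-N,N]^m \subseteq \R^m$ we have $\bar{\blambda} = \blambda$, so this function coincides there with $Q(\blambda) \coloneqq P(\blambda,\blambda)$, and $Q \in \cP_m$. The Varopoulos norm of a function on $[-N,N]^m$ depends only on its values there, so $\beta_{N,m}(\varphi - Q) < 1/N$, and we take $P_N \coloneqq Q$. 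This establishes density, so $C_\beta(\R^m) = \overline{\cP_m} = \cC_\beta(\R^m)$. Finiteness of $\beta$ on polynomials is Example \ref{ex.mvarpoly}.

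The one step needing care is the reduction of the $\ast$-polynomials to genuine polynomials, which uses only that the domains are real. A full proof should also verify the monotonicity of $\beta_{r,m}$ in $r$ used above, though this is immediate from the definition of the Varopoulos norm.
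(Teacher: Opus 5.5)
Your proposal is correct and is essentially the paper's proof: for each $n$ you use Proposition~\ref{prop.polydenseVC} to pick a polynomial within $1/n$ of $\varphi$ in $\beta_{n,m}$, and conclude that the sequence converges in every seminorm $\beta_{r,m}$. You also spell out two steps the paper leaves implicit, and handle both correctly: a $\ast$-polynomial restricted to a real box is an honest polynomial, and $\beta_{r,m}$ is increasing in $r$.
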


\begin{proof}
Let $\varphi \in \cC_{\beta}(\R^m)$.
By Proposition \ref{prop.polydenseVC}, if $n \in \N$, then there exists a $P_n \in \cP_m$ such that $\beta_{n,m}(\varphi - P_n) < 1/n$.
The sequence $(P_n)_{n \in \N}$ converges in $\cC_{\beta}(\R^m)$ to $\varphi$.
\end{proof}

I now provide a description of $V(\Om_1,\ldots,\Om_m)$ (with $\Om_1,\ldots,\Om_m$ metrizable) inspired by the integral projective tensor products (vid.\ \cite[\S4.1]{Nikitopoulos2023m} or \cite[\S2.2]{Nikitopoulos2023n}) from the theory of multiple operator integrals (MOIs).
The characterization and its proof, as well as several other results and proofs in this paper, require the theories of strong/Bochner measurability and integrability in Fr\'echet spaces;
please see \cite[App.\ E]{Cohn2013} for the Banach-space case, which is all that is needed in this subsection, and \cite[\S1.1]{Nikitopoulos2024} for the general case.

\begin{lemma}\label{lem.projmeas}
Suppose $\Om_1,\ldots,\Om_m$ are metrizable, and let $(\Sigma,\sH)$ be a measurable space.
If for all $i =1,\ldots,m$, $\varphi_i \colon \Om_i \times \Sigma \to \C$ is product measurable, i.e., $\cB_{\Om_i} \otimes \sH/\cB_{\C}$-measurable, and $\varphi_i(\cdot,\sigma) \in C(\Om_i)$ whenever $\sigma \in \Sigma$, then the map
\[
\Sigma \ni \sigma \mapsto \varphi_1(\cdot,\sigma) \otimes \cdots \otimes \varphi_m(\cdot,\sigma) \in V(\Om_1,\ldots,\Om_m)
\]
is strongly measurable.
\end{lemma}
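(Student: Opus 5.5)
The strategy is to reduce to the case $m=1$ and then use continuity of the multilinear tensor map. Because $\Om_i$ is compact metrizable, $C(\Om_i)$ is separable. The map $\Phi_i\colon \Sigma \ni \sigma \mapsto \varphi_i(\cdot,\sigma) \in C(\Om_i)$ therefore has separable range automatically. By the Pettis measurability theorem, it suffices to show that $\Phi_i$ is weakly measurable.

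\textbf{Step 1: weak measurability of each $\Phi_i$.} For each $\om \in \Om_i$, point evaluation $\delta_\om$ satisfies $\delta_\om\circ\Phi_i = \varphi_i(\om,\cdot)$. This is $\sH$-measurable, being a section of a product-measurable function. The dual $C(\Om_i)^*$ consists of finite Radon measures, and the point evaluations do not span it. There are two ways to handle this.

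\emph{Route (a).} Let $D\subseteq \Om_i$ be countable and dense. Then
\[
\norm{\Phi_i(\sigma)-g}_{\ell^\infty(\Om_i)} = \sup_{\om\in D}|\varphi_i(\om,\sigma)-g(\om)|
\]
is $\sH$-measurable in $\sigma$ for each $g\in C(\Om_i)$. Hence preimages of open balls are measurable. Since $C(\Om_i)$ is separable metric, every open set is a countable union of open balls, so $\Phi_i$ is Borel measurable. Borel measurability together with separable range gives strong measurability.

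\emph{Route (b).} For a Radon measure $\mu$, the function $\sigma\mapsto\int\varphi_i(\om,\sigma)\,\mu(\d\om)$ is measurable by Fubini--Tonelli applied to the product-measurable, bounded-in-$\om$ integrand. I would write up route (a), since it is cleaner and avoids any integrability issues.

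\textbf{Step 2: continuity of the tensor map.} The map
\[
T\colon C(\Om_1)\times\cdots\times C(\Om_m)\to V(\Om_1,\ldots,\Om_m), \qquad (g_1,\ldots,g_m)\mapsto g_1\otimes\cdots\otimes g_m,
\]
is $m$-linear with $\norm{g_1\otimes\cdots\otimes g_m}_{V}\le\prod_i\norm{g_i}_{\ell^\infty(\Om_i)}$, directly from Definition \ref{def.Varopoulos}. It is therefore continuous.

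\textbf{Step 3: composition.} The tuple map $\sigma\mapsto(\Phi_1(\sigma),\ldots,\Phi_m(\sigma))$ is strongly measurable into the product Banach space. One can take simple approximants $s_{i,n}\to\Phi_i$ pointwise and form the tuple $(s_{1,n},\ldots,s_{m,n})$, which is simple. Then $T(s_{1,n},\ldots,s_{m,n})$ is a simple $V$-valued function converging pointwise to $T\circ(\Phi_1,\ldots,\Phi_m)$ by continuity of $T$. Pointwise limits of simple functions are strongly measurable, so the map in the statement is strongly measurable.

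\textbf{Main obstacle.} The only delicate point is Step 1, namely passing from measurability of the sections $\varphi_i(\om,\cdot)$ to measurability of $\Phi_i$ valued in $C(\Om_i)$. The metrizability hypothesis is used here: it makes $C(\Om_i)$ separable and lets the sup norm be computed over a countable dense set, so Route (a) goes through. The rest is routine.
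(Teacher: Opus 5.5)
Your proof is correct. The second half, which tensors simple approximants $s_{1,n}\otimes\cdots\otimes s_{m,n}$ and passes to the pointwise limit, is exactly the paper's argument. You also make explicit the joint continuity of the bounded $m$-linear map $(g_1,\ldots,g_m)\mapsto g_1\otimes\cdots\otimes g_m$, which the paper uses tacitly.

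The genuine difference is in the case $m=1$. The paper takes essentially your Route (b):
\begin{itemize}
\item it identifies $C(\Omega)^*$ with Radon measures via Riesz--Markov;
\item it cites a lemma that $\sigma\mapsto\int_\Omega\varphi(\cdot,\sigma)\,d\mu$ is measurable, which gives weak measurability;
\item it then invokes Pettis's theorem, using separability of $C(\Omega)$.
\end{itemize}

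Your Route (a) instead uses the countable family of point evaluations at a dense $D\subseteq\Omega_i$. This family is norming for $C(\Omega_i)$, so it yields measurability of $\sigma\mapsto\|\Phi_i(\sigma)-g\|_{\ell^\infty(\Omega_i)}$ directly. Borel measurability follows, and separability then gives strong measurability. In effect this is a hand proof of the special case of Pettis's theorem you need.

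What each route buys:
\begin{itemize}
\item Route (a) is more elementary and self-contained. It needs only measurability of sections and density of $D$. It also sidesteps the point hidden in the paper's cited lemma: $\varphi(\cdot,\sigma)$ is bounded for each $\sigma$ but not uniformly in $\sigma$, so measurability of the integrals needs a truncation or monotone-class step.
\item The paper's route is shorter on the page because it outsources that step to a citation and to Pettis.
\end{itemize}

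One small framing issue: you announce that, by Pettis, it suffices to prove weak measurability. Route (a) never proves weak measurability against all of $C(\Omega_i)^*$; it proves Borel measurability directly, which is all you need. Either drop that sentence or rephrase it to match what you actually do.
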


\begin{proof}
I first prove the lemma assuming $m=1$, in which case $\Om_1 = \Om$ and $\varphi \coloneqq \varphi_1$.
By the Riesz--Markov theorem, $C(\Om)^*$ is isometrically isomorphic to the space $M(\Om)$ of Radon complex measures on $\Om$.
Now, if $\mu \in M(\Om)$, then the function $\Sigma \ni \sigma \mapsto \int_{\Om} \varphi(\cdot,\sigma)\,\d\mu \in \C$ is measurable by a standard measure-theory argument (vid.\ \cite[Lem.\ 4.2.2]{Nikitopoulos2023m}).
Therefore, the map $\Sigma \ni \sigma \mapsto \varphi(\cdot,\sigma) \in C(\Om)$ is weakly measurable.
Since $\Om$ is compact and metrizable, $C(\Om)$ is a separable Banach space.
The strong measurability of $\Sigma \ni \sigma \mapsto \varphi(\cdot,\sigma) \in C(\Om)$ then follows from Pettis's measurability theorem.

Next, let $m \in \N$ be general, and fix $i = 1,\ldots,m$.
By the previous paragraph, the map $\Sigma \ni \sigma \mapsto F_i(\sigma) \coloneqq \varphi_i(\cdot,\sigma) \in C(\Om_i)$ is strongly measurable.
Let $(s_{i,n})_{n \in \N}$ be a sequence of simple maps from $\Sigma$ to $C(\Om_i)$ converging pointwise to $F_i$.
Then $(s_{1,n}(\cdot) \otimes \cdots \otimes s_{m,n}(\cdot))_{n \in \N}$ is a sequence of simple maps from $\Sigma$ to $V(\Om_1,\ldots,\Om_m)$ converging pointwise to the function $F(\cdot) \coloneqq F_1(\cdot) \otimes \cdots \otimes F_m(\cdot)$, which shows that $F$ is strongly measurable.
\end{proof}

\begin{theorem}[Integral description of the Varopoulos algebra]\label{thm.IPTPV}
Suppose $\Om_1,\ldots,\Om_m$ are metrizable.
Let $(\Sigma,\sH,\rho)$ be a measure space, and for all $i = 1,\ldots,m$, let $\varphi_i \colon \Om_i \times \Sigma \to \C$ be a product-measurable function such that $\varphi_i(\cdot,\sigma) \in C(\Om_i)$ whenever $\sigma \in \Sigma$.
If
\begin{equation}
    \int_{\Sigma} \prod_{i=1}^m\norm{\varphi_i(\cdot,\sigma)}_{\ell^{\infty}(\Om_i)} \, \rho(\d\sigma) < \infty \; \text{ and } \; \varphi(\boldsymbol{\om}) \coloneqq \int_{\Sigma} \prod_{i=1}^m\varphi_i(\om_i,\sigma)\,\rho(\d\sigma) \quad (\boldsymbol{\om} \in \Om), \label{eq.integphi}
\end{equation}
then
\[
\varphi = \int_{\Sigma} \varphi_1(\cdot,\sigma) \otimes \cdots \otimes \varphi_m(\cdot,\sigma)\,\rho(\d\sigma) \in V(\Om_1,\ldots,\Om_m)
\]
as a $V(\Om_1,\ldots,\Om_m)$-valued Bochner integral, and
\begin{equation}
    \norm{\varphi}_{V(\Om_1,\ldots,\Om_m)} \leq \int_{\Sigma} \prod_{i=1}^m\norm{\varphi_i(\cdot,\sigma)}_{\ell^{\infty}(\Om_i)}  \, \rho(\d\sigma).\label{eq.integestim}
\end{equation}
\end{theorem}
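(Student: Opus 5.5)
The plan is to exhibit $\varphi$ directly as a Bochner integral in the Banach space $V(\Om_1,\ldots,\Om_m)$. We then use the fact that point evaluations are continuous linear functionals on $V(\Om_1,\ldots,\Om_m)$ to identify that integral with the pointwise formula \eqref{eq.integphi}.

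\emph{Step 1 (measurability).} Define $F(\sigma) \coloneqq \varphi_1(\cdot,\sigma)\otimes\cdots\otimes\varphi_m(\cdot,\sigma) \in V(\Om_1,\ldots,\Om_m)$. By Lemma \ref{lem.projmeas}, $F \colon \Sigma \to V(\Om_1,\ldots,\Om_m)$ is strongly measurable. This is exactly where metrizability of the $\Om_i$ enters, through the separability of $C(\Om_i)$.

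\emph{Step 2 (integrability).} Directly from the definition of the Varopoulos norm, using the one-term decomposition, we have
\[
\norm{F(\sigma)}_{V(\Om_1,\ldots,\Om_m)} \leq \prod_{i=1}^m \norm{\varphi_i(\cdot,\sigma)}_{\ell^\infty(\Om_i)}.
\]
The left-hand side is measurable in $\sigma$ because $F$ is strongly measurable. The first condition in \eqref{eq.integphi} then gives $\int_\Sigma \norm{F}_V\,\d\rho < \infty$. So $F$ is Bochner integrable, and $\psi \coloneqq \int_\Sigma F\,\d\rho \in V(\Om_1,\ldots,\Om_m)$ exists. The standard estimate $\norm{\int F\,\d\rho} \leq \int \norm{F}\,\d\rho$ combined with the display yields \eqref{eq.integestim}, once we know $\psi = \varphi$.

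\emph{Step 3 (identification).} Since $V(\Om_1,\ldots,\Om_m)$ is a Banach space whose completeness is asserted in the paper, Bochner integrals there make sense. For each fixed $\boldsymbol{\om} \in \Om$, the evaluation map $\mathrm{ev}_{\boldsymbol{\om}} \colon V(\Om_1,\ldots,\Om_m) \to \C$ is linear and bounded, because $|\psi(\boldsymbol{\om})| \leq \norm{\psi}_{\ell^\infty(\Om)} \leq \norm{\psi}_V$. Bounded linear maps commute with Bochner integrals, so
\[
\psi(\boldsymbol{\om}) = \int_\Sigma F(\sigma)(\boldsymbol{\om})\,\rho(\d\sigma) = \int_\Sigma \prod_{i=1}^m \varphi_i(\om_i,\sigma)\,\rho(\d\sigma) = \varphi(\boldsymbol{\om}).
\]
Hence $\varphi = \psi \in V(\Om_1,\ldots,\Om_m)$ as a function, and in particular $\varphi$ is continuous.

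The only genuinely nontrivial input is the strong measurability in Step 1, which Lemma \ref{lem.projmeas} already provides. The rest is routine Bochner-integral theory. One small point to check is that $\sigma \mapsto \prod_i \norm{\varphi_i(\cdot,\sigma)}_{\ell^\infty}$ is measurable, so that the hypothesis in \eqref{eq.integphi} is meaningful. This holds because each factor is the norm of the strongly measurable map $\sigma \mapsto \varphi_i(\cdot,\sigma) \in C(\Om_i)$ from the proof of Lemma \ref{lem.projmeas}.
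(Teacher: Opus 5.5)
Your proposal is correct and follows essentially the same route as the paper's proof: strong measurability of $F$ from Lemma~\ref{lem.projmeas}, Bochner integrability from the norm bound, identification of $\int_\Sigma F\,\d\rho$ with $\varphi$ via the evaluation functionals, and the estimate from the triangle inequality for Bochner integrals. The only difference is that the paper writes $\norm{F(\sigma)}_{V(\Om_1,\ldots,\Om_m)} = \prod_{i=1}^m \norm{\varphi_i(\cdot,\sigma)}_{\ell^\infty(\Om_i)}$ as an equality, and your one-sided inequality is all that is needed.
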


\begin{proof}
By Lemma \ref{lem.projmeas}, the map
\[
\Sigma \ni \sigma \mapsto F(\sigma) \coloneqq \varphi_1(\cdot,\sigma) \otimes \cdots \otimes \varphi_m(\cdot,\sigma) \in V(\Om_1,\ldots,\Om_m)
\]
is strongly measurable.
Since
\[
\int_{\Sigma}\norm{F}_{V(\Om_1,\ldots,\Om_m)}\,\d\rho = \int_{\Sigma} \prod_{i=1}^m\norm{\varphi_i(\cdot,\sigma)}_{\ell^{\infty}(\Om_i)} \, \rho(\d\sigma) < \infty,
\]
it follows that $F$ is Bochner $\rho$-integrable.
The identity $\varphi = \int_{\Sigma} F \,\d\rho$ then follows by applying the evaluation functionals $\{V(\Om_1,\ldots,\Om_m) \ni \psi \mapsto \psi(\boldsymbol{\om}) \in \C : \boldsymbol{\om} \in \Om\}$ to $\int_{\Sigma} F \,\d\rho$.
Finally, \eqref{eq.integestim} follows from the triangle inequality for Bochner integrals.
\end{proof}

The reason for the name of Theorem \ref{thm.IPTPV} is the following immediate consequence:
If $\Om_1,\ldots,\Om_m$ are metrizable, then the Varopoulos algebra $V(\Om_1,\ldots,\Om_m)$ is precisely the space of functions $\varphi \in C(\Om)$ such that there exist a measure space $(\Sigma,\sH,\rho)$ and product-measurable functions $\varphi_1 \colon \Sigma \times \Om_1 \to \C,\ldots,\varphi_m \colon \Sigma \times \Om_m \to \C$ as in Theorem \ref{thm.IPTPV} satisfying
\[
\varphi(\boldsymbol{\om}) = \int_{\Sigma} \varphi_1(\om_1,\sigma)\cdots \varphi_m(\om_m,\sigma)\,\rho(\d\sigma) \qquad (\boldsymbol{\om} \in \Om).
\]
Furthermore,
\[
\norm{\varphi}_{V(\Om_1,\ldots,\Om_m)} = \inf\Bigg\{\int_{\Sigma} \prod_{i=1}^m\norm{\varphi_i(\cdot,\sigma)}_{\ell^{\infty}(\Om_i)}\,\rho(\d\sigma) : \underset{\text{are as in the previous sentence}}{{}^{\mbox{\smaller$(\Sigma,\sH,\rho)\text{ and } \varphi_1,\ldots,\varphi_m$}}}\Bigg\}.
\]
In the terminology and notation of MOIs, one might say that $V(\Om_1,\ldots,\Om_m)$ is the ``integral projective tensor product $C(\Om_1) \iotimes \cdots \iotimes C(\Om_m)$.''

\begin{example}\label{ex.multivarWiener}
Let $(\Sigma,\sH)$ be a measurable space, $\mu$ be a complex measure on $(\Sigma,\sH)$, $|\mu|$ be the total variation of $\mu$, and $\boldsymbol{\xi} \colon \Sigma \to \R^m$ be a Borel-measurable function.
Define
\[
\varphi(\blambda) \coloneqq \int_{\Sigma}e^{i\,\boldsymbol{\xi}(\sigma)\cdot \blambda} \, \mu(\d\sigma) \qquad (\blambda \in \R^m).
\]
I claim that $\varphi \in VC(\R^m) = C_{\beta}(\R^m) = \cC_{\beta}(\R^m)$ and
\begin{equation}
    \sup_{r > 0}\beta_{r,m}(\varphi) = \sup_{r > 0} \norm{\varphi|_{[-r,r]^m}}_{V([-r,r]_{(m)})} \leq |\mu|(\Sigma). \label{eq.multivarWiener}
\end{equation}
Indeed, let $r > 0$, and apply Theorem \ref{thm.IPTPV} with $\Om_1 = \cdots = \Om_m = [-r,r]$, $\rho \coloneqq |\mu|$,
\[
\varphi_1(\om_1,\sigma) \coloneqq e^{i\xi_1(\sigma)\om_1}\frac{\d\mu}{\d|\mu|}(\sigma), \; \text{ and } \; \varphi_j(\om_j,\sigma) \coloneqq e^{i\xi_j(\sigma)\om_j} \; \text{ for } j = 2,\ldots,m
\]
to deduce the claim.
\end{example}

For the sake of completeness and since it will be important later (in subsection \ref{subsec.VaropLFC}), I end this subsection by proving the well-known result that $V(\Om_1,\ldots,\Om_m)$ is naturally isometrically isomorphic to $C(\Om_1) \potimes \cdots \potimes C(\Om_m)$.

\begin{theorem}\label{thm.inj}
If $\iota_{\mathsmaller{\Om_1,\ldots,\Om_m}} \colon C(\Om_1) \potimes \cdots \potimes C(\Om_m) \to C(\Om)$ is the bounded linear map determined via the universal property of $\potimes$ by
\[
\varphi_1 \otimes \cdots \otimes \varphi_m \mapsto ((\om_1,\ldots,\om_m) \mapsto \varphi_1(\om_1)\cdots \varphi_m(\om_m)),
\]
then $\iota_{\mathsmaller{\Om_1,\ldots,\Om_m}}$ is an injective, unital $\ast$-homomorphism.
\end{theorem}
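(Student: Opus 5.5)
The plan is to treat the homomorphism property and the injectivity separately. Everything except injectivity is routine; injectivity is the classical fact that $C(\Om_i)$ has the approximation property.

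For existence and the algebraic properties, the $m$-linear map $(\varphi_1,\ldots,\varphi_m)\mapsto \varphi_1\otimes\cdots\otimes\varphi_m \in C(\Om)$ (Notation \ref{nota.tensfunc}) has norm at most $1$ with respect to $\norm{\cdot}_{\ell^\infty(\Om)}$. The universal property of $\potimes$ therefore gives $\iota \coloneqq \iota_{\mathsmaller{\Om_1,\ldots,\Om_m}}$ with $\norm{\iota}\le 1$. On elementary tensors, $\iota$ is multiplicative, preserves involution, and sends $1\otimes\cdots\otimes 1$ to $1$. The multiplication and involution on $C(\Om_1)\potimes\cdots\potimes C(\Om_m)$ are continuous and are the extensions of the algebraic ones, and the algebraic tensor product is dense. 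By continuity and linearity, $\iota$ is thus a unital $\ast$-homomorphism.

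For injectivity, I would reduce to a statement about functionals. Let $u$ be in the projective tensor product with $\iota(u)=0$. By the standard representation of elements of the projective tensor product (vid.\ \cite[Ch.\ 2]{Ryan2002}), write $u=\sum_n \varphi_{1,n}\otimes\cdots\otimes\varphi_{m,n}$ with $\sum_n\prod_i\norm{\varphi_{i,n}}_{\ell^\infty(\Om_i)}<\infty$. The dual of the projective tensor product is $B_m(C(\Om_1)\times\cdots\times C(\Om_m);\C)$, so it suffices to show $\sum_n T(\varphi_{1,n},\ldots,\varphi_{m,n})=0$ for every bounded $m$-linear form $T$.

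The main obstacle is to pass from vanishing of the point evaluations $\sum_n\prod_i\varphi_{i,n}(\om_i)=0$ to vanishing of an arbitrary $T$. I would first try a partition-of-unity argument, which is essentially the approximation property of $C(K)$. For each $i$ and each $\e>0$, the family $\{\varphi_{i,n}\}_n$ has only finitely many members of non-negligible size after weighting. Using compactness, I would choose a finite open cover of $\Om_i$ with points $\om_i^{(j)}$, a subordinate partition of unity $(h_{i,j})_j$, and the operator $P_i\psi\coloneqq\sum_j\psi(\om_i^{(j)})h_{i,j}$. Then $\norm{P_i}\le 1$, and $\norm{P_i\varphi_{i,n}-\varphi_{i,n}}<\e$ for the finitely many $n$ that matter; the tail is controlled by summability.

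Now $(P_1\otimes\cdots\otimes P_m)u=\sum_{\mathbf{j}}\big(\sum_n\prod_i\varphi_{i,n}(\om_i^{(j_i)})\big)h_{1,j_1}\otimes\cdots\otimes h_{m,j_m}$. The inner sums are exactly values of $\iota(u)$, so this expression is $0$. Meanwhile, $(P_1\otimes\cdots\otimes P_m)u$ is within $O(\e)$ of $u$ in projective norm: telescope $\prod_i P_i\varphi_{i,n}-\prod_i\varphi_{i,n}$ using $\norm{P_i}\le1$ for the main terms, and use the small tail for the rest. Hence $\norm{u}_{\potimes}\lesssim\e$, so $u=0$. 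The only delicate point in this step is the uniform approximation of finitely many functions simultaneously by a single $P_i$. This follows from their equicontinuity, since they form a finite set of continuous functions on a compact space, by choosing the cover fine enough that each relevant $\varphi_{i,n}$ oscillates by less than $\e$ on every member.
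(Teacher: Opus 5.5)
Your argument is correct, but it takes a different route from the paper's.

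\textbf{The paper's route.} The paper proves injectivity by citation. $C(\Om_1)$ has the approximation property \cite[Ex.\ 4.2]{Ryan2002}, which gives injectivity of $\iota_{\Om_1,\Om_2}$ via \cite[Prop.\ 4.6]{Ryan2002}. It then inducts on $m$ using the factorization $\iota_{\Om_1,\ldots,\Om_m} = \iota_{\Om_1,\Xi}\circ(\id_{C(\Om_1)}\potimes\iota_{\Om_2,\ldots,\Om_m})$, where $\Xi = \Om_2\times\cdots\times\Om_m$. Injectivity of $\id_{C(\Om_1)}\potimes\iota_{\Om_2,\ldots,\Om_m}$ again rests on the approximation property of $C(\Om_1)$ \cite[Exer.\ 4.1]{Ryan2002}.

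\textbf{Your route.} You inline the content of those citations. The contractive finite-rank operators $P_i\psi = \sum_j \psi(\om_i^{(j)})h_{i,j}$ are exactly the witnesses to the metric approximation property of $C(\Om_i)$. The identity $(P_1\potimes\cdots\potimes P_m)u = \sum_{\mathbf{j}}\iota(u)(\om_1^{(j_1)},\ldots,\om_m^{(j_m)})\,h_{1,j_1}\otimes\cdots\otimes h_{m,j_m}$ shows that this finite-rank approximant of $u$ depends only on $\iota(u)$. This handles all $m$ factors at once, with no induction and no identification of iterated projective tensor products.

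\textbf{Trade-off.} The paper's proof is shorter and modular. Yours is self-contained and quantitative, since it bounds $\norm{u}_{\potimes}$ directly by a multiple of $\e$. It also makes transparent that only point evaluations and partitions of unity are needed.

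\textbf{Two small remarks.} First, your opening reduction to bounded $m$-linear forms is never used, since you estimate $\norm{u}_{\potimes}$ directly. Second, the order of choices matters:
\begin{enumerate}
\item fix $N$ from the tail first, which is legitimate only because $\norm{P_i}\le 1$ holds for every choice of cover;
\item then choose the approximation tolerance depending on $N$, $m$, and $\max_{i,\,n\le N}\norm{\varphi_{i,n}}$;
\item only then choose the covers.
\end{enumerate}
Using a tolerance of $\e$ itself does not yield a bound that is $O(\e)$ uniformly, because the telescoped error over $n\le N$ carries a factor that grows with $N$.
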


\begin{proof}
The only nontrivial claim is that $\iota_{\mathsmaller{\Om_1,\ldots,\Om_m}}$ is injective.
I prove this by induction on $m \geq 2$.
By \cite[Ex.\ 4.2]{Ryan2002}, $C(\Om_1)$ has the approximation property.
Consequently, the injectivity of $\iota_{\mathsmaller{\Om_1,\Om_2}}$ follows from \cite[Prop.\ 4.6]{Ryan2002}.

Now, assume the result is true for $m \geq 2$ spaces, and write $\Xi \coloneqq \Om_2 \times \cdots \times \Om_m$.
By the $m=2$ case, the map $\iota_{\mathsmaller{\Om_1,\Xi}} \colon C(\Om_1) \potimes C(\Xi) \to C(\Om_1 \times \Xi) = C(\Om)$ is injective.
By the induction hypothesis, the map $\iota_{\mathsmaller{\Om_2,\ldots,\Om_m}} \colon C(\Om_2) \potimes \cdots \potimes C(\Om_m) \to C(\Xi)$ is injective.
Since $C(\Om_1)$ has the approximation property, it follows from \cite[Exer.\ 4.1]{Ryan2002} that the map
\[
\id_{C(\Om_1)} \potimes \iota_{\mathsmaller{\Om_2,\ldots,\Om_m}} \colon \underbrace{C(\Om_1) \potimes (C(\Om_2) \potimes \cdots \potimes C(\Om_m) )}_{= C(\Om_1) \potimes \cdots \potimes C(\Om_m)} \to C(\Om_1) \potimes C(\Xi)
\]
is injective as well.
Since $\iota_{\mathsmaller{\Om_1,\ldots,\Om_m}} = \iota_{\mathsmaller{\Om_1,\Xi}} \circ \big(\id_{C(\Om_1)} \potimes \iota_{\mathsmaller{\Om_2,\ldots,\Om_m}}\big)$, the proof is complete.
\end{proof}

\begin{corollary}\label{cor.seriesVarop}
If $\iota_{\mathsmaller{\Om_1,\ldots,\Om_m}}$ is as in Theorem \ref{thm.inj}, then $\im \iota_{\mathsmaller{\Om_1,\ldots,\Om_m}} = V(\Om_1,\ldots,\Om_m)$, and
\[
\norm{\iota_{\mathsmaller{\Om_1,\ldots,\Om_m}}(a)}_{V(\Om_1,\ldots,\Om_m)} = \norm{a}_{C(\Om_1) \potimes \cdots \potimes C(\Om_m)} \qquad (a \in C(\Om_1) \potimes \cdots \potimes C(\Om_m)).
\]
In other words, $\iota_{\mathsmaller{\Om_1,\ldots,\Om_m}}$ is an isometric $\ast$-isomorphism between $C(\Om_1) \potimes \cdots \potimes C(\Om_m)$ and the Varopoulos algebra $V(\Om_1,\ldots,\Om_m)$.
\end{corollary}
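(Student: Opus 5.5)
The plan is to use the injectivity from Theorem \ref{thm.inj} together with the standard series representation of elements of a projective tensor product. The key fact I will invoke is the following. If $X_1,\ldots,X_m$ are Banach spaces and $a \in X_1 \potimes \cdots \potimes X_m$, then
\[
\norm{a} = \inf\Big\{\sum_{n=1}^\infty \prod_{i=1}^m \norm{x_{i,n}} \Big\},
\]
where the infimum runs over all representations $a = \sum_{n=1}^\infty x_{1,n}\otimes\cdots\otimes x_{m,n}$ with $\sum_n \prod_i \norm{x_{i,n}} < \infty$, the series converging in the projective norm. In particular, every element admits such a representation. This is \cite[Prop.\ 2.8]{Ryan2002} for $m=2$, and it extends to general $m$ by the same argument, or by associativity of $\potimes$.

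First I show that $\iota \coloneqq \iota_{\mathsmaller{\Om_1,\ldots,\Om_m}}$ maps into $V(\Om_1,\ldots,\Om_m)$ contractively. Fix $a$ and a representation as above with $\varphi_{i,n} \in C(\Om_i)$. Since $\iota$ is bounded and linear, $\iota(a) = \sum_n \varphi_{1,n}\otimes\cdots\otimes\varphi_{m,n}$, with convergence in $C(\Om)$ and hence pointwise. So this representation satisfies \eqref{eq.Vardecomp}, and taking the infimum over representations gives $\norm{\iota(a)}_{V} \le \norm{a}_{\potimes}$.

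Next I show that $\iota$ is onto $V(\Om_1,\ldots,\Om_m)$ and that the reverse inequality holds. Let $\varphi \in V(\Om_1,\ldots,\Om_m)$, and take sequences $(\varphi_{i,n})$ satisfying \eqref{eq.Vardecomp}. The series $a \coloneqq \sum_n \varphi_{1,n}\otimes\cdots\otimes\varphi_{m,n}$ converges absolutely in $C(\Om_1)\potimes\cdots\potimes C(\Om_m)$, because the projective norm is a cross norm. Its norm satisfies $\norm{a} \le \sum_n\prod_i \norm{\varphi_{i,n}}_{\ell^\infty(\Om_i)}$. By continuity, $\iota(a)(\boldsymbol{\om}) = \sum_n \prod_i \varphi_{i,n}(\om_i) = \varphi(\boldsymbol{\om})$, so $\iota(a) = \varphi$ and $\varphi \in \im \iota$.

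The delicate point is that $a$ depends on the chosen representation. Here injectivity does the work: any two representations of $\varphi$ produce elements $a, a'$ with $\iota(a) = \iota(a') = \varphi$, so $a = a'$ by Theorem \ref{thm.inj}. Hence $\norm{\iota^{-1}(\varphi)} \le \sum_n\prod_i\norm{\varphi_{i,n}}$ for every representation, and taking the infimum gives $\norm{\iota^{-1}(\varphi)}_{\potimes} \le \norm{\varphi}_V$. Combined with the first step, $\iota$ is an isometric bijection onto $V(\Om_1,\ldots,\Om_m)$. It is a unital $\ast$-homomorphism by Theorem \ref{thm.inj}.

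The only step I expect to need care is justifying the series characterization of the projective norm for $m$-fold products. If no reference for general $m$ is at hand, I would argue directly. The algebraic tensor product is dense, and any element $a$ can be written as a limit of finite tensors $t_k$ with $\norm{t_{k+1}-t_k} < 2^{-k}\varepsilon$. Expanding the telescoping sum $t_1 + \sum_k (t_{k+1} - t_k)$ into elementary tensors, with near-optimal decompositions of each difference, yields a representation with $\sum_n\prod_i\norm{x_{i,n}} \le \norm{a} + 2\varepsilon$.
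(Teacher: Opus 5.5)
Your proof is correct and takes the same approach as the paper: combine the series characterization of the $m$-fold projective norm (via \cite[Prop.\ 2.8]{Ryan2002} and induction) with the injectivity from Theorem \ref{thm.inj}. The paper states only that this combination yields the result. You spell out the steps it leaves to the reader: contractivity, surjectivity, and the use of injectivity to make $\iota^{-1}(\varphi)$ independent of the chosen decomposition.
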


\begin{proof}
By \cite[Prop.\ 2.8]{Ryan2002} and an inductive argument, if $V_1,\ldots,V_m$ are Banach spaces, then for every $u \in V_1 \potimes \cdots \potimes V_m$, there exist sequences $(v_{1,n})_{n \in \N} \in V_1^{\N},\ldots,(v_{m,n})_{n \in \N} \in V_m^{\N}$ such that
\begin{equation}
    \sum_{n=1}^{\infty}\prod_{i=1}^m\norm{v_{i,n}}_{V_i} < \infty \; \text{ and } \; u = \sum_{n=1}^{\infty} v_{1,n} \otimes \cdots \otimes v_{m,n} \; \text{ in } \; V_1 \potimes \cdots \potimes V_m;\label{eq.projdecomp}
\end{equation}
furthermore,
\[
\norm{u}_{V_1 \potimes \cdots \potimes V_m} = \inf \Bigg\{\sum_{n=1}^{\infty}\prod_{i=1}^m\norm{v_{i,n}}_{V_i} : \text{the sequences } (v_{i,n})_{n \in \N} \in V_i^{\N} \text{ satisfy \eqref{eq.projdecomp}}\Bigg\}.
\]
Combining this fact with Theorem \ref{thm.inj} yields the result.
\end{proof}

\subsection{Symmetrically normed ideals}\label{subsec.SNI}

In this subsection, I introduce the normed ideals of interest:
symmetrically normed ideals.
Throughout, $\cB$ is a unital Banach algebra with norm $\norm{\cdot}$.

\begin{definition}\label{def.sni}
Let $\cI \subseteq \cB$ be an ideal, and suppose $\norm{\cdot}_{\cI}$ is a norm on $\cI$.
The pair $(\cI,\norm{\cdot}_{\cI})$ is a \textbf{Banach ideal} of $\cB$ if $(\cI,\norm{\cdot}_{\cI})$ is a Banach space and the inclusion
\[
\iota_{\scI} \colon (\cI,\norm{\cdot}_{\cI}) \to (\cB,\norm{\cdot})
\]
is bounded;
in this case, write $(\cI,\norm{\cdot}_{\cI}) \unlhd \cB$ and $C_{\cI} \coloneqq \|\iota_{\scI}\|_{\cI \to \cB} \in [0,\infty)$.
If, in addition,
\[
\norm{arb}_{\cI} \leq \norm{a}\norm{r}_{\cI}\norm{b} \qquad  (a,b \in \cB, \; r \in \cI),
\]
then $(\cI,\norm{\cdot}_{\cI})$ is a \textbf{symmetrically normed ideal} of $\cB$, written $(\cI,\norm{\cdot}_{\cI}) \sni \cB$ or, when confusion is unlikely, $\cI \sni \cB$.
\end{definition}

\begin{example}[Closed ideals]\label{ex.closedideals}
If $\cI \subseteq \cB$ is a closed ideal, then $(\cI,\norm{\cdot}) \sni \cB$.
In particular, the \textbf{trivial ideals}, $\cI = \{0\}$ and $\cI = \cB$, are symmetrically normed ideals.
\end{example}

\begin{example}[Schatten $p$-ideals]\label{ex.Schatten}
Let $H$ be a complex Hilbert space, and suppose that $1 \leq p < \infty$.
If $\cS_p(H)$ is the set of Schatten $p$-class operators on $H$ and $\norm{\cdot}_p$ is the Schatten $p$-norm, then $(\cS_p(H),\norm{\cdot}_p) \sni B(H)$.
\end{example}

\begin{example}[Ideals induced by symmetric spaces of measurable operators]\label{ex.NCLp}
Those familiar with noncommutative integration theory (vid.\ \cite{DdPS2023}) will recognize that the previous example generalizes massively.
Please see \cite{Nikitopoulos2023h} and the references therein for the terminology and results necessary to understand this example fully.
\pagebreak

Let $H$ be a complex Hilbert space, $\cM \subseteq B(H)$ be a von Neumann algebra, and $\tau$ be a faithful, normal, semifinite trace on $\cM$.
If $1 \leq p < \infty$ and $\cL^p(\tau) \coloneqq L^p(\tau) \cap \cM$ is the set of all $a \in \cM$ such that $\tau(|a|^p) < \infty$ and $\norm{\cdot}_p$ is the noncommutative $L^p$ norm associated with $\tau$, i.e., $\norm{a}_p = \tau(|a|^p)^{1/p}$, then
\[
\left(\cL^p(\tau),\max\big\{\norm{\cdot},\norm{\cdot}_p\big\}\right) \sni \cM.
\]
The previous example is the special case in which $\cM = B(H)$ and $\tau = \Tr$.

Even the previous paragraph generalizes.
In the same setting, if $(E,\norm{\cdot}_E)$ is a symmetric space of $\tau$-measurable operators, then \cite[Prop.\ 17]{DdP2014} implies that
\[
\big(\cE,\norm{\cdot}_{\cE}\big) \coloneqq \left(E \cap \cM, \max\big\{ \norm{\cdot}_E,\norm{\cdot}\big\}\right) \sni \cM.
\]
The previous paragraph is the special case in which $(E,\norm{\cdot}_E) = (L^p(\tau),\norm{\cdot}_p)$.
\end{example}

Beware:
There are many definitions of a symmetrically normed ideal (or related objects) in the literature.
Sometimes, it is required that $C_{\cI} = 1$.
Sometimes, $\cB$ is required to be a von Neumann or $C^*$-algebra, and $\cI$ is required to be a $\ast$-ideal with $\norm{r^*}_{\cI} = \norm{r}_{\cI}$ for all $r \in \cI$.
Sometimes, even more requirements are imposed.
I adopt the minimal definition above because it is the least restrictive.
It is worth noting, at least parenthetically, that due to the following fact, some of the aforementioned definitions are equivalent to mine.

\begin{proposition}[Ideals of von Neumann algebras]
Let $\cM$ be a von Neumann algebra, $\cI \subseteq \cM$ be an ideal of $\cM$, and $r,s \in \cM$.
\begin{enumerate}[label=(\roman*),font=\normalfont]
    \item $r \in \cI \iff r^* \in \cI \iff |r| \in \cI$.
    In particular, $\cI$ is a $\ast$-ideal of $\cM$.\label{item.star1}
    \item If $s \in \cI$ and $|r| \leq |s|$, then $r \in \cI$.\label{item.down1}
\end{enumerate}
Suppose, in addition, that $\norm{\cdot}_{\cI}$ is a norm on $\cI$ such that $\norm{atb}_{\cI} \leq \norm{a}\,\norm{t}_{\cI} \norm{b}$ whenever $t \in \cI$ and $a,b \in \cM$.
\begin{enumerate}[label=(\roman*),font=\normalfont]
\setcounter{enumi}{2}
    \item If $r \in \cI$, then $\norm{r}_{\cI} = \norm{r^*}_{\cI} = \norm{|r|}_{\cI}$.\label{item.star2}
    \item If $s \in \cI$ and $|r| \leq |s|$, then $\norm{r}_{\cI} \leq \norm{s}_{\cI}$.\label{item.down2}
\end{enumerate}
\end{proposition}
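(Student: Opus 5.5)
The plan is to use the standard facts about von Neumann algebras: polar decomposition $r = v|r|$, with $v$ a partial isometry in $\cM$ and $|r| = v^*r$, and the fact that whenever $|r| \leq |s|$ there is a contraction $c \in \cM$ with $|r| = c|s|$. The same device handles both the membership claims and the norm estimates.

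For \ref{item.star1}, suppose first that $r \in \cI$. Then $|r| = v^*r \in \cI$ because $\cI$ is an ideal. Conversely, if $|r| \in \cI$, then $r = v|r| \in \cI$. Thus $r \in \cI \iff |r| \in \cI$.

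For the adjoint, note that $r^* = |r|v^*$. Hence $|r| \in \cI$ implies $r^* \in \cI$. Applying the same reasoning to $r^*$ in place of $r$ gives $r^* \in \cI \Rightarrow r = (r^*)^* \in \cI$. This proves \ref{item.star1}.

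For \ref{item.down1}, it suffices by \ref{item.star1} to show $|r| \in \cI$ whenever $|s| \in \cI$ and $|r| \leq |s|$. Write $x \coloneqq |r|$ and $y \coloneqq |s|$, so $0 \leq x \leq y$. This gives $x^{1/2}x^{1/2} \leq y^{1/2}y^{1/2}$, so $\norm{x^{1/2}\xi} \leq \norm{y^{1/2}\xi}$ for every vector $\xi$. By Douglas's lemma, valid inside $\cM$ since von Neumann algebras are closed under the relevant strong limits and partial isometries, there is a contraction $c \in \cM$ with $x^{1/2} = c\,y^{1/2}$. Taking adjoints, $x^{1/2} = y^{1/2}c^*$. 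Therefore
\[
x = x^{1/2}x^{1/2} = c\,y^{1/2}y^{1/2}c^* = c\,y\,c^*.
\]
Since $y \in \cI$, this shows $x \in \cI$. The step I expect to be the main obstacle is producing $c$ inside $\cM$ rather than merely in $B(H)$. I will handle it explicitly: define $c_0$ on $\overline{\operatorname{ran}}\, y^{1/2}$ by $y^{1/2}\xi \mapsto x^{1/2}\xi$, extend it by $0$ on the orthogonal complement, and check that the result commutes with every unitary of $\cM'$. The bicommutant theorem then places $c_0$ in $\cM$.

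For \ref{item.star2} and \ref{item.down2}, I reuse the same factorizations together with the symmetric-norm inequality. From $|r| = v^*r$ and $r = v|r|$ with $\norm{v} \leq 1$ we get $\norm{|r|}_{\cI} \leq \norm{r}_{\cI}$ and $\norm{r}_{\cI} \leq \norm{|r|}_{\cI}$, so the two are equal. From $r^* = |r|v^*$ we get $\norm{r^*}_{\cI} \leq \norm{|r|}_{\cI} = \norm{r}_{\cI}$, and exchanging the roles of $r$ and $r^*$ gives equality. Finally, the identity $|r| = c|s|c^*$ yields
\[
\norm{r}_{\cI} = \norm{|r|}_{\cI} \leq \norm{c}\,\norm{|s|}_{\cI}\,\norm{c^*} \leq \norm{|s|}_{\cI} = \norm{s}_{\cI},
\]
which is \ref{item.down2}.
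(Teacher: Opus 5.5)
Your proof is correct and is essentially the paper's argument: polar decomposition handles (i) and (iii), and for (ii) and (iv) a contraction $c \in \cM$ with $|r|^{1/2} = c\,|s|^{1/2}$ gives $|r| = c|s|c^*$ (the paper cites Dixmier's lemma for $c$, and your bicommutant construction is its standard proof). One slip: your opening sentence states the factorization as $|r| = c|s|$, which fails in general because it would force $|r|^2 \le |s|^2$, but the body of your proof correctly uses the square-root version $|r|^{1/2} = c\,|s|^{1/2}$.
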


\begin{proof}
For the first and third items, let $r=u|r|$ be the polar decomposition of $r$, and recall that $|r| = u^*r$ as well.
The fact that $r \in \cM$ implies that $u,|r| \in \cM$ as well.
Consequently, if $r \in \cI$, then $r^* = |r|u^* = u^*ru^* \in \cI$ since $\cI$ is an ideal.
Now, if $r^* \in \cI$, then $|r| = |r|^* = (u^*r)^* = r^*u \in \cI$ since $\cI$ is an ideal.
Finally, if $|r| \in \cI$, then $r = u|r| \in \cI$ since $\cI$ is an ideal.
This takes care of the first item.
For the third, note that if $r \in \cI$, then
\begin{align*}
    \norm{r^*}_{\cI} & = \norm{u^*ru^*}_{\cI} \leq \norm{u^*}\,\norm{r}_{\cI}\norm{u^*} \leq \norm{r}_{\cI} = \norm{u|r|}_{\cI} \\
    & \leq \norm{u}\,\norm{|r|}_{\cI} \leq \norm{|r|}_{\cI} = \norm{r^*u}_{\cI} \leq \norm{r^*}_{\cI} \norm{u} \leq \norm{r^*}_{\cI},
\end{align*}
which yields the desired result.

For the second and fourth items, note that it suffices (by the other items) to assume $r,s \geq 0$ so that $r = |r|$ and $s = |s|$.
By (the proof of) \cite[Pt.\ I, Lem.\ 1.2]{Dixmier1981}, if $0 \leq r \leq s$, then there exists a $c \in \cM$ such that $\norm{c} \leq 1$ and $\sqrt{r}=c\sqrt{s}$.
In particular, if $s \in \cI$, then
\[
r = \sqrt{r}\big(\sqrt{r}\big)^* = c\sqrt{s}\big(c\sqrt{s}\big)^* = csc^* \in \cI
\]
because $\cI$ is an ideal.
This takes care of the second item.
Continuing for the fourth item, it follows that
\[
\norm{r}_{\cI} = \norm{csc^*}_{\cI} \leq \norm{c}\,\norm{s}_{\cI}\norm{c^*} \leq \norm{s}_{\cI},
\]
as desired.
\end{proof}

Consequently, if $\cB = \cM$ is a von Neumann algebra, the definitions of an invariant operator ideal of $\cM$ in \cite{ACDS2009} and a symmetrically normed ideal of $\cM$ in \cite{ST2019} are equivalent, up to a constant multiple of the ideal's norm, to the definition of a symmetrically normed ideal of $\cB = \cM$ in Definition \ref{def.sni}.

To wrap up this brief subsection, I present a basic property of symmetrically normed ideals that will be repeatedly useful later, namely, that symmetrically normed ideals play well with the $\sh$ operation defined in the paragraph after Theorem \ref{thm.HFCintro}.

\begin{proposition}\label{prop.hashonI}
Let $(\cI,\norm{\cdot}_{\cI}) \sni \cB$, and fix $k \in \N$.
If $u \in \cB^{\potimes(k+1)}$, $i =1,\ldots,k$, and $b = (b_1,\ldots,b_k) \in \cB^{i-1} \times \cI \times \cB^{k-i}$, then $u\sh b \in \cI$, and
\[
\norm{u\sh b}_{\cI} \leq \norm{u}_{\cB^{\potimes(k+1)}}\norm{b_i}_{\cI}\prod_{j \neq i} \norm{b_j}.
\]
In particular, if $b = (b_1,\ldots,b_k) \in \cI^k$, then
\[
\norm{u\sh b}_{\cI} \leq C_{\cI}^{k-1}\norm{u}_{\cB^{\potimes(k+1)}}\prod_{j = 1}^k \norm{b_j}_{\cI}.
\]
\end{proposition}

\begin{proof}
Let $b = (b_1,\ldots,b_k) \in \cB^{i-1} \times \cI \times \cB^{k-i}$.
By definition of a symmetrically normed ideal, if $a_1,\ldots,a_{k+1} \in \cB$ and $u \coloneqq a_1\otimes \cdots \otimes a_{k+1}$, then
\begin{align*}
    \norm{u\sh b}_{\cI} & = \norm{a_1b_1\cdots a_kb_ka_{k+1}}_{\cI} \\
    & \leq \norm{a_1b_1\cdots a_{i-1}b_{i-1}a_i}\,\norm{b_i}_{\cI} \norm{a_{i+1}b_{i+1}\cdots a_k b_ka_{k+1}} \\
    & \leq \norm{a_1}\cdots\norm{a_{k+1}}\,\norm{b_i}_{\cI}\prod_{j \neq i} \norm{b_j}.
\end{align*}
The result then follows via standard arguments from the universal property of the projective tensor product and the continuity of $\iota_{\scI} \colon (\cI, \norm{\cdot}_{\cI}) \to (\cB,\norm{\cdot})$.
\end{proof}

\section{Lexicographic functional calculus (LFC)}\label{sec.LFC}

For the duration of this section, let $\alpha$ be a family of possibly infinite norms as in \eqref{eq.alphaintro}.

\subsection{The spaces \texorpdfstring{$\cC_{\alpha}(\R^m)$}{} and \texorpdfstring{$C_{\alpha}(\R^m)$}{}}\label{subsec.Calpha}

To study and construct LFC, it is helpful to develop some terminology for properties the family $\alpha$ can have and see how those properties affect the spaces $\cC_{\alpha}(\R^m)$ and $C_{\alpha}(\R^m)$.
Before proceeding, the reader should review the material from subsection \ref{subsec.results} through Example \ref{ex.Varopoulosfamily}.

\begin{notation}\label{nota.norms}
Let $m \in \N$.
Write $\alpha_{\cdot,m} \coloneqq (\alpha_{r,m} \colon C([-r,r]^m) \to [0,\infty])_{r > 0}$.
Also, if $r > 0$ and $\varphi$ is a complex-valued continuous function whose domain contains $[-r,r]^m$, write $\alpha_{r,m}(\varphi) \coloneqq \alpha_{r,m}(\varphi|_{[-r,r]^m})$.
Finally, write $|\blambda|_{\infty} \coloneqq \max\left\{ |\lambda_i| : i=1,\ldots,m\right\}$ for $\blambda \in \R^m$.
\end{notation}
\pagebreak

\begin{definition}\label{def.norms1}
The family $\alpha$ is
\begin{enumerate}[label=(\roman*),font=\normalfont]
    \item \textbf{complete} if $(C([-r,r]^m)_{\alpha_{r,m}},\alpha_{r,m}|_{<\infty})$ is a Banach space for all $r > 0$ and $m \in \N$;\label{item.complete}
    \item \textbf{increasing} if
    \[
    \alpha_{s,m}(\varphi|_{[-s,s]^m}) \leq \alpha_{r,m}(\varphi)
    \]
    whenever $0 < s \leq r$, $m \in \N$, and $\varphi \in C([-r,r]^m)$;\label{item.increasing}
    \item \textbf{submultiplicative} if
    \[
    \alpha_{r,m}(\varphi\,\psi) \leq \alpha_{r,m}(\varphi)\,\alpha_{r,m}(\psi)
    \]
    for all $r > 0$, $m \in \N$, and $\varphi, \psi \in C([-r,r]^m)$;\label{item.submultiplicative}
    \item \textbf{$\boldsymbol{\ast}$-isometric} if
    \[
    \alpha_{r,m}\big(\overline{\varphi}\big) = \alpha_{r,m}(\varphi)
    \]
    for all $r > 0$, $m \in \N$, and $\varphi \in C([-r,r]^m)$;\label{item.starisometric}
    \item \textbf{translation invariant} if
    \[
    \alpha_{r,m}\big(\varphi(\cdot+\bmu)|_{[-r,r]^m}\big) \leq \alpha_{r+|\bmu|_{\infty},m}(\varphi)
    \]
    for all $m \in \N$, $\bmu \in \R^m$, and $\varphi \in C([-|\bmu|_{\infty}-r,r+|\bmu|_{\infty}]^m)$;\label{item.translationinvariant}
    \item \textbf{compatible} if
    \[
    \alpha_{r,m+n}(\varphi \otimes \psi) \leq \alpha_{r,m}(\varphi)\,\alpha_{r,n}(\psi)
    \]
    for all $r > 0$, $m,n \in \N$, $\varphi \in C([-r,r]^m)$, and $\psi \in C([-r,r]^n)$; and\label{item.compatible}
    \item \textbf{consistent} if for all $m \in \N$, $r > 0$, $i=1,\ldots,m$, and $\varphi \in C([-r,r]^m)$,
    \[
    \alpha_{r,m+1}(\tilde{\varphi}_i) = \alpha_{r,m}(\varphi),
    \]
    where $\tilde{\varphi}_i(\blambda) \coloneqq \varphi(\lambda_1,\ldots,\lambda_{i-1},\lambda_{i+1},\ldots,\lambda_{m+1})$ for all $\blambda \in [-r,r]^{m+1}$.\label{item.consistent}
\end{enumerate}
\end{definition}

\begin{example}[Uniform and Varopoulos families]\label{ex.uniformandVaropoulos}
The uniform family $\alpha = u$ (Example~\ref{ex.uniformfamily}) and the Varopoulos family $\alpha = \beta$ (Example \ref{ex.Varopoulosfamily}) both satisfy properties \ref{item.complete}--\ref{item.consistent} from Definition \ref{def.norms1}, as I encourage the reader to verify.
\end{example}

\begin{example}[$\cA$-Varopoulos family]\label{ex.betaA}
Let $\cA$ be a unital $C^*$-algebra.
Later, an $\cA$-specific adjustment of the Varopoulos family will come in handy.
For all $r > 0$, $m \in \N$, and $\varphi \in C([-r,r]^m)$, define
\[
\beta_{r,m}^{\scA}(\varphi) \coloneqq \sup_{\mathbf{a} \in \cA_{\sa,r}^m} \norm{\varphi|_{\sigma(a_1) \times \cdots \times \sigma(a_m)}}_{V(\sigma(a_1),\ldots,\sigma(a_m))} \in [0,\infty].
\]
(Recall that $\cA_{\sa,r} = \{a \in \cA_{\sa} : \norm{a} \leq r\}$.)
The family
\[
\beta^{\scA} \coloneqq \big(\beta_{r,m}^{\scA} \colon C([-r,r]^m) \to [0,\infty]\big)_{r > 0, \, m \in \N}
\]
of possibly infinite norms is the \textbf{$\boldsymbol{\cA}$-Varopoulos family}.
I encourage the reader to verify that $\alpha = \beta^{\scA}$ satisfies properties \ref{item.complete}--\ref{item.consistent} from Definition \ref{def.norms1}.
Only translation invariance is slightly tricky to verify;
the key fact is that $\sigma(a+\mu1) = \sigma(a)+\mu$ for all $a \in \cA$ and $\mu \in \C$.
Also, observe that $\beta^{\mathsmaller{C([0,1])}} = \beta$ because for each $r > 0$, there exists an $a \in C([0,1])$ such that $\sigma(a) = [-r,r]$.
(Take, e.g., $a(x) \coloneqq r(2x-1)$ for all $x \in [0,1]$.)
\end{example}

Recall that $\cC_{\alpha}(\R^m)$ is a Hausdorff locally convex topological vector space (LCTVS).
Next, we go through some additional properties of the spaces $\cC_{\alpha}(\R^m)$ and $C_{\alpha}(\R^m)$.
Below, $\hookrightarrow$ indicates continuous inclusion.
Also, recall that $C(\R^m) = \cC_u(\R^m) = C_u(\R^m)$ has the topology of locally uniform convergence.

\begin{definition}\label{def.stronger}
For $i=1,2$, let $\alpha^i = (\alpha_{r,m}^i \colon C([-r,r]^m) \to [0,\infty])_{r > 0, \, m \in \N}$ be a family of possibly infinite norms.
Write $\alpha^1 \leq \alpha^2$ whenever $\alpha_{r,m}^1 \leq \alpha_{r,m}^2$ for all $r > 0$ and $m \in \N$;
in this case, $\alpha^2$ is \textbf{stronger} than $\alpha^1$.
\end{definition}

\begin{example}\label{ex.Varopstrongerthanuniform}
Since $\norm{\cdot}_{\ell^{\infty}(\Om_1 \times \cdots \times \Om_m)} \leq \norm{\cdot}_{V(\Om_1,\ldots,\Om_m)}$ for all locally compact Hausdorff spaces $\Om_1,\ldots,\Om_m$, the Varopoulos family is stronger than the uniform family, i.e., $u \leq \beta$.
More generally, if $\cA$ is a unital $C^*$-algebra, then $u \leq \beta^{\scA} \leq \beta$.
Indeed, that $\beta^{\scA} \leq \beta$ is obvious.
To see $u \leq \beta^{\scA}$, observe that $\sigma(\lambda 1) = \{\lambda\}$ for all $\lambda \in \C$.
Consequently, if $r > 0$, $m \in \N$, $\blambda \in [-r,r]^m$, and $a_i \coloneqq \lambda_i1 \in \cA_{\sa,r}$ for all $i=1,\ldots,m$, then
\[
\norm{\varphi|_{\sigma(a_1) \times \cdots \times \sigma(a_m)}}_{V(\sigma(a_1),\ldots,\sigma(a_m))} = |\varphi(\blambda)|.
\]
It follows that $u \leq \beta^{\scA}$.
\end{example}

\begin{proposition}[Properties of $\cC_{\alpha}(\R^m)$ and $C_{\alpha}(\R^m)$]\label{prop.Calpha}
Let
\[
\tilde{\alpha} = (\tilde{\alpha}_{r,m} \colon C([-r,r]^m) \to [0,\infty])_{r > 0, \, m \in \N}
\]
be another family of possibly infinite norms and $m \in \N$.
\begin{enumerate}[label=(\roman*),font=\normalfont]
    \item If $\tilde{\alpha}$ is stronger than $\alpha$, then $\cC_{\tilde{\alpha}}(\R^m) \hookrightarrow \cC_{\alpha}(\R^m)$.
    If, in addition, $\tilde{\alpha}$ is finite on polynomials, then so is $\alpha$, and $C_{\tilde{\alpha}}(\R^m) \hookrightarrow C_{\alpha}(\R^m)$.\label{item.stronger}
    \item If $\alpha$ is submultiplicative (and finite on polynomials), then $\cC_{\alpha}(\R^m)$ is a complex algebra with a jointly continuous product operation (and $C_{\alpha}(\R^m)$ is a closed subalgebra of $\cC_{\alpha}(\R^m)$).
    If $\alpha$ is also $\ast$-isometric, then $\cC_{\alpha}(\R^m)$ is a $\ast$-algebra with a continuous $\ast$-operation (and $C_{\alpha}(\R^m)$ is a closed $\ast$-subalgebra of $\cC_{\alpha}(\R^m)$).\label{item.submultstarisom}
    \item Suppose $\tilde{\alpha}$ is increasing and $\tilde{\alpha} \leq \alpha$.
    If $\alpha$ is complete, then $\cC_{\alpha}(\R^m)$ is complete.
    In particular, if $u \leq \alpha$ and $\alpha$ is complete, then $\cC_{\alpha}(\R^m)$ is complete.\label{item.compinc}
    \item If $\alpha$ is increasing, then $\cC_{\alpha}(\R^m)$ is metrizable.
    Consequently, by the previous item with $\tilde{\alpha} = \alpha$, if $\alpha$ is increasing and complete, then $\cC_{\alpha}(\R^m)$ is a Fr\'{e}chet space.\label{item.inc}
    \item If $\alpha$ is translation invariant and $\bmu \in \R^m$, then the translation operator
    \[
    \cC_{\alpha}(\R^m) \ni \varphi \mapsto \tau_{\bmu}(\varphi) \coloneqq \varphi(\cdot+\bmu) \in \cC_{\alpha}(\R^m)
    \]
    is well defined, linear, and continuous.
    If, in addition, $\alpha$ is increasing and finite on polynomials, then
    \[
    \R^m \times C_{\alpha}(\R^m) \ni (\bmu,\varphi) \mapsto \tau(\mu,\varphi) \coloneqq \tau_{\bmu}(\varphi) \in C_{\alpha}(\R^m)
    \]
    is well defined and (jointly) continuous.\label{item.transinv}
    \item If $u \leq \alpha$, $\alpha_{\cdot,1} = u_{\cdot,1}$, and $\alpha$ is complete and compatible, then $\alpha \leq \beta$.
    In particular, $\alpha$ is finite on polynomials, and $VC(\R^m) \hookrightarrow C_{\alpha}(\R^m) \hookrightarrow \cC_{\alpha}(\R^m) \hookrightarrow C(\R^m)$.\label{item.compcomp}\pagebreak
    \item For $\cS \subseteq C(\R^m)$, define $\cS_{\loc}$ to be the set of all $\varphi \in C(\R^m)$ such that for all $r > 0$, there exists a $\psi_r \in \cS$ such that $\varphi|_{[-r,r]^m} = \psi_r|_{[-r,r]^m}$.
    If $\cS \subseteq \cC_{\alpha}(\R^m)$, then
    \[
    \cS_{\loc} \subseteq \overline{\cS} \subseteq \cC_{\alpha}(\R^m),
    \]
    where the closure above takes place in the space $\cC_{\alpha}(\R^m)$.\label{item.SlocCalpha}
\end{enumerate}
\end{proposition}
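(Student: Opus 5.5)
Items \ref{item.stronger}, \ref{item.inc}, \ref{item.SlocCalpha} and the first half of \ref{item.submultstarisom} follow directly from the definitions; the real content is in \ref{item.compinc}, \ref{item.transinv} and \ref{item.compcomp}. I would take the items in order.

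For \ref{item.stronger}, $\alpha_{r,m}\le\tilde\alpha_{r,m}$ gives $\cC_{\tilde\alpha}(\R^m)\subseteq\cC_{\alpha}(\R^m)$, and each defining seminorm of the target is dominated by one of the source, so the inclusion is continuous. Finiteness on polynomials passes from $\tilde\alpha$ to $\alpha$. Since continuous maps send closures into closures, $C_{\tilde\alpha}(\R^m)=\overline{\cP_m}^{\tilde\alpha}$ lands in $\overline{\cP_m}^{\alpha}=C_\alpha(\R^m)$.

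For \ref{item.submultstarisom}, submultiplicativity shows that $\cC_\alpha(\R^m)$ is closed under products. The estimate
\[
\alpha_{r,m}(\varphi\psi-\varphi_0\psi_0)\le\alpha_{r,m}(\varphi-\varphi_0)\,\alpha_{r,m}(\psi)+\alpha_{r,m}(\varphi_0)\,\alpha_{r,m}(\psi-\psi_0)
\]
then gives joint continuity of the product. $\cP_m$ is a subalgebra, and by joint continuity the closure of a subalgebra is again a subalgebra. In the $\ast$-isometric case, conjugation is an isometry for every seminorm. Moreover, on $\R^m$ the conjugate of a polynomial is the polynomial with conjugated coefficients, so $\cP_m$ is $\ast$-closed and hence so is its closure.

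For \ref{item.compinc}, let $(\varphi_j)$ be a Cauchy net in $\cC_\alpha(\R^m)$. For each $r>0$, the restrictions $\varphi_j|_{[-r,r]^m}$ form a Cauchy net in the Banach space $C([-r,r]^m)_{\alpha_{r,m}}$, so they converge to some $\psi_r$. To check compatibility of the $\psi_r$, take $s\le r$ and use that $\tilde\alpha$ is increasing with $\tilde\alpha\le\alpha$:
\[
\tilde\alpha_{s,m}\big(\psi_r|_{[-s,s]^m}-\psi_s\big)\le\tilde\alpha_{r,m}(\psi_r-\varphi_j)+\tilde\alpha_{s,m}(\varphi_j-\psi_s)\le\alpha_{r,m}(\psi_r-\varphi_j)+\alpha_{s,m}(\varphi_j-\psi_s)\to0.
\]
Since $\tilde\alpha_{s,m}$ vanishes only at $0$, this forces $\psi_r|_{[-s,s]^m}=\psi_s$. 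The $\psi_r$ therefore glue to a continuous $\varphi\in\cC_\alpha(\R^m)$ with $\varphi_j\to\varphi$. The special case is $\tilde\alpha=u$. For \ref{item.inc}, monotonicity gives $\alpha_{r,m}\le\alpha_{\lceil r\rceil,m}$, so the countable family $(\alpha_{n,m})_{n\in\N}$ generates the topology and $\cC_\alpha(\R^m)$ is metrizable.

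For \ref{item.transinv}, the defining inequality gives $\alpha_{r,m}(\tau_{\bmu}\varphi)\le\alpha_{r+|\bmu|_\infty,m}(\varphi)$, which yields well-definedness and continuity of $\tau_{\bmu}$ on $\cC_\alpha(\R^m)$. Since $\tau_{\bmu}(\cP_m)\subseteq\cP_m$, continuity also gives $\tau_{\bmu}(C_\alpha(\R^m))\subseteq C_\alpha(\R^m)$. Joint continuity is the main obstacle, and I would handle it in two steps.

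\begin{enumerate}[label=(\alph*)]
\item Fix a polynomial $P$ of degree at most $d$. The map $\bmu\mapsto\tau_{\bmu}P$ takes values in the finite-dimensional space of polynomials of degree at most $d$, and its coefficients depend polynomially on $\bmu$. Each $\alpha_{r,m}$ is a genuine finite norm on that space, hence equivalent to any coefficient norm, so $\bmu\mapsto\tau_{\bmu}P$ is continuous.
\item For $\varphi_0\in C_\alpha(\R^m)$, $|\bmu|_\infty\le R$ and a polynomial $P$, split
\[
\alpha_{r,m}(\tau_{\bmu}\varphi-\tau_{\bmu_0}\varphi_0)\le\alpha_{r+R,m}(\varphi-P)+\alpha_{r,m}(\tau_{\bmu}P-\tau_{\bmu_0}P)+\alpha_{r+R,m}(P-\varphi_0).
\]
Here $\alpha_{r+|\bmu|_\infty,m}\le\alpha_{r+R,m}$ because $\alpha$ is increasing. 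Choosing $P$ close to $\varphi_0$ in $\alpha_{r+R,m}$ and then using step (a) makes the right-hand side small for $(\bmu,\varphi)$ near $(\bmu_0,\varphi_0)$.
\end{enumerate}

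For \ref{item.compcomp}, let $\varphi\in V([-r,r]_{(m)})$ have a decomposition $\varphi=\sum_n\varphi_{1,n}\otimes\cdots\otimes\varphi_{m,n}$ as in \eqref{eq.Vardecomp}. Compatibility together with $\alpha_{\cdot,1}=u_{\cdot,1}$ gives
\[
\alpha_{r,m}(\varphi_{1,n}\otimes\cdots\otimes\varphi_{m,n})\le\prod_i\norm{\varphi_{i,n}}_{\ell^\infty}.
\]
The partial sums $S_N$ are therefore Cauchy in the Banach space $C([-r,r]^m)_{\alpha_{r,m}}$ and converge there to some $\psi$. Because $u\le\alpha$, they also converge uniformly to $\psi$, while pointwise they converge to $\varphi$, so $\psi=\varphi$. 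Hence
\[
\alpha_{r,m}(\varphi)\le\lim_N\alpha_{r,m}(S_N)\le\sum_n\prod_i\norm{\varphi_{i,n}}_{\ell^\infty}.
\]
Taking the infimum over decompositions gives $\alpha\le\beta$. Finiteness on polynomials then comes from \ref{item.stronger} and Example \ref{ex.mvarpoly}. The chain of inclusions follows from \ref{item.stronger}, Corollary \ref{cor.polydenseinVC}, and $u\le\alpha$.

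For \ref{item.SlocCalpha}, let $\varphi\in\cS_{\loc}$ and pick $\psi_n\in\cS$ agreeing with $\varphi$ on $[-n,n]^m$. Then $\alpha_{r,m}(\varphi)=\alpha_{r,m}(\psi_n)<\infty$ for $n\ge r$, so $\varphi\in\cC_\alpha(\R^m)$. Also $\alpha_{r,m}(\varphi-\psi_n)=0$ for $n\ge r$, so $\psi_n\to\varphi$ and $\varphi\in\overline{\cS}$.
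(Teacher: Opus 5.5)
Your proposal is correct and follows essentially the same route as the paper's proof item by item: the Cauchy-net gluing argument for (iii), the countable seminorm family for (iv), the polynomial-approximation splitting for (v), the absolutely convergent partial sums in the Banach space $C([-r,r]^m)_{\alpha_{r,m}}$ for (vi), and the exhaustion by the $\psi_n$ for (vii). The only differences are in (v): you get continuity of $\bmu \mapsto \tau_{\bmu}P$ from equivalence of norms on the finite-dimensional space of polynomials of degree at most $d$, where the paper uses an explicit binomial-expansion estimate, and you check joint continuity directly with seminorm neighborhoods, where the paper uses sequences via metrizability.
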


\begin{proof}
The first two items follow readily from the relevant definitions.
I take each remaining item in turn.

\ref{item.compinc} Let $(\varphi_j)_{j \in J}$ be a Cauchy net in $\cC_{\alpha}(\R^m)$.
If $r > 0$, then $(\varphi_j|_{[-r,r]^m})_{j \in J}$ is a Cauchy net in the Banach space $C([-r,r]^m)_{\alpha_{r,m}}$.
Thus, there is some $\psi_r \in C([-r,r]^m)_{\alpha_{r,m}}$ such that $(\varphi_j|_{[-r,r]^m})_{j \in J}$ converges to $\psi_r$ in $C([-r,r]^m)_{\alpha_{r,m}}$.
Now,
\[
\tilde{\alpha}_{r,m}\big(\varphi_j|_{[-r,r]^m} - \psi_r\big) \leq \alpha_{r,m}\big(\varphi_j|_{[-r,r]^m} - \psi_r\big) \xrightarrow{j \in J} 0,
\]
i.e., $(\varphi_j|_{[-r,r]^m})_{j \in J}$ converges to $\psi_r$ in $C([-r,r]^m)_{\tilde{\alpha}_{r,m}}$.
I claim that if $0 < s \leq r$, then $\psi_r|_{[-s,s]^m} = \psi_s$.
Indeed, since $\tilde{\alpha}$ is increasing,
\begin{align*}
    \tilde{\alpha}_{s,m}\big(\psi_r|_{[-s,s]^m} - \psi_s\big) & \leq \tilde{\alpha}_{s,m}\big(\psi_r|_{[-s,s]^m} - \varphi_j|_{[-s,s]^m}\big) + \tilde{\alpha}_{s,m}\big(\varphi_j|_{[-s,s]^m} - \psi_s\big) \\
    & \leq \tilde{\alpha}_{r,m}\big(\psi_r - \varphi_j|_{[-r,r]^m}\big) + \tilde{\alpha}_{s,m}\big(\varphi_j|_{[-s,s]^m} - \psi_s\big) \xrightarrow{j \in J} 0,
\end{align*}
so $\tilde{\alpha}_{s,m}(\psi_r|_{[-s,s]^m} - \psi_s) = 0$.
Since $\tilde{\alpha}_{s,m}$ is a norm, $\psi_r|_{[-s,s]^m} = \psi_s$, as claimed.
Consequently, there exists a unique $\psi \in \cC_{\alpha}(\R^m)$ such that $\psi|_{[-r,r]^m} = \psi_r$ for all $r > 0$.
After unraveling the definitions, it becomes clear that $(\varphi_j)_{j \in J}$ converges in $\cC_{\alpha}(\R^m)$ to $\psi$.
Thus, $\cC_{\alpha}(\R^m)$ is~complete.

\ref{item.inc} Since $\alpha$ is increasing, the topology of $\cC_{\alpha}(\R^m)$ is induced by the countable family
\[
\cC_{\alpha}(\R^m) \ni \varphi \mapsto \alpha_{n,m}(\varphi|_{[-n,n]^m}) \in [0,\infty) \qquad (n \in \N)
\]
of seminorms.
Consequently, by standard arguments,
\[
d_{\alpha}(\varphi,\psi) \coloneqq \sum_{n=1}^{\infty}2^{-n}\frac{\alpha_{n,m}(\varphi-\psi)}{1+\alpha_{n,m}(\varphi-\psi)} \qquad (\varphi,\psi \in \cC_{\alpha}(\R^m))
\]
is a metric on $\cC_{\alpha}(\R^m)$ that induces the topology of $\cC_{\alpha}(\R^m)$.

\ref{item.transinv} The first statement in this item follows easily from the definition of translation invariant.
To prove the second statement, suppose $\alpha$ is finite on polynomials.
Notice that if $\bmu \in \R^m$ and $\varphi \in C_{\alpha}(\R^m)$, then $\tau_{\bmu}(\varphi) \in C_{\alpha}(\R^m)$.
Indeed, if $P \in \cP_m$, then
\[
\tau_{\bmu}(P) = P(\cdot+\bmu) \in \cP_m \subseteq C_{\alpha}(\R^m).
\]
Consequently, if $(P_j)_{j \in J}$ is a net of polynomials converging to $\varphi$ in $\cC_{\alpha}(\R^m)$, then $(\tau_{\bmu}(P_j))_{j \in J}$ is a net in $\cP_m \subseteq C_{\alpha}(\R^m)$ converging to $\tau_{\bmu}(\varphi)$ in $\cC_{\alpha}(\R^m)$.
Thus, $\tau_{\bmu}(\varphi) \in C_{\alpha}(\R^m)$.

Next, I claim that if $P \in \cP_m$, then $\R^m \ni \bmu \mapsto \tau_{\bmu}(P) \in C_{\alpha}(\R^m)$ is continuous.
Indeed, it suffices to take
\[
P(\blambda) = P_{\gamma}(\blambda) \coloneqq \blambda^{\gamma} = \lambda_1^{\gamma_1}\cdots\lambda_m^{\gamma_m}
\]
for some $\gamma  = (\gamma_1,\ldots,\gamma_m)\in \N_0^m$, in which case the binomial theorem yields
\[
P_{\gamma}(\boldsymbol{\lambda}+\bmu) = (\lambda_1+\mu_1)^{\gamma_1}\cdots(\lambda_m+\mu_m)^{\gamma_m} = \sum_{0 \leq \delta \leq \gamma} \binom{\gamma_1}{\delta_1}\cdots\binom{\gamma_m}{\delta_m}\bmu^{\gamma-\delta}\blambda^{\delta},\pagebreak
\]
where $0 \leq \delta \leq \gamma$ means that $0 \leq \delta_i \leq \gamma_i$ for all $i = 1,\ldots,m$.
Consequently, if $r > 0$, then
\[
\alpha_{r,m}(\tau_{\bmu}(P_{\gamma})-\tau_{\boldsymbol{\nu}}(P_{\gamma})) \leq  \sum_{0 \leq \delta \leq \gamma} \binom{\gamma_1}{\delta_1}\cdots\binom{\gamma_m}{\delta_m}\big|\bmu^{\gamma-\delta}-\boldsymbol{\nu}^{\gamma-\delta}\big|\alpha_{r,m}(P_{\delta}) \xrightarrow{\boldsymbol{\nu} \to \bmu} 0
\]
for each $\bmu \in \R^m$, as claimed.

Finally, suppose $\alpha$ is also increasing so that $C_{\alpha}(\R^m)$ is metrizable by the previous item.
Let $(\bmu_n,\varphi_n)_{n \in \N}$ be a sequence in $\R^m \times C_{\alpha}(\R^m)$ converging to $(\bmu,\varphi) \in \R^m \times C_{\alpha}(\R^m)$ and $\e,r > 0$.
Writing $R \coloneqq \sup \left\{|\bmu_n|_{\infty} : n \in \N\right\} < \infty$, let $P \in \cP_m$ be such that
\[
\alpha_{r+R,m}(P-\varphi) < \frac{\e}{6}.
\]
Now, let $N \in \N$ be such that $n \geq N$ implies
\[
\alpha_{r+R,m}(\varphi_n-\varphi) < \frac{\e}{6} \; \text{ and } \; \alpha_{r,m}(\tau_{\bmu_n}(P)-\tau_{\bmu}(P)) < \frac{\e}{3}.
\]
Since $\alpha$ is translation invariant and increasing, if $n \geq N$, then
\begin{align*}
    \alpha_{r,m}(\tau_{\bmu_n}(\varphi_n) - \tau_{\bmu}(\varphi)) & \leq \alpha_{r,m}(\tau_{\bmu_n}(\varphi_n-P)) + \alpha_{r,m}(\tau_{\bmu_n}(P)-\tau_{\bmu}(P)) + \alpha_{r,m}(\tau_{\bmu}(P-\varphi)) \\
    & \leq \alpha_{r+|\bmu_n|_{\infty},m}(\varphi_n-P) + \alpha_{r,m}(\tau_{\bmu_n}(P)-\tau_{\bmu}(P))  + \alpha_{r+|\bmu|_{\infty},m}(P-\varphi) \\
    & \leq \alpha_{r+R,m}(\varphi_n-P) + \alpha_{r,m}(\tau_{\bmu_n}(P)-\tau_{\bmu}(P))  + \alpha_{r+R,m}(P-\varphi) \\
    & \leq \alpha_{r+R,m}(\varphi_n-\varphi) + \alpha_{r,m}(\tau_{\bmu_n}(P)-\tau_{\bmu}(P))  + 2\,\alpha_{r+R,m}(P-\varphi) < \e.
\end{align*}
Thus, $\tau \colon \R^m \times C_{\alpha}(\R^m) \to C_{\alpha}(\R^m)$ is continuous.

\ref{item.compcomp} Let $r > 0$.
By induction and the definition of compatibility, if $\varphi_i \in C([-r,r])$ for each $i=1,\ldots,m$, then
\[
\alpha_{r,m}(\varphi_1 \otimes \cdots \otimes \varphi_m) \leq \prod_{i=1}^m\alpha_{r,1}(\varphi_i) = \prod_{i=1}^m u_{r,1}(\varphi_i) = \prod_{i=1}^m \norm{\varphi_i}_{\ell^{\infty}([-r,r])} < \infty.
\]
Consequently, if $\varphi \in V([-r,r]_{(m)})$ and $\varphi = \sum_{n=1}^{\infty}\varphi_{1,n} \otimes \cdots \otimes \varphi_{m,n}$ is decomposed as in \eqref{eq.Vardecomp}, then
\[
\sum_{n=1}^{\infty}\alpha_{r,m}(\varphi_{1,n} \otimes \cdots \otimes \varphi_{m,n}) \leq \sum_{n=1}^{\infty}\prod_{i=1}^m \norm{\varphi_{i,n}}_{\ell^{\infty}([-r,r])} < \infty.
\]
Since $\alpha$ is complete, $\big(\sum_{n=1}^N \varphi_{1,n} \otimes \cdots \otimes \varphi_{m,n}\big)_{N \in \N}$ converges in $C([-r,r]^m)_{\alpha_{r,m}}$ to some $\psi \in C([-r,r]^m)_{\alpha_{r,m}}$.
Since $u \leq \alpha$ (and therefore $C([-r,r]^m)_{\alpha_{r,m}} \hookrightarrow C([-r,r]^m)$), it follows that $\psi = \varphi$.
Thus,
\[
\alpha_{r,m}(\varphi) = \alpha_{r,m}(\psi) \leq \sum_{n=1}^{\infty}\prod_{i=1}^m \norm{\varphi_{i,n}}_{\ell^{\infty}([-r,r])}.
\]
Taking the infimum over all decompositions of $\varphi$ yields
\[
\alpha_{r,m}(\varphi) \leq \norm{\varphi}_{V([-r,r]_{(m)})} = \beta_{r,m}(\varphi),
\]
as desired.

\ref{item.SlocCalpha} I claim that if $\varphi \in \cS_{\loc}$ and $(\psi_r)_{r > 0}$ is as in the definition of $\cS_{\loc}$, then $\varphi \in \cC_{\alpha}(\R^m)$, and $(\psi_n)_{n \in \N}$ is a sequence in $\cS$ converging in $\cC_{\alpha}(\R^m)$ to $\varphi$, which implies the result.
Indeed, $\varphi \in \cC_{\alpha}(\R^m)$ because for all $r > 0$,
\[
\alpha_{r,m}(\varphi) = \alpha_{r,m}\big(\varphi|_{[-r,r]^m}\big) = \alpha_{r,m}\big(\psi_r|_{[-r,r]^m}\big) = \alpha_{r,m}(\psi_r) < \infty
\]
since $\psi_r \in \cS \subseteq \cC_{\alpha}(\R^m)$.
Now, $(\psi_n)_{n \in \N}$ converges in $\cC_{\alpha}(\R^m)$ to $\varphi$ because if $r > 0$ and $N$ is any natural number larger than $r$, then
\[
\psi_N|_{[-r,r]^m} = \big(\psi_N|_{[-N,N]^m}\big)|_{[-r,r]^m} = \big(\varphi|_{[-N,N]^m}\big)|_{[-r,r]^m} = \varphi|_{[-r,r]^m}.
\]
Consequently, $\alpha_{r,m}(\psi_N - \varphi) = 0$.
In particular, $\alpha_{r,m}(\psi_n-\varphi) \to 0$ as $n \to \infty$.
Thus, $(\psi_n)_{n \in \N}$ converges in $\cC_{\alpha}(\R^m)$ to $\varphi$, as claimed.
\end{proof}

\subsection{Construction of LFC}\label{subsec.constructLFC}

The purpose of this subsection is to construct lexicographic functional calculus (LFC) with respect to our fixed family $\alpha$ of possibly infinite norms.
I begin by introducing a few more properties of such families.

\begin{definition}[Controlling LFC]\label{def.controlLFC}
Let $\cA$ be a unital $C^*$-algebra and $\cI \sni \cA$.
\begin{enumerate}[label=(\roman*),font=\normalfont]
    \item $\alpha$ \textbf{controls polynomial lexicographic functional calculus} (\textbf{LFC}) \textbf{in $\boldsymbol{\cI}$} if it is finite on polynomials, $\alpha_{\cdot,1} = u_{\cdot,1}$, and for all $m \geq 2$, $P \in \cP_m$, and $b \in \cI^{m-1}$,
    \[
    \sup_{\mathbf{a} \in \cA_{\sa,r}^m}\norm{P_{\sotimes}(\mathbf{a})\sh b}_{\cI} \leq C_{\cI}^{m-2}\alpha_{r,m}(P) \prod_{i=1}^{m-1} \norm{b_i}_{\cI} \qquad (r > 0).
    \]
    Recall that $\cA_{\sa,r} = \{a \in \cA_{\sa} : \norm{a} \leq r\}$.\label{item.controlLFCinI}
    \item $\alpha$ \textbf{controls polynomial LFC in $\boldsymbol{\cA}$ with an insertion from $\boldsymbol{\cI}$} if it controls polynomial LFC in $\cA$ and for all $m \geq 2$, $P \in \cP_m$, $i=1,\ldots,m-1$, and $b \in \cA^{i-1} \times \cI \times \cA^{m-1-i}$,
    \[
    \sup_{\mathbf{a} \in \cA_{\sa,r}^m}\norm{P_{\sotimes}(\mathbf{a})\sh b}_{\cI} \leq \alpha_{r,m}(P) \norm{b_i}_{\cI}\prod_{j \neq i} \norm{b_j} \qquad (r > 0).
    \]
    Observe that, by definition of a Banach ideal, if $\alpha$ controls polynomial LFC with an insertion from $\cI$, then $\alpha$ controls polynomial LFC in $\cI$.\label{item.controlLFCinAiwthI}
    \item $\alpha$ is \textbf{ideal for LFC in $\boldsymbol{\cA}$} if it is complete, compatible, increasing, and translation invariant; $u \leq \alpha$; and for every symmetrically normed ideal $(\cJ,\norm{\cdot}_{\cJ})$ of $\cA$, $\alpha$ controls polynomial LFC in $\cA$ with an insertion from $\cJ$.\label{item.idealinA}
    \item $\alpha$ is \textbf{ideal for LFC} if for every unital $C^*$-algebra $\cB$, $\alpha$ is ideal for LFC in $\cB$.\label{item.ideal}
\end{enumerate}
Observe that if $\alpha$ is ideal for LFC in $\cA$, then $u \leq \alpha \leq \beta$ by Proposition \ref{prop.Calpha}\ref{item.compcomp}.
\end{definition}

In subsection \ref{subsec.commLFC}, I prove that if $\cA$ is commutative, then $u$ is ideal for LFC in $\cA$.
In subsection \ref{subsec.VaropLFC}, I prove that $\beta^{\scA}$ is ideal for LFC in $\cA$; if $\cA$ is finite dimensional, then a ``scaled version'' of $u$ is ideal for LFC in $\cA$; and $\beta$ is ideal for LFC.
Later, in subsection \ref{subsec.MOIfam}, I use MOIs to study a more technical example.

Let us now construct LFC.

\begin{notation}\label{nota.Cbb}
If $V$ and $W$ are normed vector spaces over $\F \in \{\R,\C\}$, then $\Cbb(V;W)$ is the space of continuous functions from $V$ to $W$ that are bounded on bounded sets, endowed with the topology of uniform convergence on bounded sets.
\end{notation}

Since the topology of convergence on bounded sets is induced by the countable family
\[
\Cbb(V;W) \ni F \mapsto \sup_{\norm{v}_V \leq n} \norm{F(v)}_W \in \R \qquad (n \in \N)
\]
of seminorms, $\Cbb(V;W)$ is a metrizable LCTVS over $\F$.
By standard arguments, if $W$ is a Banach space, then $\Cbb(V;W)$ is a Fr\'echet space over $\F$.

To motivate Theorem \ref{thm.LFC} below, the actual construction of LFC, observe that if $\cA$ is a unital $C^*$-algebra, then the polynomial functional calculus map
\[
\cP \ni p \mapsto \ev_{\scA}(p) \coloneqq p_{\scA} \in \Cbb(\cA_{\sa};\cA)
\]
is well defined and linear.
Furthermore, if $p \in \cP$, then
\begin{equation}
    \sup_{a \in \cA_{\sa,r}}\norm{\ev_{\scA}(p)(a)} = \sup_{a \in \cA_{\sa,r}}\norm{p(a)} = \sup_{a \in \cA_{\sa,r}}\norm{p}_{\ell^{\infty}(\sigma(a))} = \|p\|_{\ell^{\infty}([-r,r])} \qquad (r > 0).\label{eq.1dimcase}
\end{equation}
Since $\cP$ is dense in $C(\R)$ by the Stone--Weierstrass theorem and $\Cbb(\cA_{\sa};\cA)$ is a Fr\'echet space, $\ev_{\scA}$ extends uniquely to a continuous linear map from $C(\R)$ to $\Cbb(\cA_{\sa};\cA)$.
This extension is precisely the continuous functional calculus map $C(\R) \ni f \mapsto f_{\scA} \in \Cbb(\cA_{\sa};\cA)$.

\begin{lemma}[Continuity of polynomial LFC]\label{lem.pLFCcont}
Let $\cA$ be a unital $C^*$-algebra and $\cI \sni \cA$.
If $m \geq 2$, $P \in \cP_m$, and $i=1,\ldots,m-1$, then the function
\[
\ev_{\scI}^{m,i}(P) \colon \cA_{\sa}^m \to B_{m-1}(\cA^{i-1} \times \cI \times \cA^{m-1-i};\cI)
\]
defined by
\[
\ev_{\scI}^{m,i}(P)(\mathbf{a})[b] \coloneqq P_{\sotimes}(\mathbf{a})\sh b \qquad \big( \mathbf{a} \in \cA_{\sa}^m, \; b \in \cA^{i-1} \times \cI \times \cA^{m-1-i}\big)
\]
belongs to $\Cbb(\cA_{\sa}^m;B_{m-1}(\cA^{i-1} \times \cI \times \cA^{m-1-i};\cI))$.
Since $\iota_{\scI} \colon (\cI,\norm{\cdot}_{\cI}) \to (\cA,\norm{\cdot})$ is continuous, it follows that the function
\[
\ev_{\scI}^m(P) \colon \cA_{\sa}^m \to B_{m-1}(\cI^{m-1};\cI)
\]
defined by
\[
\ev_{\scI}^m(P)(\mathbf{a})[b] \coloneqq P_{\sotimes}(\mathbf{a})\sh b \qquad \big( \mathbf{a} \in \cA_{\sa}^m, \; b \in \cI^{m-1}\big)
\]
belongs to $\Cbb(\cA_{\sa}^m;B_{m-1}(\cI^{m-1};\cI))$.
\end{lemma}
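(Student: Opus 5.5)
By linearity of $P \mapsto \ev_{\scI}^{m,i}(P)$ (a finite sum of elements of $\Cbb$ lies in $\Cbb$), I will reduce to the monomial case $P(\blambda) = \blambda^{\delta}$. In that case
\[
\ev_{\scI}^{m,i}(P)(\mathbf{a})[b] = a_1^{\delta_1}b_1a_2^{\delta_2}b_2\cdots a_{m-1}^{\delta_{m-1}}b_{m-1}a_m^{\delta_m}.
\]
For fixed $\mathbf{a}$, this expression is plainly $(m-1)$-linear in $b$. By the symmetrically normed property (the same estimate as in the proof of Proposition \ref{prop.hashonI}), it lies in $\cI$ and satisfies
\[
\norm{P_{\sotimes}(\mathbf{a})\sh b}_{\cI} \leq \prod_{j=1}^m \norm{a_j}^{\delta_j}\,\norm{b_i}_{\cI}\prod_{j\neq i}\norm{b_j}.
\]
So $\ev_{\scI}^{m,i}(P)(\mathbf{a})$ belongs to $B_{m-1}(\cA^{i-1}\times\cI\times\cA^{m-1-i};\cI)$, with operator norm at most $\prod_j\norm{a_j}^{\delta_j} \le r^{|\delta|}$ when $\mathbf{a} \in \cA_{\sa,r}^m$. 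Since $\cA_{\sa}^m$ is given, say, the max norm, this is exactly boundedness on bounded sets.

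For continuity, I will use a telescoping argument. Fix $\mathbf{a},\mathbf{a}' \in \cA_{\sa,r}^m$. Write the difference of the two words as a sum over the $|\delta|$ positions at which a single factor $a_j$ is replaced by $a_j'$. Each resulting term has the form $w\,(a_j-a_j')\,w'$, where $w,w'$ are words in the $a$'s, $a'$'s, and the $b$'s, and the $\cI$-entry $b_i$ sits on one side of $a_j - a_j'$. Applying $\norm{xry}_{\cI}\le\norm{x}\norm{r}_{\cI}\norm{y}$ to each term gives
\[
\norm{\ev_{\scI}^{m,i}(P)(\mathbf{a})-\ev_{\scI}^{m,i}(P)(\mathbf{a}')}_{B_{m-1}} \le |\delta|\, r^{|\delta|-1}\max_j\norm{a_j-a_j'}.
\]
The map is therefore locally Lipschitz, and in particular continuous. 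The only point requiring care is placing the $\cI$-norm on $b_i$ (rather than on $a_j-a_j'$) wherever $b_i$ occurs in the word. This is exactly what the two-sided ideal inequality allows, and it is the sole mildly delicate step.

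For the second statement, I will compose with the bounded linear restriction map
\[
B_{m-1}(\cA^{i-1}\times\cI\times\cA^{m-1-i};\cI)\to B_{m-1}(\cI^{m-1};\cI),
\]
taking, e.g., $i=1$. Its norm is at most $C_{\cI}^{m-2}$, since $\norm{b_j}\le C_{\cI}\norm{b_j}_{\cI}$ for each $j \neq i$. Composing a bounded linear map with an element of $\Cbb$ gives an element of $\Cbb$, which completes the proof.
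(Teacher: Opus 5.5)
Your proof is correct and complete. The monomial reduction, the operator-norm bound $\prod_j\norm{a_j}^{\delta_j}\le r^{|\delta|}$, the telescoping Lipschitz estimate (with the $\cI$-norm always placed on $b_i$), and the restriction map of norm at most $C_{\cI}^{m-2}$ all check out.

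The paper leaves this lemma to the reader, so there is no written proof to compare against. The route its machinery suggests is the one used in the proofs of Lemma \ref{lem.phitensorcont} and Theorem \ref{thm.AVaropLFC}: factor $\ev_{\scI}^{m,i}(P)=\sh^i\circ P_{\sotimes}$. Here $\mathbf{a}\mapsto P_{\sotimes}(\mathbf{a})=\sum_{\delta}c_{\delta}\,a_1^{\delta_1}\otimes\cdots\otimes a_m^{\delta_m}$ is evidently in $\Cbb(\cA_{\sa}^m;\cA^{\potimes m})$. The map $\sh^i\colon\cA^{\potimes m}\to B_{m-1}(\cA^{i-1}\times\cI\times\cA^{m-1-i};\cI)$ is bounded linear of norm at most one by Proposition \ref{prop.hashonI}.

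Your word-level telescoping is this same estimate unpacked. In the tensor route, the telescoping takes place in $\cA^{\potimes m}$, using $\norm{x_1\otimes\cdots\otimes x_m}_{\cA^{\potimes m}}=\prod_j\norm{x_j}$. The two-sided ideal inequality is applied once and for all inside Proposition \ref{prop.hashonI}, and that factorization is what the paper reuses for non-polynomial $\varphi$ in Theorem \ref{thm.AVaropLFC}. Your version is self-contained, avoids projective tensor products entirely, and gives an explicit local Lipschitz constant $|\delta|\,r^{|\delta|-1}$ per monomial.
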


I leave the proof of Lemma \ref{lem.pLFCcont} to the reader.

\begin{theorem}[Construction of LFC]\label{thm.LFC}
Let $\cA$ be a unital $C^*$-algebra, $\cI \sni \cA$, and $m \geq 2$.
\begin{enumerate}[font=\normalfont,label=(\roman*)]
    \item Suppose $\alpha$ controls polynomial LFC in $\cI$.
    The map
    \[
    \cP_m \ni P \mapsto \ev_{\scI}^m(P) \in \Cbb\big(\scA_{\sa}^m;B_{m-1}\big(\cI^{m-1};\cI\big)\big),
    \]
    where $\ev_{\scI}^m(P)$ is as in Lemma \ref{lem.pLFCcont}, extends uniquely to a continuous linear map $\Phi_{\scI,\alpha}^m \colon C_{\alpha}(\R^m) \to \Cbb(\cA_{\sa}^m;B_{m-1}(\cI^{m-1};\cI))$ called the \textbf{$\boldsymbol{m}$-variate $\boldsymbol{\alpha}$-lexicographic functional calculus in $\boldsymbol{\cI}$}---$m$-variate $\alpha$-LFC in $\cI$ for short.\label{item.LFCinI}
    \item Suppose $\alpha$ controls polynomial LFC in $\cA$ with an insertion from $\cI$.
    In addition, fix an $i=1,\ldots,m-1$.
    The map
    \[
    \cP_m \ni P \mapsto \ev_{\scI}^{m,i}(P) \in \Cbb\big(\scA_{\sa}^m;B_{m-1}\big(\cA^{i-1} \times \cI \times \cA^{m-1-i};\cI\big)\big),
    \]
    where $\ev_{\scI}^{m,i}(P)$ is as in Lemma \ref{lem.pLFCcont}, extends uniquely to a continuous linear map $\Phi_{\scI,\alpha}^{m,i} \colon C_{\alpha}(\R^m) \to \Cbb(\cA_{\sa}^m;B_{m-1}(\cA^{i-1} \times \cI \times \cA^{m-1-i};\cI))$.
    Furthermore,
    \begin{equation}
        \Phi_{\scI,\alpha}^{m,i}(\varphi)(\mathbf{a})[b] = \Phi_{\scI,\alpha}^m(\varphi)(\mathbf{a})[b] = \Phi_{\scA,\alpha}^m(\varphi)(\mathbf{a})[b]\label{eq.agree}
    \end{equation}
    for all $\varphi \in C_{\alpha}(\R^m)$, $\mathbf{a} \in \cA_{\sa}^m$, and $b \in \cI^{m-1} \subseteq \cA^{i-1} \times \cI \times \cA^{m-1-i} \subseteq \cA^{m-1}$.\label{item.LFCinAwI}
\end{enumerate}
\end{theorem}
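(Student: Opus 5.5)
The plan is the standard ``extend by density'' argument, with the control hypothesis supplying the continuity estimate and completeness of the target supplying the extension.

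For item~\ref{item.LFCinI}, fix $m \geq 2$. By Lemma \ref{lem.pLFCcont}, $\ev_{\scI}^m$ maps $\cP_m$ into $F \coloneqq \Cbb(\cA_{\sa}^m;B_{m-1}(\cI^{m-1};\cI))$, and it is clearly linear in $P$. The space $F$ is Fr\'echet because $B_{m-1}(\cI^{m-1};\cI)$ is a Banach space, $\cI$ being Banach. The topology of $F$ is generated by the seminorms
\[
p_n(G) \coloneqq \sup_{\mathbf{a}\in\cA_{\sa}^m,\,|\mathbf{a}|\le n}\norm{G(\mathbf{a})}_{B_{m-1}(\cI^{m-1};\cI)}.
\]
Take the max-norm on the product, so that the ball is $\cA_{\sa,n}^m$. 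The hypothesis that $\alpha$ controls polynomial LFC in $\cI$ then gives directly
\[
p_n(\ev_{\scI}^m(P)) \leq C_{\cI}^{m-2}\,\alpha_{n,m}(P) \qquad (P \in \cP_m).
\]
Hence $\ev_{\scI}^m \colon (\cP_m,\text{subspace topology of } \cC_{\alpha}(\R^m)) \to F$ is continuous, and in fact uniformly continuous, being linear.

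Since $\cP_m$ is dense in $C_{\alpha}(\R^m)$ by definition and $F$ is complete (and Hausdorff), the standard extension theorem for uniformly continuous linear maps from a dense subspace of a topological vector space into a complete Hausdorff TVS yields a unique continuous linear extension $\Phi_{\scI,\alpha}^m$. To carry this out concretely, I would use Cauchy nets $(P_j)$ converging to $\varphi$. The estimate above shows that $(\ev_{\scI}^m(P_j))$ is Cauchy in $F$, so it has a limit; the limit is independent of the chosen net by interlacing two nets. Linearity and continuity pass to the limit, and the bound $p_n(\Phi(\varphi)) \le C_{\cI}^{m-2}\alpha_{n,m}(\varphi)$ persists. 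Uniqueness follows from density together with the Hausdorff property.

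Item~\ref{item.LFCinAwI} is proved identically. Lemma \ref{lem.pLFCcont} gives $\ev_{\scI}^{m,i}(P) \in F_i \coloneqq \Cbb(\cA_{\sa}^m;B_{m-1}(\cA^{i-1}\times\cI\times\cA^{m-1-i};\cI))$, which is again Fréchet, and the insertion estimate yields $p_n(\ev_{\scI}^{m,i}(P)) \le \alpha_{n,m}(P)$.

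For \eqref{eq.agree}, note that the hypothesis also says $\alpha$ controls polynomial LFC in $\cA$ and (as remarked in Definition \ref{def.controlLFC}) in $\cI$. Therefore all three maps $\Phi_{\scI,\alpha}^{m,i}$, $\Phi_{\scI,\alpha}^m$, and $\Phi_{\scA,\alpha}^m$ exist. Fix $\mathbf{a}$ and $b\in\cI^{m-1}$, and consider the three linear functionals-valued maps
\[
\varphi\mapsto \Phi^{m,i}_{\scI,\alpha}(\varphi)(\mathbf{a})[b],\quad \Phi^m_{\scI,\alpha}(\varphi)(\mathbf{a})[b]\in\cI,\qquad \varphi\mapsto \Phi^m_{\scA,\alpha}(\varphi)(\mathbf{a})[b]\in\cA.
\]
Compose the first two with the bounded inclusion $\iota_{\scI}$. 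Each of the resulting maps is continuous from $C_{\alpha}(\R^m)$ to $\cA$, because evaluation at a point and at a fixed multilinear argument is continuous on these $\Cbb$ spaces, and the restriction $B_{m-1}(\cA^{i-1}\times\cI\times\cA^{m-1-i};\cI)\to B_{m-1}(\cI^{m-1};\cI)$ is bounded since $\cI\hookrightarrow\cA$. All three maps agree on $\cP_m$, where each equals $P_{\sotimes}(\mathbf{a})\sh b$. By density they agree everywhere. The first two equalities then hold in $\cI$, since $\iota_{\scI}$ is injective.

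The main technical point is the correct use of completeness of $F$ and $F_i$ together with continuity of the relevant evaluation and restriction maps; everything else is bookkeeping.
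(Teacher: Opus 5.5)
Your proposal is correct and follows essentially the same route as the paper. The control estimates make $P \mapsto \ev_{\scI}^m(P)$ and $P \mapsto \ev_{\scI}^{m,i}(P)$ continuous on the dense subspace $\cP_m$, the extension theorem for uniformly continuous linear maps into a complete Hausdorff space (which the paper cites from Tr\`eves) gives existence and uniqueness, and \eqref{eq.agree} follows because the relevant evaluation maps are continuous and agree on $\cP_m$; the paper phrases this last step as closedness of the set of $\varphi$ satisfying \eqref{eq.agree}, which is the same argument.
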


\begin{proof}
For the first item, note that by definition, if $\alpha$ controls polynomial LFC in $\cI$ and $P \in \cP_m$, then
\[
\sup_{\mathbf{a} \in \cA_{\sa,r}^m}\norm{\ev_{\scI}^m(P)(\mathbf{a})}_{B_{m-1}(\cI^{m-1};\cI)} \leq C_{\cI}^{m-2}\alpha_{r,m}(P) \qquad (r > 0).
\]
In particular, the linear map $\ev_{\scI}^m \colon \cP_m \to \Cbb(\cA_{\sa}^m;B_{m-1}(\cI^{m-1};\cI))$ is continuous when $\cP_m \subseteq C_{\alpha}(\R^m)$ is endowed with the subspace topology.
Recall that $C_{\alpha}(\R^m)$ is a Hausdorff LCTVS and $\Cbb(\cA_{\sa}^m;B_{m-1}(\cI^{m-1};\cI))$ is a Fr\'echet space---in particular, a complete Hausdorff LCTVS.
Consequently, the desired conclusion follows from \cite[Prop.\ 5.5]{Treves1967} (continuous linear maps are uniformly continuous), \cite[Thm.\ 5.1]{Treves1967} (uniformly continuous maps extend continuously to the closure), and the fact that $\cP_m$ is dense in $C_{\alpha}(\R^m)$.

For the second, by definition, if $\alpha$ controls polynomial LFC in $\cA$ with an insertion from $\cI$ and $P \in \cP_m$, then
\[
\sup_{\mathbf{a} \in \cA_{\sa,r}^m}\norm{\ev_{\scI}^{m,i}(P)(\mathbf{a})}_{B_{m-1}(\cA^{i-1} \times \cI \times \cA^{m-1-i};\cI)} \leq \alpha_{r,m}(P) \qquad (r > 0).
\]
By the same results from \cite{Treves1967} cited in the previous paragraph, $\ev_{\scI}^{m,i}$ extends uniquely to a continuous linear map $\Phi_{\scI,\alpha}^{m,i} \colon C_{\alpha}(\R^m) \to \Cbb(\cA_{\sa}^m;B_{m-1}(\cA^{i-1} \times \cI \times \cA^{m-1-i};\cI))$.
To complete the proof, note that
\[
\cS \coloneqq \left\{\varphi \in C_{\alpha}(\R^m) : \text{\eqref{eq.agree} holds for all } \mathbf{a} \in \cA_{\sa}^m \text{ and } b \in \cI^{m-1}\right\}
\]
contains $\cP_m$ and is closed in $C_{\alpha}(\R^m)$ by the continuity properties of $\iota_{\scI}$, $\Phi_{\scA,\alpha}^m$, $\Phi_{\scI,\alpha}^m$, and $\Phi_{\scI,\alpha}^{m,i}$.
Since $\cP_m$ is dense in $C_{\alpha}(\R^m)$, $\cS = C_{\alpha}(\R^m)$, as desired.
\end{proof}

Note that the proof above makes clear that if $\alpha$ controls polynomial LFC in $\cI$ and $\varphi \in C_{\alpha}(\R^m)$, then
\[
\sup_{\mathbf{a} \in \cA_{\sa,r}^m} \norm{\Phi_{\scI,\alpha}^m(\varphi)(\mathbf{a})}_{B_{m-1}(\cI^{m-1};\cI)} \leq C_{\cI}^{m-2}\alpha_{r,m}(\varphi) \qquad (r > 0).
\]
Also, if $\alpha$ controls polynomial LFC in $\cA$ with an insertion from $\cI$ and $\varphi \in C_{\alpha}(\R^m)$, then
\[
\max_{i=1,\ldots,m-1}\sup_{\mathbf{a} \in \cA_{\sa,r}^m} \norm{\Phi_{\scI,\alpha}^m(\varphi)(\mathbf{a})}_{B_{m-1}(\cA^{i-1} \times \cI \times \cA^{m-1-i};\cI)} \leq \alpha_{r,m}(\varphi) \qquad (r > 0).
\]
Such estimates will be essential in subsection \ref{subsec.derivatives}, specifically, in the proof of Theorem \ref{thm.derform}.

\begin{notation}[LFC]\label{nota.LFC}
Let $\cA$ be a unital $C^*$-algebra and $\cI \sni \cA$.
When $\alpha$ controls LFC in $\cI$ and $m \geq 2$, write
\[
\varphi_{\scI,\alpha} \coloneqq \Phi_{\scI,\alpha}^m(\varphi) \in \Cbb\big(\cA_{\sa}^m;B_{m-1}\big(\cI^{m-1};\cI\big)\big) \; \text{ and } \; \varphi_{\scI,\alpha}(\mathbf{a}) \sh b \coloneqq \Phi_{\scI,\alpha}^m(\varphi)(\mathbf{a})[b] \in \cI
\]
for all $\varphi \in C_{\alpha}(\R^m)$, $\mathbf{a} \in \cA_{\sa}^m$, and $b \in \cI^{m-1}$.
\end{notation}

In general, the $\sh$ symbol above is purely formal;
it serves as a reminder of how we constructed LFC and what it means.
However, when $\alpha = \beta^{\scA}$ is the $\cA$-Varopoulos family, $\varphi_{\scI,\beta^{\scA}}(\mathbf{a})\sh b$ will truly be expressible as some object $\varphi_{\sotimes}(\mathbf{a}) \in \cA^{\potimes m}$ acting via the actual $\sh$ operation on $b$;
please see subsection \ref{subsec.VaropLFC} for the details.

Also, suppose $\tilde{\alpha} = (\tilde{\alpha}_{r,m} \colon C([-r,r]^m) \to [0,\infty])_{r > 0, \, m \in \N}$ is a family of possibly infinite norms stronger than $\alpha$.
Observe that if $\alpha$ controls polynomial LFC in $\cI$ (or controls polynomial LFC in $\cA$ with an insertion from $\cI$), then so does $\tilde{\alpha}$.
Moreover, by definition of LFC, $\varphi_{\scI,\tilde{\alpha}} = \varphi_{\scI,\alpha}$ for all $\varphi \in C_{\tilde{\alpha}}(\R^m) \subseteq C_{\alpha}(\R^m)$ because the identity on $\cP_m$ extends to a continuous linear map---the inclusion---from $C_{\tilde{\alpha}}(\R^m)$ to $C_{\alpha}(\R^m)$.

\subsection{The commutative case}\label{subsec.commLFC}

The first example of LFC we shall examine is the somewhat trivial case, serving as a sanity check, of $u$-LFC in a commutative unital $C^*$-algebra.
To begin, I show that in the case of $m \geq 2$ variables, identity \eqref{eq.1dimcase} becomes an inequality that is unfavorable from the point of view of constructing LFC unless the $C^*$-algebra in question is commutative.
Actually, the following proposition is essentially the whole reason $\cA$-specific conditions are necessary to construct LFC-type functional calculi.
(If necessary, please review the notation in Lemma \ref{lem.pLFCcont} at this time.)

\begin{proposition}\label{prop.ellinflbd}
Let $\cA$ be a unital $C^*$-algebra, $m \geq 2$, and $i=1,\ldots,m-1$.
If $P \in \cP_m$ and $\{0\} \neq \cI \sni \cA$, then
\[
u_{r,m}(P) = \norm{P}_{\ell^{\infty}([-r,r]^m)} \leq \sup_{\mathbf{a} \in \cA_{\sa,r}^m}\norm{\ev_{\scI}^{m,i}(P)(\mathbf{a})}_{B_{m-1}(\cA^{i-1} \times \cI \times \cA^{m-1-i};\cI)} \qquad (r > 0)
\]
with equality if $\cA$ is commutative.
\end{proposition}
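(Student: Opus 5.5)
The inequality will come from testing on scalar tuples, and the equality in the commutative case from Gelfand theory.

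\emph{Lower bound.} Fix $r>0$ and $\blambda \in [-r,r]^m$, and set $a_j \coloneqq \lambda_j 1 \in \cA_{\sa,r}$. Since each $a_j$ is a scalar, $P_{\sotimes}(\mathbf{a}) = P(\blambda)\,1^{\otimes m}$. Hence, for every $b \in \cA^{i-1} \times \cI \times \cA^{m-1-i}$,
\[
P_{\sotimes}(\mathbf{a})\sh b = P(\blambda)\,b_1\cdots b_{m-1}.
\]
Because $\cI \neq \{0\}$ is an ideal, there is some $0 \neq c \in \cI$. Take $b_i \coloneqq c/\norm{c}_{\cI}$ and $b_j \coloneqq 1$ for $j \neq i$; this $b$ has norm at most $1$ in the relevant product. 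Then $\norm{P_{\sotimes}(\mathbf{a})\sh b}_{\cI} = |P(\blambda)|$, so the operator norm of $\ev_{\scI}^{m,i}(P)(\mathbf{a})$ is at least $|P(\blambda)|$. Taking the supremum over $\blambda$ gives the stated inequality.

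\emph{Equality in the commutative case.} Suppose $\cA$ is commutative. Since all products now commute,
\[
P_{\sotimes}(\mathbf{a})\sh b = P(a_1,\ldots,a_m)\,b_1\cdots b_{m-1},
\]
where $P(a_1,\ldots,a_m)$ is the ordinary polynomial functional calculus in $\cA$. The symmetric-norm property of $\cI$ then gives
\[
\norm{P(a_1,\ldots,a_m)\,b_1\cdots b_{m-1}}_{\cI} \leq \norm{P(a_1,\ldots,a_m)}\,\norm{b_i}_{\cI}\prod_{j\neq i}\norm{b_j}.
\]
It remains to show $\norm{P(\mathbf{a})} \leq \norm{P}_{\ell^\infty([-r,r]^m)}$ for $\mathbf{a} \in \cA_{\sa,r}^m$. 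Identify $\cA \cong C(X)$ via the Gelfand transform, which is an isometric $\ast$-isomorphism. Each $a_j$ becomes a real-valued function with values in $[-r,r]$, because $\sigma(a_j) \subseteq [-r,r]$. Under this identification, $P(\mathbf{a})$ corresponds to the function $x \mapsto P(a_1(x),\ldots,a_m(x))$, and its sup norm is at most $\norm{P}_{\ell^\infty([-r,r]^m)}$. Combined with the lower bound, this yields equality.

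This step is essentially routine. The only point needing care is that the lower-bound test vector must live in the correct product space and be normalized in the correct norm: $\norm{\cdot}_{\cI}$ in slot $i$ and $\norm{\cdot}$ elsewhere, with $\norm{1} = 1$. The nontriviality hypothesis $\cI \neq \{0\}$ is used exactly to produce the element $c$.
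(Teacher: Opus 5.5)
Your proposal is correct and is essentially the paper's argument: the lower bound comes from testing on scalar tuples $(\lambda_1 1,\ldots,\lambda_m 1)$ against $b=(1,\ldots,1,c,1,\ldots,1)$ with $\norm{c}_{\cI}=1$, and the commutative equality comes from $P_{\sotimes}(\mathbf{a})\sh b = P(\mathbf{a})\,b_1\cdots b_{m-1}$ together with the symmetric-norm estimate. The only cosmetic difference is that you bound $\norm{P(\mathbf{a})}$ through the Gelfand representation, whereas the paper phrases the same step via the joint spectrum $\sigma(\mathbf{a}) \subseteq [-r,r]^m$.
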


\begin{proof}
Let $r > 0$, and suppose $P(\blambda) = \sum_{|\delta| \leq d} c_{\delta}\, \blambda^{\delta} \in \cP_m$.
Also, let $\bmu \in [-r,r]^m$ and $c \in \cI$, and define 
\[
\mathbf{a} \coloneqq (\mu_11,\ldots,\mu_m1) \in \cA_{\sa,r}^m \; \text{ and } \; b \coloneqq (\underbrace{1,\ldots,1}_{\mathsmaller{i-1 \text{ times}}},c,\underbrace{1,\ldots,1}_{\mathsmaller{m-1-i \text{ times}}}) \in \cA^{i-1} \times \cI \times \cA^{m-1-i}.
\]
Then
\[
\ev_{\scI}^{m,i}(P)(\mathbf{a})[b] = \sum_{|\delta| \leq d} c_{\delta} \, (\mu_11)^{\delta_1}b_1\cdots (\mu_{m-1}1)^{\delta_{m-1}}b_{m-1}(\mu_m1)^{\delta_m} = P(\bmu) \,c.
\]
In particular, if $c$ is chosen so that $\norm{c}_{\cI} = 1$, which is possible if $\cI$ is not zero, then
\[
|P(\bmu)| = \norm{P(\bmu)\,c}_{\cI} \leq \sup_{\tilde{\mathbf{a}} \in \cA_{\sa,r}^m}\norm{\ev_{\scI}^{m,i}(P)(\tilde{\mathbf{a}})}_{B_{m-1}(\cA^{i-1} \times \cI \times \cA^{m-1-i};\cI)}.
\]
Taking the supremum over all $\bmu \in [-r,r]^m$ gives the desired estimate.
\pagebreak

Now, assume $\cA$ is commutative.
If $\mathbf{a} \in \cA_{\sa,r}^m$ and $b \in \cA^{i-1} \times \cI \times \cA^{m-1-i}$, then
\begin{align*}
    \ev_{\scI}^{m,i}(P)(\mathbf{a})[b] & = \sum_{|\delta| \leq d} c_{\delta} \, a_1^{\delta_1}b_1\cdots a_{m-1}^{\delta_{m-1}}b_{m-1}a_m^{\delta_m} \\
    & = \sum_{|\delta| \leq d} c_{\delta} \, a_1^{\delta_1}\cdots a_m^{\delta_m} \,b_1\cdots b_{m-1} \\
    & = P(\mathbf{a})\,b_1\cdots b_{m-1},
\end{align*}
where $P(\mathbf{a})$ is defined via the continuous functional calculus for $m$-tuples of commuting self-adjoint elements.
Thus,
\begin{align*}
    \norm{\ev_{\scI}^{m,i}(P)(\mathbf{a})[b]}_{\cI} & = \norm{P(\mathbf{a})\,b_1\cdots b_{m-1}}_{\cI} \\
    & \leq \norm{P(\mathbf{a})}\norm{b_i}_{\cI}\prod_{j \neq i} \norm{b_j} \\
    & = \norm{P}_{\ell^{\infty}(\sigma(\mathbf{a}))}\norm{b_i}_{\cI}\prod_{j \neq i} \norm{b_j}
\end{align*}
because $(\cI,\norm{\cdot}_{\cI})$ is a symmetrically normed ideal of $\cA$.
To be clear,
\[
\sigma(\mathbf{a}) \subseteq \sigma(a_1) \times \cdots \times \sigma(a_m) \subseteq [-r,r]^m
\]
is the joint spectrum of the $m$-tuple $\mathbf{a} = (a_1,\ldots,a_m)$.
It follows that
\[
\norm{\ev_{\scI}^{m,i}(P)(\mathbf{a})}_{B_{m-1}(\cA^{i-1} \times \cI \times \cA^{m-1-i};\cI)} \leq \norm{P}_{\ell^{\infty}(\sigma(\mathbf{a}))} \leq \norm{P}_{\ell^{\infty}([-r,r]^m)}.
\]
Taking the supremum over all $\mathbf{a} \in \cA_{\sa,r}^m$ completes the proof.
\end{proof}

This proposition and its proof yield a full description of $u$-LFC in the commutative case.

\begin{lemma}\label{lem.phiAcont}
Let $\cA$ be a commutative unital $C^*$-algebra and $\varphi \in C(\R^m)$.
If $\varphi_{\scA} \colon \cA_{\sa}^m \to \cA$ is the map $\mathbf{a} \mapsto \varphi(\mathbf{a})$ defined via the continuous functional calculus for $m$-tuples of commuting self-adjoint elements, then $\varphi_{\scA} \in \Cbb(\cA_{\sa}^m;\cA)$.
\end{lemma}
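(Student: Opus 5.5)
We must show $\varphi_{\scA}$ is continuous and bounded on bounded subsets of $\cA_{\sa}^m$. The plan is to mimic the single-variable argument around \eqref{eq.1dimcase}: prove the claim for polynomials, then pass to general $\varphi$ by a density argument that is uniform on bounded sets.

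\textbf{Polynomials.} For $P \in \cP_m$, the map $\mathbf{a} \mapsto P(\mathbf{a})$ is a finite sum of products of the coordinates $a_1,\ldots,a_m$. Since multiplication in $\cA$ is jointly continuous, this map is continuous from $\cA_{\sa}^m$ to $\cA$. Because $\cA$ is commutative, the multivariate continuous functional calculus $\psi \mapsto \psi(\mathbf{a})$ is an isometric $\ast$-isomorphism from $C(\sigma(\mathbf{a}))$ onto the $C^*$-algebra generated by $a_1,\ldots,a_m$. Here $\sigma(\mathbf{a}) \subseteq \sigma(a_1)\times\cdots\times\sigma(a_m) \subseteq [-r,r]^m$ whenever $\mathbf{a} \in \cA_{\sa,r}^m$. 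Hence, for every $\psi \in C(\R^m)$,
\[
\sup_{\mathbf{a} \in \cA_{\sa,r}^m}\norm{\psi(\mathbf{a})} = \sup_{\mathbf{a} \in \cA_{\sa,r}^m}\norm{\psi}_{\ell^{\infty}(\sigma(\mathbf{a}))} \leq \norm{\psi}_{\ell^{\infty}([-r,r]^m)}.
\]
This is the same estimate used at the end of the proof of Proposition \ref{prop.ellinflbd}. In particular, each $P_{\scA}$ is bounded on bounded sets, so $P_{\scA} \in \Cbb(\cA_{\sa}^m;\cA)$.

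\textbf{General $\varphi$.} Fix $\varphi \in C(\R^m)$. By Stone--Weierstrass (Example \ref{ex.uniformfamily}), choose polynomials $P_n$ with $\norm{\varphi - P_n}_{\ell^{\infty}([-n,n]^m)} < 1/n$. Applying the displayed estimate to $\psi = \varphi - P_n$, and using linearity of the functional calculus at each fixed $\mathbf{a}$, gives
\[
\sup_{\mathbf{a} \in \cA_{\sa,r}^m}\norm{\varphi(\mathbf{a}) - P_n(\mathbf{a})} \leq \frac{1}{n} \qquad (n \geq r).
\]
So $(P_n)_{\scA} \to \varphi_{\scA}$ uniformly on each bounded set $\cA_{\sa,r}^m$. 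A uniform limit of continuous functions is continuous, so $\varphi_{\scA}$ is continuous on the interior of each such ball, and these interiors exhaust $\cA_{\sa}^m$. Boundedness on bounded sets also follows directly from the displayed estimate: $\norm{\varphi(\mathbf{a})} \leq \norm{\varphi}_{\ell^{\infty}([-r,r]^m)}$ for $\mathbf{a} \in \cA_{\sa,r}^m$. Equivalently, since $\Cbb(\cA_{\sa}^m;\cA)$ is a Fréchet space, $\varphi_{\scA}$ is the limit in $\Cbb(\cA_{\sa}^m;\cA)$ of the Cauchy sequence $((P_n)_{\scA})_{n}$.

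The only step that needs care is the spectral-inclusion and isometry fact for commuting self-adjoint tuples, namely that $\norm{\psi(\mathbf{a})} = \norm{\psi}_{\ell^{\infty}(\sigma(\mathbf{a}))}$ with $\sigma(\mathbf{a})$ contained in the product of the individual spectra. This is standard Gelfand theory: identify $C^*(1,a_1,\ldots,a_m)$ with $C(\sigma(\mathbf{a}))$, where $\sigma(\mathbf{a})$ is the image of the character space under $\chi \mapsto (\chi(a_1),\ldots,\chi(a_m))$, and each $\chi(a_i)$ lies in $\sigma(a_i)$. With this fact in hand, the rest of the argument is routine.
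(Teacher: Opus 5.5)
Your proposal is correct and follows essentially the same route as the paper's sketch. Both first handle polynomials directly, then approximate $\varphi$ locally uniformly by polynomials via Stone--Weierstrass, and then use $\norm{\psi(\mathbf{a})} = \norm{\psi}_{\ell^{\infty}(\sigma(\mathbf{a}))}$ together with $\sigma(\mathbf{a}) \subseteq [-r,r]^m$ to upgrade this to uniform convergence on bounded sets. You additionally spell out the details the paper leaves implicit: the explicit choice of $P_n$, the spectral inclusion, and why uniform convergence on balls yields continuity.
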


\begin{proof}[Sketch of proof]
The conclusion is obvious if $\varphi = P \in \cP_m$ is a polynomial.
In general, by the Stone--Weierstrass theorem, there exists a sequence $(P_n)_{n \in \N}$ in $\cP_m$ converging locally uniformly to $\varphi$.
Since $\norm{\psi(\mathbf{a})} = \norm{\psi}_{\ell^{\infty}(\sigma(\mathbf{a}))}$ for all $\mathbf{a} \in \cA_{\sa}^m$ and $\psi \in C(\sigma(\mathbf{a}))$, the sequence $((P_n)_{\scA})_{n \in \N}$ converges uniformly on bounded sets to $\varphi_{\scA}$.
The result follows.
\end{proof}

\begin{theorem}[LFC in the commutative case]\label{thm.commLFC}
If $\cA$ is a commutative unital $C^*$-algebra, then the uniform family $u$ is ideal for LFC in $\cA$.
Furthermore, if $\cI \sni \cA$, $m \geq 2$, and $\varphi \in C(\R^m) = C_u(\R^m)$, then
\begin{equation}
    \varphi_{\scI,u}(\mathbf{a}) \sh b = \varphi(\mathbf{a}) \,b_1\cdots b_{m-1} \qquad \big(\mathbf{a} \in \cA_{\sa}^m, \; b \in \cI^{m-1}\big),\label{eq.commLFC}
\end{equation}
where $\varphi(\mathbf{a}) \in \cA$ is defined via the continuous functional calculus for $m$-tuples of commuting self-adjoint elements.
\end{theorem}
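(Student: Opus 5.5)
Two things have to be checked: first, that the uniform family $u$ meets every condition in Definition \ref{def.controlLFC}\ref{item.idealinA}; second, that formula \eqref{eq.commLFC} holds. The structural conditions are already available. By Example \ref{ex.uniformandVaropoulos}, $u$ is complete, compatible, increasing and translation invariant. The condition $u \leq u$ is trivial, and $u$ is finite on polynomials with $u_{\cdot,1} = u_{\cdot,1}$. So the only substantive requirement is that, for every symmetrically normed ideal $\cJ \sni \cA$, the family $u$ controls polynomial LFC in $\cA$ with an insertion from $\cJ$.

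For that requirement I would apply Proposition \ref{prop.ellinflbd} with $\cI$ replaced by $\cJ$. When $\cA$ is commutative, its proof gives
\[
\norm{\ev_{\scJ}^{m,i}(P)(\mathbf{a})[b]}_{\cJ} \leq \norm{P}_{\ell^{\infty}([-r,r]^m)}\norm{b_i}_{\cJ}\prod_{j\neq i}\norm{b_j}
\]
for all $\mathbf{a} \in \cA_{\sa,r}^m$. The argument uses only the symmetric-norm inequality and never needs $\cJ \neq \{0\}$; if $\cJ = \{0\}$ the bound is trivial anyway. Taking $\cJ = \cA$ with the operator norm gives control of polynomial LFC in $\cA$ itself, and taking general $\cJ$ gives control with an insertion from $\cJ$. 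This completes the verification that $u$ is ideal for LFC in $\cA$.

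For \eqref{eq.commLFC}, fix $\cI$ and $m$, and compare two maps from $C(\R^m)$ into $\Cbb(\cA_{\sa}^m;B_{m-1}(\cI^{m-1};\cI))$. The first is $\Phi^m_{\scI,u}$. The second is $\Psi(\varphi)(\mathbf{a})[b] \coloneqq \varphi(\mathbf{a})\,b_1\cdots b_{m-1}$. The proof of Proposition \ref{prop.ellinflbd} shows that these agree on $\cP_m$. By Stone--Weierstrass, $\cP_m$ is dense in $C(\R^m) = C_u(\R^m)$ (Example \ref{ex.uniformfamily}). So the identity $\Phi^m_{\scI,u} = \Psi$ follows once I know that $\Psi$ is well defined, linear and continuous, because both maps are then continuous and linear on $C(\R^m)$ and agree on a dense subspace.

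Checking $\Psi$ is the one step that needs real work, and I expect it to be the main obstacle. For each fixed $\varphi$, I need $\Psi(\varphi)$ to belong to $\Cbb$, meaning it is continuous in $\mathbf{a}$ with values in the multilinear operator space. I would obtain this from Lemma \ref{lem.phiAcont}, which says $\varphi_{\scA} \in \Cbb(\cA_{\sa}^m;\cA)$, together with the estimate $\norm{c\,b_1\cdots b_{m-1}}_{\cI} \leq \norm{c}\,C_{\cI}^{m-2}\prod_i\norm{b_i}_{\cI}$, which controls the $B_{m-1}(\cI^{m-1};\cI)$-norm by the $\cA$-norm of $\varphi(\mathbf{a})$. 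For continuity of $\Psi$ in $\varphi$, I would use
\[
\sup_{\mathbf{a}\in\cA^m_{\sa,r}}\norm{\Psi(\varphi)(\mathbf{a})} \leq C_{\cI}^{m-2}\sup_{\mathbf{a}\in\cA^m_{\sa,r}}\norm{\varphi}_{\ell^\infty(\sigma(\mathbf{a}))} \leq C_{\cI}^{m-2}u_{r,m}(\varphi),
\]
which holds because the joint spectrum satisfies $\sigma(\mathbf{a}) \subseteq [-r,r]^m$. With both facts established, the density argument gives \eqref{eq.commLFC}.
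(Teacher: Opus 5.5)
Your proposal is correct and follows essentially the same route as the paper. You use Proposition~\ref{prop.ellinflbd} for the only nontrivial condition in ``ideal for LFC in $\cA$''; then you define $\varphi \mapsto \bigl(\mathbf{a} \mapsto (b \mapsto \varphi(\mathbf{a})\,b_1\cdots b_{m-1})\bigr)$, show it is well defined via Lemma~\ref{lem.phiAcont} and continuous via the $C_{\cI}^{m-2}u_{r,m}(\varphi)$ bound, and conclude from agreement on $\cP_m$ and density. Your extra remarks on the structural properties from Example~\ref{ex.uniformandVaropoulos} and the harmless case $\cJ=\{0\}$ only make explicit what the paper leaves implicit.
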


\begin{proof}
Proposition \ref{prop.ellinflbd} takes care of the only nontrivial part of the verification that $u$ is ideal for LFC in $\cA$.
It therefore remains to verify \eqref{eq.commLFC}.
To this end, define a linear map $\Phi \colon C(\R^m) \to \Cbb(\cA_{\sa}^m;B_{m-1}(\cI^{m-1};\cI))$ by
\[
\Phi(\varphi)(\mathbf{a})[b] \coloneqq \varphi(\mathbf{a})\,b_1\cdots b_{m-1} \qquad \big(\varphi \in C(\R^m), \; \mathbf{a} \in \cA_{\sa}^m, \; b \in \cI^{m-1}\big).
\]
(Lemma \ref{lem.phiAcont} ensures $\Phi$ is well defined.)
Note that $\Phi$ is continuous because for all $r > 0$,
\begin{align*}
    \sup_{\mathbf{a} \in \cA_{\sa,r}^m} \norm{\Phi(\varphi)(\mathbf{a})}_{B_{m-1}(\cI^{m-1};\cI)} & \leq C_{\cI}^{m-2} \sup_{\mathbf{a} \in \cA_{\sa,r}^m} \norm{\varphi(\mathbf{a})} = C_{\cI}^{m-2}\sup_{\mathbf{a} \in \cA_{\sa,r}^m} \norm{\varphi}_{\ell^{\infty}(\sigma(\mathbf{a}))} \\
    & = C_{\cI}^{ m-2}\norm{\varphi}_{\ell^{\infty}([-r,r]^m)} = C_{\cI}^{m-2} u_{r,m}(\varphi).
\end{align*}
In addition, the proof of Proposition \ref{prop.ellinflbd} shows that $\Phi(P) = \ev_{\scI}^m(P)$ for all $P \in \cP_m$.
Thus, $\Phi = \Phi_{\scI,u}^m$, i.e., \eqref{eq.commLFC} holds.
\end{proof}

\subsection{(\texorpdfstring{$\cA$}{}-)Varopoulos LFC}\label{subsec.VaropLFC}

Next, we consider the Varopoulos family $\beta$ and its $\cA$-specific counterpart $\beta^{\scA}$ from Example \ref{ex.betaA}, which give rise to the Varopoulos LFC and the \textbf{$\boldsymbol{\cA}$-Varopoulos LFC}, respectively.
The Varopoulos family gives rise to the smallest well-behaved space of functions---in the sense of Proposition \ref{prop.Calpha}\ref{item.compcomp}---that has a rich, useful LFC in \emph{every} unital $C^*$-algebra.
Considering the $\cA$-specific family $\beta^{\scA}$ enables the proper treatment of the finite-dimensional case, in which LFC is defined for \emph{all} multivariate continuous functions.

Here and throughout, I shall freely identify the Varopoulos algebra $V(\Om_1,\ldots,\Om_m)$ with the projective tensor product $C(\Om_1) \potimes \cdots \potimes C(\Om_m)$ \`a la Corollary \ref{cor.seriesVarop}.

\begin{notation}\label{nota.phitensor}
Let $\cA$ be a unital $C^*$-algebra.
\begin{enumerate}[font=\normalfont,label=(\roman*)]
    \item $\cA_{\nu} \coloneqq \{a \in \cA : a^*a=aa^*\}$ is the set of its normal elements.\label{item.normal}
    \item If $a \in \cA_{\nu}$, then $\Phi_a \colon C(\sigma(a)) \to \cA$ is the continuous functional calculus homomorphism, i.e., $\Phi_a(f) \coloneqq f(a) \in \cA$ for all $f \in C(\sigma(a))$.\label{item.FC}
    \item If $\mathbf{a} = (a_1,\ldots,a_m) \in \cA_{\nu}^m$ and
    \[
    \psi \in C(\sigma(a_1)) \potimes \cdots \potimes C(\sigma(a_m)) = V(\sigma(a_1),\ldots,\sigma(a_m)),
    \]
    define
    \[
    \psi_{\sotimes}(\mathbf{a}) \coloneqq \big(\Phi_{a_1} \potimes \cdots \potimes \Phi_{a_m}\big)(\psi) \in \cA^{\potimes m},
    \]
    where $\Phi_{a_1} \potimes \cdots \potimes \Phi_{a_m} \colon C(\sigma(a_1)) \potimes \cdots \potimes C(\sigma(a_m)) \to \cA^{\potimes m}$ is the projective tensor product of the maps $\Phi_{a_1},\ldots,\Phi_{a_m}$.
    In particular, if $\varphi \colon \R^m \to \C$ satisfies the condition that $\varphi|_{\sigma(a_1) \times \cdots \times \sigma(a_m)} \in V(\sigma(a_1),\ldots,\sigma(a_m)) = C(\sigma(a_1)) \potimes \cdots \potimes C(\sigma(a_m))$ for all $\mathbf{a} = (a_1,\ldots,a_m) \in \cA_{\sa}^m$, then $\varphi$ induces the function
    \[
    \cA_{\sa}^m \ni \mathbf{a} \mapsto \varphi_{\sotimes}(\mathbf{a}) \coloneqq \big(\varphi|_{\sigma(a_1) \times \cdots \times \sigma(a_m)}\big)_{\sotimes}(\mathbf{a}) \in \cA^{\potimes m}
    \]
    from $\cA_{\sa}^m$ to $\cA^{\potimes m}$.\label{item.phitensor}
\end{enumerate}
\end{notation}

I leave it to the reader to check that Notation \ref{nota.phitensor}\ref{item.phitensor} agrees with Notation \ref{nota.algstuff}\ref{item.Ptensor} (restricted to self-adjoint inputs) when $\varphi = P \in \cP_m$.
Here is another important example.
\pagebreak

\begin{example}[$\psi_{\sotimes}(\mathbf{a})$ in finite dimensions]\label{ex.findimLFC}
Let $\cA$ be a finite-dimensional unital $C^*$-algebra.
Note that if $a \in \cA_{\nu}$, then $\#\sigma(a) \leq \dim \cA$, i.e., $\sigma(a)$ has at most $\dim \cA$ elements.
Indeed, $C(\sigma(a)) \cong C^*(a,1)$ via $\Phi_a \colon C(\sigma(a)) \to C^*(a,1) \subseteq \cA$.
Thus,
\[
\dim C(\sigma(a)) = \dim C^*(a,1) \leq \dim \cA < \infty,
\]
from which it follows that $\#\sigma(a) \leq \dim \cA$.
Now, write
\[
P_{\lambda}^a \coloneqq 1_{\{\lambda\}}(a) \in \Proj\left(\cA\right) \coloneqq \{p \in \cA_{\sa} : p^2=p\} \qquad (\lambda \in \C).
\]
If $\mathbf{a} \in \cA_{\sa}^m$ and $\psi \in C(\sigma(a_1) \times \cdots \times \sigma(a_m)) = \C^{\sigma(a_1) \times \cdots \times \sigma(a_m)}$, then
\begin{align*}
    \psi & = \sum_{\blambda \in \sigma(a_1) \times \cdots \times \sigma(a_m)} \psi(\blambda)\,1_{\{\blambda\}} \\
    & = \sum_{\blambda \in \sigma(a_1) \times \cdots \times \sigma(a_m)} \psi(\blambda)\,1_{\{\lambda_1\}} \otimes \cdots \otimes 1_{\{\lambda_m\}}.
\end{align*}
Consequently,
\begin{align*}
    \psi_{\sotimes}(\mathbf{a}) & = \sum_{\blambda \in \sigma(a_1) \times \cdots \times \sigma(a_m)} \psi(\blambda)\,\Phi_{a_1}(1_{\{\lambda_1\}}) \otimes \cdots \otimes \Phi_{a_m}(1_{\{\lambda_m\}}) \\
    & = \sum_{\blambda \in \sigma(a_1) \times \cdots \times \sigma(a_m)} \psi(\blambda)\,P_{\lambda_1}^{a_1} \otimes \cdots \otimes P_{\lambda_m}^{a_m}. 
\end{align*}
In particular, if $m \geq 2$, then
\begin{equation}
    \psi_{\sotimes}(\mathbf{a})\sh b = \sum_{\blambda \in \sigma(a_1) \times \cdots \times \sigma(a_m)} \psi(\blambda)\,P_{\lambda_1}^{a_1} b_1 \cdots P_{\lambda_{m-1}}^{a_{m-1}}b_{m-1}P_{\lambda_m}^{a_m}\label{eq.findimLFC}
\end{equation}
for all $b \in \cA^{m-1}$.
\end{example}

Let us now begin the development of $\cA$-Varopoulos LFC in general.

\begin{lemma}\label{lem.bdphitensor}
Let $\cA$ be a unital $C^*$-algebra, and suppose $\varphi \colon \R^m \to \C$ satisfies the condition
\[
\varphi|_{\sigma(a_1) \times \cdots \times \sigma(a_m)} \in V(\sigma(a_1),\ldots,\sigma(a_m)) \; \text{ for all } \; (a_1,\ldots,a_m) \in \cA_{\sa}^m,
\]
e.g., $\varphi \in \cC_{\beta^{\scA}}(\R^m)$.
If $\mathbf{a} = (a_1,\ldots,a_m) \in \cA_{\sa}^m$, then
\[
\norm{\varphi_{\sotimes}(\mathbf{a})}_{\cA^{\potimes m}} \leq \norm{\varphi|_{\sigma(a_1) \times \cdots \times \sigma(a_m)}}_{V(\sigma(a_1),\ldots,\sigma(a_m))}.
\]
In particular,
\[
\sup_{\mathbf{a} \in \cA_{\sa,r}^m} \norm{\varphi_{\sotimes}(\mathbf{a})}_{\cA^{\potimes m}} \leq \beta_{r,m}^{\scA}(\varphi) \leq \beta_{r,m}(\varphi) \qquad (r > 0).
\]
\end{lemma}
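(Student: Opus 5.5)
The estimate follows from the fact that a tensor product of contractions is a contraction for the projective norm. For each self-adjoint $a_i$, the continuous functional calculus $\Phi_{a_i} \colon C(\sigma(a_i)) \to \cA$ is a unital $\ast$-homomorphism. It is isometric, since $\norm{f(a_i)} = \norm{f}_{\ell^{\infty}(\sigma(a_i))}$, so in particular $\norm{\Phi_{a_i}} \leq 1$. By the universal property of the Banach-space projective tensor product, the map $\Phi_{a_1} \potimes \cdots \potimes \Phi_{a_m}$ is bounded with
\[
\norm{\Phi_{a_1} \potimes \cdots \potimes \Phi_{a_m}} \leq \prod_{i=1}^m \norm{\Phi_{a_i}} \leq 1.
\]

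I would verify this directly from the series description in the proof of Corollary \ref{cor.seriesVarop}. Let $\psi \coloneqq \varphi|_{\sigma(a_1) \times \cdots \times \sigma(a_m)}$, and fix $\e > 0$. Choose a decomposition $\psi = \sum_n \psi_{1,n} \otimes \cdots \otimes \psi_{m,n}$, with $\psi_{i,n} \in C(\sigma(a_i))$, such that
\[
\sum_n \prod_{i=1}^m \norm{\psi_{i,n}}_{\ell^{\infty}(\sigma(a_i))} < \norm{\psi}_{V(\sigma(a_1),\ldots,\sigma(a_m))} + \e.
\]
Via the isometric identification of Corollary \ref{cor.seriesVarop}, this series converges in $C(\sigma(a_1)) \potimes \cdots \potimes C(\sigma(a_m))$. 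Applying the bounded map $\Phi_{a_1} \potimes \cdots \potimes \Phi_{a_m}$ gives
\[
\psi_{\sotimes}(\mathbf{a}) = \sum_n \psi_{1,n}(a_1) \otimes \cdots \otimes \psi_{m,n}(a_m).
\]
The triangle inequality and the cross-norm property of $\potimes$ then yield
\[
\norm{\psi_{\sotimes}(\mathbf{a})}_{\cA^{\potimes m}} \leq \sum_n \prod_{i=1}^m \norm{\psi_{i,n}(a_i)} = \sum_n \prod_{i=1}^m \norm{\psi_{i,n}}_{\ell^{\infty}(\sigma(a_i))} < \norm{\psi}_{V(\sigma(a_1),\ldots,\sigma(a_m))} + \e.
\]
Letting $\e \to 0$ gives the first inequality.

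For the second claim, take the supremum over $\mathbf{a} \in \cA_{\sa,r}^m$. The right-hand side then becomes $\beta_{r,m}^{\scA}(\varphi)$ by the definition in Example \ref{ex.betaA}. The inequality $\beta^{\scA}_{r,m} \leq \beta_{r,m}$ was noted in Example \ref{ex.Varopstrongerthanuniform}. That argument works because each $\sigma(a_i) \subseteq [-r,r]$, so restricting any decomposition of $\varphi|_{[-r,r]^m}$ to the smaller product set gives a decomposition with no larger sup-norms.

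The only point requiring any care is that the expression for $\psi_{\sotimes}(\mathbf{a})$ depends on the decomposition only through the element of the projective tensor product. This holds because $\Phi_{a_1} \potimes \cdots \potimes \Phi_{a_m}$ is defined on that tensor product, and by Theorem \ref{thm.inj} the identification with functions is injective. The remaining steps are routine.
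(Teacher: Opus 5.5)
Your proof is correct and follows the paper's own argument. You observe that each $\Phi_{a_i}$ is contractive, hence so is $\Phi_{a_1} \potimes \cdots \potimes \Phi_{a_m}$, and you combine this with the isometric identification of $V(\sigma(a_1),\ldots,\sigma(a_m))$ with $C(\sigma(a_1)) \potimes \cdots \potimes C(\sigma(a_m))$ from Corollary \ref{cor.seriesVarop}. Your explicit series verification just unpacks that same contractivity and is not needed.
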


\begin{proof}
Recall that $\Phi_{a_i} \colon C(\sigma(a_i)) \to \cA$ has operator norm one for all $i=1,\ldots,m$.
Thus, the tensor product $\Phi_{a_1} \potimes \cdots \potimes \Phi_{a_m}$ has operator norm one.
Consequently,
\begin{align*}
    \norm{\varphi_{\sotimes}(\mathbf{a})}_{\cA^{\potimes m}} & = \norm{\big(\Phi_{a_1} \potimes \cdots \potimes \Phi_{a_m}\big)\big(\varphi|_{\sigma(a_1) \times \cdots \times \sigma(a_m)}\big)}_{\cA^{\potimes m}} \\
    & \leq \norm{\varphi|_{\sigma(a_1) \times \cdots \times \sigma(a_m)}}_{C(\sigma(a_1)) \potimes \cdots \potimes C(\sigma(a_m))} \\
    & = \norm{\varphi|_{\sigma(a_1) \times \cdots \times \sigma(a_m)}}_{V(\sigma(a_1),\ldots,\sigma(a_m))},
\end{align*}
as desired.
\end{proof}

\begin{lemma}\label{lem.phitensorcont}
If $\cA$ is a unital $C^*$-algebra and $\varphi \in C_{\beta^{\scA}}(\R^m)$, then $\varphi_{\sotimes} \in \Cbb\big(\cA_{\sa}^m;\cA^{\potimes m}\big)$.
\end{lemma}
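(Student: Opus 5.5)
Since $\varphi \in C_{\beta^{\scA}}(\R^m)$, it is a limit in $\cC_{\beta^{\scA}}(\R^m)$ of polynomials. I would prove the lemma for polynomials first and then pass to the limit using the uniform bound in Lemma~\ref{lem.bdphitensor}.

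\textbf{Polynomial case.} Let $P \in \cP_m$. As noted after Notation~\ref{nota.phitensor}, for self-adjoint inputs
\[
P_{\sotimes}(\mathbf{a}) = P(a_1 \otimes 1^{\otimes(m-1)},\ldots,1^{\otimes(m-1)}\otimes a_m) = \sum_{|\delta|\le d} c_\delta\, a_1^{\delta_1}\otimes\cdots\otimes a_m^{\delta_m}.
\]
Each map $\mathbf{a} \mapsto a_1^{\delta_1}\otimes\cdots\otimes a_m^{\delta_m}$ is the restriction of a bounded multilinear map composed with monomials. A telescoping estimate together with $\norm{x_1\otimes\cdots\otimes x_m}_{\cA^{\potimes m}} = \prod_i\norm{x_i}$ shows it is continuous from $\cA_{\sa}^m$ to $\cA^{\potimes m}$. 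It is also bounded on bounded sets, since the norm is at most $r^{|\delta|}$ on $\cA_{\sa,r}^m$. Hence $P_{\sotimes} \in \Cbb(\cA_{\sa}^m;\cA^{\potimes m})$.

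\textbf{General case.} Take a net (a sequence suffices, since $\beta^{\scA}$ is increasing and so $\cC_{\beta^{\scA}}$ is metrizable) $(P_n)$ in $\cP_m$ with $\beta^{\scA}_{r,m}(P_n - \varphi) \to 0$ for every $r>0$. By linearity of $\psi \mapsto \psi_{\sotimes}(\mathbf{a})$, which is immediate because $\Phi_{a_1}\potimes\cdots\potimes\Phi_{a_m}$ and restriction are linear, Lemma~\ref{lem.bdphitensor} gives
\[
\sup_{\mathbf{a}\in\cA_{\sa,r}^m}\norm{(P_n)_{\sotimes}(\mathbf{a})-\varphi_{\sotimes}(\mathbf{a})}_{\cA^{\potimes m}} \le \beta^{\scA}_{r,m}(P_n-\varphi) \to 0 .
\]
Every bounded subset of $\cA_{\sa}^m$ lies in some $\cA_{\sa,r}^m$, so $(P_n)_{\sotimes} \to \varphi_{\sotimes}$ uniformly on bounded sets. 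A uniform-on-bounded-sets limit of continuous functions that are bounded on bounded sets is again continuous and bounded on bounded sets: continuity follows from the standard $\e/3$ argument on a ball around each point, and boundedness from the uniform estimate. Therefore $\varphi_{\sotimes} \in \Cbb(\cA_{\sa}^m;\cA^{\potimes m})$.

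\textbf{Main obstacle.} The only subtle point is that $\varphi_{\sotimes}(\mathbf{a})$ is defined through the restriction $\varphi|_{\sigma(a_1)\times\cdots\times\sigma(a_m)}$, whose domain depends on $\mathbf{a}$. One must check that the difference $(P_n)_{\sotimes}(\mathbf{a})-\varphi_{\sotimes}(\mathbf{a})$ equals $(P_n-\varphi)_{\sotimes}(\mathbf{a})$ computed with the same spectra. This holds because restriction to $\sigma(a_1)\times\cdots\times\sigma(a_m)$ is linear, and because $\varphi$ and $P_n$ both lie in $\cC_{\beta^{\scA}}(\R^m)$, so both restrictions lie in $V(\sigma(a_1),\ldots,\sigma(a_m))$. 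With this in hand, Lemma~\ref{lem.bdphitensor} applies directly to $P_n - \varphi$.
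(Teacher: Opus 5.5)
Your proposal is correct and is essentially the paper's proof: you handle the polynomial case via the explicit formula $\sum_{\delta} c_\delta\, a_1^{\delta_1}\otimes\cdots\otimes a_m^{\delta_m}$, then take a sequence of polynomials $P_n \to \varphi$ in $C_{\beta^{\scA}}(\R^m)$ and use Lemma~\ref{lem.bdphitensor} to get uniform convergence on bounded sets. Your extra justifications (metrizability to pass to sequences, linearity of $\psi \mapsto \psi_{\sotimes}(\mathbf{a})$, and the limit preserving membership in $\Cbb$) only make explicit what the paper leaves implicit.
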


\begin{proof}
If $P(\blambda) = \sum_{|\delta| \leq d} c_{\delta}\blambda^{\delta} \in \cP_m$, then
\[
P_{\sotimes}(\mathbf{a}) = \sum_{|\delta| \leq d} c_{\delta}\,a_1^{\delta_1} \otimes \cdots \otimes a_m^{\delta_m} \qquad \big(\mathbf{a} \in \cA_{\sa}^m\big).
\]
Consequently, it is clear that $P_{\sotimes} \in \Cbb\big(\cA_{\sa}^m;\cA^{\potimes m}\big)$.
Now, if $\varphi \in C_{\beta^{\scA}}(\R^m)$, let $(P_n)_{n \in \N}$ be a sequence in $\cP_m$ converging in $C_{\beta^{\scA}}(\R^m)$ to $\varphi$.
By Lemma \ref{lem.bdphitensor}, if $r > 0$, then
\[
\sup_{\mathbf{a} \in \cA_{\sa,r}^m} \norm{(P_n)_{\sotimes}(\mathbf{a}) - \varphi_{\sotimes}(\mathbf{a})}_{\cA^{\potimes m}} \leq \beta_{r,m}^{\scA}(P_n - \varphi) \xrightarrow{n \to \infty} 0.
\]
Thus, $((P_n)_{\sotimes})_{n \in \N}$ converges to $\varphi_{\sotimes}$ uniformly on bounded sets.
The result follows.
\end{proof}

\begin{theorem}[$\cA$-Varopoulos LFC]\label{thm.AVaropLFC}
Let $\cA$ be a unital $C^*$-algebra.
The $\cA$-Varopoulos family $\beta^{\scA}$ is ideal for LFC in $\cA$.
Furthermore, if $\cI \sni \cA$, $m \geq 2$, and $\varphi \in C_{\beta^{\scA}}(\R^m)$, then
\begin{equation}
    \varphi_{\scI,\beta^{\scA}}(\mathbf{a}) \sh b = \varphi_{\sotimes}(\mathbf{a})\sh b \qquad \big(\mathbf{a} \in \cA_{\sa}^m, \; b \in \cI^{m-1}\big),\label{eq.VaropLFC}
\end{equation}
where the $\sh$ on the left-hand side is formal and the $\sh$ on the right-hand side is the actual $\sh = \sh_k$ operation introduced in subsection \ref{subsec.HFC}.
\end{theorem}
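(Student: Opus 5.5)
The proof splits into two parts: checking that $\beta^{\scA}$ is ideal for LFC in $\cA$, and then identifying the resulting calculus with the genuine $\sh$-action of $\varphi_{\sotimes}(\mathbf{a})$.

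For the first part, Example \ref{ex.betaA} already records that $\beta^{\scA}$ is complete, compatible, increasing and translation invariant. Example \ref{ex.Varopstrongerthanuniform} gives $u \leq \beta^{\scA}$. It is finite on polynomials because $\beta^{\scA} \leq \beta$ and $\beta$ is finite on polynomials by Example \ref{ex.mvarpoly}.

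For $m=1$, I would check $\beta^{\scA}_{r,1} = u_{r,1}$. The key observation is that $V(\sigma(a)) = C(\sigma(a))$ isometrically when $m=1$. Hence $\beta^{\scA}_{r,1}(\varphi) = \sup_{a \in \cA_{\sa,r}}\norm{\varphi}_{\ell^\infty(\sigma(a))}$. This is at most $\norm{\varphi}_{\ell^\infty([-r,r])}$, and it is at least that quantity by taking $a=\lambda 1$.

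The main point is the insertion estimate. Fix $\cJ \sni \cA$, $m\ge 2$, $P\in\cP_m$, $i$, and $b \in \cA^{i-1}\times\cJ\times\cA^{m-1-i}$. By the agreement of Notation \ref{nota.phitensor}\ref{item.phitensor} with Notation \ref{nota.algstuff}\ref{item.Ptensor}, $P_{\sotimes}(\mathbf{a})\sh b$ is the honest $\sh$ applied to $P_{\sotimes}(\mathbf{a}) \in \cA^{\potimes m}$. Proposition \ref{prop.hashonI} then gives
\[
\norm{P_{\sotimes}(\mathbf{a})\sh b}_{\cJ} \le \norm{P_{\sotimes}(\mathbf{a})}_{\cA^{\potimes m}}\norm{b_i}_{\cJ}\prod_{j\ne i}\norm{b_j}.
\]
Lemma \ref{lem.bdphitensor} bounds the tensor norm by $\beta^{\scA}_{r,m}(P)$ after taking the supremum over $\mathbf{a}\in\cA_{\sa,r}^m$. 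Applying the same estimate with $\cJ=\cA$ shows that $\beta^{\scA}$ controls polynomial LFC in $\cA$, so the insertion estimate is exactly the required control.

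For \eqref{eq.VaropLFC}, I would use the fact that $\beta^{\scA}$ controls LFC in $\cI$, as just shown. Define
\[
\Phi(\varphi)(\mathbf{a})[b] \coloneqq \varphi_{\sotimes}(\mathbf{a})\sh b, \qquad \varphi \in C_{\beta^{\scA}}(\R^m).
\]
By Lemma \ref{lem.phitensorcont}, $\varphi_{\sotimes}\in\Cbb(\cA_{\sa}^m;\cA^{\potimes m})$. The map $u\mapsto \sh(u)$ is a bounded linear map $\cA^{\potimes m}\to B_{m-1}(\cI^{m-1};\cI)$ with norm at most $C_{\cI}^{m-2}$ by Proposition \ref{prop.hashonI}. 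Composing, $\Phi(\varphi)\in\Cbb(\cA_{\sa}^m;B_{m-1}(\cI^{m-1};\cI))$.

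Continuity of $\Phi$ follows from the bound
\[
\sup_{\mathbf{a}\in\cA_{\sa,r}^m}\norm{\Phi(\varphi)(\mathbf{a})} \le C_{\cI}^{m-2}\beta^{\scA}_{r,m}(\varphi),
\]
which again comes from Lemma \ref{lem.bdphitensor}. Since $\Phi(P)=\ev^m_{\scI}(P)$ for polynomials and $\cP_m$ is dense in $C_{\beta^{\scA}}(\R^m)$ by definition, the uniqueness clause in Theorem \ref{thm.LFC}\ref{item.LFCinI} yields $\Phi=\Phi^m_{\scI,\beta^{\scA}}$.

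The only slightly delicate step is the agreement of the two notions of $P_{\sotimes}(\mathbf{a})$. Under $\Phi_{a_1}\potimes\cdots\potimes\Phi_{a_m}$, the monomial $\lambda_1^{\delta_1}\otimes\cdots\otimes\lambda_m^{\delta_m}$ is sent to $a_1^{\delta_1}\otimes\cdots\otimes a_m^{\delta_m}$, so the two definitions coincide. Everything else is routine bookkeeping.
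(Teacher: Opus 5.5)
Your proposal is correct and follows essentially the same route as the paper: you verify the structural properties of $\beta^{\scA}$ via Examples \ref{ex.betaA} and \ref{ex.Varopstrongerthanuniform}, obtain the insertion bound from Proposition \ref{prop.hashonI} combined with Lemma \ref{lem.bdphitensor}, and identify the LFC with $\varphi_{\sotimes}(\mathbf{a})\sh b$ by the density/uniqueness argument built on Lemma \ref{lem.phitensorcont}. The only cosmetic difference is that you define the comparison map directly into $B_{m-1}(\cI^{m-1};\cI)$ with the constant $C_{\cI}^{m-2}$, whereas the paper first defines it into the insertion space with norm at most one and then restricts to $\cI^{m-1}$.
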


\begin{proof}
As noted in Example \ref{ex.Varopstrongerthanuniform}, $u \leq \beta^{\scA}$.
Also, as noted in Example \ref{ex.betaA}, $\beta^{\scA}$ is complete, compatible, increasing, and translation invariant.
Since it is clear that $\beta_{\cdot,1}^{\scA} = u_{\cdot,1}$, in order to prove that $\beta^{\scA}$ is ideal for LFC in $\cA$, it suffices to show that $\beta^{\scA}$ controls polynomial LFC in $\cA$ with an insertion from $\cI$ whenever $\cI \sni \cA$.
We shall do so and establish \eqref{eq.VaropLFC}~simultaneously.

To this end, let $i=1,\ldots,m-1$, and write $\sh^i \colon \cA^{\potimes m} \to B_{m-1}(\cA^{i-1} \times \cI \times \cA^{m-1-i};\cI)$ for the map $u \mapsto (b \mapsto u\sh b)$.
By Proposition \ref{prop.hashonI}, $\sh^i$ is a well-defined bounded linear map with operator norm at most one.
Now, Lemma \ref{lem.phitensorcont} implies that
\[
\sh^i\varphi_{\sotimes} = \sh^i\circ \varphi_{\sotimes} \in \Cbb(\cA_{\sa}^m; B_{m-1}(\cA^{i-1} \times \cI \times \cA^{m-1-i};\cI)) \qquad \big(\varphi \in C_{\beta^{\scA}}(\R^m)\big).
\]
Define $\Phi \colon C_{\beta^{\scA}}(\R^m) \to \Cbb(\cA_{\sa}^m;B_{m-1}(\cA^{i-1} \times \cI \times \cA^{m-1-i};\cI))$ by $\varphi \mapsto \sh^i\varphi$.
If $r > 0$, then
\begin{align*}
    \sup_{\mathbf{a} \in \cA_{\sa,r}^m} \norm{\Phi(\varphi)(\mathbf{a})} & = \sup_{\mathbf{a} \in \cA_{\sa,r}^m} \norm{\sh^i\varphi_{\sotimes}(\mathbf{a})}_{B_{m-1}(\cA^{i-1} \times \cI \times \cA^{m-1-i};\cI)} \\
    & \leq \sup_{\mathbf{a} \in \cA_{\sa,r}^m} \norm{\varphi_{\sotimes}(\mathbf{a})}_{\cA^{\potimes m}} \leq \beta_{r,m}^{\scA}(\varphi)
\end{align*}
by Lemma \ref{lem.bdphitensor}.
Since $\Phi(P) = \ev_{\scI}^{m,i}(P)$ for all $P \in \cP_m$, it follows---after applying the above bound with $\cI = \cA$ as well---that $\beta^{\scA}$ controls polynomial LFC in $\cA$ with an insertion from $\cI$.
Furthermore, since $\iota_{\scI}$ is continuous, the map $\Psi \colon C_{\beta^{\scA}}(\R^m) \to \Cbb(\cA_{\sa}^m ;B_{m-1}(\cI^{m-1};\cI))$ defined by
\[
\Psi(\varphi)(\mathbf{a}) \coloneqq \Phi(\varphi)(\mathbf{a})|_{\cI^{m-1}} \qquad \big(\varphi \in C_{\beta^{\scA}}(\R^m), \; \mathbf{a} \in \cA_{\sa}^m\big)
\]
is well defined, linear, and continuous.
Since $\Psi(P) = \ev_{\scI}^m(P)$ for all $P \in \cP_m$, it follows that $\Psi = \Phi_{\scI,\beta^{\scA}}^m$, i.e., \eqref{eq.VaropLFC} holds.
\end{proof}

\begin{corollary}\label{cor.VaropLFC}
Let $\cA$ be a unital $C^*$-algebra.
\begin{enumerate}[font=\normalfont,label=(\roman*)]
    \item The Varopoulos family $\beta$ is ideal for LFC, and \eqref{eq.VaropLFC} holds for all $\varphi \in VC(\R^m)$ with $\varphi_{\scI,\beta}(\mathbf{a}) \sh b $ in place of $\varphi_{\scI,\beta^{\scA}}(\mathbf{a}) \sh b$.\label{item.VaropLFC}
    \item If $\alpha$ is ideal for LFC in $\cA$, then $\alpha \leq \beta$, and \eqref{eq.VaropLFC} holds for all $\varphi \in VC(\R^m) \subseteq C_{\alpha}(\R^m)$ with $\varphi_{\scI,\alpha}(\mathbf{a}) \sh b $ in place of $\varphi_{\scI,\beta^{\scA}}(\mathbf{a}) \sh b$.\label{item.VaropLFCconverse}
\end{enumerate}
\end{corollary}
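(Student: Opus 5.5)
For item \ref{item.VaropLFC}, I would fix an arbitrary unital $C^*$-algebra $\cB$ and reduce to Theorem \ref{thm.AVaropLFC} applied to $\cB$. The first step is to check the structural properties required in Definition \ref{def.controlLFC}\ref{item.idealinA}. By Example \ref{ex.uniformandVaropoulos}, $\beta$ is complete, compatible, increasing and translation invariant. By Example \ref{ex.Varopstrongerthanuniform}, $u \leq \beta$. It is also clear that $\beta_{\cdot,1} = u_{\cdot,1}$, because $V([-r,r]) = C([-r,r])$ isometrically (the case $m=1$ of Corollary \ref{cor.seriesVarop}). Finiteness on polynomials is Example \ref{ex.mvarpoly}.

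The remaining requirement is that $\beta$ controls polynomial LFC in $\cB$ with an insertion from every $\cJ \sni \cB$. Example \ref{ex.Varopstrongerthanuniform} gives $\beta^{\mathsmaller{\cB}} \leq \beta$. By Theorem \ref{thm.AVaropLFC}, $\beta^{\mathsmaller{\cB}}$ controls polynomial LFC in $\cB$ with an insertion from $\cJ$. By the remark after Notation \ref{nota.LFC}, a stronger family inherits this control. So $\beta$ is ideal for LFC in $\cB$, and since $\cB$ was arbitrary, $\beta$ is ideal for LFC.

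For the formula, let $\varphi \in VC(\R^m)$. Since $\beta^{\scA} \leq \beta$, Proposition \ref{prop.Calpha}\ref{item.stronger} gives $VC(\R^m) = C_\beta(\R^m) \hookrightarrow C_{\beta^{\scA}}(\R^m)$. The remark after Notation \ref{nota.LFC} then gives $\varphi_{\scI,\beta} = \varphi_{\scI,\beta^{\scA}}$. Combining this with \eqref{eq.VaropLFC} yields the claim.

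For item \ref{item.VaropLFCconverse}, $\alpha \leq \beta$ holds by the observation at the end of Definition \ref{def.controlLFC}, which follows from Proposition \ref{prop.Calpha}\ref{item.compcomp}. That same proposition gives $VC(\R^m) \hookrightarrow C_\alpha(\R^m)$. Again by the remark after Notation \ref{nota.LFC}, with $\tilde\alpha = \beta$, we get $\varphi_{\scI,\alpha} = \varphi_{\scI,\beta}$ for $\varphi \in VC(\R^m)$. Here both calculi are defined: $\alpha$ is ideal for LFC in $\cA$, and $\beta$ is handled by item \ref{item.VaropLFC}. Item \ref{item.VaropLFC} then gives $\varphi_{\scI,\beta}(\mathbf{a})\sh b = \varphi_{\sotimes}(\mathbf{a})\sh b$.

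The only delicate point is this consistency between calculi for comparable families. It rests on uniqueness of continuous extensions from the dense set $\cP_m$ together with continuity of the inclusion $C_{\tilde\alpha}(\R^m)\hookrightarrow C_\alpha(\R^m)$, and both are already recorded in the paper.
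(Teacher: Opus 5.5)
Your proposal is correct and follows the paper's own proof. For (i), you combine the structural properties of $\beta$ from Examples \ref{ex.uniformandVaropoulos} and \ref{ex.Varopstrongerthanuniform} with Theorem \ref{thm.AVaropLFC} (applied in each unital $C^*$-algebra) and the comparison remark after Notation \ref{nota.LFC}; for (ii), you use Proposition \ref{prop.Calpha}\ref{item.compcomp}, item (i), and that same remark, and your separate check that $\beta_{\cdot,1} = u_{\cdot,1}$ usefully covers a condition that stronger families do not automatically inherit.
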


\begin{proof}
For the first item, recall from Example \ref{ex.Varopstrongerthanuniform} that $u \leq \beta$ and from Example \ref{ex.uniformandVaropoulos} that $\beta$ is complete, compatible, increasing, and translation invariant.
With these in mind, the rest of the first item follows at once from Theorem \ref{thm.AVaropLFC} and the comments following Notation~\ref{nota.LFC}.

For the second item, combine the definition of ideal for LFC in $\cA$, Proposition \ref{prop.Calpha}\ref{item.compcomp}, the first item, and the comments following Notation \ref{nota.LFC}.
\end{proof}

Now, I use Theorem \ref{thm.IPTPV} to demonstrate that there are many ways to compute $\varphi_{\sotimes}(\mathbf{a})$ and thus the ($\cA$-)Varopoulos LFC.

\begin{proposition}\label{prop.tens}
Retain the setup of Theorem \ref{thm.IPTPV}.
For each $i = 1,\ldots,m$, let $V_i$ be a complex Banach space and $T_i \colon C(\Om_i) \to V_i$ be a bounded linear map.
Also, write
\[
T \coloneqq T_1 \potimes \cdots \potimes T_m \colon V(\Om_1,\ldots,\Om_m) = C(\Om_1) \potimes \cdots \potimes C(\Om_m) \to V_1 \potimes \cdots \potimes V_m
\]
and $\Om \coloneqq \Om_1 \times \cdots \times \Om_m$.
\begin{enumerate}[label=(\roman*),font=\normalfont]
    \item If $\varphi$ is as in \eqref{eq.integphi}, then
    \[
    T(\varphi) = \int_{\Sigma} T_1(\varphi_1(\cdot,\sigma))\otimes \cdots \otimes T_m(\varphi_m(\cdot,\sigma))\,\rho(\d\sigma),
    \]
    where the right-hand side is a Bochner integral in $V_1 \potimes \cdots \potimes V_m$.\label{item.tensint}
    \item Suppose $\cA \coloneqq V_1 = \cdots = V_m$ is a unital $C^*$-algebra, $\cI \sni \cA$, $i=1,\ldots,m-1$, and $b \in \cA^{i-1}\times \cI \times \cA^{m-1-i}$.
    If $\varphi$ is as in \eqref{eq.integphi}, then
    \[
    T(\varphi) \sh b = \int_{\Sigma} T_1(\varphi_1(\cdot,\sigma))\,b_1\cdots T_{m-1}(\varphi_{m-1}(\cdot,\sigma))\,b_{m-1}\,T_m(\varphi_m(\cdot,\sigma))\,\rho(\d\sigma) \in \cI,
    \]
    where the right-hand side is a Bochner integral in $(\cI,\norm{\cdot}_{\cI})$.\label{item.hashtensint}
\end{enumerate}
\end{proposition}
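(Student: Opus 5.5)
By Theorem \ref{thm.IPTPV}, the map $F(\sigma) \coloneqq \varphi_1(\cdot,\sigma)\otimes\cdots\otimes\varphi_m(\cdot,\sigma)$ is Bochner $\rho$-integrable in $V(\Om_1,\ldots,\Om_m) = C(\Om_1)\potimes\cdots\potimes C(\Om_m)$. Moreover, $\varphi = \int_\Sigma F\,\d\rho$ there, where we identify $V$ with the projective tensor product via Corollary \ref{cor.seriesVarop}. The plan is to push this Bochner integral through two bounded linear maps, using the standard fact that bounded linear operators commute with Bochner integrals: if $S\colon X\to Y$ is bounded and $G$ is Bochner integrable in $X$, then $S\circ G$ is Bochner integrable in $Y$ and $S\int G = \int S\circ G$.

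For item \ref{item.tensint}, I apply this fact with $S = T = T_1\potimes\cdots\potimes T_m$. This map is bounded, with norm at most $\prod_i\norm{T_i}$. On elementary tensors, $T(F(\sigma)) = T_1(\varphi_1(\cdot,\sigma))\otimes\cdots\otimes T_m(\varphi_m(\cdot,\sigma))$. Hence $T(\varphi) = T\big(\int_\Sigma F\,\d\rho\big) = \int_\Sigma T\circ F\,\d\rho$, which is the claimed formula. Strong measurability and integrability of $T\circ F$ are automatic from those of $F$: composing simple approximants with $T$ gives simple approximants, and $\norm{T(F(\sigma))}\le \norm{T}\norm{F(\sigma)}$.

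For item \ref{item.hashtensint}, set $V_j = \cA$. Let $\sh^i_b\colon \cA^{\potimes m}\to\cI$ denote the map $u\mapsto u\sh b$ for the fixed $b\in\cA^{i-1}\times\cI\times\cA^{m-1-i}$. By Proposition \ref{prop.hashonI}, $\sh^i_b$ is a well-defined linear map into $\cI$ and is bounded, with norm at most $\norm{b_i}_\cI\prod_{j\ne i}\norm{b_j}$, when $\cI$ carries $\norm{\cdot}_\cI$. Applying the commuting fact again to the Bochner integral from item \ref{item.tensint}, now in $(\cI,\norm{\cdot}_\cI)$, gives
\[
T(\varphi)\sh b = \int_\Sigma \big(T_1(\varphi_1(\cdot,\sigma))\otimes\cdots\otimes T_m(\varphi_m(\cdot,\sigma))\big)\sh b\,\rho(\d\sigma).
\]
By definition of $\sh$ on elementary tensors, the integrand equals $T_1(\varphi_1(\cdot,\sigma))\,b_1\cdots b_{m-1}\,T_m(\varphi_m(\cdot,\sigma))$.

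I do not expect a real obstacle. The only point needing care is measurability/integrability of the composed integrands, and this is handled uniformly by the lemma that bounded linear maps preserve strong measurability and Bochner integrability. That lemma in turn rests on Lemma \ref{lem.projmeas}, via Theorem \ref{thm.IPTPV}.
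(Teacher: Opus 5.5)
Your proposal is correct and follows essentially the same route as the paper: take the Bochner integral representation of $\varphi$ in $C(\Om_1)\potimes\cdots\potimes C(\Om_m)$ from Theorem~\ref{thm.IPTPV}, push it through the bounded map $T$, and then through the bounded map $u\mapsto u\sh b$ into $(\cI,\norm{\cdot}_{\cI})$ supplied by Proposition~\ref{prop.hashonI}.
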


\begin{proof}
By Theorem \ref{thm.IPTPV},
\[
\varphi = \int_{\Sigma} \varphi_1(\cdot,\sigma) \otimes \cdots \otimes \varphi_m(\cdot,\sigma)\,\rho(\d\sigma)
\]
is a Bochner integral in $C(\Om_1) \potimes \cdots \potimes C(\Om_m)$.
Since $T$ is a bounded linear map from $C(\Om_1) \potimes \cdots \potimes C(\Om_m)$ to $V_1 \potimes \cdots \potimes V_m$, it follows from basic properties of the Bochner integral that
\begin{align*}
    T(\varphi) & = \int_{\Sigma} T(\varphi_1(\cdot,\sigma) \otimes \cdots \otimes \varphi_m(\cdot,\sigma))\,\rho(\d\sigma) \\
    & = \int_{\Sigma} T_1(\varphi_1(\cdot,\sigma))\otimes \cdots \otimes T_m(\varphi_m(\cdot,\sigma))\,\rho(\d\sigma),
\end{align*}
as claimed in the first item.
\pagebreak

For the second item, let $\sh^i$ be as in the proof of Theorem \ref{thm.AVaropLFC}, and write $S \colon \cA^{\potimes m} \to \cI$ for the bounded linear map $u \mapsto (\sh^iu)[b] = u \sh b$.
Then the first item and, again, basic properties of the Bochner integral yield that
\begin{align*}
    T(\varphi)\sh b & = S(T(\varphi)) = S\left(\int_{\Sigma} T_1(\varphi_1(\cdot,\sigma))\otimes \cdots \otimes T_m(\varphi_m(\cdot,\sigma))\,\rho(\d\sigma)\right)\\
    & = \int_{\Sigma} S\left(T_1(\varphi_1(\cdot,\sigma))\otimes \cdots \otimes T_m(\varphi_m(\cdot,\sigma))\right)\rho(\d\sigma) \\
    & = \int_{\Sigma} T_1(\varphi_1(\cdot,\sigma))\,b_1\cdots T_{m-1}(\varphi_{m-1}(\cdot,\sigma))\,b_{m-1}\,T_m(\varphi_m(\cdot,\sigma))\,\rho(\d\sigma) \in \cI,
\end{align*}
as claimed in the second item.
\end{proof}

Of course, the situation of present interest is when $\Om_i = \sigma(a_i)$ with $a_i \in \cA_{\sa}$ and $T_i = \Phi_{a_i}$ for all $i=1,\ldots,m$.
Here is an example in this situation that will show up later.

\begin{example}[A calculation of $\varphi_{\sotimes}(\mathbf{a})$]\label{ex.genWiener}
Return to the setup of Example \ref{ex.multivarWiener}.
In particular,
\[
\varphi(\blambda) = \int_{\Sigma}e^{i\,\boldsymbol{\xi}(\sigma)\cdot \blambda} \, \mu(\d\sigma) \qquad (\blambda \in \R^m).
\]
Next, let $\cA$ be a unital $C^*$-algebra.
If $\mathbf{a} \in \cA_{\sa}^m$, then, using the decomposition of $\varphi$ from Example \ref{ex.multivarWiener}, Proposition \ref{prop.tens}\ref{item.tensint} yields
\[
\varphi_{\sotimes}(\mathbf{a}) = \int_{\Sigma} e^{i\xi_1(\sigma)\,a_1} \otimes \cdots \otimes e^{i\xi_m(\sigma)\,a_m}\,\mu(\d\sigma) \in \cA^{\potimes m}.
\]
If, in addition, $\cI \sni \cA$, $i=1,\ldots,m-1$, and $b \in \cA^{i-1} \times \cI \times \cA^{m-1-i}$, then
\[
\varphi_{\scI,\beta}(\mathbf{a})\sh b = \varphi_{\sotimes}(\mathbf{a})\sh b = \int_{\Sigma} e^{i\xi_1(\sigma)\,a_1} b_1 \cdots e^{i\xi_{m-1}(\sigma)\,a_{m-1}} b_{m-1}e^{i\xi_m(\sigma)\,a_m}\,\mu(\d\sigma) \in \cI
\]
by Theorem \ref{thm.AVaropLFC} and Proposition \ref{prop.tens}\ref{item.hashtensint}.
\end{example}

To end this subsection, I explore the finite-dimensional case.
Aside from Theorem \ref{thm.AVaropLFC}, the key result to its analysis is that if $\cA$ is a finite-dimensional unital $C^*$-algebra, then the $\cA$-Varopoulos family $\beta^{\scA}$ is equivalent to the uniform family $u$.

\begin{lemma}\label{lem.betaAuA}
Let $\cA$ be a finite-dimensional unital $C^*$-algebra.
The family
\[
u^{\scA} = \big(u_{r,m}^{\scA} \coloneqq (\dim \cA)^{m-1}u_{r,m}\colon C([-r,r]^m) \to [0,\infty]\big)_{r > 0, \, m \in \N}
\]
satisfies $\beta^{\scA} \leq u^{\scA}$.
In particular, $C_{\beta^{\scA}}(\R^m) = C(\R^m)$ for all $m \in \N$.
\end{lemma}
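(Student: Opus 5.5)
The plan is to bound the Varopoulos norm of $\varphi|_{\sigma(a_1)\times\cdots\times\sigma(a_m)}$ directly by an explicit finite tensor decomposition, exactly as in Example \ref{ex.findimLFC}. Fix $r > 0$, $\varphi \in C([-r,r]^m)$, and $\mathbf{a} = (a_1,\ldots,a_m) \in \cA_{\sa,r}^m$. By Example \ref{ex.findimLFC}, each $\sigma(a_i)$ is a finite subset of $[-r,r]$ with $\#\sigma(a_i) \leq \dim \cA$. Hence $C(\sigma(a_i)) = \C^{\sigma(a_i)}$, and we can write
\[
\varphi|_{\sigma(a_1)\times\cdots\times\sigma(a_m)} = \sum_{\blambda \in \sigma(a_1)\times\cdots\times\sigma(a_m)} \varphi(\blambda)\,1_{\{\lambda_1\}}\otimes\cdots\otimes 1_{\{\lambda_m\}}.
\]
This is a decomposition of the type in \eqref{eq.Vardecomp}, with finitely many nonzero terms.

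The naive estimate gives $\sum_{\blambda}|\varphi(\blambda)| \leq (\dim\cA)^m\,u_{r,m}(\varphi)$, which is off by one factor of $\dim\cA$. To get the exponent $m-1$, I will leave the last variable unsplit. That is, I group the terms according to $(\lambda_1,\ldots,\lambda_{m-1})$:
\[
\varphi|_{\sigma(a_1)\times\cdots\times\sigma(a_m)} = \sum_{(\lambda_1,\ldots,\lambda_{m-1})} 1_{\{\lambda_1\}}\otimes\cdots\otimes 1_{\{\lambda_{m-1}\}}\otimes \varphi(\lambda_1,\ldots,\lambda_{m-1},\cdot)|_{\sigma(a_m)}.
\]
Here the sum ranges over $\sigma(a_1)\times\cdots\times\sigma(a_{m-1})$, so it has at most $(\dim\cA)^{m-1}$ terms. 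Each term has $\ell^\infty$-norm product at most $\norm{\varphi}_{\ell^\infty([-r,r]^m)}$, since each indicator has sup norm $1$.

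It follows that $\norm{\varphi|_{\sigma(a_1)\times\cdots\times\sigma(a_m)}}_{V(\sigma(a_1),\ldots,\sigma(a_m))} \leq (\dim\cA)^{m-1}u_{r,m}(\varphi)$. Taking the supremum over $\mathbf{a} \in \cA_{\sa,r}^m$ gives $\beta^{\scA}_{r,m}(\varphi) \leq u^{\scA}_{r,m}(\varphi)$. The case $m=1$ is consistent, since there the bound reads $\beta^{\scA}_{r,1} \leq u_{r,1}$.

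For the final claim, note that $u \leq \beta^{\scA} \leq u^{\scA}$ by Example \ref{ex.Varopstrongerthanuniform} and the bound just proved. So for each fixed $m$ the seminorms $\beta^{\scA}_{r,m}$ and $u_{r,m}$ are equivalent up to the constant $(\dim\cA)^{m-1}$. In particular, every $\varphi \in C(\R^m)$ has finite $\beta^{\scA}_{r,m}$ for all $r > 0$, so $\cC_{\beta^{\scA}}(\R^m) = C(\R^m)$, and the topologies coincide with that of locally uniform convergence. Therefore the closure of $\cP_m$ in $\cC_{\beta^{\scA}}(\R^m)$ equals its closure in $C(\R^m)$, which is all of $C(\R^m)$ by Stone--Weierstrass, as in Example \ref{ex.uniformfamily}. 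This gives $C_{\beta^{\scA}}(\R^m) = C(\R^m)$.

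None of these steps is a serious obstacle. The only point needing care is getting the exponent $m-1$ rather than $m$, and that comes from not splitting the last variable.
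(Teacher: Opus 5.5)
Your proof is correct and is essentially the paper's argument. The paper likewise uses the finite spectra from Example~\ref{ex.findimLFC} and leaves one variable unsplit, though it keeps the first variable whole, writing $\varphi(\cdot,\bmu)|_{\sigma(a_1)}\otimes 1_{\{\mu_1\}}\otimes\cdots\otimes 1_{\{\mu_{m-1}\}}$, where you keep the last. Either way this gives at most $(\dim\cA)^{m-1}$ terms, each of norm at most $u_{r,m}(\varphi)$. Your justification of $C_{\beta^{\scA}}(\R^m)=C(\R^m)$ via $u\le\beta^{\scA}\le u^{\scA}$ and Stone--Weierstrass spells out a step the paper leaves implicit.
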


\begin{proof}
Note that $\beta_{\cdot,1}^{\scA} = u_{\cdot,1}^{\scA} = u_{\cdot,1}$, so it suffices to take $m \geq 2$.
In this case, let $r > 0$ and $\mathbf{a} = (a_1,\ldots,a_m) \in \cA_{\sa,r}^m$.
If $\varphi \in C([-r,r]^m)$, then
\begin{align*}
    \varphi|_{\sigma(a_1) \times \cdots \times \sigma(a_m)} & = \sum_{\blambda \in \sigma(a_1) \times \cdots \times \sigma(a_m)} \varphi(\blambda)\, 1_{\{\blambda\}} \\
    & = \sum_{\bmu \in \sigma(a_2) \times \cdots \times \sigma(a_m)} \left(\sum_{\lambda \in \sigma(a_1)} \varphi(\lambda,\bmu)\,1_{\{\lambda\}}\right) \otimes 1_{\{\mu_1\}} \otimes \cdots \otimes 1_{\{\mu_{m-1}\}} \\
    & = \sum_{\bmu \in \sigma(a_2) \times \cdots \times \sigma(a_m)} \varphi(\cdot,\bmu)|_{\sigma(a_1)} \otimes 1_{\{\mu_1\}} \otimes \cdots \otimes 1_{\{\mu_{m-1}\}}.
\end{align*}
Consequently,
\begin{align*}
    \norm{\varphi|_{\sigma(a_1) \times \cdots \times \sigma(a_m)}}_{V(\sigma(a_1),\ldots,\sigma(a_m))} & \leq \sum_{\bmu \in \sigma(a_2) \times \cdots \times \sigma(a_m)} \norm{\varphi(\cdot,\bmu)}_{\ell^{\infty}(\sigma(a_1))} \prod_{i=1}^{m-1}\norm{1_{\{\mu_i\}}}_{\ell^{\infty}(\sigma(a_{i+1}))} \\
    & = \sum_{\bmu \in \sigma(a_2) \times \cdots \times \sigma(a_m)} \norm{\varphi(\cdot,\bmu)}_{\ell^{\infty}(\sigma(a_1))} \\
    & \leq \left(\max_{i=2,\ldots,m} \#\sigma(a_i)\right)^{m-1}\max_{\blambda \in \sigma(a_1)\times \cdots \times \sigma(a_m)}|\varphi(\blambda)| \\
    & \leq (\dim \cA)^{m-1} \max_{\blambda \in \sigma(a_1)\times \cdots \times \sigma(a_m)}|\varphi(\blambda)| \\
    & \leq (\dim \cA)^{m-1}u_{r,m}(\varphi) = u_{r,m}^{\scA}(\varphi).
\end{align*}
Taking the supremum over all $\mathbf{a} \in \cA_{\sa,r}^m$ yields the result.
\end{proof}

\begin{corollary}[LFC in finite dimensions]\label{cor.findimLFC}
If $\cA$ is a finite-dimensional unital $C^*$-algebra and $\cI \sni \cA$, then $u^{\scA}$ controls polynomial LFC in $\cA$ with an insertion from $\cI$.
Furthermore, if $m \geq 2$ and $\varphi \in C(\R^m) = C_{u^{\scA}}(\R^m)$, then
\[
\varphi_{\cI,u^{\scA}}(\mathbf{a})\sh b = \sum_{\blambda \in \sigma(a_1) \times \cdots \times \sigma(a_m)} \varphi(\blambda)\,P_{\lambda_1}^{a_1}b_1\cdots P_{\lambda_{m-1}}^{a_{m-1}}b_{m-1} P_{\lambda_m}^{a_m}
\]
for all $\mathbf{a} \in \cA_{\sa}^m$ and $b \in \cI^{m-1}$.
\end{corollary}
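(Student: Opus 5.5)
The plan is to deduce both claims from Theorem \ref{thm.AVaropLFC} combined with Lemma \ref{lem.betaAuA} and Example \ref{ex.findimLFC}.

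For the first claim, note that Theorem \ref{thm.AVaropLFC} shows $\beta^{\scA}$ is ideal for LFC in $\cA$. In particular, it controls polynomial LFC in $\cA$ with an insertion from $\cI$. Lemma \ref{lem.betaAuA} gives $\beta^{\scA} \leq u^{\scA}$, and $u^{\scA}$ is finite on polynomials with $u_{\cdot,1}^{\scA} = u_{\cdot,1}$. The remark following Notation \ref{nota.LFC} says a stronger family inherits the control property. Hence $u^{\scA}$ controls polynomial LFC in $\cA$ with an insertion from $\cI$.

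Next I identify $C_{u^{\scA}}(\R^m)$. Since $u^{\scA}$ is a constant multiple of $u$ in each $(r,m)$, the two families define the same topology on $C(\R^m)$. Therefore $C_{u^{\scA}}(\R^m) = C_u(\R^m) = C(\R^m)$ by Stone--Weierstrass (Example \ref{ex.uniformfamily}). Also $\beta^{\scA} \leq u^{\scA}$ gives a continuous inclusion $C(\R^m) = C_{u^{\scA}}(\R^m) \hookrightarrow C_{\beta^{\scA}}(\R^m)$ by Proposition \ref{prop.Calpha}\ref{item.stronger}. By the consistency remark after Notation \ref{nota.LFC}, $\varphi_{\scI,u^{\scA}} = \varphi_{\scI,\beta^{\scA}}$ for every $\varphi \in C(\R^m)$.

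For the formula, apply \eqref{eq.VaropLFC} to get $\varphi_{\scI,\beta^{\scA}}(\mathbf{a})\sh b = \varphi_{\sotimes}(\mathbf{a})\sh b$. Then evaluate the right-hand side with \eqref{eq.findimLFC} from Example \ref{ex.findimLFC}, taking $\psi = \varphi|_{\sigma(a_1)\times\cdots\times\sigma(a_m)}$. The spectra are finite, so this restriction automatically lies in $V(\sigma(a_1),\ldots,\sigma(a_m))$. The result is exactly the stated sum.

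The only point needing care is the bookkeeping that the inclusions and the consistency of the two calculi are compatible. Concretely, this means checking that the identity map on $\cP_m$ extends to the identity $C_{u^{\scA}}(\R^m) \to C_{\beta^{\scA}}(\R^m)$. This holds by uniqueness of continuous extensions from the dense subspace $\cP_m$, so I expect no real obstacle.
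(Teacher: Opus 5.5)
Your proposal is correct and is essentially the paper's proof, which simply combines Lemma \ref{lem.betaAuA}, Theorem \ref{thm.AVaropLFC}, the comments following Notation \ref{nota.LFC}, and identity \eqref{eq.findimLFC}. Your explicit check that $u^{\scA}$ is finite on polynomials and satisfies $u^{\scA}_{\cdot,1} = u_{\cdot,1}$ is a worthwhile addition, since the inheritance remark after Notation \ref{nota.LFC} tacitly requires it.
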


\begin{proof}
Combine Lemma \ref{lem.betaAuA}, Theorem \ref{thm.AVaropLFC}, the comments following Notation \ref{nota.LFC}, and identity \eqref{eq.findimLFC} from Example \ref{ex.findimLFC}.
\end{proof}

It is worth observing that the constant $(\dim \cA)^{m-1}$ in $u^{\scA}$ can be improved when $\cI = \cA$.
To be more precise, Corollary \ref{cor.findimLFC} says that if $\cA$ is a finite-dimensional unital $C^*$-algebra and $\varphi \in C(\R^m)$, then
\begin{equation}
    \sup_{\mathbf{a} \in \cA_{\sa,r}^m}\norm{\varphi_{\scA,u^{\scA}}(\mathbf{a})}_{B_{m-1}(\cA^{m-1};\cA)} \leq (\dim \cA)^{m-1}\norm{\varphi}_{\ell^{\infty}([-r,r]^m)} \qquad (r > 0). \label{eq.subop}
\end{equation}
Using the full classification of finite-dimensional $C^*$-algebras and sharp estimates on the operator norms of multilinear Schur multipliers, specifically, \cite[eq.\ (4.1.6)]{ST2019}, inequality \eqref{eq.subop} can be improved substantially.
Before stating the result, I recall some information about the classification of finite-dimensional $C^*$-algebras.

\begin{fact}\label{fact.findimclass}
If $\cA$ is a finite-dimensional $C^*$-algebra, then there exist unique $k \in \N_0$ and unique-up-to-permutations $(n_1,\ldots,n_k) \in \N^k$ such that
\[
\cA \cong \mathrm{M}_{n_1}(\C) \oplus \cdots \oplus \mathrm{M}_{n_k}(\C).
\]
In fact, if $Z(\cA) = \{a \in \cA : ab=ba$ for all $b \in \cA\}$ is the center of $\cA$, then $k = \dim Z(\cA)$, and there is a unique collection $\{p_1,\ldots,p_k\} \subseteq \Proj\left(Z(\cA)\right) \setminus \{0\}$ of $k$ minimal central projections such that
\[
n_i = \sqrt{\dim \left(\cA p_i\right)} \qquad (i=1,\ldots,k).
\]
\end{fact}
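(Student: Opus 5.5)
The plan is to follow the standard structure theory route, organized around the center. First, $Z(\cA)$ is a finite-dimensional commutative unital $C^*$-algebra (for $\cA \neq \{0\}$; the case $\cA = \{0\}$ gives $k=0$). By Gelfand duality $Z(\cA) \cong C(X)$ with $X$ a compact Hausdorff space. Since $\dim C(X) < \infty$, $X$ is finite with $\#X = \dim Z(\cA) =: k$. The indicator functions of the points of $X$ then correspond to pairwise orthogonal nonzero projections $p_1,\ldots,p_k \in Z(\cA)$ summing to $1$, and these are exactly the minimal projections of $Z(\cA)$. Uniqueness of this collection follows because in $C(X)$ the minimal projections are precisely the point indicators. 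Centrality gives the decomposition
\[
\cA = \cA p_1 \oplus \cdots \oplus \cA p_k
\]
as a direct sum of $C^*$-algebras, each $\cA p_i$ having unit $p_i$. Moreover $Z(\cA p_i) = Z(\cA)p_i = \C p_i$, so each summand is a finite-dimensional $C^*$-algebra with trivial center.

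The core step, which I expect to be the main obstacle, is showing that a finite-dimensional unital $C^*$-algebra $\cB$ with $Z(\cB) = \C 1$ is isomorphic to $\mathrm{M}_n(\C)$. I would build matrix units.

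\begin{enumerate}[label=(\alph*)]
    \item Finite dimensionality gives a minimal nonzero projection $e$: pick a nonzero projection of minimal trace-rank, or use spectral projections of self-adjoint elements, which have finite spectrum.
    \item For minimal $e$, show $e\cB e = \C e$. Any self-adjoint $ebe$ has finite spectrum, and its spectral projections lie under $e$, so by minimality $ebe$ is a scalar multiple of $e$.
    \item Take a maximal family $e_1,\ldots,e_n$ of pairwise orthogonal minimal projections that are mutually Murray--von Neumann equivalent to $e$. Choose partial isometries $v_j$ with $v_j^*v_j = e$ and $v_jv_j^* = e_j$, and set $e_{ij} \coloneqq v_iv_j^*$. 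These satisfy the matrix-unit relations.
    \item Show $\sum_j e_j = 1$. The projection $q \coloneqq 1 - \sum_j e_j$ is either zero or satisfies $q\cB e \neq 0$. Indeed, if $q\cB e = 0$, then the closed two-sided ideal generated by $e$ yields a nontrivial central projection (the unit of the ideal $\cB e \cB$, which is a unital finite-dimensional $C^*$-subalgebra and an ideal), contradicting the trivial center. If $q\cB e \neq 0$, pick $x = qye \neq 0$. Then $x^*x \in e\cB e = \C e$, so a scalar multiple of $x$ is a partial isometry from $e$ to a projection under $q$, contradicting maximality.
    \item Conclude that $\cB = \sum_{i,j} e_i\cB e_j = \operatorname{span}\{e_{ij}\}$, because $e_i \cB e_j = v_i e\cB e v_j^*$. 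Hence $\cB \cong \mathrm{M}_n(\C)$, and $n^2 = \dim \cB$.
\end{enumerate}

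Combining these steps gives $\cA \cong \bigoplus_i \mathrm{M}_{n_i}(\C)$ with $n_i = \sqrt{\dim(\cA p_i)}$.

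For uniqueness, suppose $\cA \cong \bigoplus_{j=1}^{k'} \mathrm{M}_{m_j}(\C)$ is any such decomposition. The center of the right-hand side is $\C^{k'}$, so $k' = \dim Z(\cA) = k$. The minimal central projections are the block units, so they are carried to $\{p_1,\ldots,p_k\}$ under the isomorphism. Hence the $m_j$ equal the $\sqrt{\dim(\cA p_i)}$ up to a permutation.
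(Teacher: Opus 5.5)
Your proof is correct and follows the same route as the paper's sketch. Both realize $Z(\cA) \cong C(\Om)$ with $\Om$ finite, split $\cA = \bigoplus_i \cA p_i$ along the minimal central projections, identify each block with a full matrix algebra, and read off $k = \dim Z(\cA)$ and $n_i = \sqrt{\dim(\cA p_i)}$. The only difference is in the block step: the paper declares each $\cA p_i$ simple and cites Takesaki for ``finite-dimensional simple implies $\mathrm{M}_n(\C)$,'' whereas you prove that step directly from the trivial center by building matrix units, with your ideal-unit argument in (d) doing the work of showing that trivial center forces simplicity. Like the paper, you tacitly use that nonzero finite-dimensional $C^*$-algebras, here $\cA$ and $\cB e \cB$, are unital.
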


\begin{proof}[Sketch of proof]
I merely state the highlights of the proof.
Please see \cite[\S{I}.11]{Takesaki1979} for the details of the argument.
If $\cA$ is finite dimensional, then $Z(\cA)$ is a finite-dimensional commutative $C^*$-algebra. Thus, $Z(\cA) \cong C(\Om)$ for some finite discrete space $\Om = \{\om_1,\ldots,\om_k\}$.
If $p_1,\ldots,p_k \in \Proj\left(Z(\cA)\right)$ are the projections corresponding, respectively, to the functions $1_{\{\om_1\}},\ldots,1_{\{\om_k\}} \in C(\Om)$, then $\cA \cong \bigoplus_{i=1}^k\cA p_i$, and $\cA p_i$ is a simple $C^*$-algebra for all $i=1,\ldots,k$.
Also, since $\Proj\left(Z(\cA)\right) = \{$element corresponding to $1_F : F \subseteq \Om\}$, $\{p_1,\ldots,p_k\}$ is clearly the unique collection of size $k$ of non-zero minimal central projections in $\cA$. Now, if $\cB$ is a finite-dimensional simple $C^*$-algebra, then $\cB \cong \MnC$ for some $n \in \N$. Of course, $n$ is uniquely determined by $\cB$ because $\dim \cB = n^2$.
It follows that $\cA \cong \bigoplus_{i=1}^k\mathrm{M}_{n_i}(\C)$, where $n_i = \sqrt{\dim \left(\cA p_i\right)}$ for all $i=1,\ldots,k$.
\end{proof}

\begin{proposition}\label{prop.optimestim}
Let $\cA$ be a non-zero (and therefore unital) finite-dimensional $C^*$-algebra, and define
\[
n(\cA) \coloneqq \max\left\{\dim\left(\cA p\right) : p \in \Proj\left(Z(\cA)\right) \text{ is minimal}\right\} \in \{1,\ldots,\dim\cA\}.
\]
If $m \geq 2$ and $\varphi \in C(\R^m)$, then
\[
\|\varphi_{\scA,u^{\scA}}(\mathbf{a})\|_{B_{m-1}(\cA^{m-1};\cA)} \leq n(\cA)^{\frac{m-1}{4}}\norm{\varphi}_{\ell^{\infty}(\sigma(a_1) \times \cdots \times \sigma(a_m))} \qquad (\mathbf{a} \in \cA_{\sa}^m).
\]
In particular, the family $\big(n(\cA)^{(m-1)/4}u_{r,m})_{r > 0, \, m \in \N}$ controls polynomial LFC in $\cA$.
\end{proposition}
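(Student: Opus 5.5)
The plan is to reduce to a single matrix block using the structure theory in Fact \ref{fact.findimclass}, and then recognize the LFC on each block as a multilinear Schur multiplier whose norm is controlled by \cite[eq.\ (4.1.6)]{ST2019}.

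First, I fix a $\ast$-isomorphism $\cA \cong \mathrm{M}_{n_1}(\C) \oplus \cdots \oplus \mathrm{M}_{n_k}(\C)$ as in Fact \ref{fact.findimclass}. It is isometric, with the norm on the right given by the maximum over blocks. Under it each $a_i \in \cA_{\sa}$ becomes $a_i = \bigoplus_j a_i^{(j)}$ with $a_i^{(j)}$ self-adjoint and $\sigma(a_i) = \bigcup_j \sigma(a_i^{(j)})$. Since continuous functional calculus commutes with $\ast$-homomorphisms, we get $P_\lambda^{a_i} = \bigoplus_j P_\lambda^{a_i^{(j)}}$, where the $j$-th summand is $0$ if $\lambda \notin \sigma(a_i^{(j)})$. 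Multiplication in the direct sum is componentwise, so the formula in Corollary \ref{cor.findimLFC} (with $\cI = \cA$) splits blockwise. The $j$-th component of $\varphi_{\scA,u^{\scA}}(\mathbf{a})\sh b$ is
\[
\sum_{\blambda \in \sigma(a_1^{(j)}) \times \cdots \times \sigma(a_m^{(j)})} \varphi(\blambda)\,P_{\lambda_1}^{a_1^{(j)}} b_1^{(j)} \cdots P_{\lambda_{m-1}}^{a_{m-1}^{(j)}} b_{m-1}^{(j)} P_{\lambda_m}^{a_m^{(j)}},
\]
which is the same LFC expression computed inside $\mathrm{M}_{n_j}(\C)$. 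Because $\norm{b_i^{(j)}} \leq \norm{b_i}$, it suffices to prove that for $\cA = \mathrm{M}_n(\C)$ one has
\[
\norm{\varphi_{\sotimes}(\mathbf{a}) \sh b} \leq n^{(m-1)/2}\norm{\varphi}_{\ell^\infty(\sigma(a_1)\times\cdots\times\sigma(a_m))}\prod_i \norm{b_i}.
\]
Here $n^2 = \dim(\cA p)$ for the corresponding minimal central projection $p$, so $n^{(m-1)/2} = \dim(\cA p)^{(m-1)/4} \leq n(\cA)^{(m-1)/4}$. Taking the maximum over blocks then gives the claimed inequality.

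For a single block $\mathrm{M}_n(\C)$, I diagonalize $a_i = U_i D_i U_i^*$ with $U_i$ unitary and $D_i = \mathrm{diag}(\mu^i_1,\ldots,\mu^i_n)$, listing eigenvalues with multiplicity. Then $P^{a_i}_\lambda = U_i\big(\sum_{s : \mu^i_s = \lambda} e_{ss}\big) U_i^*$. Expanding and regrouping shows
\[
\varphi_{\sotimes}(\mathbf{a})\sh b = U_1\, M_\phi\big[U_1^*b_1U_2,\ldots,U_{m-1}^*b_{m-1}U_m\big]\, U_m^*,
\]
where $M_\phi$ is the $(m-1)$-linear Schur multiplier in the standard basis with symbol $\phi(s_1,\ldots,s_m) \coloneqq \varphi(\mu^1_{s_1},\ldots,\mu^m_{s_m})$. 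Concretely, $M_\phi[x_1,\ldots,x_{m-1}]_{s_1 s_m} = \sum_{s_2,\ldots,s_{m-1}} \phi(\mathbf{s})\,(x_1)_{s_1s_2}\cdots(x_{m-1})_{s_{m-1}s_m}$. Conjugation by unitaries preserves the operator norm, and $\sup|\phi| = \norm{\varphi}_{\ell^\infty(\prod\sigma(a_i))}$. So the desired bound is exactly the sharp estimate $\norm{M_\phi}_{B_{m-1}(\mathrm{M}_n^{m-1};\mathrm{M}_n)} \leq n^{(m-1)/2}\sup|\phi|$ from \cite[eq.\ (4.1.6)]{ST2019}.

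For the final claim, polynomial control: taking $\varphi = P \in \cP_m$, the LFC agrees with $P_{\sotimes}(\mathbf{a})\sh b$. Since $\sigma(a_i)\subseteq[-r,r]$, the bound gives $\sup_{\mathbf{a}\in\cA_{\sa,r}^m}\norm{P_{\sotimes}(\mathbf{a})\sh b} \leq n(\cA)^{(m-1)/4}u_{r,m}(P)\prod\norm{b_i}$. Moreover, the family is finite on polynomials, and its $m=1$ member is $u_{\cdot,1}$ because the exponent vanishes. Hence the family controls polynomial LFC in $\cA$, taking $\cI = \cA$ so that $C_\cI = 1$.

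I expect the main obstacle to be the bookkeeping in the Schur-multiplier identification, namely handling repeated eigenvalues and making sure the regrouping of spectral projections into standard matrix units matches the normalization of \cite[eq.\ (4.1.6)]{ST2019}. That reference must give the operator-norm bound with constant $n^{(m-1)/2}$ for $(m-1)$-linear Schur multipliers on $\mathrm{M}_n$ with bounded symbols. If its normalization differs, I would fall back to the known bilinear estimate $\sqrt n$ and iterate, writing the multilinear multiplier as a composition of one-variable-at-a-time multipliers.
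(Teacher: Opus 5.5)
Your proposal is correct and follows the paper's proof. The paper likewise obtains the $\mathrm{M}_{n}(\C)$ bound $n^{(m-1)/2}\norm{\varphi}_{\ell^{\infty}(\sigma(a_1)\times\cdots\times\sigma(a_m))}$ from \cite[Cor.\ 4.1.4]{ST2019}, which rests on \cite[eq.\ (4.1.6)]{ST2019}; your unitary-diagonalization and Schur-multiplier identification just unpacks that citation, and the paper then passes blockwise to $\mathrm{M}_{n_1}(\C)\oplus\cdots\oplus\mathrm{M}_{n_k}(\C)$ via Fact~\ref{fact.findimclass} and Corollary~\ref{cor.findimLFC} exactly as you do. Drop your fallback, though: it is not needed and would not work in general, because a general symbol $\phi(s_1,\ldots,s_m)$ does not factor into two-index symbols, so $M_\phi$ is not a composition of linear Schur multipliers.
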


\begin{proof}
To begin, let us prove the claimed estimate when $\cA = \MnC$ for $n \in \N$.
Let $m \geq 2$ and $\varphi \in C(\R^m)$.
By Corollary \ref{cor.findimLFC},
\[
\varphi_{\scA,u^{\scA}}(\mathbf{a}) \sh b = \sum_{\blambda \in \sigma(a_1) \times \cdots \times \sigma(a_m)} \varphi(\blambda) \, P_{\lambda_1}^{a_1} b_1\cdots P_{\lambda_{m-1}}^{a_{m-1}} b_{m-1} P_{\lambda_m}^{a_m}
\]
for all $\mathbf{a} \in \cA_{\sa}^m = \MnC_{\sa}^m$ and $b \in \cA^{m-1} = \MnC^{m-1}$.
Consequently, by \cite[Cor.\ 4.1.4]{ST2019}, which relies on the aforementioned sharp operator-norm estimates on multilinear Schur multipliers (vid.\ \cite[eq.\ (4.1.6)]{ST2019}), 
\[
\norm{\varphi_{\scA,u^{\scA}}(\mathbf{a})}_{B_{m-1}(\MnC^{m-1};\MnC)} \leq n^{\frac{m-1}{2}}\norm{\varphi}_{\ell^{\infty}(\sigma(a_1)\times \cdots \times \sigma(a_m))}
\]
for all $\mathbf{a} \in \cA_{\sa}^m = \MnC_{\sa}^m$.
Since $n(\cA) = n(\MnC) = n^2$, this is the desired estimate.

In general, Fact \ref{fact.findimclass} implies that it suffices to assume $\cA = \mathrm{M}_{n_1}(\C) \oplus \cdots \oplus \mathrm{M}_{n_k}(\C)$ for some $k \in \N$ and $n_1,\ldots,n_k \in \N$, in which case $n(\cA) = \max\left\{n_i^2 : i=1,\ldots,k\right\}$.
Recall that the $C^*$-algebraic direct sum $\cA = \mathrm{M}_{n_1}(\C) \oplus \cdots \oplus \mathrm{M}_{n_k}(\C)$ is given componentwise operations and the norm
\[
\norm{(a_1,\ldots,a_k)} = \max_{i=1,\ldots,k}\norm{a_i}_{\mathrm{M}_{n_i}(\C)}.
\]
Consequently, if $A = (a_1,\ldots,a_k) \in \cA$ and $f \colon \sigma(A) \to \C$ is a function, then
\[
\sigma(A) = \bigcup_{i = 1}^k \sigma(a_i) \; \text{ and } \; f(A) = (f(a_1),\ldots,f(a_k)).
\]
In particular,
\[
P_{\lambda}^A = (P_{\lambda}^{a_1},\ldots,P_{\lambda}^{a_k}) \qquad (\lambda \in \C).
\]
Now, fix $\boldsymbol{A} = (A_1,\ldots,A_m) \in \cA_{\sa}^m$ and $B = (B_1,\ldots,B_{m-1}) \in \cA^{m-1}$.
For $j=1,\ldots,m$ and $i = 1,\ldots, m-1$, write $A_j = (a_{j,1},\ldots,a_{j,k})$ and $B_i = (b_{i,1},\ldots,b_{i,k})$.
Next, if $\ell=1,\ldots,k$, then by Corollary \ref{cor.findimLFC} (twice) and the above,
\begin{align*}
    (\varphi_{\scA,u^{\scA}}(\boldsymbol{A}) \sh B)_{\ell} & = \left(\sum_{\blambda \in \sigma(A_1) \times \cdots \times \sigma(A_m)} \varphi(\blambda) \, P_{\lambda_1}^{A_1}B_1\cdots P_{\lambda_{m-1}}^{A_{m-1}}B_{m-1} P_{\lambda_m}^{A_m}\right)_{\ell} \\
    & = \sum_{\blambda \in \sigma(A_1) \times \cdots \times \sigma(A_m)} \varphi(\blambda) \, P_{\lambda_1}^{a_{1,\ell}}b_{1,\ell}\cdots P_{\lambda_{m-1}}^{a_{m-1,\ell}}b_{m-1,\ell}P_{\lambda_m}^{a_{m,\ell}} \\
    & = \sum_{\blambda \in \sigma(a_{1,\ell}) \times \cdots \times \sigma(a_{m,\ell})} \varphi(\blambda) \, P_{\lambda_1}^{a_{1,\ell}}\,b_{1,\ell}\cdots P_{\lambda_{m-1}}^{a_{m-1,\ell}}\,b_{m-1,\ell}\,P_{\lambda_m}^{a_{m,\ell}} \\
    & = \varphi_{\mathsmaller{\mathrm{M}_{n_{\ell}}(\C)},u^{\mathsmaller{\mathrm{M}_{n_{\ell}}(\C)}}}(a_{1,\ell},\ldots,a_{m,\ell})\sh[b_{1,\ell},\ldots,b_{m-1,\ell}].
\end{align*}
Therefore, by the previous paragraph,
\begin{align*}
    \norm{\varphi_{\scA,u^{\scA}}(\boldsymbol{A}) \sh B} & = \max_{\ell=1,\ldots,k}\norm{\varphi_{\mathsmaller{\mathrm{M}_{n_{\ell}}(\C)},u^{\mathsmaller{\mathrm{M}_{n_{\ell}}(\C)}}}(a_{1,\ell},\ldots,a_{m,\ell})\sh[b_{1,\ell},\ldots,b_{m-1,\ell}]}_{\mathrm{M}_{n_{\ell}}(\C)} \\
    & \leq \norm{B_1}\cdots\norm{B_{m-1}} \max_{\ell=1,\ldots,k}\left(n_{\ell}^{\frac{m-1}{2}}\norm{\varphi}_{\ell^{\infty}(\sigma(a_{1,\ell}) \times \cdots \times \sigma(a_{m,\ell}))}\right) \\
    & \leq n(\cA)^{\frac{m-1}{4}}\norm{\varphi}_{\ell^{\infty}(\sigma(A_1) \times \cdots \times \sigma(A_m))}\norm{B_1}\cdots\norm{B_{m-1}}.
\end{align*}
This completes the proof.
\end{proof}

\subsection{An example using multiple operator integrals}\label{subsec.MOIfam}

Finally, for the reader sufficiently familiar with multiple operator integrals (MOIs), I use some MOI theory to introduce and study one more interesting family of possibly infinite norms and its associated LFC.
I shall freely use the notation, terminology, and results from \cite[\S2.2 \& \S4.2]{Nikitopoulos2023n}.
The reader who is uninterested in or insufficiently familiar with MOIs may safely skip to the next section. 

Below, if $\Xi$ is a Polish space, then $\ell^{\infty}(\Xi,\cB_{\Xi})^{\iotimes m}$ denotes the $m$-fold integral projective tensor product $\ell^{\infty}(\Xi,\cB_{\Xi}) \iotimes \cdots \iotimes \ell^{\infty}(\Xi,\cB_{\Xi})$.

\begin{notation}\label{nota.nufam}
If $r > 0$, $m \in \N$, and $\varphi \in C([-r,r]^m)$, then
\[
\nu_{r,m}(\varphi) \coloneqq \norm{\varphi}_{\ell^{\infty}([-r,r],\cB_{[-r,r]})^{\iotimes m}} \in [0,\infty].
\]
As usual, write $\nu \coloneqq (\nu_{r,m} \colon C([-r,r]^m) \to [0,\infty])_{r > 0, \, m \in \N}$.
\end{notation}

The space $C_{\nu}^k(\R)$ is precisely the space $NC^k(\R)$ of ``noncommutative $C^k$ functions'' I introduced in \cite{Nikitopoulos2023n}.

\begin{proposition}\label{prop.nufam}
The family $\nu$ of possibly infinite norms satisfies $u \leq \nu$.
Moreover, $\nu$ satisfies properties \ref{item.complete}--\ref{item.consistent} from Definition \ref{def.norms1}.
In particular, $\nu \leq \beta$ by Proposition \ref{prop.Calpha}\ref{item.compcomp}.
\end{proposition}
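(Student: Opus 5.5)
The plan is to verify each property by manipulating integral decompositions directly, using the description of $\ell^{\infty}(\Xi,\cB_{\Xi})^{\iotimes m}$ from \cite[\S2.2]{Nikitopoulos2023n}. Recall that a function $\varphi$ belongs to that space if and only if there are a $\sigma$-finite measure space $(\Sigma,\sH,\rho)$ and product-measurable, bounded-in-the-first-variable functions $\varphi_1,\ldots,\varphi_m$ such that $\int_{\Sigma}\prod_i\norm{\varphi_i(\cdot,\sigma)}_{\ell^{\infty}}\,\rho(\d\sigma)<\infty$ and $\varphi(\blambda)=\int_{\Sigma}\prod_i\varphi_i(\lambda_i,\sigma)\,\rho(\d\sigma)$. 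The norm $\nu_{r,m}(\varphi)$ is the infimum of $\int_{\Sigma}\prod_i\norm{\varphi_i(\cdot,\sigma)}_{\ell^{\infty}}\,\d\rho$ over all such decompositions. I will also use the facts, established there, that this space is a Banach $\ast$-algebra under pointwise operations and that its norm dominates the sup norm.

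\textbf{$u\leq\nu$ and completeness.} The inequality $u\leq\nu$ is the pointwise bound
\[
|\varphi(\blambda)|\leq\int_{\Sigma}\prod_i\norm{\varphi_i(\cdot,\sigma)}_{\ell^{\infty}}\,\d\rho,
\]
followed by taking the infimum over decompositions. For completeness, I identify $C([-r,r]^m)_{\nu_{r,m}}$ with $C([-r,r]^m)\cap\ell^{\infty}([-r,r],\cB_{[-r,r]})^{\iotimes m}$. This is a closed subspace of the Banach space $\ell^{\infty}([-r,r],\cB_{[-r,r]})^{\iotimes m}$. Indeed, a $\nu_{r,m}$-Cauchy sequence of continuous functions converges in that Banach space to some $\psi$. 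Since $u\leq\nu$, the sequence also converges uniformly, so $\psi$ is the uniform limit of continuous functions and is therefore continuous.

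\textbf{The structural properties.} Each of these follows from an explicit operation on decompositions, after which one takes the infimum.
\begin{enumerate}
\item Increasing: restrict each $\varphi_i(\cdot,\sigma)$ to $[-s,s]$; restriction does not increase sup norms.
\item Submultiplicative: given decompositions over $(\Sigma,\rho)$ and $(\Sigma',\rho')$, use $\Sigma\times\Sigma'$ with the product measure $\rho\otimes\rho'$ and the factors $\varphi_i\psi_i$; Tonelli gives the norm bound.
\item $\ast$-isometric: conjugate every factor, which gives $\nu_{r,m}(\overline{\varphi})\leq\nu_{r,m}(\varphi)$; applying this twice gives equality.
\item Translation invariant: use the factors $\varphi_i(\cdot+\mu_i,\sigma)|_{[-r,r]}$. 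Their sup norms are at most those on $[-r-|\bmu|_{\infty},r+|\bmu|_{\infty}]$, and product measurability survives composition with the continuous map $(\lambda,\sigma)\mapsto(\lambda+\mu_i,\sigma)$.
\item Compatible: again use the product measure space $\Sigma\times\Sigma'$, concatenating the $m$ factors of $\varphi$ with the $n$ factors of $\psi$.
\item Consistent, inequality $\leq$: insert the constant function $1$ as the $i$-th factor.
\item Consistent, inequality $\geq$: take a decomposition of $\tilde{\varphi}_i$ and freeze its $i$-th variable at a point $c\in[-r,r]$. Absorb the scalar $\varphi_i(c,\sigma)$ into a neighbouring factor, which is possible since $m\geq1$ guarantees another factor exists. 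Because $|\varphi_i(c,\sigma)|\leq\norm{\varphi_i(\cdot,\sigma)}_{\ell^{\infty}}$, this yields a decomposition of $\varphi$ of no larger cost, and measurability is preserved since fixing a coordinate of a jointly measurable function gives a measurable function.
\end{enumerate}

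\textbf{Conclusion.} Once completeness, compatibility, $u\leq\nu$ and $\nu_{\cdot,1}=u_{\cdot,1}$ are in hand, Proposition \ref{prop.Calpha}\ref{item.compcomp} gives $\nu\leq\beta$. For $\nu_{\cdot,1}=u_{\cdot,1}$: the one-fold integral projective tensor product norm is the sup norm, since $\varphi=\varphi\cdot1$ on a one-point measure space gives $\leq$, and $u\leq\nu$ gives $\geq$.

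I expect the main obstacle to be the measure-theoretic bookkeeping rather than any estimate. The points to check are that product-measure and translated decompositions remain admissible in the sense of \cite{Nikitopoulos2023n} (in particular $\sigma$-finiteness of $\rho\otimes\rho'$, and measurability of the products), and that the completeness argument may legitimately quote the Banach-space property of the integral projective tensor product. If that Banach-space property is not directly available, I would first try to prove it by summing decompositions over a disjoint union of measure spaces, mirroring the series description of the projective tensor norm used in Corollary \ref{cor.seriesVarop}.
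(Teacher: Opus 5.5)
Your proposal is correct and takes essentially the same approach as the paper. The paper's sketch just says that every property follows directly from the definition of $\ell^{\infty}$-integral projective decompositions, or by standard arguments from \cite[Prop.\ 2.2.3]{Nikitopoulos2023n}, with $\nu \leq \beta$ then obtained from Proposition \ref{prop.Calpha}\ref{item.compcomp}; your decomposition manipulations (product measure spaces, freezing a variable for the reverse consistency inequality, the closed-subspace completeness argument, and the one-point space for $\nu_{\cdot,1} = u_{\cdot,1}$) are precisely the routine verifications it leaves to the reader.
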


\begin{proof}[Sketch of proof]
All the assertions in the statement follow either directly from the definitions or by standard arguments from \cite[Prop.\ 2.2.3]{Nikitopoulos2023n}.
Note that it is also easy to see directly that $\nu \leq \beta$.
\end{proof}

Of course, $C_{\nu}(\R) = \cC(\R) = C(\R) = VC(\R)$.
It is unclear to me whether $\nu \neq \beta$, $C_{\nu}(\R^m) \subsetneq \cC_{\nu}(\R^m)$, $VC(\R^m) \subsetneq C_{\nu}(\R^m)$, or $VC(\R^m) \subsetneq \cC_{\nu}(\R^m)$ whenever $m \geq 2$.

\begin{theorem}[MOIs as LFC]\label{thm.nufam}
Let $\cM$ be a von Neumann algebra and $\cA\subseteq \cM$ be a unital $C^*$-subalgebra;
for example, $\cA$ could be any unital $C^*$-algebra and $\cM$ its double dual $\cA^{**}$.
The family $\nu$ controls polynomial LFC in $\cA$.
Furthermore, if $m \geq 2$ and $\varphi \in C_{\nu}(\R^m)$, then
\begin{equation}
    \varphi_{\scA,\nu}(\mathbf{a})\sh b = \int_{\sigma(a_m)} \cdots \int_{\sigma(a_1)} \varphi(\blambda) \, P^{a_1}(\d\lambda_1)\,b_1\cdots P^{a_{m-1}}(\d\lambda_{m-1})\,b_{m-1}\,P^{a_m}(\d\lambda_m) \label{eq.MOILFC}
\end{equation}
for all $\mathbf{a} \in \cA_{\sa}^m$ and $b \in \cA^{m-1}$.
Moreover, if $(\cI,\norm{\cdot}_{\cI}) \sni \cM$ is an integral symmetrically normed ideal of $\cM$ (vid.\ \cite[Def.\ 3.1.3(b)]{Nikitopoulos2023h}), then $\nu$ controls polynomial LFC in $\cM$ with an insertion from $\cI$, and \eqref{eq.MOILFC} holds with $\varphi_{\scI,\nu}(\mathbf{a})\sh b$ in place of $\varphi_{\scA,\nu}(\mathbf{a}) \sh b$ for all $\varphi \in C_{\nu}(\R^m)$, $\mathbf{a} \in \cM_{\sa}^m$, and $b \in \cI^{m-1}$.
\end{theorem}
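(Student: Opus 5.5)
The plan is to copy the proof of Theorem \ref{thm.AVaropLFC}, with the projective tensor product $C(\sigma(a_1))\potimes\cdots\potimes C(\sigma(a_m))$ replaced by the integral projective tensor product $\ell^{\infty}(\Xi,\cB_{\Xi})^{\iotimes m}$, and the map $\Phi_{a_1}\potimes\cdots\potimes\Phi_{a_m}$ followed by $\sh$ replaced by the MOI
\[
\varphi \mapsto \int_{\sigma(a_m)}\cdots\int_{\sigma(a_1)} \varphi(\blambda)\,P^{a_1}(\d\lambda_1)\,b_1\cdots P^{a_m}(\d\lambda_m).
\]
Here $P^{a_i}$ is the projection-valued spectral measure of $a_i$ in $\cM$. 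Working in $\cM$ is what makes these spectral measures available.

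First, I would recall from \cite[\S4.2]{Nikitopoulos2023n} the boundedness estimate for MOIs with $\ell^{\infty}$-IPTP symbols. For $\mathbf{a}\in\cM_{\sa,r}^m$, $b\in\cM^{m-1}$, and $\psi \in \ell^{\infty}([-r,r],\cB_{[-r,r]})^{\iotimes m}$, one has
\[
\norm{I^{\mathbf{a}}\psi[b]} \leq \norm{\psi}_{\ell^{\infty}([-r,r],\cB_{[-r,r]})^{\iotimes m}}\prod_{j}\norm{b_j}.
\]
To see this, take a decomposition $\psi(\blambda)=\int_{\Sigma}\psi_1(\lambda_1,\sigma)\cdots\psi_m(\lambda_m,\sigma)\,\rho(\d\sigma)$. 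The MOI is then the (weak$^*$) integral $\int_{\Sigma}\psi_1(a_1,\sigma)\,b_1\cdots\psi_m(a_m,\sigma)\,\rho(\d\sigma)$, and each $\norm{\psi_i(a_i,\sigma)}\le\norm{\psi_i(\cdot,\sigma)}_{\ell^\infty}$. Taking the infimum over decompositions gives the bound. The integral ideal version is \cite[Def.\ 3.1.3(b)]{Nikitopoulos2023h}: for $b\in\cM^{i-1}\times\cI\times\cM^{m-1-i}$, the same bound holds with $\norm{b_i}_{\cI}$ in place of $\norm{b_i}$ and the result in $\cI$.

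Second, I would check that for a polynomial $P$ the MOI equals $P_{\sotimes}(\mathbf{a})\sh b$. It suffices to treat monomials. For $\blambda^\delta$ the symbol is an elementary tensor, and $\int\lambda^{\delta_i}\,P^{a_i}(\d\lambda)=a_i^{\delta_i}$. Combining the first two steps with $\nu_{r,m}(P)=\norm{P|_{[-r,r]^m}}_{\ell^{\infty}(\ldots)^{\iotimes m}}$ shows that $\nu$ controls polynomial LFC in $\cA$, or in $\cM$ with an insertion from $\cI$. Finiteness of $\nu$ on polynomials and $\nu_{\cdot,1}=u_{\cdot,1}$ come from Proposition \ref{prop.nufam}. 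The insertion estimate carries no $C_{\cI}$ factor, so it also gives control in $\cI$.

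Third, for identity \eqref{eq.MOILFC}, I would define $\Phi(\varphi)(\mathbf{a})[b]$ to be the right-hand side, for $\varphi\in C_{\nu}(\R^m)$. This is well defined because $\nu_{r,m}(\varphi)<\infty$ with $r\ge\max\norm{a_i}$, and by the standard restriction property the restriction to $\sigma(a_1)\times\cdots\times\sigma(a_m)$ may be used. The estimate from the first step gives $\sup_{\mathbf{a}\in\cA^m_{\sa,r}}\norm{\Phi(\varphi)(\mathbf{a})}\le\nu_{r,m}(\varphi)$, so $\Phi$ is continuous as a linear map into bounded functions on bounded sets.

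The main obstacle is showing that $\Phi(\varphi)$ is continuous in $\mathbf{a}$, i.e., lies in $\Cbb$, and that its values lie in $\cA$ rather than merely $\cM$ when $b\in\cA^{m-1}$. I would handle both by density. Since $\varphi\in C_{\nu}(\R^m)$, there are polynomials $P_n\to\varphi$ in $C_\nu$. The estimate then gives $\Phi(P_n)\to\Phi(\varphi)$ uniformly on bounded sets. Each $\Phi(P_n)=\ev^m(P_n)$ is continuous and $\cA$-valued (respectively $\cI$-valued) by Lemma \ref{lem.pLFCcont}, and uniform limits preserve both properties. Since $\Phi$ agrees with $\ev^m$ on $\cP_m$ and is continuous, uniqueness in Theorem \ref{thm.LFC} gives $\Phi=\Phi^m_{\scA,\nu}$ (respectively $\Phi^m_{\scI,\nu}$), which yields \eqref{eq.MOILFC}.
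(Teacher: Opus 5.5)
Your proposal is correct and follows essentially the paper's route: identify the MOI with the polynomial LFC on polynomials via their obvious $\ell^{\infty}$-integral projective decompositions, use the MOI norm bound (in $\mathcal{M}$, and its integral-symmetrically-normed-ideal version) to get control by $\nu$, and then pass to all of $C_{\nu}(\mathbb{R}^m)$ by density of polynomials. The only difference is cosmetic. The paper shows that the set of $\varphi$ for which \eqref{eq.MOILFC} holds is closed, so the MOI inherits $\mathcal{A}$-valuedness and continuity in $\mathbf{a}$ from the LFC side for free. You instead verify these properties directly for the MOI-defined map and then invoke uniqueness of the continuous extension; both work.
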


\begin{proof}[Sketch of proof]
Let $r > 0$ and $P \in \cP_m$.
The representation $P(\blambda) = \sum_{|\delta| \leq d} c_{\delta}\,\blambda^{\delta}$ of $P$ is an $\ell^{\infty}$-integral projective decomposition of $P|_{[-r,r]^m}$.
From this observation and the definition of the MOI
\[
(I^{\mathbf{a}}P)[b] = \int_{\sigma(a_m)} \cdots \int_{\sigma(a_1)} P(\blambda) \, P^{a_1}(\d\lambda_1)\,b_1\cdots P^{a_{m-1}}(\d\lambda_{m-1})\,b_{m-1}\,P^{a_m}(\d\lambda_m),
\]
it follows that
\begin{equation}
    (I^{\mathbf{a}}P)[b] = \sum_{|\delta| \leq d} c_{\delta} \, a_1^{\delta_1}b_1\cdots a_{m-1}^{\delta_{m-1}}b_{m-1}a_m^{\delta_m} = \ev_{\scA}^m(P)(\mathbf{a})[b] \quad \big(\mathbf{a} \in \cA_{\sa,r}^m, \; b \in \cA^{m-1}\big).\label{eq.MOIagree}
\end{equation} 
In addition, by \cite[Thm.\ 4.2.4(iii)]{Nikitopoulos2023n}, if $\mathbf{a} \in \cA_{\sa,r}^m$, then
\begin{align*}
    \norm{\ev_{\scA}^m(P)(\mathbf{a})}_{B_{m-1}(\cA^{m-1};\cA)} & = \norm{(I^{\mathbf{a}}P)|_{\cA^{m-1}}}_{B_{m-1}(\cA^{m-1};\cA)} \\
    & \leq \norm{P}_{\ell^{\infty}(\sigma(a_1),\cB_{\sigma(a_1)}\hspace{-0.1mm}) \iotimes \cdots \iotimes \ell^{\infty}(\sigma(a_m),\cB_{\sigma(a_m)}\hspace{-0.1mm})} \leq \nu_{r,m}(P).
\end{align*}
Thus, $\nu$ controls polynomial LFC in $\cA$.
Now, let
\[
\cS \coloneqq \left\{\varphi \in C_{\nu}(\R^m) : \text{\eqref{eq.MOILFC} holds for all } \mathbf{a} \in \cA_{\sa}^m \text{ and } b \in \cA^{m-1}\right\}.
\]
By \cite[Thm.\ 4.2.4(iii)]{Nikitopoulos2023n}, if $\varphi \in C_{\nu}(\R^m)$, $r > 0$, $\mathbf{a} \in \cA_{\sa,r}^m$, and $b \in \cA^{m-1}$, then
\[
\norm{(I^{\mathbf{a}}\varphi)[b]} \leq \norm{\varphi}_{\ell^{\infty}(\sigma(a_1),\cB_{\sigma(a_1)}\hspace{-0.1mm}) \iotimes \cdots \iotimes \ell^{\infty}(\sigma(a_m),\cB_{\sigma(a_m)}\hspace{-0.1mm})} \prod_{i=1}^{m-1}\norm{b_i} \leq \nu_{r,m}(\varphi)\prod_{i=1}^{m-1}\norm{b_i}.
\]
Since $\Phi_{\scA,\nu}^m \colon C_{\nu}(\R^m) \to \Cbb(\cA_{\sa}^m;B_{m-1}(\cA^{m-1};\cA))$ is continuous, it follows that $\cS$ is closed in $C_{\nu}(\R^m)$.
By \eqref{eq.MOIagree}, $\cP_m \subseteq \cS$.
Since $\cP_m$ is dense in $C_{\nu}(\R^m)$, it follows that $\cS = C_{\nu}(\R^m)$, as desired.

Let $(\cI,\norm{\cdot}_{\cI})$ be an integral symmetrically normed ideal of $\cM$.
To prove that $\nu$ controls polynomial LFC in $\cM$ with an insertion from $\cI$ and to establish that \eqref{eq.MOILFC} holds with $\varphi_{\scI,\nu}(\mathbf{a})\sh b$ in place of $\varphi_{\scA,\nu}(\mathbf{a}) \sh b$ for all $\varphi \in C_{\nu}(\R^m)$, $\mathbf{a} \in \cM_{\sa}^m$, and $b \in \cI^{m-1}$, repeat the same argument as in the previous paragraph with $\cA = \cM$, $\ev_{\scI}^{m,i}(P)$ or $\ev_{\scI}^m(P)$ (as needed) in place of $\ev_{\scA}^m(P)$, \cite[Prop.\ 4.1.7]{Nikitopoulos2023h} in place of \cite[Thm.\ 4.2.4(iii)]{Nikitopoulos2023n}, and $b \in \cI^{m-1}$ or $b \in \cM^{i-1} \times \cI \times \cM^{m-1-i}$ (as needed) in place of $b \in \cA^{m-1}$.
\end{proof}

\begin{remark}[$\cA$-specific refinement]
Let $\cM$ be a von Neumann algebra and $\cA \subseteq \cM$ be a unital $C^*$-subalgebra.
(Respectively, let $(\cI,\norm{\cdot}_{\cI})$ be an integral symmetrically normed ideal of $\cM$.)
The proof above actually shows that if
\[
\nu_{r,m}^{\scA}(\varphi) \coloneqq \sup_{\mathbf{a} \in \cA_{\sa,r}^m}\norm{\varphi|_{\sigma(a_1)\times \cdots \times \sigma(a_m)}}_{\ell^{\infty}(\sigma(a_1),\cB_{\sigma(a_1)}\hspace{-0.1mm}) \iotimes \cdots \iotimes \ell^{\infty}(\sigma(a_m),\cB_{\sigma(a_m)}\hspace{-0.1mm})} \in [0,\infty]
\]
for all $r > 0$, $m \in \N$, and $\varphi \in C([-r,r]^m)$, then $\nu^{\scA} \coloneqq \big(\nu_{r,m}^{\scA}\big)_{r > 0, \, m \in \N}$ controls polynomial LFC in $\cA$ (respectively, controls polynomial LFC in $\cM$ with an insertion from $\cI$), and \eqref{eq.MOILFC} holds with $\varphi_{\scA,\nu^{\scA}}(\mathbf{a})\sh b$ in place of $\varphi_{\scA,\nu}(\mathbf{a})\sh b$ for all $\varphi \in C_{\nu^{\scA}}(\R^m)$, $\mathbf{a} \in \cA_{\sa}^m$, and $b \in \cA^{m-1}$ (respectively, \eqref{eq.MOILFC} holds with $\varphi_{\scI,\nu^{\scA}}(\mathbf{a})\sh b$ in place of $\varphi_{\scA,\nu}(\mathbf{a})\sh b$ for all $\varphi \in C_{\nu^{\scA}}(\R^m)$, $\mathbf{a} \in \cM_{\sa}^m$, and $b \in \cI^{m-1}$).
Also, note that if $\cA$ is finite dimensional, then $\nu^{\scA} = \beta^{\scA}$.
\end{remark}

It is not clear to me whether $\nu$ is ideal for LFC.

\section{Application to functional calculus calculus}\label{sec.apptofunkycalccalc}

For the duration of this section, let $\alpha$ be a family of possibly infinite norms as in \eqref{eq.alphaintro}.

\subsection{The spaces \texorpdfstring{$\cC_{\alpha}^k(\R)$}{} and \texorpdfstring{$C_{\alpha}^k(\R)$}{}}\label{subsec.Ckalpha}

This subsection's purpose is to conduct a detailed study of the spaces $\cC_{\alpha}^k(\R)$ and $C_{\alpha}^k(\R)$ introduced in subsection \ref{subsec.results}.
First in this study is a collection of basic properties.
Before stating them, let us review a few facts about the ``$C^k$ topology'' on $C^k(\R)$, that of locally uniform convergence of derivatives of order strictly less than $k+1$.

Let $k \in \N_0 \cup \{\infty\}$.
The \textbf{$\boldsymbol{C^k}$ topology} on $C^k(\R)$ is the Fr\'echet-space topology induced by the seminorms
\[
C^k(\R) \ni f \mapsto \big\|f^{(i)}\big\|_{\ell^{\infty}([-r,r])} \in [0,\infty) \qquad (r > 0, \; 0 \leq i < k+1).
\]
By \cite[Cor.\ 2.1.4]{Nikitopoulos2023n}, if $f \in C^k(\R)$, then
\[
u_{r,i+1}\big(f^{[i]}\big) = \big\|f^{[i]}\big\|_{\ell^{\infty}([-r,r]^{i+1})} = \frac{1}{i!}\big\|f^{(i)}\big\|_{\ell^{\infty}([-r,r])} \qquad (r > 0, \; 0 \leq i < k+1).
\]
Consequently, the topology on $\cC_u^k(\R) = C^k(\R)$ defined in subsection \ref{subsec.results} is precisely the $C^k$ topology.
Since it is elementary to show that polynomials are dense in $C^k(\R)$ with the $C^k$ topology (vid.\ \cite[Lem.\ A.1.5]{Nikitopoulos2023n}), $C_u^k(\R) = \cC_u^k(\R) = C^k(\R)$.

\begin{proposition}[Properties of $\cC_{\alpha}^k(\R)$ and $C_{\alpha}^k(\R)$]\label{prop.Ckalpha}
Let $k \in \N_0 \cup \{\infty\}$ and
\[
\tilde{\alpha} = (\tilde{\alpha}_{r,m} \colon C([-r,r]^m) \to [0,\infty])_{r > 0, \, m \in \N}
\]
be another family of possibly infinite norms.
\begin{enumerate}[label=(\roman*),font=\normalfont]
    \item If $\tilde{\alpha}$ is stronger than $\alpha$, then $\cC_{\tilde{\alpha}}^k(\R) \hookrightarrow \cC_{\alpha}^k(\R)$.
    If, in addition, $\tilde{\alpha}$ is finite on polynomials, then so is $\alpha$, and $C_{\tilde{\alpha}}^k(\R) \hookrightarrow C_{\alpha}^k(\R)$.
    In particular, if $\alpha$ satisfies $u \leq \alpha \leq \beta$, then $C_{\beta}^k(\R) \hookrightarrow C_{\alpha}^k(\R) \hookrightarrow \cC_{\alpha}^k(\R) \hookrightarrow C^k(\R)$.\label{item.strongerCk}
    \item Suppose $0 \leq j < k+1$.
    If $\alpha$ is submultiplicative and compatible, then
    \[
    \alpha_{r,j+1}\big((fg)^{[j]}\big) \leq \alpha_{r,1}(1)^j\sum_{i=0}^j\alpha_{r,i+1}\big(f^{[i]}\big)\,\alpha_{r,j-i+1}\big(g^{[j-i]}\big) \qquad (r > 0)
    \]
    for all $f,g \in C^k(\R)$.
    (As usual, $0 \cdot \infty \coloneqq 0$.)
    Consequently, if $f,g \in \cC_{\alpha}^k(\R)$, then $fg \in \cC_{\alpha}^k(\R)$, and the product operation on $\cC_{\alpha}^k(\R)$ is continuous.
    If, in addition, $\alpha$ is $\ast$-isometric, then $\cC_{\alpha}^k(\R)$ is a $\ast$-algebra with a continuous $\ast$-operation.
    Finally, if $\alpha$ is ($\ast$-isometric,) submultiplicative, compatible, and finite on polynomials, then $C_{\alpha}^k(\R)$ is a ($\ast$-)subalgebra of $\cC_{\alpha}^k(\R)$.\label{item.submultstarisomCk}
    \item If $u \leq \alpha$ and $\alpha$ is complete, then $\cC_{\alpha}^k(\R)$ is complete.\label{item.compincCk}
    \item If $\alpha$ is increasing, then $\cC_{\alpha}^k(\R)$ is metrizable.
    Consequently, by the previous item, if $u \leq \alpha$ and $\alpha$ is complete and increasing, then $\cC_{\alpha}^k(\R)$ is a Fr\'{e}chet space.\label{item.incCk}
    \item If $\cS \subseteq \cC_{\alpha}^k(\R)$, then
    \[
    \cS_{\loc} \subseteq \overline{\cS} \subseteq \cC_{\alpha}^k(\R),
    \]
    where the closure above takes place in the space $\cC_{\alpha}^k(\R)$.
    (Please see Proposition \ref{prop.Calpha}\ref{item.SlocCalpha} for the definition of $\cS_{\loc}$.)\label{item.SlocCk}
\end{enumerate}
\end{proposition}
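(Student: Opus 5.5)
I will mimic the proof of Proposition \ref{prop.Calpha}, taking the items in order. Item \ref{item.strongerCk} follows from the definitions: $\tilde{\alpha}_{r,i+1}(f^{[i]}) \ge \alpha_{r,i+1}(f^{[i]})$ gives the continuous inclusion $\cC_{\tilde{\alpha}}^k(\R)\hookrightarrow\cC_{\alpha}^k(\R)$. Since the inclusion is continuous and maps $\cP$ into $\cP$, it sends $\overline{\cP}$ into $\overline{\cP}$. For the last sentence, I use $\cC_u^k(\R)=C^k(\R)$ with the $C^k$ topology, together with the fact that $\beta$ is finite on polynomials.

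For item \ref{item.submultstarisomCk}, the key input is the Leibniz rule for divided differences:
\[
(fg)^{[j]}(\lambda_1,\ldots,\lambda_{j+1}) = \sum_{i=0}^j f^{[i]}(\lambda_1,\ldots,\lambda_{i+1})\,g^{[j-i]}(\lambda_{i+1},\ldots,\lambda_{j+1}).
\]
I will prove this on $\R^{j+1}_{\neq}$ by induction from the recursive definition, and then extend it to all of $\R^{j+1}$ by continuity (\cite[Prop.\ 2.1.3(ii)]{Nikitopoulos2023n}). Each summand is a pointwise product of two functions of $j+1$ variables. The first, $f^{[i]}$ in the first $i+1$ variables, equals $f^{[i]}\otimes 1^{\otimes(j-i)}$. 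The second, $g^{[j-i]}$ in the last $j-i+1$ variables, equals $1^{\otimes i}\otimes g^{[j-i]}$. Submultiplicativity splits the $\alpha_{r,j+1}$ norm of the product, and compatibility bounds $\alpha_{r,j+1}(f^{[i]}\otimes 1^{\otimes(j-i)})\le \alpha_{r,i+1}(f^{[i]})\,\alpha_{r,1}(1)^{j-i}$, and similarly for the $g$ factor. This yields the factor $\alpha_{r,1}(1)^{i}\alpha_{r,1}(1)^{j-i}=\alpha_{r,1}(1)^j$. The degenerate cases, where a factor has zero tensor legs, need a little care: compatibility only applies with $m,n\ge1$, so I simply skip the tensoring when $i=0$ or $i=j$. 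Continuity of the product and closedness under $\ast$ then follow, the latter because $\overline{f}^{[i]}=\overline{f^{[i]}}$ (the $\lambda_i$ are real). For the subalgebra claim, $\cP\cdot\cP\subseteq\cP$ and joint continuity give $\overline{\cP}\cdot\overline{\cP}\subseteq\overline{\cP}$. Joint continuity comes from the bilinear estimate, by the usual net argument.

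Item \ref{item.compincCk} is the main obstacle, because each $f^{[i]}$ must be identified with a limit taken in a separate Banach space. Given a Cauchy net $(f_j)$, Proposition \ref{prop.Calpha}\ref{item.compinc} (with $\tilde{\alpha}=u$) gives limits $\varphi_i\in\cC_{\alpha}(\R^{i+1})$ with $f_j^{[i]}\to\varphi_i$ in $\cC_\alpha(\R^{i+1})$, hence locally uniformly. Since $u\le\alpha$, the net $(f_j)$ is also Cauchy in $C^k(\R)$, so it converges there to some $f$. Then $f_j^{[i]}\to f^{[i]}$ locally uniformly, and uniqueness of locally uniform limits forces $\varphi_i=f^{[i]}$. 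This shows $f\in\cC_\alpha^k(\R)$ and $f_j\to f$ in $\cC_\alpha^k(\R)$.

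Item \ref{item.incCk} uses the countable family of seminorms $\alpha_{n,i+1}$ with $n\in\N$ and $i$ ranging over $\{0,\dots,k\}$ (or $\N_0$ when $k=\infty$), exactly as in Proposition \ref{prop.Calpha}\ref{item.inc}. For item \ref{item.SlocCk}, I copy the proof of Proposition \ref{prop.Calpha}\ref{item.SlocCalpha}. If $f=\psi_r$ on $[-r,r]$, then $f^{[i]}=\psi_r^{[i]}$ on $[-r,r]^{i+1}$, because divided differences there depend only on values in $[-r,r]$, with continuity handling the diagonal. Hence $\alpha_{r,i+1}(f^{[i]}-\psi_N^{[i]})=0$ for every $N\ge r$.
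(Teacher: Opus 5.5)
Your proposal is correct and follows the paper's proof essentially step by step: the definitional argument for (i), the divided-difference Leibniz rule combined with submultiplicativity and compatibility for (ii), reduction to completeness of $C^k(\R)$ plus Proposition~\ref{prop.Calpha}\ref{item.compinc} for (iii), a countable family of seminorms for (iv), and the localization argument for (v). The only minor differences are that you prove the Leibniz rule by induction instead of citing it, and that in (v) you match $f$ with $\psi_r$ on $[-r,r]$ and handle the diagonal by continuity, whereas the paper requires agreement on the slightly larger interval $[-1-r,r+1]$.
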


\begin{proof}
The first item follows readily from the definitions.
I take the remaining items in~turn.

\ref{item.submultstarisomCk} Let $r > 0$.
The product rule for divided differences, \cite[Prop.\ 2.1.3(iii)]{Nikitopoulos2023n}, says that if $f,g \in C^k(\R)$ and $0 \leq j < k+1$, then
\[
(fg)^{[j]} = \sum_{i=0}^j \big(f^{[i]} \otimes 1^{\otimes (j-i)}\big) \big(1^{\otimes i} \otimes g^{[j-i]}\big).
\]
Thus,
\begin{align*}
    \alpha_{r,j+1}\big((fg)^{[j]}\big) & \leq \sum_{i=0}^j \alpha_{r,j+1}\big(\big(f^{[i]} \otimes 1^{\otimes (j-i)}\big) \big(1^{\otimes i} \otimes g^{[j-i]}\big)\big) \\
    & \leq \sum_{i=0}^j \alpha_{r,j+1}\big(f^{[i]} \otimes 1^{\otimes (j-i)}\big) \,\alpha_{r,j+1}\big(1^{\otimes i} \otimes g^{[j-i]}\big) \tag{submultiplicativity} \\
    & \leq \alpha_{r,1}(1)^j \sum_{i=0}^j \alpha_{r,i+1}\big(f^{[i]}\big) \,\alpha_{r,j-i+1}\big(g^{[j-i]}\big), \tag{compatibility}
\end{align*}
as claimed.
The rest of this item's claims follow readily from the definitions.

\ref{item.compincCk} Let $(f_j)_{j \in J}$ be a Cauchy net in $\cC_{\alpha}^k(\R)$.
Since $\cC_{\alpha}^k(\R) \hookrightarrow \cC_u^k(\R) = C^k(\R)$, the net $(f_j)_{j \in J}$ is also Cauchy in $C^k(\R)$.
Since $C^k(\R)$ is complete, there exists an $f \in C^k(\R)$ such that $(f_j)_{j \in J}$ converges to $f$ in $C^k(\R)$ and therefore $\big(f_j^{[i]}\big)_{j \in J}$ converges to $f^{[i]}$ in $C(\R^{i+1})$ whenever $0 \leq i < k+1$.
Now, if $0 \leq i < k+1$, then $\big(f_j^{[i]}\big)_{j \in J}$ is Cauchy and therefore, by Proposition \ref{prop.Calpha}\ref{item.compinc}, convergent in $\cC_{\alpha}(\R^{i+1})$.
Since we already know that $\big(f_j^{[i]}\big)_{j \in J}$ converges to $f^{[i]}$ in $C(\R^{i+1})$ and $\cC_{\alpha}(\R^{i+1}) \hookrightarrow C(\R^{i+1})$, it follows that $f^{[i]} \in \cC_{\alpha}(\R^{i+1})$ and $\big(f_j^{[i]}\big)_{j \in J}$ converges to $f^{[i]}$ in $\cC_{\alpha}(\R^{i+1})$.
This proves that $f \in \cC_{\alpha}^k(\R)$ and $(f_j)_{j \in J}$ converges to $f$ in $\cC_{\alpha}^k(\R)$.
Thus, $\cC_{\alpha}^k(\R)$ is complete.
\pagebreak

\ref{item.incCk} If $\alpha$ is increasing, then the topology of $\cC_{\alpha}^k(\R)$ is induced by the countable family
\[
\cC_{\alpha}^k(\R) \ni f \mapsto \alpha_{n,i+1}\big(f^{[i]}\big) \in [0,\infty)  \qquad (n \in \N, \; 0 \leq i < k+1)
\]
of seminorms, from which it follows that $\cC_{\alpha}^k(\R)$ is metrizable.

\ref{item.SlocCk} Let $f \in \cS_{\loc}$, and for each $r > 0$, let $g_r \in \cS$ be such that $g_r|_{[-1-r,r+1]} = f|_{[-1-r,r+1]}$.
I claim that $g \in \cC_{\alpha}^k(\R)$ and $(g_n)_{n \in \N}$ is a sequence in $\cS$ converging in $\cC_{\alpha}^k(\R)$ to $f$, which implies the result.
Indeed, if $r > 0$ and $0 \leq i < k+1$, then
\[
\alpha_{r,i+1}\big(f^{[i]}\big) = \alpha_{r,i+1}\big(f^{[i]}|_{[-r,r]^{i+1}}\big) = \alpha_{r,i+1}\big(g_r^{[i]}|_{[-r,r]^{i+1}}\big) = \alpha_{r,i+1}\big(g_r^{[i]}\big) < \infty
\]
because $g_r \in \cS \subseteq \cC_{\alpha}^k(\R)$;
thus, $f \in \cC_{\alpha}^k(\R)$. 
Now, $(g_n)_{n \in \N}$ converges in $\cC_{\alpha}^k(\R)$ to $f$ because if $r > 0$, $N$ is any natural number larger than $r$, and $0 \leq i < k+1$, then
\[
g_N^{[i]}|_{[-r,r]^{i+1}} = \big(g_N^{[i]}|_{[-N,N]^{i+1}}\big)|_{[-r,r]^{i+1}} = \big(f^{[i]}|_{[-N,N]^{i+1}}\big)|_{[-r,r]^{i+1}} = f^{[i]}|_{[-r,r]^{i+1}}.
\]
Consequently, $\alpha_{r,i+1}\big((g_N - f)^{[i]}\big) = 0$.
In particular, $\alpha_{r,i+1}\big((g_n - f)^{[i]}\big) \to 0$ as $n \to \infty$.
Thus, $(g_n)_{n \in \N}$ converges in $\cC_{\alpha}^k(\R)$ to $f$, as claimed.
\end{proof}

Now, suppose $\alpha$ is finite on polynomials.
Certainly, $C_{\alpha}^k(\R) \subseteq \cC_{\alpha}^k(\R)$.
However, $C_{\alpha}^k(\R)$ is actually contained in a smaller closed subspace of $\cC_{\alpha}^k(\R)$.

\begin{notation}\label{nota.mathbfCkalpha}
Let $k \in \N_0 \cup \{\infty\}$, and suppose $\alpha$ is finite on polynomials.
Write
\[
\mathbf{C}_{\alpha}^k(\R) \coloneqq \big\{ f \in C^k(\R) : f^{[i]} \in C_{\alpha}\big(\R^{i+1}\big) \text{ whenever } 0 \leq i < k+1\big\}.
\]
Observe that $C_{\alpha}^k(\R) \subseteq \mathbf{C}_{\alpha}^k(\R) \subseteq \cC_{\alpha}^k(\R)$.
\end{notation}

As we shall see, it is often substantially less onerous to show that a function belongs to $\mathbf{C}_{\alpha}^k(\R)$ than to show it belongs to $C_{\alpha}^k(\R)$.
However, the main result of the next subsection concerns functions in $C_{\alpha}^k(\R)$.
It is therefore useful to know when $C_{\alpha}^k(\R) = \mathbf{C}_{\alpha}^k(\R)$.
The first main result of this subsection, Theorem \ref{thm.mathbfCkalpha=Ckalpha} below, gives a criterion ensuring this.

\begin{definition}[Wiener space]\label{def.Wk}
Let $k \in \N_0$.
The \textbf{$\boldsymbol{k^{\text{\textbf{th}}}}$ Wiener space} $W_k(\R)$ is the set of functions $f \colon \R \to \C$ with the property that there exists a (necessarily unique) Borel complex measure $\mu$ on $\R$ such that
\[
\mu_{(k)} \coloneqq \int_{\R} |\xi|^k \,|\mu|(\d \xi) < \infty \; \text{ and } \; f(\lambda) = \int_{\R} e^{i\lambda \xi} \, \mu(\d\xi) \qquad (\lambda \in \R).
\]
Recall that $|\mu|$ is the total variation of $\mu$.
Also, define $W_{\infty}(\R) \coloneqq \bigcap_{k \in \N} W_k(\R)$.
\end{definition}

\begin{theorem}\label{thm.mathbfCkalpha=Ckalpha}
Suppose $u \leq \alpha \leq \beta$ and $\alpha$ is increasing, complete, and translation invariant, e.g., $\alpha$ is ideal for LFC in some unital $C^*$-algebra.
Let $k \in \N_0 \cup \{\infty\}$.
\begin{enumerate}[font=\normalfont,label=(\roman*)]
    \item $\mathbf{C}_{\alpha}^k(\R) = C_{\alpha}^k(\R)$, i.e., polynomials are dense in $\mathbf{C}_{\alpha}^k(\R)$;
    actually,
    \[
    C_{\alpha}^k(\R) = \big\{f \in C^k(\R) : f^{[k]} \in C_{\alpha}\big(\R^{k+1}\big)\big\}.
    \]
    In particular, if, in addition, $\cC_{\alpha}(\R^m) = C_{\alpha}(\R^m)$ for all $m \in \N$, then $\cC_{\alpha}^k(\R) = C_{\alpha}^k(\R)$, i.e., polynomials are dense in $\cC_{\alpha}^k(\R)$.
    \item $W_k(\R) \subseteq \mathbf{C}_{\alpha}^k(\R)$, and $W_k(\R)$ is dense in $\mathbf{C}_{\alpha}^k(\R)$.
\end{enumerate}
\end{theorem}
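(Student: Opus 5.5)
The plan is to prove (ii) first and then deduce (i) from it. The inclusion $W_k(\R) \subseteq C_{\alpha}^k(\R)$ will come from explicit Wiener-type formulas for divided differences. The density of $W_k(\R)$, and hence of $\cP$, in $\mathbf{C}_{\alpha}^k(\R)$ will come from mollification, using translation invariance (Proposition \ref{prop.Calpha}\ref{item.transinv}), together with the localization principle Proposition \ref{prop.Ckalpha}\ref{item.SlocCk}.

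\emph{Step 1: $W_k(\R) \subseteq C_{\beta}^k(\R) \subseteq C_{\alpha}^k(\R)$.} Let $f(\lambda) = \int_{\R} e^{i\lambda\xi}\,\mu(\d\xi)$ with $\mu_{(k)}<\infty$. By the Hermite--Genocchi formula, for $0 \leq i < k+1$,
\[
f^{[i]}(\blambda) = \int_{\R}\int_{\Delta_i} (i\xi)^i e^{i\xi\,(\mathbf{t}\cdot\blambda)}\,\d\mathbf{t}\,\mu(\d\xi),
\]
where $\Delta_i$ is the standard simplex. This is exactly of the form in Example \ref{ex.multivarWiener}, so $\beta_{r,i+1}(f^{[i]}) \leq \int |\xi|^i\,|\mu|(\d\xi)/i!$ uniformly in $r$. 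Since $\alpha \leq \beta$, this gives $f \in \mathbf{C}_{\alpha}^k(\R)$.

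To get polynomial approximants, I will proceed in two moves.
\begin{enumerate}
\item Replace $\mu$ by $\mu|_{[-R,R]}$. The bound above shows the error tends to $0$ in every seminorm $\beta_{r,i+1}((\cdot)^{[i]})$ as $R\to\infty$, by dominated convergence on the tails of $\int|\xi|^i\,\d|\mu|$.
\item For compactly supported $\mu$, use the Taylor polynomials $p_N(\lambda) = \int\sum_{n\leq N}(i\lambda\xi)^n/n!\,\mu(\d\xi)$. The divided difference $(\lambda^n)^{[i]}$ is a sum of $\binom{n}{i}$ monomials, so $\beta_{r,i+1}((\lambda^n)^{[i]}) \leq \binom{n}{i}r^{n-i}$. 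The tail $\sum_{n>N}\binom{n}{i}r^{n-i}R^n/n!\,|\mu|(\R)$ then tends to $0$.
\end{enumerate}
Hence $W_k(\R) \subseteq \overline{\cP} = C_{\beta}^k(\R) \hookrightarrow C_{\alpha}^k(\R)$.

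\emph{Step 2: density of $W_k(\R)$.} Let $f \in C^k(\R)$ with the relevant divided differences in $C_{\alpha}$. Take a mollifier $\rho_\e$ and set $f_\e \coloneqq f*\rho_\e$. Then
\[
f_\e^{[i]} = \int \tau_{-y\mathbf{1}}\big(f^{[i]}\big)\,\rho_\e(y)\,\d y
\]
as a $C_{\alpha}(\R^{i+1})$-valued integral of a continuous integrand. By the joint continuity of translation in Proposition \ref{prop.Calpha}\ref{item.transinv}, which uses that $\alpha$ is increasing and translation invariant, and by metrizability and completeness (Proposition \ref{prop.Calpha}\ref{item.compinc},\ref{item.inc}), we get $f_\e^{[i]} \to f^{[i]}$ in $C_{\alpha}(\R^{i+1})$. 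Each $f_\e$ is smooth. Multiplying by cutoffs $\chi_n \in C_c^\infty$ equal to $1$ on $[-n,n]$ gives functions in $C_c^\infty \subseteq W_\infty(\R)$ that agree with $f_\e$ on $[-n,n]$. Thus $f_\e \in (W_k)_{\loc} \subseteq \overline{W_k} \subseteq C_{\alpha}^k(\R)$ by Proposition \ref{prop.Ckalpha}\ref{item.SlocCk} and Step 1. Letting $\e\to0$ and using that $\cC_{\alpha}^k(\R)$ is complete (Proposition \ref{prop.Ckalpha}\ref{item.compincCk}) gives $f \in C_{\alpha}^k(\R)$. This yields both $\mathbf{C}_{\alpha}^k = C_{\alpha}^k$ and the density in (ii). The final claim of (i), under $\cC_\alpha = C_\alpha$, is then immediate, since in that case $\mathbf{C}_{\alpha}^k(\R) = \cC_{\alpha}^k(\R)$.

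\emph{Main obstacle.} The sharper description in (i) assumes only $f^{[k]} \in C_{\alpha}(\R^{k+1})$. Step 2 needs each $f^{[i]}$ with $i<k$ to lie in $C_{\alpha}$ in order to run the translation argument. I would first try to recover the lower divided differences from the top one via the recursion
\[
f^{[i]}(\blambda) = f^{[i]}(\lambda_1,\ldots,\lambda_i,0) + \lambda_{i+1}\,f^{[i+1]}(\lambda_1,\ldots,\lambda_i,0,\lambda_{i+1}),
\]
after a symmetric permutation of variables, peeling variables one at a time down to a constant. The difficulty is that this requires multiplying by a coordinate function and restricting or embedding variables. The hypotheses do not directly supply submultiplicativity or consistency for $\alpha$, so these operations would have to be controlled by approximating on the polynomial level and using $u\leq\alpha\leq\beta$ together with translation invariance. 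Alternatively, I would try mollifying only the top divided difference and integrating back, using the $u\leq\alpha$ control for the lower-order polynomial corrections. This is the step I expect to require the most care.
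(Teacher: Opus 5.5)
Your Steps 1 and 2 follow the paper's proof closely. You truncate $\mu$ and then use Taylor polynomials, as in Lemma~\ref{lem.sameclosures}; you mollify $f^{[i]}$ along the diagonal using continuity of translations, as in Lemma~\ref{lem.mollify}; and you localize via Proposition~\ref{prop.Ckalpha}\ref{item.SlocCk}. This correctly gives $W_k(\R)\subseteq C_\beta^k(\R)\subseteq C_\alpha^k(\R)$, $\mathbf{C}_\alpha^k(\R)=C_\alpha^k(\R)$, the density of $W_k(\R)$ and of $\cP$ in $\mathbf{C}_\alpha^k(\R)$, and the final claim of (i). Two minor points: your cutoff argument placing $f\ast\rho_\e$ in $W_\infty(\R)_{\loc}$ replaces the paper's citation of $C^{k+1}(\R)\subseteq W_k(\R)_{\loc}$, and completeness is not needed in the last limit, since $C_\alpha^k(\R)$ is closed by definition.

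\textbf{The gap.} The sharper description $C_\alpha^k(\R)=\{f\in C^k(\R): f^{[k]}\in C_\alpha(\R^{k+1})\}$ (for finite $k$) is not proved. You leave it as an open obstacle. The route you sketch, rebuilding $f^{[i]}$ from $f^{[i+1]}$ by freezing variables and multiplying by coordinate functions, needs submultiplicativity- or consistency-type control of $\alpha$ that the hypotheses do not provide.

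\textbf{The missing idea.} Nothing has to be recovered from the top divided difference: for $f\in C^k(\R)$, the lower divided differences lie in $C_\alpha$ automatically. The argument runs as follows.
\begin{enumerate}
\item If $0\le i<k$, then $f\in C^{i+1}(\R)$, and $C^{i+1}(\R)\subseteq W_i(\R)_{\loc}$. This is \cite[Lem.\ 3.2.3(iii)]{Nikitopoulos2023n}. Directly: a compactly supported $C^{i+1}$ function $g$ has $(1+|\xi|)^{i+1}\widehat{g}\in L^2$ by Plancherel, so $\int_{\R}|\xi|^i|\widehat{g}(\xi)|\,\d\xi<\infty$ by Cauchy--Schwarz.
\item Your Step 1, applied with $i$ in place of $k$, gives $W_i(\R)\subseteq C_\beta^i(\R)$.
\item Proposition~\ref{prop.Ckalpha}\ref{item.SlocCk} then gives $f\in C_\beta^i(\R)$.
\item Hence $f^{[i]}\in C_\beta(\R^{i+1})\subseteq C_\alpha(\R^{i+1})$, by Proposition~\ref{prop.Calpha}\ref{item.stronger} and $\alpha\le\beta$.
\end{enumerate}
Therefore $\{f\in C^k(\R): f^{[k]}\in C_\alpha(\R^{k+1})\}=\mathbf{C}_\alpha^k(\R)$, which you have already shown equals $C_\alpha^k(\R)$. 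This is exactly how the paper finishes.
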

\pagebreak

In particular,
\[
\cC_u^k(\R) = \mathbf{C}_u^k(\R) = C_u^k(\R) = C^k(\R),
\]
as I already pointed out earlier, and
\[
\cC_{\beta}^k(\R) = \mathbf{C}_{\beta}^k(\R) = C_{\beta}^k(\R) = \big\{f \in C^k(\R) : f^{[k]} \in VC\big(\R^{k+1}\big)\big\},
\]
which I claimed (in part) in subsection \ref{subsec.results}.
Also,
\[
\mathbf{C}_{\beta^{\scA}}^k(\R) = C_{\beta^{\scA}}^k(\R) = \big\{ f \in C^k(\R) : f^{[k]} \in C_{\beta^{\scA}}\big(\R^{k+1}\big)\big\}
\]
for every unital $C^*$-algebra $\cA$.
Finally,
\[
\mathbf{C}_{\nu}^k(\R) = C_{\nu}^k(\R) = \big\{ f \in C^k(\R) : f^{[k]} \in C_{\nu}\big(\R^{k+1}\big)\big\},
\]
which is a new characterization of my space $NC^k(\R) = C_{\nu}^k(\R)$ of ``noncommutative $C^k$ functions'' from \cite{Nikitopoulos2023n}.
(Please see subsection \ref{subsec.MOIfam} for the family $\nu$.)

The proof of Theorem \ref{thm.mathbfCkalpha=Ckalpha} comprises two key steps:
showing that (1) $W_k(\R)$ and $\cP$ have the same closures, namely $C_{\alpha}^k(\R)$, in $\mathbf{C}_{\alpha}^k(\R)$ and (2) every function in $\mathbf{C}_{\alpha}^k(\R)$ can be approximated via mollification by functions in $W_k(\R)_{\loc} \subseteq C_{\alpha}^k(\R)$.
Both steps make heavy use of a handy expression from \cite{Nikitopoulos2023n}, which I reproduce momentarily, for the divided differences of $C^k$ functions.

\begin{notation}\label{nota.simplices}
Write $\R_+ \coloneqq [0,\infty)$.
If $k \in \N$, then
\begin{align*}
     \Sigma_k & \coloneqq \big\{s = (s_1,\ldots,s_k) \in \R_+^k : |s| = s_1+\cdots+s_k \leq 1\big\}, \; \text{ and} \\
     \Delta_k & \coloneqq \big\{\mathbf{t} = (t_1,\ldots,t_{k+1}) \in \R_+^{k+1} : t_1+\cdots+t_{k+1} = 1\big\}.
\end{align*}
Also, $\rho_k$ is the pushforward of the $k$-dimensional Lebesgue measure on $\Sigma_k$ by the homeomorphism $\Sigma_k \ni s \mapsto (s,1-|s|) \in \Delta_k$.
Explicitly, $\rho_k$ is the Borel measure on $\Delta_k$ characterized~by
\[
\int_{\Delta_k} \varphi(\mathbf{t}) \, \rho_k(\d\mathbf{t}) = \int_{\Sigma_k} \varphi(s,1-|s|) \, \d s
\]
for all bounded Borel $\varphi \colon \Delta_k \to \C$.
In particular, $\rho_k(\Delta_k) = 1/k!$, as the reader may verify.
\end{notation}

Let $k \in \N_0$.
If $g \in C^k(\R)$, then \cite[Prop.\ 2.1.3(ii)]{Nikitopoulos2023n} says that
\begin{equation}
    g^{[k]}(\blambda) = \int_{\Delta_k} g^{(k)}(\mathbf{t} \boldsymbol{\cdot} \blambda)\,\rho_k(\d\mathbf{t}) \qquad \big(\blambda \in \R^{k+1}\big).\label{eq.divdiff}
\end{equation}
This results in a nice formula for the divided differences of functions in the $k^{\text{th}}$ Wiener space.
Indeed, if $g = \int_{\R} e^{i \boldsymbol{\cdot}\xi}\,\mu(\d\xi) \in W_k(\R)$ and $j=0,\ldots,k$, then
\[
g^{(j)}(\lambda) = \int_{\R} e^{i\lambda \xi}(i\xi)^j\,\mu(\d\xi) \qquad (\lambda \in \R).
\]
Consequently, 
\begin{equation}
    g^{[j]}(\blambda) = \int_{\Delta_j}\int_{\R} e^{i(\mathbf{t} \boldsymbol{\cdot} \blambda)\xi} (i\xi)^j\,\mu(\d\xi)\,\rho_j(\d\mathbf{t}) \qquad \big(\blambda \in \R^{j+1}\big)\label{eq.divdiffWk}
\end{equation}
by \eqref{eq.divdiff}.

\begin{lemma}\label{lem.sameclosures}
If $\alpha \leq \beta$ and $k \in \N_0 \cup \{\infty\}$, then $W_k(\R) \subseteq C_{\alpha}^k(\R)$.
Since $C^{k+1}(\R) \subseteq W_k(\R)_{\loc}$ by \cite[Lem.\ 3.2.3(iii)]{Nikitopoulos2023n}, it follows from Proposition \ref{prop.Ckalpha}\ref{item.SlocCk} that $C^{k+1}(\R)$ is contained in the closure of $W_k(\R)$ in $C_{\alpha}^k(\R)$.
Thus, since polynomials are smooth, $W_k(\R)$ is dense in $C_{\alpha}^k(\R)$.
\end{lemma}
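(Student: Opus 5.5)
The plan is to show that $W_k(\R) \subseteq C_{\alpha}^k(\R)$ by approximating each $g \in W_k(\R)$ by polynomials in the topology of $\cC_{\alpha}^k(\R)$. Since $\alpha \leq \beta$, Proposition \ref{prop.Ckalpha}\ref{item.strongerCk} gives $C_{\beta}^k(\R) \hookrightarrow C_{\alpha}^k(\R)$. It therefore suffices to treat $\alpha = \beta$, and to show that for each $r>0$ and each finite $j<k+1$ there are polynomials $p$ with $\beta_{r,j+1}((g-p)^{[j]})$ small. (For $k=\infty$, first fix a finite $j_0$ and control all $j \le j_0$ simultaneously.)

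\textbf{Step 1: a Varopoulos estimate for divided differences.} Let $g = \int_{\R} e^{i\cdot\xi}\,\mu(\d\xi) \in W_k(\R)$. By \eqref{eq.divdiffWk}, $g^{[j]}$ is an integral over $\Delta_j \times \R$ of the elementary tensors $\blambda \mapsto \prod_{l=1}^{j+1} e^{it_l\lambda_l\xi}$, weighted by $(i\xi)^j\,\mu(\d\xi)\,\rho_j(\d\mathbf{t})$. These functions are product measurable. Theorem \ref{thm.IPTPV} (compare Example \ref{ex.multivarWiener}) then gives
\[
\beta_{r,j+1}\big(g^{[j]}\big) \le \frac{1}{j!}\int_{\R}|\xi|^j\,|\mu|(\d\xi) \le \frac{1}{j!}\big(|\mu|(\R)+\mu_{(k)}\big),
\]
using $|\xi|^j \le 1+|\xi|^k$. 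So $W_k(\R) \subseteq \cC_{\beta}^k(\R)$, with a bound that depends only on the total mass of $(1+|\xi|^k)|\mu|$.

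\textbf{Step 2: truncation.} Set $\mu_N \coloneqq \mu|_{[-N,N]}$ and $g_N \coloneqq \int e^{i\cdot\xi}\,\mu_N(\d\xi)$. Applying the Step 1 estimate to $g-g_N$ bounds $\beta_{r,j+1}((g-g_N)^{[j]})$ by $\int_{|\xi|>N}(1+|\xi|^k)\,|\mu|(\d\xi)$. This tends to $0$ by dominated convergence. Hence it suffices to handle measures of compact support.

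\textbf{Step 3: Taylor approximation.} This is the main step. For $\mu$ supported in $[-N,N]$, expand $e^{i\lambda\xi} = \sum_{n} (i\lambda\xi)^n/n!$ and let $p_M(\lambda) \coloneqq \sum_{n\le M}\frac{i^n\lambda^n}{n!}\int \xi^n\,\mu(\d\xi)$, which is a polynomial. The obstacle is to control $\beta_{r,j+1}((g-p_M)^{[j]})$ uniformly. The route I would try uses $\beta_{r,j+1}(h^{[j]}) \le \frac{1}{j!}\sum_n \|\text{coefficient of }\lambda^n\text{ in } h^{(j)}\|\,r^{n}$-type bounds. These come from the divided difference of monomials: $(\lambda^n)^{[j]}$ is a sum of $\binom{n}{j}$ monomials, each with $V$-norm at most $r^{n-j}$ (Example \ref{ex.mvarpoly}). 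The tail is then bounded by
\[
\sum_{n>M}\binom{n}{j}\frac{N^n\,|\mu|(\R)}{n!}\,r^{n-j},
\]
which tends to $0$ as $M\to\infty$, since it is the tail of a convergent exponential-type series. Combining Steps 2 and 3 with a diagonal choice gives polynomials converging to $g$ in every seminorm. This yields $g\in C_{\beta}^k(\R)\subseteq C_{\alpha}^k(\R)$.

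The remaining assertions are then formal. Given \cite[Lem.\ 3.2.3(iii)]{Nikitopoulos2023n} and Proposition \ref{prop.Ckalpha}\ref{item.SlocCk}, $C^{k+1}(\R)\subseteq W_k(\R)_{\loc}$ lies in the closure of $W_k(\R)$. That closure therefore contains $\cP$, which is dense in $C_{\alpha}^k(\R)$ by definition. So $W_k(\R)$ is dense in $C_{\alpha}^k(\R)$.
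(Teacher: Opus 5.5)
Your proposal is correct and follows essentially the same route as the paper. You reduce to $\alpha=\beta$, truncate $\mu$ to $[-N,N]$ using \eqref{eq.divdiffWk} together with the Varopoulos bound from Theorem~\ref{thm.IPTPV}/Example~\ref{ex.multivarWiener}, and then approximate by Taylor polynomials via the fact that $(\lambda^n)^{[j]}$ is a sum of $\binom{n}{j}$ monomials, each of $V$-norm at most $r^{n-j}$ on $[-r,r]^{j+1}$. The one step you leave implicit, as the paper does (``a simple limiting argument''), is passing the divided difference through the infinite power series. This follows from absolute convergence of that series in the Banach space $V([-r,r]_{(j+1)})$ together with the continuous inclusion of $V([-r,r]_{(j+1)})$ into $C([-r,r]^{j+1})$.
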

\pagebreak

\begin{proof}
It suffices to prove that $W_k(\R) \subseteq C_{\beta}^k(\R)$.
To this end, let $f = \int_{\R} e^{i\boldsymbol{\cdot}\xi}\,\mu(\d\xi) \in W_k(\R)$.
Now, for each $n \in \N$, define $\mu_n(\d\xi) \coloneqq 1_{[-n,n]}(\xi) \, \mu(\d\xi)$ and
\[
f_n(\lambda) \coloneqq \int_{\R} e^{i\lambda\xi}\,\mu_n(\d\xi) = \int_{\R} e^{i\lambda \xi}1_{[-n,n]}(\xi) \, \mu(\d\xi) \qquad (\lambda \in \R).
\]
Then $f_n \in W_k(\R)$, and $\supp |\mu_n| \subseteq [-n,n]$ for all $n \in \N$.
If $0 \leq j < k+1$, then \eqref{eq.divdiffWk} applied to the function $g = f-f_n \in W_k(\R)$ yields
\[
(f-f_n)^{[j]}(\blambda) = \int_{\Delta_j}\int_{\R} e^{i(\mathbf{t} \boldsymbol{\cdot} \blambda)\xi}(i\xi)^j(1-1_{[-n,n]}(\xi))\,\mu(\d\xi)\,\rho_j(\d\mathbf{t}) \qquad \big(\blambda \in \R^{j+1}\big).
\]
Consequently, by inequality \eqref{eq.multivarWiener} from Example \ref{ex.multivarWiener} applied to the function $\varphi = f-f_n$ and the dominated convergence theorem,
\begin{align*}
    \sup_{r > 0}\beta_{r,j+1}\big((f-f_n)^{[j]}\big) & \leq  \int_{\Delta_j}\int_{\R}|\xi|^j(1-1_{[-n,n]}(\xi)) \, |\mu|(\d\xi)\,\rho_j(\d\mathbf{t}) \\
    & = \frac{1}{j!}\int_{\R}|\xi|^j(1-1_{[-n,n]}(\xi)) \, |\mu|(\d\xi) \xrightarrow{n \to \infty} 0.
\end{align*}
In particular, $f_n \to f$ in $\cC_{\beta}^k(\R)$ as $n \to \infty$.
It therefore suffices to assume $\supp |\mu|$ is compact.

Suppose $R > 0$ and $\supp |\mu| \subseteq [-R,R]$.
Then $\int_{\R} |f|\,d|\mu| \leq \mu_{(0)}\,\norm{f}_{\ell^{\infty}([-R,R])}$ for all Borel-measurable functions $f \colon \R \to \C$.
In particular, $\mu_{(m)} \leq R^m\mu_{(0)} < \infty$ for all $m \in \N$.
Therefore, for any $n \in \N$,
\[
q_n(\lambda) \coloneqq \int_{\R}\sum_{m=0}^n\frac{(i\lambda \xi)^m}{m!} \, \mu(\d\xi) = \sum_{m=0}^n\frac{(i\lambda)^m}{m!}\int_{\R} \xi^m \, \mu(\d\xi) \in \cP
\]
is well defined.
I claim that $q_n \to f$ in $\cC_{\beta}^k(\R)$ as $n \to \infty$.
Indeed, since
\[
e^{i\lambda \xi} = \sum_{m=0}^{\infty}\frac{(i\lambda\xi)^m}{m!} \; \text{ and } \; \int_{\R} \sum_{m=0}^{\infty}\frac{|\lambda\xi|^m}{m!}\,\mu(\d\xi) = \int_{\R} e^{|\lambda\xi|}\,|\mu|(\d\xi) \leq e^{|\lambda|R}\mu_{(0)},
\]
the dominated convergence theorem yields
\[
f(\lambda) - q_n(\lambda) = \int_{\R}\sum_{m=n+1}^{\infty}\frac{(i\lambda \xi)^m}{m!} \, \mu(\d\xi) = \sum_{m=n+1}^{\infty}\frac{(i\lambda)^m}{m!}\int_{\R}\xi^m \, \mu(\d\xi) \qquad (\lambda \in \R).
\]
Consequently, by \cite[eq.\ (10)]{Nikitopoulos2023n} (the formula for divided differences of polynomials) and a simple limiting argument, if $0 \leq j < k+1$, then
\[
(f-q_n)^{[j]}(\blambda) = \sum_{m=n+1}^{\infty}\frac{i^m}{m!}\int_{\R}\xi^m \, \mu(\d\xi)\sum_{|\gamma| = m-j} \blambda^{\gamma} \qquad \big(\blambda \in \R^{j+1}\big).
\]
Therefore, using the fact that $\big\{\gamma \in \N_0^{j+1} : |\gamma| = m-j\big\}$ has $\binom{m}{m-j} \leq 2^m$ elements,
\[
\beta_{r,j+1}\big((f-q_n)^{[j]}\big) \leq \sum_{m=n+1}^{\infty}\binom{m}{m-j}\frac{r^{m-j}}{m!}\mu_{(m)} \leq \frac{\mu_{(0)}}{r^j}\sum_{m=n+1}^{\infty}\frac{(2rR)^m}{m!} \xrightarrow{n \to \infty} 0 \qquad (r > 0).
\]
Thus, $q_n \to f$ in $\cC_{\beta}^k(\R)$ as $n \to \infty$.
In particular, $f \in C_{\beta}^k(\R)$, as desired.
\end{proof}

\begin{lemma}\label{lem.mollify}
Let $\eta \in C_c^{\infty}(\R)$ be such that $\int_{\R} \eta(x)\,\d x = 1$, and suppose $u \leq \alpha$ and $\alpha$ is finite on polynomials, increasing, complete, and translation invariant.
If $m \in \N$, $\varphi \in C_{\alpha}(\R^m)$,~and
\[
\varphi_{\e}(\blambda) \coloneqq \int_{\R} \eta(x) \,\varphi(\lambda_1-\e x, \ldots, \lambda_m - \e x)\,\d x \qquad (\blambda \in \R^m, \; \e > 0),
\]
then $\varphi_{\e} \in C_{\alpha}(\R^m)$ for all $\e > 0$, and $\varphi_{\e} \to \varphi$ in $C_{\alpha}(\R^m)$ as $\e \searrow 0$.
\end{lemma}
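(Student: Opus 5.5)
The plan is to view $\varphi_{\e}$ as a Bochner integral of translates of $\varphi$ with values in $C_{\alpha}(\R^m)$, and then to use the joint continuity of translation from Proposition \ref{prop.Calpha}\ref{item.transinv}. That result applies because $\alpha$ is translation invariant, increasing, and finite on polynomials. Write $\mathbf{1} \coloneqq (1,\ldots,1) \in \R^m$, so that
\[
\varphi(\lambda_1-\e x,\ldots,\lambda_m-\e x) = \tau_{-\e x\mathbf{1}}(\varphi)(\blambda).
\]
The first step is to fix $\e > 0$ and consider the map $F_{\e} \colon \R \ni x \mapsto \tau_{-\e x \mathbf{1}}(\varphi) \in C_{\alpha}(\R^m)$. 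By Proposition \ref{prop.Calpha}\ref{item.transinv} this map is continuous.

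The next step is to check that $C_{\alpha}(\R^m)$ is a Fr\'echet space. Since $u \leq \alpha$ and $\alpha$ is complete and increasing, Proposition \ref{prop.Calpha}\ref{item.compinc},\ref{item.inc} show that $\cC_{\alpha}(\R^m)$ is Fr\'echet, and $C_{\alpha}(\R^m)$ is a closed subspace of it. It follows that $\eta F_{\e}$ is a continuous, compactly supported function into a Fr\'echet space, so it is Bochner (or Riemann) integrable. Its integral $\Psi_{\e} \coloneqq \int_{\R}\eta(x)\,F_{\e}(x)\,\d x$ lies in $C_{\alpha}(\R^m)$, because the integral of a continuous function on a compact set with values in a closed subspace stays in that subspace, for instance as a limit of Riemann sums.

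To identify $\Psi_{\e}$ with $\varphi_{\e}$, apply the point evaluations $\psi \mapsto \psi(\blambda)$. These are continuous on $C_{\alpha}(\R^m)$ since $u \leq \alpha$, and they commute with the integral. This gives $\Psi_{\e}(\blambda) = \varphi_{\e}(\blambda)$, so $\varphi_{\e} \in C_{\alpha}(\R^m)$.

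For convergence, use $\int_{\R}\eta = 1$ to write
\[
\varphi_{\e} - \varphi = \int_{\R} \eta(x)\,\big(\tau_{-\e x\mathbf{1}}(\varphi) - \varphi\big)\,\d x.
\]
Fix $r > 0$ and let $\supp \eta \subseteq [-M,M]$. The triangle inequality for the seminorm $\alpha_{r,m}$ under the integral gives
\[
\alpha_{r,m}(\varphi_{\e}-\varphi) \leq \norm{\eta}_{L^1}\sup_{|x| \leq M}\alpha_{r,m}\big(\tau_{-\e x\mathbf{1}}(\varphi)-\varphi\big).
\]
Joint continuity of $\tau$ at $(0,\varphi)$, restricted to the compact set of shifts $\{-\e x\mathbf{1} : |x| \leq M\}$ as $\e \searrow 0$, makes the supremum tend to $0$. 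This proves $\varphi_{\e} \to \varphi$ in $C_{\alpha}(\R^m)$.

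The main obstacle is the rigor of the Fr\'echet-valued integration: justifying that a seminorm passes under the integral, and that the integral commutes with point evaluations. I will handle both by citing the Fr\'echet-space Bochner theory already invoked in the paper, \cite[\S1.1]{Nikitopoulos2024}. Alternatively, the integral can be realized as a limit of Riemann sums, which converge because $\eta F_{\e}$ is uniformly continuous on compact sets. Riemann sums lie in $C_{\alpha}(\R^m)$ and satisfy both properties trivially, so both pass to the limit.
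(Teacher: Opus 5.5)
Your proposal is correct and follows essentially the same route as the paper. You realize $\varphi_{\e}$ as a $C_{\alpha}(\R^m)$-valued Bochner integral of translates of $\varphi$, which are continuous in the shift by Proposition \ref{prop.Calpha}\ref{item.transinv}; you identify it through point evaluations, which are continuous because $u \leq \alpha$; and you then pass to the limit. The only difference is cosmetic: the paper finishes with the dominated convergence theorem for Fr\'echet-valued Bochner integrals, dominating by $\alpha_{r+\e R,m}(\varphi)\,\|\eta\|_{L^1}$ via translation invariance and monotonicity, whereas you bound $\alpha_{r,m}(\varphi_{\e}-\varphi)$ directly by $\|\eta\|_{L^1}$ times a supremum over shrinking shifts, which uses only continuity of $\bmu \mapsto \tau_{\bmu}(\varphi)$ at $\bmu = 0$.
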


\begin{proof}
I shall freely use basic facts and terminology concerning strong/Bochner measurability and integrability of functions with values in Fr\'echet spaces.
Please see \cite[\S1.1]{Nikitopoulos2024} for the requisite background.

Recall that if $S$ is a set and $s \in S$, then $s_{(m)}$ is the element of $S^m$ with all components equal to $s$.
Let $\e > 0$.
By Proposition \ref{prop.Calpha}\ref{item.transinv}, the function
\[
\R \ni x \mapsto F_{\e}(x) \coloneqq \eta(x)\,\varphi\big(\cdot - (\e x)_{(m)}\big) \in C_{\alpha}(\R^m)
\]
is well defined and continuous because $\alpha$ is translation invariant, increasing, and finite on polynomials.
Consequently, $F_{\e}$ is strongly measurable.
Proposition \ref{prop.Calpha}\ref{item.transinv} also implies that $F_{\e} \to F \coloneqq \eta(\cdot) \, \varphi$ pointwise (as functions from $\R$ to $C_{\alpha}(\R^m)$) as $\e \searrow 0$.
In addition, if $R > 0$, $\supp \eta \subseteq [-R,R]$, and $r > 0$, then
\begin{align*}
    \int_{\R} \sup_{0 < \delta \leq \e}\alpha_{r,m}(F_{\delta}(x))\,\d x & = \int_{-R}^R |\eta(x)| \,\sup_{0 < \delta \leq \e}\alpha_{r,m}\big(\varphi\big(\cdot - (\delta x)_{(m)}\big)\big) \,\d x \\
    & \leq \alpha_{r+\e R, m}(\varphi)\, \norm{\eta}_{L^1} < \infty
\end{align*}
because $\alpha$ is translation invariant and increasing.
Since $(\alpha_{r,m})_{r > 0}$ induces the topology of the Fr\'echet space $C_{\alpha}(\R^m)$, the inequality above implies that $F_{\e}$ is strongly integrable and, by the dominated convergence theorem for (Fr\'echet space--valued) Bochner integrals, $\int_{\R} F_{\e}(x) \to \int_{\R} F(x)\,\d x = \varphi \int_{\R} \eta(x) \,\d x = \varphi$ in $C_{\alpha}(\R^m)$ as $\e \searrow 0$.
Finally, by applying the evaluation functionals $C_{\alpha}(\R^m) \ni \psi \mapsto \psi(\blambda) \in \C$ ($\blambda \in \R^m$), which are continuous because $u \leq \alpha$, to the Bochner integral $\int_{\R} F_{\e}(x)\,\d x$, it follows that $\varphi_{\e} = \int_{\R} F_{\e}(x) \,\d x$ for all $\e > 0$.
This completes the proof.
\end{proof}

\begin{proof}[Proof of Theorem \ref{thm.mathbfCkalpha=Ckalpha}]
Let $\eta \in C_c^{\infty}(\R)$ be such that $\int_{\R} \eta(x)\,\d x = 1$, and define
\[
\eta_{\e}(x) \coloneqq \frac1\e\eta\left(\frac{x}{\e}\right) \qquad (x \in \R, \; \e > 0).
\]
If $f \in \mathbf{C}_{\alpha}^k(\R)$, then $f \ast \eta_{\e} \in C^{\infty}(\R)$ for all $\e > 0$.
In particular, by Lemma \ref{lem.sameclosures}, $f \ast \eta_{\e} \in C_{\alpha}^k(\R)$ for all $\e > 0$.

Next, I argue that $f \ast \eta_{\e} \to f$ in $\mathbf{C}_{\alpha}^k(\R)$ as $\e \searrow 0$.
To this end, note that if $g \in C(\R)$, $i \in \N$, $\e > 0$, and $\blambda \in \R^{i+1}$, then
\begin{align*}
    \int_{\Delta_i} (g \ast \eta_{\e})(\mathbf{t} \boldsymbol{\cdot} \blambda) \, \rho_i(\d\mathbf{t}) & = \int_{\Delta_i}\int_{\R} g(\mathbf{t} \boldsymbol{\cdot} \blambda-x)\, \eta_{\e}(x)\,\d x \, \rho_i(\d\mathbf{t}) \\
    & = \int_{\Delta_i}\int_{\R} g\big(\mathbf{t} \boldsymbol{\cdot} \big(\blambda-x_{(i+1)}\big)\big)\, \eta_{\e}(x)\,\d x \, \rho_i(\d\mathbf{t}) \\
    & = \int_{\R} \eta_{\e}(x) \int_{\Delta_i}  g\big(\mathbf{t} \boldsymbol{\cdot} \big(\blambda-x_{(i+1)}\big)\big) \, \rho_i(\d\mathbf{t}) \,\d x \\
    & = \int_{\R} \eta(y) \int_{\Delta_i}  g\big(\mathbf{t} \boldsymbol{\cdot} \big(\blambda-(\e y)_{(i+1)}\big)\big) \, \rho_i(\d\mathbf{t})\,\d y
\end{align*}
by Fubini's theorem and the change of variable $y \coloneqq x/\e$.
It follows from \eqref{eq.divdiff} (twice) that if $0 \leq i < k+1$ and $\blambda \in \R^{i+1}$, then
\begin{align*}
    (f \ast \eta_{\e})^{[i]}(\blambda) & = \int_{\Delta_i} (f \ast \eta_{\e})^{(i)}(\mathbf{t} \boldsymbol{\cdot} \blambda) \, \rho_i(\d\mathbf{t}) = \int_{\Delta_i} \big(f^{(i)} \ast \eta_{\e}\big)(\mathbf{t} \boldsymbol{\cdot} \blambda) \, \rho_i(\d\mathbf{t}) \\
    & = \int_{\R} \eta(y) \int_{\Delta_i}  f^{(i)}\big(\mathbf{t} \boldsymbol{\cdot} \big(\blambda-(\e y)_{(i+1)}\big)\big) \, \rho_i(\d\mathbf{t})\,\d y \\
    & = \int_{\R} \eta(y)\,f^{[i]}\big(\blambda - (\e y)_{(i+1)}\big)\,\d y.
\end{align*}
Therefore, by Lemma \ref{lem.mollify}, $(f \ast \eta_{\e})^{[i]} \to f^{[i]}$ in $C_{\alpha}\big(\R^{i+1}\big)$ as $\e \searrow 0$.
In other words, $f \ast \eta_{\e} \to f$ in $\mathbf{C}_{\alpha}^k(\R)$ as $\e \searrow 0$, as desired.

Finally, since $C^{k+1}(\R) \subseteq C_{\beta}^k(\R)$ by Lemma \ref{lem.sameclosures}, it is automatic that if $f \in C^k(\R)$, then
\[
f^{[i]} \in VC\big(\R^{i+1}\big) = C_{\beta}\big(\R^{i+1}\big) \subseteq C_{\alpha}\big(\R^{i+1}\big) \qquad (0 \leq i < k).
\]
Consequently, $C_{\alpha}^k(\R) = \mathbf{C}_{\alpha}^k(\R) = \big\{f \in C^k(\R) : f^{[k]} \in C_{\alpha}\big(\R^{k+1}\big)\big\}$.
\end{proof}

I wrap up this subsection by proving Theorem \ref{thm.BesovHolderVaropoulos}, the cheeky statement of which is that if $f$ is ``slightly better than $C^k$,'' then $f$ is Varopoulos $C^k$, i.e., $f \in VC^k(\R) = C_{\beta}^k(\R) = \cC_{\beta}^k(\R)$.

\begin{definition}[Besov spaces]\label{def.Besov}
Let $\eta \in C_c^{\infty}(\R)$ be such that $0 \leq \eta \leq 1$, $\supp \eta \subseteq [-2,2]$, and $\eta \equiv 1$ on $[-1,1]$.
Define
\[
\eta_i(\xi) \coloneqq \eta\big(2^{-i}\xi\big) - \eta\big(2^{-i+1}\xi\big) \qquad (i \in \Z, \; \xi \in \R).
\]
Now, for $(s,p,q) \in \R \times [1,\infty]^2$ and $f \in \sS'(\R) \coloneqq \{$tempered distributions on $\R\}$, define
\begin{align*}
    \norm{f}_{\dot{B}_q^{s,p}} & \coloneqq \big\|\big(2^{is}\|\wch{\eta}_i \ast f\|_{L^p}\big)_{i \in \Z}  \big\|_{\ell^q(\Z)} \in [0,\infty] \; \text{ and} \\
    \norm{f}_{B_q^{s,p}} & \coloneqq \|\wch{\eta} \ast f\|_{L^p} + \big\|\big(2^{is}\|\wch{\eta}_i \ast f\|_{L^p}\big)_{i \in \N}  \big\|_{\ell^q(\N)} \in [0,\infty].
\end{align*}
Above,
\[
\wch{g}(x) \coloneqq \frac{1}{2\pi} \int_{\R} e^{ix\xi}g(\xi)\,\d \xi \qquad (x \in \R)
\]
is the inverse Fourier transform of $g$.
The space
\[
\dot{B}_q^{s,p}(\R) \coloneqq \big\{f \in \sS'(\R) : \norm{f}_{\dot{B}_q^{s,p}} < \infty\big\}
\]
is called the \textbf{homogeneous $\boldsymbol{(s,p,q)}$-Besov space}, and the space
\[
B_q^{s,p}(\R) \coloneqq \big\{f \in \sS'(\R) : \norm{f}_{B_q^{s,p}} < \infty\big\}
\]
is called the \textbf{inhomogeneous $\boldsymbol{(s,p,q)}$-Besov space}.
\end{definition}

Please see \cite[\S3]{Nikitopoulos2023n} and the references therein for background on Besov spaces.
In particular, as is explained therein, $W_k(\R) \subseteq B_1^{k,\infty}(\R) \subseteq \dot{B}_1^{k,\infty}(\R) \subseteq C^k(\R)$ for all $k \in \N$.
The final key to upgrading the latter containment to $\dot{B}_1^{k,\infty}(\R) \subseteq VC^k(\R)$, the first part of Theorem \ref{thm.BesovHolderVaropoulos}, is the following celebrated result of Peller.

\begin{theorem}[Peller]\label{thm.Peller}
If $k \in \N$, then there exists a constant $c_k < \infty$ such that for all $f \in B_1^{k,\infty}(\R)$, there exist a $\sigma$-finite measure space $(\Sigma,\sH,\rho)$ and measurable functions $\varphi_1,\ldots,\varphi_{k+1} \colon \R \times \Sigma \to \C$ such that $\varphi_i(\cdot,\sigma) \in C(\R)$ for all $i =1,\ldots,k+1$ and $\sigma \in \Sigma$,
\[
\int_{\Sigma} \prod_{i=1}^{k+1}\norm{\varphi_i(\cdot,\sigma)}_{\ell^{\infty}(\R)}\,\rho(\d\sigma) \leq c_k\norm{f}_{B_1^{k,\infty}}, \; \text{ and } \; f^{[k]}(\blambda) = \int_{\Sigma}\prod_{i=1}^{k+1}\varphi_i(\lambda_i,\sigma) \,\rho(\d\sigma)
\]
for all $\blambda \in \R^{k+1}$.
\end{theorem}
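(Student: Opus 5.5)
The plan is a Littlewood--Paley reduction followed by a separate estimate for band-limited functions. Fix $\eta$ and $\eta_i$ as in Definition \ref{def.Besov}. For $f \in B_1^{k,\infty}(\R)$, decompose
\[
f = \wch{\eta} \ast f + \sum_{i \in \N} \wch{\eta}_i \ast f \eqqcolon f_0 + \sum_{i \in \N} f_i .
\]
Each $f_i$ is a bounded function whose Fourier transform is supported in $[-2^{i+1},2^{i+1}]$. In the normalization of Definition \ref{def.Besov} (so $p=\infty$, $q=1$), membership in $B_1^{k,\infty}(\R)$ says exactly that
\[
\norm{f_0}_{\ell^\infty} + \sum_{i \in \N} 2^{ik}\norm{f_i}_{\ell^\infty} = \norm{f}_{B_1^{k,\infty}} < \infty .
\]
The theorem should therefore follow from the following band-limited lemma, by summing over $i$.

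\emph{Band-limited lemma.} There is a constant $c_k' < \infty$ such that whenever $g \in L^\infty(\R)$ has $\supp \widehat{g} \subseteq [-\sigma,\sigma]$, the divided difference $g^{[k]}$ admits a decomposition as in the statement, with
\[
\int_\Sigma \prod_{j=1}^{k+1} \norm{\varphi_j(\cdot,s)}_{\ell^\infty(\R)} \, \rho(\d s) \leq c_k'\,\sigma^k \norm{g}_{\ell^\infty(\R)} .
\]

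Assuming the lemma, the theorem follows in three steps.

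\begin{enumerate}
\item Apply the lemma to each $f_i$ with $\sigma = 2^{i+1}$.
\item Take $\Sigma$ to be the disjoint union of the resulting measure spaces. This is $\sigma$-finite as a countable union of $\sigma$-finite spaces, and the functions $\varphi_j$ are defined piecewise on it. The total integral is then at most $c_k' 2^k \norm{f}_{B_1^{k,\infty}}$.
\item Verify that $f^{[k]} = \sum_i f_i^{[k]}$ pointwise on $\R^{k+1}$. The series $\sum_i f_i$ converges in $C^k(\R)$ because $\sum_i 2^{ik}\norm{f_i}_{\ell^\infty}<\infty$ (Bernstein's inequality gives $\norm{f_i^{(k)}}_{\ell^\infty} \lesssim 2^{ik}\norm{f_i}_{\ell^\infty}$). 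Then \eqref{eq.divdiff} passes the limit inside the divided difference. Dominated convergence identifies the integral over the disjoint union with the sum of the pieces.
\end{enumerate}

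To prove the lemma, first rescale: $g_\sigma(x) \coloneqq g(x/\sigma)$ satisfies $g_\sigma^{[k]}(\blambda) = \sigma^{-k} g^{[k]}(\blambda/\sigma)$, and sup norms are invariant under dilation, so it suffices to take $\sigma = 1$. Then I would argue by induction on $k$, using Peller's sampling expansion. For $k=1$ and $\supp\widehat{g} \subseteq [-1,1]$, the Paley--Wiener/Shannon sampling theorem (applied in $y$ to the function $y\mapsto (g(x)-g(y))/(x-y)$, which has spectrum in $[-1,1]$) gives
\[
\frac{g(x)-g(y)}{x-y} = \sum_{n \in \Z} \frac{g(x)-g(\pi n)}{x-\pi n}\cdot\frac{\sin(y-\pi n)}{y-\pi n}.
\]
For higher $k$, the same expansion would be iterated in the last variable, using the recursive definition of $g^{[k]}$. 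The variables then separate, with the sampled values $g(\pi n)$ absorbed into the factor depending on $\lambda_1$.

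The main obstacle is the norm estimate. The sampling expansion naturally produces Haagerup-type bounds,
\[
\sup_x \Big(\sum_n |a_n(x)|^2\Big)^{1/2} \cdot \sup_y \Big(\sum_n |b_n(y)|^2\Big)^{1/2} \lesssim \norm{g}_{\ell^\infty},
\]
whereas the statement requires the stronger quantity $\int \prod_j \sup|\varphi_j|$. To upgrade, I would first try a Fourier-side representation, which is tailored to products of sup norms as in Example \ref{ex.multivarWiener}. Choose a Schwartz function $\psi$ with $\widehat{\psi} \equiv 1$ on $[-1,1]$ and $\widehat{\psi} \in C_c^\infty$, so that $g = g \ast \psi$. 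Then \eqref{eq.divdiffWk} gives
\[
g^{[k]}(\blambda) = \int_{\R} g(y)\,\big(\psi(\cdot - y)\big)^{[k]}(\blambda)\,\d y,
\]
and each $(\psi(\cdot-y))^{[k]}$ factors as
\[
\int_{\Delta_k}\int_{\R} (i\xi)^k\, \widehat{\psi}(\xi)\, e^{-i\xi y}\prod_{j} e^{i\xi t_j\lambda_j}\,\d\xi\,\rho_k(\d\mathbf{t}).
\]
This uses $\sum_j t_j = 1$, so the factor $e^{-i\xi y}$ separates from the $\lambda_j$. The difficulty is that $g$ is only bounded, not integrable, so $\int_\R |g(y)|\,\d y$ may be infinite. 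My fallback is to localize in $y$ using the sampling lattice $\pi\Z$: group the $y$-integral into unit cells and use the rapid decay of $\psi$ to sum the cells in $\ell^1$. This is the step I expect to require Peller's genuine ideas. It would be reasonable to invoke his result here rather than reprove it.
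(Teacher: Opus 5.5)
The Littlewood--Paley bookkeeping, the $C^k$-convergence argument and the rescaling are all correct, but they are the routine half of the proof. The paper itself does not prove this theorem; it quotes it from \cite{Peller2006} (see also \cite[App.~A]{Nikitopoulos2023h}). The genuine gap is the band-limited lemma. It carries the entire content of the theorem, you state it without proof, and neither of your two routes can prove it as described.

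Read as a decomposition indexed by $n\in\Z$, the sampling expansion uses $a_n(x)=\frac{g(x)-g(\pi n)}{x-\pi n}$ and $b_n(y)=\frac{\sin(y-\pi n)}{y-\pi n}$. Its cost is $\sum_n\sup_x|a_n(x)|\,\sup_y|b_n(y)|\geq\sum_n|g'(\pi n)|$, which is typically infinite. It gives only $\ell^2$ (Haagerup-type) control. Randomizing the signs does not help, since with independent random signs $\sup_y\big|\sum_n\pm b_n(y)\big|=\infty$ almost surely. For $k=1$, Grothendieck's inequality (projective norm on $C(K)\otimes C(L)$ at most $K_G$ times the Haagerup norm) would bridge this for finite sums. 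That is a further deep input, however, and you would still need something else for $k\geq2$.

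The cell-localization fallback fails for a structural reason. The cost $\int_\Sigma\prod_j\norm{\varphi_j(\cdot,s)}_{\ell^\infty(\R)}\,\rho(\d s)$ dominates the sup norm of the function represented, and it is unchanged when all variables are translated. Hence the cell pieces $\int_{\pi n}^{\pi(n+1)}g(y)\,(\psi(\cdot-y))^{[k]}\,\d y$ have costs bounded below uniformly in $n$; for $g(x)=\cos(x/2)$ they are translates of four fixed nonzero functions. Rapid decay of $\psi$ concentrates each piece near its cell but does not make it small, so the sum over cells diverges. Some cancellation between cells must be exploited.

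One way to supply it is by induction on $k$ with $\sigma=1$, so that $\norm{g^{(j)}}_{\ell^\infty}\leq\norm{g}_{\ell^\infty}$ by Bernstein. It uses the fact that the conclusion allows an arbitrary $\sigma$-finite parameter space.

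\emph{Expansion.} Fix $\chi\in C_c^\infty(\R)$ with $\chi=1$ near $0$, and expand $1=\prod_{i<j}\big[\chi(\lambda_i-\lambda_j)+\big(1-\chi(\lambda_i-\lambda_j)\big)\big]$.

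\emph{Off-diagonal terms.} In every term containing some factor $1-\chi(\lambda_i-\lambda_j)$, symmetry and the recursion give $\big(1-\chi(\lambda_i-\lambda_j)\big)g^{[k]}(\blambda)=K(\lambda_i-\lambda_j)\big(g^{[k-1]}(\blambda^{(j)})-g^{[k-1]}(\blambda^{(i)})\big)$. Here $\blambda^{(l)}$ is $\blambda$ with $\lambda_l$ deleted and $K(u)\coloneqq(1-\chi(u))/u$. Since $K\in L^2$ and $K''\in L^1$, we have $\widehat{K}\in L^1$. So $K(\lambda_i-\lambda_j)$ and all the $\chi$-factors have finite cost, as in Example \ref{ex.multivarWiener}, and the induction hypothesis applies.

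\emph{Clustered term.} In $\prod_{i<j}\chi(\lambda_i-\lambda_j)\,g^{[k]}$, insert a partition of unity $\sum_n\theta(\lambda_1-n)$. On the $n$th piece every $\lambda_j$ lies within a fixed distance of $n$, so $g^{[k]}$ may be replaced by $(g\,\Theta(\cdot-n))^{[k]}$ for a fixed bump $\Theta$. Then \eqref{eq.divdiff} gives a Fourier representation of that piece whose factors can be cut off near $n$. Its density, apart from the integrable $\widehat{\chi}$-factors, is dominated uniformly in $n$ by $C\norm{g}_{\ell^\infty}\min(|\xi|^k,|\xi|^{-2})$.

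\emph{Gluing.} Combine the pieces using independent Steinhaus sequences $(\omega^{(1)}_n),\ldots,(\omega^{(k)}_n)$. Place $\omega^{(j)}_n$ in the $j$th factor for $j\leq k$ and $\overline{\omega^{(1)}_n\cdots\omega^{(k)}_n}$ in the last. The expectation returns $\sum_n$, while bounded overlap of the supports makes the cost $\int\sup_n$ rather than $\sum_n$.

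Without an argument of this kind, or a direct citation of Peller's theorem as the paper does, your proposal establishes only the summation step.
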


Slightly stronger forms of this result are stated as \cite[Thm.\ 5.5]{Peller2006}, \cite[Thm.\ 2.2.1]{Peller2016}, and \cite[Thm.\ 4.3.13]{Nikitopoulos2023h}.
Please see \cite[App.\ A]{Nikitopoulos2023h} for a detailed and mostly self-contained proof.

\begin{corollary}\label{cor.BesovVCk}
If $k \in \N$, then $B_1^{k,\infty}(\R) \subseteq VC^k(\R)$.
\end{corollary}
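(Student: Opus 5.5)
The plan is to apply Theorem \ref{thm.Peller} directly, together with Theorem \ref{thm.IPTPV} and Theorem \ref{thm.mathbfCkalpha=Ckalpha}. Since $\beta$ is the Varopoulos family, it satisfies $u \leq \beta \leq \beta$ and is increasing, complete, and translation invariant (Example \ref{ex.uniformandVaropoulos}). Moreover, $\cC_{\beta}(\R^m) = C_{\beta}(\R^m)$ by Corollary \ref{cor.polydenseinVC}. Theorem \ref{thm.mathbfCkalpha=Ckalpha} therefore gives
\[
VC^k(\R) = C_{\beta}^k(\R) = \big\{f \in C^k(\R) : f^{[k]} \in VC\big(\R^{k+1}\big)\big\}.
\]
So it suffices to show that $f \in C^k(\R)$ and that $\beta_{r,k+1}(f^{[k]}) < \infty$ for every $r > 0$.

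Let $f \in B_1^{k,\infty}(\R)$. As recalled before Theorem \ref{thm.Peller}, $B_1^{k,\infty}(\R) \subseteq C^k(\R)$, so $f$ is $C^k$ and $f^{[k]}$ is the continuous extension of the divided difference. Theorem \ref{thm.Peller} supplies a $\sigma$-finite measure space $(\Sigma,\sH,\rho)$ and product-measurable functions $\varphi_1,\ldots,\varphi_{k+1}$. Each $\varphi_i(\cdot,\sigma)$ is continuous and bounded on $\R$, the integral of $\prod_i\norm{\varphi_i(\cdot,\sigma)}_{\ell^\infty(\R)}$ is at most $c_k\norm{f}_{B_1^{k,\infty}}$, and
\[
f^{[k]}(\blambda) = \int_\Sigma \prod_i \varphi_i(\lambda_i,\sigma)\,\rho(\d\sigma).
\]

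Fix $r > 0$ and apply Theorem \ref{thm.IPTPV} with $\Om_1 = \cdots = \Om_{k+1} = [-r,r]$ (compact and metrizable), using the restrictions $\varphi_i|_{[-r,r]\times\Sigma}$. These remain product measurable, since Borel sets of $[-r,r]$ are traces of Borel sets of $\R$, and they remain continuous in the first variable. Because $\norm{\varphi_i(\cdot,\sigma)}_{\ell^\infty([-r,r])} \leq \norm{\varphi_i(\cdot,\sigma)}_{\ell^\infty(\R)}$, the integrability hypothesis in \eqref{eq.integphi} holds, and \eqref{eq.integestim} gives
\[
\beta_{r,k+1}\big(f^{[k]}\big) \leq c_k\norm{f}_{B_1^{k,\infty}} < \infty.
\]
This bound is uniform in $r$. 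Hence $f^{[k]} \in \cC_\beta(\R^{k+1}) = VC(\R^{k+1})$, and therefore $f \in VC^k(\R)$.

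The only point needing care is measurability. Theorem \ref{thm.Peller} states that the $\varphi_i$ are measurable on $\R \times \Sigma$, and I read this as $\cB_\R \otimes \sH$-measurability, which is exactly the hypothesis of Lemma \ref{lem.projmeas}. Given that reading, restriction preserves product measurability, and nothing else in the argument is delicate.
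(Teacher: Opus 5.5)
Your proof is correct and follows the paper's argument. Like the paper, you combine Peller's decomposition (Theorem \ref{thm.Peller}) with Theorem \ref{thm.IPTPV} on each $[-r,r]^{k+1}$ to get $f^{[k]} \in \cC_{\beta}(\R^{k+1}) = VC(\R^{k+1})$, and then apply Theorem \ref{thm.mathbfCkalpha=Ckalpha} with $\alpha = \beta$ to conclude $f \in VC^k(\R)$; your extra details (restriction preserving product measurability, and the bound being uniform in $r$, as in Remark \ref{rem.BesovVCkwodensitythm}) only spell out what the paper's short proof leaves implicit.
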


\begin{proof}
Let $f \in B_1^{k,\infty}(\R)$.
By Theorems \ref{thm.Peller} and \ref{thm.IPTPV} (and Corollary \ref{cor.polydenseinVC}), $f^{[k]} \in VC\big(\R^{k+1}\big)$.
Consequently, by Theorem \ref{thm.mathbfCkalpha=Ckalpha} with $\alpha = \beta$, $f \in C_{\beta}^k(\R) = VC^k(\R)$.
\end{proof}

\begin{proof}[Proof of Theorem \ref{thm.BesovHolderVaropoulos}]
Let $f \in \dot{B}_1^{k,\infty}(\R)$, and define $g \coloneqq (\wch{\eta} + \wch{\eta}_1)\ast f \in \sS'(\R)$.
Since $g$ is a tempered distribution with compactly supported Fourier transform, $g \in C^{\infty}(\R)$ by the Paley--Wiener theorem.
It is not difficult to show (vid.\ \cite[Lem.\ 3.3.6]{Nikitopoulos2023n}) that $f-g \in B_1^{k,\infty}(\R)$ as well.
Since $C^{\infty}(\R) \subseteq VC^k(\R)$ by Lemma \ref{lem.sameclosures} with $\alpha = \beta$ and $B_1^{k,\infty}(\R) \subseteq VC^k(\R)$ by Corollary \ref{cor.BesovVCk}, $f = (f-g)+g \in VC^k(\R)$.

If $\e > 1$, then $C_{\loc}^{k,\e}(\R) \subseteq \cP$, so it is clear that $C_{\loc}^{k,\e}(\R) \subseteq VC^k(\R)$.
Suppose $0 < \e \leq 1$.
It is argued in the proof of \cite[Thm.\ 3.3.11]{Nikitopoulos2023n} that $C_{\loc}^{k,\e}(\R) \subseteq B_1^{k,\infty}(\R)_{\loc}$.
Consequently, the containment $C_{\loc}^{k,\e}(\R) \subseteq VC^k(\R)$ follows via Proposition \ref{prop.Ckalpha}\ref{item.SlocCk} from Corollary \ref{cor.BesovVCk}.
\end{proof}

\begin{remark}[Alternate proof that $\dot{B}_1^{k,\infty}(\R) \subseteq VC^k(\R)$]\label{rem.BesovVCkwodensitythm}
It is possible to prove that $\dot{B}_1^{k,\infty}(\R) \subseteq VC^k(\R)$ without the full force of Theorem \ref{thm.mathbfCkalpha=Ckalpha}.
Indeed, note that
\[
\sup_{r > 0}\beta_{r,k+1}\big(f^{[k]}\big) \leq c_k\norm{f}_{B_1^{k,\infty}} \qquad \big(f \in B_1^{k,\infty}(\R)\big)
\]
by Theorems \ref{thm.Peller} and \ref{thm.IPTPV}.
Consequently, if $f \in \dot{B}_1^{k,\infty}(\R)$ and $(f_n)_{n \in \N}$ is the inhomogeneous Littlewood--Paley sequence of $f$, then \cite[Lem.\ 3.3.6]{Nikitopoulos2023n} says (that $f_n \in C^{\infty}(\R)$ and) $\norm{f_n-f}_{B_1^{k,\infty}} \to 0$ as $n \to \infty$.
Since $B_1^{i,\infty}(\R) \hookrightarrow B_1^{k,\infty}(\R)$ for all $i=1,\ldots,k$ and $C^{\infty}(\R) \subseteq VC^k(\R)$ by Lemma \ref{lem.sameclosures}, we conclude that $f_n \in VC^k(\R)$ for all $n \in \N$, $f \in \cC_{\beta}^k(\R)$, and $f_n \to f$ in $\cC_{\beta}^k(\R)$ as $n \to \infty$.
Thus, $f \in VC^k(\R)$.
\end{remark}

\begin{corollary}\label{cor.VCksubsetneqCk}
If $k \in \N$, then $W_k(\R)_{\loc} \subsetneq VC^k(\R) \subsetneq C^k(\R)$.
\end{corollary}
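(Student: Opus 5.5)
The two inclusions are formal, and each strictness claim needs a separate counterexample; I expect the strictness of $VC^k(\R) \subsetneq C^k(\R)$ to be the hard step.

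\emph{Inclusions.} By Lemma \ref{lem.sameclosures} with $\alpha = \beta$, $W_k(\R) \subseteq C_{\beta}^k(\R) = VC^k(\R)$. Proposition \ref{prop.Ckalpha}\ref{item.SlocCk} with $\cS = W_k(\R)$ then gives $W_k(\R)_{\loc} \subseteq \overline{W_k(\R)} \subseteq \cC_{\beta}^k(\R)$. By Theorem \ref{thm.mathbfCkalpha=Ckalpha}, $\cC_{\beta}^k(\R) = C_{\beta}^k(\R) = VC^k(\R)$. Finally, $VC^k(\R) \hookrightarrow C^k(\R)$ by Proposition \ref{prop.Ckalpha}\ref{item.strongerCk}, since $u \leq \beta$.

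\emph{$W_k(\R)_{\loc} \neq VC^k(\R)$.} If $f \in W_k(\R)_{\loc}$, then on every compact interval $f^{(k)}$ agrees with the Fourier--Stieltjes transform $\int_{\R} e^{i\cdot\xi}(i\xi)^k\,\mu(\d\xi)$ of a finite measure. Such a transform restricts, locally, to a function with an absolutely convergent Fourier series on a period interval. So it suffices to find $g \in C_{\loc}^{0,\e}(\R)$, for some $\e > 0$, that is not locally of this form. I would take a classical Hardy--Littlewood/Zygmund-type example: a $2\pi$-periodic $\e$-Hölder function, with $\e < 1/2$, whose Fourier series is not absolutely convergent, for instance $\sum_n n^{-1/2-\e}e^{in\log n}e^{inx}$. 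Multiplying by a cutoff, a standard localization argument (Wiener's lemma for $A(\mathbb{T})$) shows that $g$ is not locally in the Fourier--Stieltjes algebra near any point. Now let $f$ be a $k$-fold antiderivative of $g$. Then $f \in C_{\loc}^{k,\e}(\R) \subseteq VC^k(\R)$ by Theorem \ref{thm.BesovHolderVaropoulos}, while $f \notin W_k(\R)_{\loc}$.

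\emph{$VC^k(\R) \neq C^k(\R)$.} For $k=1$ this is immediate. If $VC^1(\R) = C^1(\R)$, then Theorem \ref{thm.derformintro}, applied with $\alpha = \beta$ (legitimate by Corollary \ref{cor.VaropLFC}), would give $f_{\mathsmaller{B(H)}} \in C^1$ for every $f \in C^1(\R)$. That contradicts Farforovskaya's examples via \cite[Thm.\ 1.2.9]{AP2016}.

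For $k \geq 2$ this reduction to $k=1$ fails: every $C^k$ function already lies in $C_{\loc}^{1,\e}(\R) \subseteq VC^1(\R)$. So the obstruction must be detected in $f^{[k]}$ itself, and I expect this to be the main obstacle. My plan is to prove a necessary condition in the spirit of Peller's: if $\beta_{r,k+1}(f^{[k]}) < \infty$, then a suitably localized $f$ has $\|\wch{\eta}_n \ast f\|_{L^1}$ controlled by $2^{-nk}$ up to a constant. To get this, pair $f^{[k]}$ against finite tensor-type test functionals: discrete $(k+1)$-linear Schur multipliers on grids of mesh $\approx 2^{-n}$. The pairing is bounded by the $V$-norm, because the injective norm of a product of sup-normed functions is what such a pairing sees. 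Then pick $f \in C^k(\R)$ built from lacunary blocks, for instance $f^{(k)} = \sum_n n^{-1}\psi(2^n\,\cdot)$ with $\psi$ a smooth bump of frequency $\approx 1$, times a cutoff. This $f^{(k)}$ is continuous but violates the $B_1^{k,\infty}$-type bound. If this direct route stalls, the fallback is to use the derivative formula of Theorem \ref{thm.derformintro} to deduce $f_{\mathsmaller{B(H)}} \in C^k$. One would then contradict the known $k$-th order analog of Farforovskaya/Peller: there exist $C^k$ functions whose operator functions fail to be $k$ times Fréchet differentiable, since differentiability forces $f$ to be locally in $B_1^{k,\infty}(\R)$.
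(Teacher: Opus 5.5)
Your inclusions, your Hardy--Littlewood argument for $W_k(\R)_{\loc}\neq VC^k(\R)$ (a classical replacement for the paper's citation of a specific function in $C_{\loc}^{k,1/4}(\R)\setminus W_k(\R)_{\loc}$ from \cite{Nikitopoulos2023n}), and your $k=1$ argument for $VC^1(\R)\neq C^1(\R)$ are all sound. The gap is $VC^k(\R)\neq C^k(\R)$ for $k\geq 2$.

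\textbf{Your direct route cannot work.} The necessary condition you aim for is $\|\wch{\eta}_n\ast f\|_{L^1}\lesssim 2^{-nk}$ after localization. It holds for \emph{every} compactly supported $g\in C^k(\R)$. Indeed, $\eta_n(\xi)(i\xi)^{-k}=2^{-nk}\theta(2^{-n}\xi)$ for a fixed $\theta\in C_c^{\infty}(\R\setminus\{0\})$, whence $\|\wch{\eta}_n\ast g\|_{L^1}\leq 2^{-nk}\|\wch{\theta}\|_{L^1}\|g^{(k)}\|_{L^1}$. So no $C^k$ function can violate it; your lacunary example, whose dyadic pieces have $L^1$ norm of order $2^{-n}/n$, certainly does not. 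Violating the paper's $B_1^{k,\infty}$ bound would prove nothing either, since that is Peller's \emph{sufficient} class (Theorem \ref{thm.motivatingthm}\ref{item.PellerN}). Your fallback has the same defect: ``differentiability forces $f\in B_1^{k,\infty}(\R)_{\loc}$'' is not a known necessary condition. So the contradiction you invoke is unsupported as stated.

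\textbf{The missing idea.} The reduction to $k=1$ does work if you pass to $f^{(k-1)}$ rather than to $f$. Telescope $f^{[k-1]}(x_{(k)})-f^{[k-1]}(y_{(k)})$ one slot at a time and use $f^{[k-1]}(x_{(k)})=f^{(k-1)}(x)/(k-1)!$. This gives
\[
\big(f^{(k-1)}\big)^{[1]}(x,y)=(k-1)!\sum_{j=1}^{k}f^{[k]}\big(x_{(j)},y_{(k+1-j)}\big)\qquad(x,y\in\R).
\]
Restricting a function in $V([-r,r]_{(k+1)})$ to the set $\{(x_{(j)},y_{(k+1-j)})\}$ does not increase the Varopoulos norm: multiply together the first $j$ and the last $k+1-j$ factors of any decomposition as in \eqref{eq.Vardecomp}. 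Hence $\beta_{r,2}\big((f^{(k-1)})^{[1]}\big)\leq k!\,\beta_{r,k+1}\big(f^{[k]}\big)$. By Theorem \ref{thm.mathbfCkalpha=Ckalpha}, $f\in VC^k(\R)$ then implies $f^{(k-1)}\in VC^1(\R)$. Now take $f$ to be a $(k-1)$-fold antiderivative of a $C^1$ function that is not locally operator Lipschitz. Your $k=1$ argument gives $f^{(k-1)}\notin VC^1(\R)$, so $f\in C^k(\R)\setminus VC^k(\R)$.

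Your fallback can be repaired the same way at the operator level. Differentiating $k-1$ times in the direction of the unit gives $\partial_1^{k-1}f_{\mathsmaller{B(H)}}(a)=f^{(k-1)}(a)$. So $f_{\mathsmaller{B(H)}}\in C^k$ forces $(f^{(k-1)})_{\mathsmaller{B(H)}}\in C^1$, which Farforovskaya's examples rule out.

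The paper instead uses $\nu\leq\beta$ to get $VC^k(\R)\subseteq NC^k(\R)$, and cites $NC^k(\R)\subsetneq C^k(\R)$ from \cite[Thm.\ 4.4.1]{Nikitopoulos2023n}.
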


\begin{proof}
I produced a specific function belonging to $C_{\loc}^{k,1/4}(\R) \setminus W_k(\R)_{\loc}$ in \cite[Thm.\ 3.4.1]{Nikitopoulos2023n}.\footnote{There are minor misprints in the proof of \cite[Thm.\ 3.4.1]{Nikitopoulos2023n}.
Specifically, the $O(\zeta^{-3/2})$ in \cite[eq.\ (19)]{Nikitopoulos2023n} should be $O(\zeta^{-1})$, which means the $O(\xi^{-3/2})$ in the next display equation should be $O(\xi^{-5/4})$.
The same typo is present in the second and last display equations in \cite[\S{A}.2]{Nikitopoulos2023n}, in which $O(\zeta^{-3/2})$ should be $O(\zeta^{-1})$ as well.}
Since $C_{\loc}^{k,1/4}(\R) \subseteq VC^k(\R)$ by Theorem \ref{thm.BesovHolderVaropoulos}, the containment $W_k(\R)_{\loc} \subseteq VC^k(\R)$ is strict.
Since $\nu \leq \beta$, $VC^k(\R) = C_{\beta}^k(\R) \subseteq C_{\nu}^k(\R) = NC^k(\R)$.
(Recall that $\nu$ is defined and studied in subsection \ref{subsec.MOIfam}.)
Since $NC^k(\R) \subsetneq C^k(\R)$ by \cite[Thm.\ 4.4.1]{Nikitopoulos2023n}, $VC^k(\R) \subsetneq C^k(\R)$ as well.
\end{proof}

\subsection{Higher derivatives of maps induced by functional calculus}\label{subsec.derivatives}

We now harvest the fruits of our labors by proving the following massive generalization of Theorem \ref{thm.derformintro}.
For the duration of this subsection, let $k \in \N$, $\cA$ be a unital $C^*$-algebra, and $\cI \sni \cA$.
Also, write $\cI_{\sa} \coloneqq \cI \cap \cA_{\sa}$, considered a real normed vector space under $\norm{\cdot}_{\cI}$.

\begin{theorem}\label{thm.derform}
Let $a \in \cA_{\sa}$.
If $\alpha$ controls polynomial LFC in $\cI$ and $f \in C_{\alpha}^k(\R)$, then $f(a+b)-f(a) \in \cI$ for all $b \in \cI_{\sa}$, and the function
\[
\cI_{\sa} \ni b \mapsto f_{a,\scI}(b) \coloneqq f(a+b) - f(a) \in \cI
\]
is $C^k$ with respect to $\norm{\cdot}_{\cI}$.
Furthermore,
\[
\partial_{b_k}\cdots\partial_{b_1}f_{a,\scI}(b) = \sum_{\pi \in S_k} f_{\scI,\alpha}^{[k]}(\underbrace{a+b,\ldots,a+b}_{\mathsmaller{k+1\,\mathrm{times}}})\sh\big[b_{\pi(1)},\ldots,b_{\pi(k)}\big] 
\]
for all $b,b_1,\ldots,b_k \in \cI_{\sa}$.
\end{theorem}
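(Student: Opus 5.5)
The plan is to prove everything first for polynomials, where all identities are algebraic, and then pass to $f \in C_{\alpha}^k(\R)$ using density of $\cP$ together with the uniform estimates that follow Theorem \ref{thm.LFC}. Fix $a \in \cA_{\sa}$ and a polynomial $p$.

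\textbf{Step 1: algebraic identities for polynomials.} Expanding monomials gives the identity
\[
p(c) - p(d) = p^{[1]}_{\sotimes}(c,d) \sh [c-d] \qquad (c,d \in \cA),
\]
which follows from $c^n - d^n = \sum_{j} c^{j}(c-d)d^{n-1-j}$. More generally, the perturbation formula
\[
p^{[j]}_{\sotimes}(c_1,\ldots,c_{i},\ldots,c_{j+1})\sh[\ldots] - p^{[j]}_{\sotimes}(c_1,\ldots,d,\ldots,c_{j+1})\sh[\ldots] = p^{[j+1]}_{\sotimes}(c_1,\ldots,c_i,d,\ldots,c_{j+1})\sh[\ldots,c_i-d,\ldots]
\]
holds, with the new slot inserted at position $i$. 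It follows from the recursive definition and symmetry of divided differences. Both are finite identities in $\cA^{\otimes(j+2)}$ pushed through $\sh$. By Notation \ref{nota.algstuff}\ref{item.algsharp}, when $c-d \in \cI$ the right-hand sides lie in $\cI$.

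Iterating these identities gives, for $b,b_1,\ldots,b_k \in \cI_{\sa}$, a Taylor-type decomposition of $p_{a,\scI}(b+h)-p_{a,\scI}(b)$ and of its increments. In particular:
\begin{itemize}
\item $\partial_{b_j}\cdots\partial_{b_1}p_{a,\scI}(b)$ equals $\sum_{\pi \in S_j} p^{[j]}_{\scI,\alpha}((a+b)_{(j+1)})\sh[b_{\pi(1)},\ldots,b_{\pi(j)}]$. This follows by induction on $j$: differentiate one slot at a time using the perturbation formula, and note that the $j+1$ slots produce the symmetrization.
\item The remainder in the first-order expansion of the $(j-1)$st derivative is a sum of terms of the form $p^{[j]}$ evaluated at mixed tuples (entries $a+b$ or $a+b+h$) minus the same at the diagonal, acting on $[\ldots,h,\ldots]$.
\end{itemize}

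\textbf{Step 2: passage to the limit.} Let $f \in C_{\alpha}^k(\R)$ and take a net (a sequence if $\alpha$ is increasing; in general a net) of polynomials $p_n \to f$ in $\cC_{\alpha}^k(\R)$. Then $p_n^{[j]} \to f^{[j]}$ in $C_{\alpha}(\R^{j+1})$ for $0\le j\le k$. The estimate after Theorem \ref{thm.LFC},
\[
\sup_{\mathbf{a} \in \cA_{\sa,r}^{j+1}}\norm{\Phi^{j+1}_{\scI,\alpha}(\varphi)(\mathbf{a})}_{B_j(\cI^j;\cI)} \le C_{\cI}^{j-1}\alpha_{r,j+1}(\varphi),
\]
shows that all LFC expressions converge uniformly on bounded sets. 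Also, $\alpha_{\cdot,1}=u_{\cdot,1}$ gives $p_n(c)\to f(c)$ uniformly on bounded sets.

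Applying the $j=1$ identity with $c=a+b$, $d=a$ and passing to the limit yields
\[
f(a+b)-f(a) = f^{[1]}_{\scI,\alpha}(a+b,a)\sh[b] \in \cI.
\]
This requires the limit taken in $\cI$ to agree with the limit in $\cA$, which holds since $\iota_{\scI}$ is continuous.

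Next, define the candidate derivatives
\[
D^j(b)[b_1,\ldots,b_j] \coloneqq \sum_{\pi \in S_j} f^{[j]}_{\scI,\alpha}((a+b)_{(j+1)})\sh[b_{\pi(1)},\ldots,b_{\pi(j)}].
\]
Each $D^j$ is continuous from $\cI_{\sa}$ (with $\norm{\cdot}_{\cI}$) into $B_j(\cI^j;\cI)$. This holds because $f^{[j]}_{\scI,\alpha} \in \Cbb(\cA_{\sa}^{j+1};B_j(\cI^j;\cI))$ and $b\mapsto a+b$ is continuous from $(\cI_{\sa},\norm{\cdot}_{\cI})$ to $\cA_{\sa}$, again since $\iota_{\scI}$ is bounded.

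To conclude that $f_{a,\scI}$ is $C^k$ with these derivatives, I will use the standard lemma: if $C^k$ maps $g_n$ converge to $g$, and their derivatives $D^j g_n$ converge uniformly on bounded sets to continuous maps $D^j$, for $j \le k$, then $g$ is $C^k$ with $D^j g = D^j$. Uniform convergence of $D^j p_{n,a,\scI}$ to $D^j$ on $\norm{\cdot}_{\cI}$-bounded sets follows from the LFC estimate, because such sets are norm-bounded in $\cA$. That each $p_{a,\scI}$ is $C^\infty$ with the Step 1 formula is immediate, since it is a polynomial map built from bounded multilinear maps.

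\textbf{Main obstacle.} The delicate step is the convergence lemma, specifically justifying Fréchet (rather than merely Gateaux) differentiability in the limit. I would prove it via the mean value inequality: for each $n$,
\[
\norm{D^{j-1}p_n(b+h)-D^{j-1}p_n(b)-D^jp_n(b)[h]} \le \norm{h}\sup_{t\in[0,1]}\norm{D^jp_n(b+th)-D^jp_n(b)}.
\]
Passing to the limit in $n$ transfers this bound to $f$. Continuity of $D^j$ then makes the right side $o(\norm{h})$. Some care is needed with nets, but the estimates are uniform, so this is fine.
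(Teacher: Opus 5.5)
Your proposal is correct and follows essentially the same route as the paper: you approximate $f$ by polynomials in $C_{\alpha}^k(\R)$, use the first-order perturbation formula together with the uniform LFC bound $C_{\cI}^{m-2}\alpha_{r,m}$ to get uniform convergence on $\norm{\cdot}_{\cI}$-bounded sets of the maps $(p_j)_{a,\scI}$ and of all their derivatives, and then apply the uniform-convergence-of-derivatives theorem. The differences are minor: the paper obtains the polynomial case from the holomorphic results (Proposition \ref{prop.pertformholo} and Theorem \ref{thm.derivinidealHFC}) instead of your direct telescoping identities, and it cites \cite[Thm.\ 1.85]{HJ2014} where you reprove that lemma via the mean value inequality.
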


If $(\cI,\norm{\cdot}_{\cI}) = (\cA,\norm{\cdot})$ and $a=0$ in Theorem \ref{thm.derform}, then $f_{a,\scI} = f_{0,\scA} = f_{\scA} - f(0)$, so the conclusions of Theorem \ref{thm.derform} take the form:
If $f \in C_{\alpha}^k(\R)$, then $f_{\scA} \in C^k(\cA_{\sa};\cA)$, and
\[
\partial_{b_k}\cdots\partial_{b_1}f_{\scA}(a) = \sum_{\pi \in S_k} f_{\scA,\alpha}^{[k]}(\underbrace{a,\ldots,a}_{\mathsmaller{k+1\,\mathrm{times}}})\sh\big[b_{\pi(1)},\ldots,b_{\pi(k)}\big] 
\]
for all $a,b_1,\ldots,b_k \in \cA_{\sa}$.
Thus, Theorem \ref{thm.derform} generalizes Theorem \ref{thm.derformintro}.

After proving Theorem \ref{thm.derform}, we shall go through several corollaries resulting from our calculations of various lexicographic functional calculi.

\begin{lemma}[Perturbation formula]\label{lem.pertform}
If $\alpha$ controls polynomial LFC in $\cI$, $f \in C_{\alpha}^1(\R)$, $a \in \cA_{\sa}$, and $b \in \cI_{\sa}$, then
\[
f(a+b) - f(b) = f_{\scI,\alpha}^{[1]}(a+b,a)\sh b.
\]
In particular, $f_{a,\scI}(b) = f(a+b) - f(a) \in \cI$.
\end{lemma}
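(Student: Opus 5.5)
I read the displayed identity as $f(a+b)-f(a) = f_{\scI,\alpha}^{[1]}(a+b,a)\sh b$; the ``$f(b)$'' on its left-hand side appears to be a misprint, since the ``in particular'' clause concerns $f(a+b)-f(a)$. The plan is to prove this identity for polynomials by a telescoping computation, and then pass to general $f \in C_{\alpha}^1(\R)$ by density of $\cP$ in $C_{\alpha}^1(\R)$ together with the continuity of the bivariate LFC from Theorem \ref{thm.LFC}\ref{item.LFCinI}.

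\emph{Polynomial case.} By linearity it suffices to take $p(\lambda) = \lambda^n$ with $n \geq 1$; the case $n=0$ is trivial, since both sides vanish. For this monomial,
\[
p^{[1]}(\lambda_1,\lambda_2) = \sum_{j=0}^{n-1}\lambda_1^j\lambda_2^{n-1-j}.
\]
Hence, by the formula for polynomial LFC following Notation \ref{nota.algstuff},
\[
p_{\sotimes}^{[1]}(a+b,a)\sh b = \sum_{j=0}^{n-1}(a+b)^j\,b\,a^{n-1-j}.
\]
Writing $b = (a+b)-a$, the right-hand side telescopes to $(a+b)^n - a^n$. By definition of $\Phi_{\scI,\alpha}^2$ on polynomials, the left-hand side equals $p_{\scI,\alpha}^{[1]}(a+b,a)\sh b$. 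This lies in $\cI$ by Notation \ref{nota.algstuff}\ref{item.algsharp}, because $b \in \cI$.

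\emph{Density step.} Since $f \in C_{\alpha}^1(\R) = \overline{\cP}$, choose a net of polynomials $(p_j)_{j \in J}$ converging to $f$ in $\cC_{\alpha}^1(\R)$. This convergence gives two things.
\begin{enumerate}[label=(\roman*),font=\normalfont]
    \item Since $\alpha_{\cdot,1} = u_{\cdot,1}$, we have $p_j \to f$ uniformly on $[-r,r]$ for every $r$. Taking $r \geq \norm{a}+\norm{b}$ and using $\norm{g(c)} = \norm{g}_{\ell^\infty(\sigma(c))}$ for self-adjoint $c$, we get $p_j(a+b) - p_j(a) \to f(a+b)-f(a)$ in $\cA$.
    \item We have $\alpha_{r,2}\big(p_j^{[1]} - f^{[1]}\big) \to 0$ for all $r$, so $p_j^{[1]} \to f^{[1]}$ in $C_{\alpha}(\R^2)$. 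Here $f^{[1]} \in C_{\alpha}(\R^2)$ because it is a limit of polynomials in $\cC_{\alpha}(\R^2)$. By continuity of $\Phi_{\scI,\alpha}^2$ into $\Cbb(\cA_{\sa}^2;B_1(\cI;\cI))$, it follows that $p_{j,\scI,\alpha}^{[1]}(a+b,a)\sh b \to f_{\scI,\alpha}^{[1]}(a+b,a)\sh b$ in $\norm{\cdot}_{\cI}$, and hence in $\norm{\cdot}$ because $\iota_{\scI}$ is bounded.
\end{enumerate}
By the polynomial case the two approximating nets coincide, so their limits in the Hausdorff space $\cA$ agree. This gives the identity, and the membership $f(a+b)-f(a) \in \cI$ follows because the right-hand side lies in $\cI$.

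No step is a real obstacle. The only points needing care are checking that $f^{[1]}$ genuinely lies in $C_{\alpha}(\R^2)$, so that the LFC is defined for it, and that the two modes of convergence (in $\cA$ and in $\cI$) can be compared; the continuity of $\iota_{\scI}$ handles the latter.
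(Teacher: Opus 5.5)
Your proposal is correct and follows the paper's proof essentially verbatim. Both establish the identity for polynomials, approximate $f$ by a net of polynomials in $C_{\alpha}^1(\R)$, and identify the $\cA$-limit of $p_j(a+b)-p_j(a)$ with the $\norm{\cdot}_{\cI}$-limit given by continuity of the bivariate $\alpha$-LFC, using boundedness of $\iota_{\scI}$; you are also right that $f(b)$ is a misprint for $f(a)$. The only difference is that you check the polynomial case directly by telescoping $\sum_{j=0}^{n-1}(a+b)^j\,b\,a^{n-1-j}$, whereas the paper gets it from the holomorphic perturbation formula, Proposition \ref{prop.pertformholo}; your route is slightly more self-contained.
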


\begin{proof}
Let $a \in \cA_{\sa}$ and $b \in \cI_{\sa}$.
By definition of polynomial LFC and the perturbation formula in the holomorphic case (Proposition \ref{prop.pertformholo}), we already know that
\[
p(a+b) - p(a) = p_{\sotimes}^{[1]}(a+b,a)\sh b = p_{\scI,\alpha}^{[1]}(a+b,a)\sh b \qquad (p \in \cP).
\]
For general $f \in C_{\alpha}^1(\R)$, choose a net $(p_j)_{j \in J}$ in $C_{\alpha}^1(\R)$ converging to $f$ in $C_{\alpha}^1(\R)$.
Since $\alpha_{\cdot,1} = u_{\cdot,1}$, $(p_j)_{j \in J}$ converges locally uniformly to $f$.
Thus, $(p_j(a+b))_{j \in J}$ and $(p_j(a))_{j \in J}$ converge, respectively, to $f(a+b)$ and $f(a)$ in $\cA$.
Since $\big(p_j^{[1]}\big)_{j \in J}$ converges to $f^{[1]}$ in $C_{\alpha}(\R^2)$, the continuity of bivariate $\alpha$-LFC in $\cI$ guarantees that
\[
(p_j(a+b) - p_j(a))_{j\in J} = \big(\big(p_j^{[1]}\big)_{\scI,\alpha}(a+b,a)\sh b\big)_{j \in J}
\]
converges to $f_{\scI,\alpha}^{[1]}(a+b,a)\sh b$ in $\cI$.
Since $\iota_{\scI} \colon (\cI,\norm{\cdot}_{\cI}) \to (\cA,\norm{\cdot})$ is continuous, we obtain~that
\begin{align*}
    f(a+b) - f(a) & = \cA\text{-}\lim_{j \in J} \left(p_j(a+b)-p_j(a)\right) = \cI\text{-}\lim_{j \in J} \left(p_j(a+b)-p_j(a)\right) \\
    & = \cI\text{-}\lim_{j \in J}\big(p_j^{[1]}\big)_{\scI,\alpha}(a+b,a)\sh b = f_{\scI,\alpha}^{[1]}(a+b,a)\sh b,
\end{align*}
as desired.
\end{proof}

\begin{proof}[Proof of Theorem \ref{thm.derform}]
By definition of polynomial LFC and the formula for the derivatives of maps induced by the holomorphic functional calculus (Theorem \ref{thm.derivinidealHFC}), we already know the result when $f=p \in \cP$.

Now, let $V$ and $W$ be normed vector spaces.
If $F \colon V \to W$ is a $k$-times Fr\'echet-differentiable function, write $D^kF \colon V \to B_k(V^k;W)$ for the $k^{\text{th}}$ Fr\'echet derivative of $F$;
please see \cite[Ch.\ 1]{HJ2014} for background on Fr\'echet derivatives.
Also, if $T \in B_k(V^k;W)$, write
\[
S(T)[v_1,\ldots,v_k] \coloneqq \sum_{\pi \in S_k} T(v_{\pi(1)},\ldots,v_{\pi(k)}) \qquad (v_1,\ldots,v_k \in V).
\]
Note that $\norm{S(T)}_{B_k(V^k;W)} \leq k!\norm{T}_{V \to W}$.

For a general $f \in C_{\alpha}^k(\R)$, choose a net $(p_j)_{j \in J}$ in $\cP$ converging to $f$ in $C_{\alpha}^k(\R)$, let $r > 0$, and write
\[
R \coloneqq \norm{a} + C_{\cI}r+1 \; \text{ and } \; \cI_{\sa,s} \coloneqq \{b \in \cI_{\sa} : \norm{b}_{\cI} \leq s\} \qquad (s \geq 0).
\]
By Lemma \ref{lem.pertform},
\begin{align*}
    \sup_{b \in \cI_{\sa,r}} \norm{(p_j)_{a,\scI}(b) - f_{a,\scI}(b)}_{\cI} & = \sup_{b \in \cI_{\sa,r}} \norm{\big(p_j^{[1]}\big)_{\scI,\alpha}(a+b,a)\sh b - f_{\scI,\alpha}^{[1]}(a+b,a)\sh b}_{\cI} \\
    & \leq r\sup_{b \in \cI_{\sa,r}} \norm{\big(p_j^{[1]}\big)_{\scI,\alpha}(a+b,a) - f_{\scI,\alpha}^{[1]}(a+b,a)}_{\cI \to \cI} \\
    & \leq r\sup_{\mathbf{a} \in \cA_{\sa,R}^2} \norm{\big(p_j^{[1]}\big)_{\scI,\alpha}(\mathbf{a}) - f_{\scI,\alpha}^{[1]}(\mathbf{a})}_{\cI \to \cI} \\
    & \leq r \,\alpha_{R,2}\big((p_j-f)^{[1]}\big) \xrightarrow{j \in J} 0.
\end{align*}
Next, let $i=1,\ldots,k$.
For each $g \in C_{\alpha}^k(\R)$, write $T_ig \colon \cI_{\sa} \to B_i(\cI_{\sa}^i;\cI)$ for the function
\[
T_ig(b) \coloneqq S\big(g_{\scI,\alpha}^{[i]}\big((a+b)_{(i+1)}\big)\big) \qquad (b \in \cI_{\sa}).
\]
This is the putative $i^{\text{th}}$ Fr\'echet derivative of $g_{a,\scI}$.
By the continuity properties of the ($i+1$)-variate $\alpha$-LFC and the fact that $(\cI,\norm{\cdot}_{\cI}) \hookrightarrow (\cA,\norm{\cdot})$, $T_ig \in \Cbb(\cI_{\sa};B_i(\cI_{\sa}^i;\cI))$.
By the first paragraph of the proof, if $r > 0$, then
\begin{align*}
    \sup_{b \in \cI_{\sa,r}} \big\|D^i(p_j)_{a,\scI}(b) - T_if&(b)\big\|_{B_i(\cI_{\sa}^i;\cI)} = \sup_{b \in \cI_{\sa,r}} \norm{T_ip_j(b) - T_if(b)}_{B_i(\cI_{\sa}^i;\cI)} \\
    & \leq i! \sup_{b \in \cI_{\sa,r}} \norm{\big(p_j^{[i]}\big)_{\scI,\alpha}\big((a+b)_{(i+1)}\big) - f_{\scI,\alpha}^{[i]}\big((a+b)_{(i+1)}\big)}_{B_i(\cI_{\sa}^i;\cI)} \\
    & \leq i! \sup_{\mathbf{a} \in \cA_{\sa,R}^{i+1}} \norm{\big(p_j^{[i]}\big)_{\scI,\alpha}(\mathbf{a}) - f_{\scI,\alpha}^{[i]}(\mathbf{a})}_{B_i(\cI^i;\cI)} \\
    & \leq i! \,\alpha_{R,i+1}\big((p_j-f)^{[i]}\big) \xrightarrow{j \in J} 0.
\end{align*}
In other words, the net $((p_j)_{a,\scI})_{j \in J}$ converges to $f_{a,\scI}$ uniformly on bounded sets, and for all $i=1,\ldots,k$, the net $\big(D^i(p_j)_{a,\scI}\big)_{j \in J}$ converges uniformly on bounded sets to $T_if$.
It then follows from \cite[Thm.\ 1.85]{HJ2014} that $f_{a,\scI} \in C^k(\cI_{\sa};\cI)$ and $D^if_{a,\scI} = T_if$ for all $i=1,\ldots,k$.
\end{proof}

\begin{remark}[Alternate proof]\label{rem.altpertformproof}
I call the method deployed above to prove Theorem \ref{thm.derform} the method of polynomial approximation:
Establish the desired conclusion on polynomials, and extend it by approximation in the appropriate topology to the desired class of functions.
This was actually the first-ever method, used by Daletskii and Krein in \cite{DK1956}, to compute the higher derivatives of maps induced by functional calculus.
In particular, the LFC formalism enabled us to prove a new, extremely general result using effectively classical techniques.
This was also my approach in \cite{Nikitopoulos2023n}.

These days, the more common method of proving such results is that of (higher-order) perturbation formulas, due to its efficacy in situations in which unbounded operators arise;
please see, e.g., \cite{dPS2004,Peller2006,ACDS2009,AP2016,Peller2016,CLMSS2019,LMS2020,LMM2021,Nikitopoulos2023h}.
Since I also used the method of perturbation formulas to study holomorphic functional calculus calculus in the appendix, it is worth noting that the method can be adapted to prove Theorem \ref{thm.derform}.
To explain how requires some notation.
If $S$ is a set, $n \in \N$, $i = 1,\ldots,n+1$, and $s = (s_1,\ldots,s_n) \in S^n$, then $s_{i-} \coloneqq (s_1,\ldots,s_{i-1}) \in S^{i-1}$ and $s_{i+} \coloneqq (s_i,\ldots,s_n) \in S^{n-i+1}$, where $s_{1-}$ and $s_{(n+1)+}$ are both the empty list.
Also, if $\pi \in S_n$, then $s^{\pi} \coloneqq (s_{\pi(1)},\ldots,s_{\pi(n)}) \in S^n$.
In this notation, the higher-order perturbation formulas read:
If $f \in C_{\alpha}^k(\R)$, $a \in \cA_{\sa}$, $\mathbf{a} \in \cA_{\sa}^k$, $c \in \cI_{\sa}$, $b \in \cI^k$, and $i=1,\ldots,k+1$, then
\[
f_{\scI,\alpha}^{[k]}(\mathbf{a}_{i-},a+c,\mathbf{a}_{i+})\sh b - f_{\scI,\alpha}^{[k]}(\mathbf{a}_{i-},a,\mathbf{a}_{i+}) \sh b = f_{\scI,\alpha}^{[k+1]}(\mathbf{a}_{i-},a+c,a,\mathbf{a}_{i+})\sh [b_{i-},c,b_{i+}].
\]
These formulas themselves can be proven, e.g., by the method of polynomial approximation.
Regardless, once they are proven, Theorem \ref{thm.derform} follows via arguments like those in the proofs of Theorem \ref{thm.derivinidealHFC} or \cite[Thm.\ 4.4.6]{Nikitopoulos2023h}.
\end{remark}

\begin{corollary}[Commutative case]\label{cor.commcase}
Suppose $\cA$ is commutative, and let $a \in \cA_{\sa}$.
If $f \in C^k(\R)$, then $f_{a,\scI} \in C^k(\cI_{\sa};\cI)$, and $\partial_{b_k}\cdots\partial_{b_1} f_{a,\scI}(b) = f^{(k)}(a+b)\,b_1\cdots b_k$ for all $b,b_1,\ldots,b_k \in \cI_{\sa}$.
\end{corollary}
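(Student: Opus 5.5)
This corollary should follow directly from Theorem \ref{thm.derform} with $\alpha = u$, combined with the explicit description of commutative $u$-LFC in Theorem \ref{thm.commLFC}.

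First, I check the hypotheses. By Theorem \ref{thm.commLFC}, $u$ is ideal for LFC in $\cA$. In particular, $u$ controls polynomial LFC in $\cA$ with an insertion from $\cI$, and hence $u$ controls polynomial LFC in $\cI$. As noted at the start of subsection \ref{subsec.Ckalpha}, $C_u^k(\R) = C^k(\R)$. So any $f \in C^k(\R)$ is admissible in Theorem \ref{thm.derform}. That theorem then gives $f_{a,\scI} \in C^k(\cI_{\sa};\cI)$ and
\[
\partial_{b_k}\cdots\partial_{b_1}f_{a,\scI}(b) = \sum_{\pi \in S_k} f_{\scI,u}^{[k]}\big((a+b)_{(k+1)}\big)\sh\big[b_{\pi(1)},\ldots,b_{\pi(k)}\big].
\]

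Next, I evaluate each summand with \eqref{eq.commLFC}, taking $m = k+1$ and $\varphi = f^{[k]} \in C(\R^{k+1})$. This gives
\[
f_{\scI,u}^{[k]}\big((a+b)_{(k+1)}\big)\sh\big[b_{\pi(1)},\ldots,b_{\pi(k)}\big] = f^{[k]}\big((a+b)_{(k+1)}\big)\,b_{\pi(1)}\cdots b_{\pi(k)}.
\]
Because $\cA$ is commutative, the product $b_{\pi(1)}\cdots b_{\pi(k)}$ equals $b_1\cdots b_k$ for every $\pi \in S_k$. The sum over $S_k$ therefore collapses to $k!\,f^{[k]}\big((a+b)_{(k+1)}\big)\,b_1\cdots b_k$.

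The only step that needs any care is identifying $f^{[k]}\big(c_{(k+1)}\big)$, computed via the multivariate continuous functional calculus, with $f^{(k)}(c)/k!$, where $c = a+b$. I would do this as follows. Since $f^{[k]}$ is continuous, \cite[Prop.\ 2.1.3(ii)]{Nikitopoulos2023n} gives $f^{[k]}(\lambda_{(k+1)}) = f^{(k)}(\lambda)/k!$ for every $\lambda \in \R$. So $f^{[k]} \circ \delta = f^{(k)}/k!$, where $\delta(\lambda) \coloneqq \lambda_{(k+1)}$ is the diagonal map. The $(k+1)$-tuple $c_{(k+1)}$ has joint spectrum contained in the diagonal $\{\lambda_{(k+1)} : \lambda \in \sigma(c)\}$. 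The composition rule for the continuous functional calculus then yields $f^{[k]}\big(c_{(k+1)}\big) = (f^{[k]}\circ\delta)(c) = f^{(k)}(c)/k!$.

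To justify the composition rule, I would check it on polynomials $P$, where $P(c,\ldots,c) = (P\circ\delta)(c)$ holds trivially, and pass to general $\varphi$ by locally uniform approximation. Substituting this identity into the previous display gives exactly $f^{(k)}(a+b)\,b_1\cdots b_k$.
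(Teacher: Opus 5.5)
Your proposal is correct and follows the paper's proof essentially step for step. Like the paper, you apply Theorem \ref{thm.derform} with $\alpha = u$ and evaluate via Theorem \ref{thm.commLFC}, then collapse the symmetrization using commutativity. The final identification of $f^{[k]}$ on the diagonal with $f^{(k)}(a+b)/k!$ also matches: the paper calls the diagonal composition rule a ``standard functional-calculus exercise,'' and your polynomial-approximation argument justifies it.
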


\begin{proof}
By Theorems \ref{thm.derform} and \ref{thm.commLFC}, if $f \in C_u^k(\R) = C^k(\R)$, then $f_{a,\scI} \in C^k(\cI_{\sa};\cI)$, and
\begin{align*}
    \partial_{b_k}\cdots\partial_{b_1} f_{a,\scI}(b) & = \sum_{\pi \in S_k} f^{[k]}\big((a+b)_{(k+1)}\big)\,b_{\pi(1)}\cdots b_{\pi(k)} \\
    & = k!\,f^{[k]}\big((a+b)_{(k+1)}\big)\,b_1\cdots b_k.\numberthis\label{eq.almosttherecomm}
\end{align*}
Now, it is a standard functional-calculus exercise to show that if $m \in \N$, $\psi \in C(\R^m)$, and $g(\lambda) \coloneqq \psi(\lambda_{(m)})$ for all $\lambda \in \R$, then $\psi(a_{(m)}) = g(a)$ for all $a \in \cA_{\sa}$.
Since, by \eqref{eq.divdiff},
\[
f^{[k]}\big(\lambda_{(k+1)}\big) = f^{(k)}(\lambda)\,\rho_k(\Delta_k) = \frac{f^{(k)}(\lambda)}{k!} \qquad (\lambda \in \R),
\]
the claimed formula follows at once from \eqref{eq.almosttherecomm}.
\end{proof}

\begin{corollary}[$\cA$-Varopoulos case]\label{cor.AVarop}
Let $a \in \cA_{\sa}$.
If $f \in C_{\beta^{\scA}}^k(\R)$, e.g., $f \in VC^k(\R)$, then $f_{a,\scI} \in C^k(\cI_{\sa};\cI)$, and
\[
\partial_{b_k}\cdots\partial_{b_1} f_{a,\scI}(b) = \sum_{\pi \in S_k} f_{\sotimes}^{[k]}(\underbrace{a+b,\ldots,a+b}_{\mathsmaller{k+1\,\mathrm{times}}})\sh\big[b_{\pi(1)},\ldots,b_{\pi(k)}\big] 
\]
for all $b,b_1,\ldots,b_k \in \cI_{\sa}$.
\end{corollary}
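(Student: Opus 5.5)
The plan is to specialize Theorem \ref{thm.derform} to $\alpha = \beta^{\scA}$ and then identify the formal LFC with the concrete tensor expression.

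\textbf{Step 1: hypotheses of Theorem \ref{thm.derform}.} By Theorem \ref{thm.AVaropLFC}, $\beta^{\scA}$ is ideal for LFC in $\cA$. In particular, it controls polynomial LFC in $\cA$ with an insertion from every symmetrically normed ideal, and hence controls polynomial LFC in $\cI$; this uses the observation in Definition \ref{def.controlLFC}\ref{item.controlLFCinAiwthI}. Theorem \ref{thm.derform} therefore applies to every $f \in C_{\beta^{\scA}}^k(\R)$. It gives $f_{a,\scI} \in C^k(\cI_{\sa};\cI)$ and the derivative formula, but expressed through the formal $\alpha$-LFC $f^{[k]}_{\scI,\beta^{\scA}}\big((a+b)_{(k+1)}\big)\sh[\,\cdots]$.

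\textbf{Step 2: replace the formal $\sh$ by the genuine one.} Fix $0 \le i \le k$. Since $f \in C_{\beta^{\scA}}^k(\R)$, we have $f^{[k]} \in C_{\beta^{\scA}}(\R^{k+1})$. Identity \eqref{eq.VaropLFC} with $m = k+1$ and $\varphi = f^{[k]}$ then gives
\[
f^{[k]}_{\scI,\beta^{\scA}}(\mathbf{a})\sh b = f^{[k]}_{\sotimes}(\mathbf{a})\sh b
\]
for all $\mathbf{a} \in \cA_{\sa}^{k+1}$ and $b \in \cI^k$. We take $\mathbf{a} = (a+b)_{(k+1)}$ and $b = (b_{\pi(1)},\ldots,b_{\pi(k)})$, and sum over $\pi \in S_k$. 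This produces exactly the displayed formula, and it is the only substantive identification in the proof.

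\textbf{Step 3: the example $VC^k(\R)$.} It remains to check that $VC^k(\R) \subseteq C^k_{\beta^{\scA}}(\R)$. By Example \ref{ex.Varopstrongerthanuniform}, $\beta^{\scA} \le \beta$. Proposition \ref{prop.Ckalpha}\ref{item.strongerCk} then yields $VC^k(\R) = C^k_{\beta}(\R) \hookrightarrow C^k_{\beta^{\scA}}(\R)$, which uses that $\beta$ is finite on polynomials.

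\textbf{Where care is needed.} There is no real obstacle. The one point to check is that the $\sh$ on the right-hand side makes sense for $b_i \in \cI$: the element $f^{[k]}_{\sotimes}\big((a+b)_{(k+1)}\big) \in \cA^{\potimes(k+1)}$ is well defined by Lemma \ref{lem.bdphitensor}, and Proposition \ref{prop.hashonI} guarantees the output lies in $\cI$.
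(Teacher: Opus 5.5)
Your proposal is correct and matches the paper's proof, which is the one-line instruction to combine Theorem~\ref{thm.derform} with Theorem~\ref{thm.AVaropLFC}, exactly as in your Steps 1 and 2. Your additional checks are all valid and make explicit what the paper leaves implicit:
\begin{itemize}
\item control of polynomial LFC with an insertion from $\cI$ implies control in $\cI$;
\item $f^{[k]} \in C_{\beta^{\scA}}(\R^{k+1})$;
\item $VC^k(\R) \subseteq C^k_{\beta^{\scA}}(\R)$, which follows from $\beta^{\scA} \le \beta$ and Proposition~\ref{prop.Ckalpha}\ref{item.strongerCk}.
\end{itemize}
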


\begin{proof}
Combine Theorems \ref{thm.derform} and \ref{thm.AVaropLFC}.
\end{proof}
\pagebreak

Using results of Peller from \cite{Peller2006} and the theory MOIs from \cite{Nikitopoulos2023m}, I proved in \cite{Nikitopoulos2023n} that if $f \in \dot{B}_1^{k,\infty}(\R)$ or $f \in C_{\loc}^{k,\e}(\R)$ for some $\e > 0$, then $f_{\scA} \in C^k(\cA_{\sa};\cA)$.
In \cite{ACDS2009}, it was proven that if $\cA$ is a von Neumann algebra represented on a separable Hilbert space, $(\cI,\norm{\cdot}_{\cI})$ is a symmetrically normed ideal of $\cA$ with property (F), $a$ is a self-adjoint operator affiliated with $\cA$, and $f \in W_{k+1}(\R)$, then $f_{a,\scI} \in C^k(\cI_{\sa};\cI)$.
In \cite{Nikitopoulos2023h}, I proved that if $\cA$ is a von Neumann algebra, $(\cI,\norm{\cdot}_{\cI})$ is an integral symmetrically normed ideal of $\cA$, $a$ is a self-adjoint operator affiliated with $\cA$, and $f \in \dot{B}_1^{1,\infty}(\R) \cap \dot{B}_1^{k,\infty}(\R)$ is such that $f'$ is bounded, then $f_{a,\scI} \in C^k(\cI_{\sa};\cI)$.
Together with Theorem \ref{thm.BesovHolderVaropoulos}, Corollary \ref{cor.AVarop} massively generalizes all these results on the higher differentiability of $f_{a,\scI}$ when $a$ is a bounded operator \emph{without} the use of MOI theory.
In particular, the regularity $f \in \dot{B}_1^{k,\infty}(\R)$ or $f \in C_{\loc}^{k,\e}(\R)$ is now known to be sufficient to guarantee the $k$-times continuous differentiability of $f_{a,\scI}$ for \emph{all} symmetrically normed ideals $\cI$ of $\cA$, not just $\cA$ itself.

\begin{corollary}[Finite-dimensional case]\label{cor.findimcase}
Suppose $\cA$ is finite dimensional, and let $a \in \cA_{\sa}$.
If $f \in C^k(\R)$, then $f_{a,\scI} \in C^k(\cI_{\sa};\cI)$, and
\[
\partial_{b_k}\cdots\partial_{b_1} f_{a,\scI}(b) = \sum_{\pi \in S_k}\sum_{\blambda \in \sigma(a+b)^{k+1}} f^{[k]}(\blambda)\,P_{\lambda_1}^{a+b}b_{\pi(1)}\cdots P_{\lambda_k}^{a+b}b_{\pi(k)}P_{\lambda_{k+1}}^{a+b}
\]
for all $b,b_1,\ldots,b_k \in \cI_{\sa}$.
\end{corollary}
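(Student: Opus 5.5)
The plan is to deduce Corollary \ref{cor.findimcase} from Theorem \ref{thm.derform}, with $\alpha = u^{\scA}$, together with the explicit formula for $u^{\scA}$-LFC in Corollary \ref{cor.findimLFC}, in the same way that Corollaries \ref{cor.commcase} and \ref{cor.AVarop} were obtained.

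\textbf{Step 1: hypotheses on $\alpha$.} By Corollary \ref{cor.findimLFC}, $u^{\scA}$ controls polynomial LFC in $\cA$ with an insertion from $\cI$. As observed in Definition \ref{def.controlLFC}\ref{item.controlLFCinAiwthI}, this implies that it controls polynomial LFC in $\cI$. The normalization $u_{\cdot,1}^{\scA} = u_{\cdot,1}$ holds because the factor $(\dim\cA)^{m-1}$ equals $1$ when $m=1$.

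\textbf{Step 2: the function space.} I must check that $C_{u^{\scA}}^k(\R) = C^k(\R)$. Since $u^{\scA}$ is a constant rescaling of $u$ in each degree $m$, the seminorms $f \mapsto u^{\scA}_{r,i+1}(f^{[i]})$ and $f\mapsto u_{r,i+1}(f^{[i]})$ generate the same topology on $C^k(\R)$. Hence $\cC_{u^{\scA}}^k(\R) = \cC_u^k(\R) = C^k(\R)$, and the closures of $\cP$ coincide, giving $C_{u^{\scA}}^k(\R) = C_u^k(\R) = C^k(\R)$. The last equality is the density fact recalled before Proposition \ref{prop.Ckalpha}. So every $f \in C^k(\R)$ is admissible in Theorem \ref{thm.derform}. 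That theorem then gives $f_{a,\scI} \in C^k(\cI_{\sa};\cI)$ and
\[
\partial_{b_k}\cdots\partial_{b_1}f_{a,\scI}(b) = \sum_{\pi \in S_k} f^{[k]}_{\scI,u^{\scA}}\big((a+b)_{(k+1)}\big)\sh\big[b_{\pi(1)},\ldots,b_{\pi(k)}\big].
\]

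\textbf{Step 3: evaluate the LFC term.} I apply Corollary \ref{cor.findimLFC} with $m=k+1$, $\varphi = f^{[k]} \in C(\R^{k+1})$, $\mathbf{a} = (a+b)_{(k+1)}$, and insertions $(b_{\pi(1)},\ldots,b_{\pi(k)}) \in \cI^k$. Since $\sigma(a_1)\times\cdots\times\sigma(a_{k+1}) = \sigma(a+b)^{k+1}$, each summand becomes
\[
\sum_{\blambda \in \sigma(a+b)^{k+1}} f^{[k]}(\blambda)\,P_{\lambda_1}^{a+b}b_{\pi(1)}\cdots P_{\lambda_k}^{a+b}b_{\pi(k)}P_{\lambda_{k+1}}^{a+b},
\]
which is exactly the claimed formula. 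Note that $f^{[k]}$ here means the continuous extension to all of $\R^{k+1}$, including the diagonal; this is the function to which LFC is applied.

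\textbf{Expected obstacle.} The main point needing care is Step 2, since everything else is substitution. It is straightforward once one notes that scaling each seminorm by a positive constant depending only on $m$ does not change the topology, and hence does not change the polynomial closure.
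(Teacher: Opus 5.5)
Your proposal is correct and matches the paper's proof: the paper also combines Theorem \ref{thm.derform}, applied with $\alpha = u^{\scA}$, with Corollary \ref{cor.findimLFC}, noting that $C_{u^{\scA}}^k(\R) = C^k(\R)$. Your Step 2 rescaling argument is simply the justification the paper leaves implicit for that last identity.
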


\begin{proof}
Combine Theorem \ref{thm.derform} and Corollary \ref{cor.findimLFC}.
(Note that $C_{u^{\scA}}^k(\R) = C^k(\R)$.)
\end{proof}

Corollary \ref{cor.findimcase} massively generalizes the matrix case, i.e., the case $\cA = \cI = \MnC$, from \cite{DK1956} as stated in more modern language and notation in \cite{Hiai2010,Nikitopoulos2023n}.

The final corollary uses notation and terminology from the theory of MOIs and their applications;
please see subsection \ref{subsec.MOIfam} and the references therein.

\begin{corollary}\label{cor.ISNIcase}
Suppose $\cM$ is a von Neumann algebra with $\cA$ as a unital $C^*$-subalgebra.
If $f \in NC^k(\R)$, then $f_{\scA} \in C^k(\cA_{\sa};\cA)$, and
\begin{align*}
    \partial_{b_k}\cdots\partial_{b_1} f_{\scA}(a) = \sum_{\pi \in S_k}\underbrace{\int_{\sigma(a)}\cdots\int_{\sigma(a)}}_{\mathsmaller{k+1\,\mathrm{times}}} f^{[k]}(\blambda)\,P^{a}(\d\lambda_1)\,b_{\pi(1)}\cdots P^{a}(\d\lambda_k)\,b_{\pi(k)}\,P^{a}(\d\lambda_{k+1})
\end{align*}
for all $a,b_1,\ldots,b_k \in \cA_{\sa}$.
Now, suppose $\cA = \cM$ and $(\cI,\norm{\cdot}_{\cI})$ is an integral symmetrically normed ideal of $\cA = \cM$, and let $a \in \cA_{\sa}$.
If $f \in NC^k(\R)$, then $f_{a,\scI} \in C^k(\cI_{\sa};\cI)$, and
\begin{align*}
    \partial_{b_k}&\cdots\partial_{b_1} f_{a,\scI}(b) \\
    & = \sum_{\pi \in S_k}\underbrace{\int_{\sigma(a+b)}\cdots\int_{\sigma(a+b)}}_{\mathsmaller{k+1\,\mathrm{times}}} f^{[k]}(\blambda)\,P^{a+b}(\d\lambda_1)\,b_{\pi(1)}\cdots P^{a+b}(\d\lambda_k)\,b_{\pi(k)}\,P^{a+b}(\d\lambda_{k+1})
\end{align*}
for all $b,b_1,\ldots,b_k \in \cI_{\sa}$.
\end{corollary}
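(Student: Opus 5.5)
The plan is to combine Theorem \ref{thm.derform} with Theorem \ref{thm.nufam}, just as Corollaries \ref{cor.AVarop} and \ref{cor.findimcase} combine Theorem \ref{thm.derform} with their LFC computations. Recall that $NC^k(\R) = C_{\nu}^k(\R)$. The first part takes $\cI = \cA$, and the second part takes $\cA = \cM$ together with an integral symmetrically normed ideal $\cI$.

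\textbf{First part.} By Theorem \ref{thm.nufam}, $\nu$ controls polynomial LFC in $\cA$. By Proposition \ref{prop.nufam}, $u \leq \nu$; since $\nu_{\cdot,1}$ is the $\ell^\infty$-integral projective norm in one variable, which is just the sup norm, we also have $\nu_{\cdot,1} = u_{\cdot,1}$. (This identity is part of the definition of ``controls polynomial LFC,'' so it is in fact already asserted by Theorem \ref{thm.nufam}.)

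Now apply Theorem \ref{thm.derform} with $(\cI,\norm{\cdot}_{\cI}) = (\cA,\norm{\cdot})$ and base point $0$, as in the discussion following that theorem. This gives $f_{\scA} \in C^k(\cA_{\sa};\cA)$ and
\[
\partial_{b_k}\cdots\partial_{b_1} f_{\scA}(a) = \sum_{\pi \in S_k} f^{[k]}_{\scA,\nu}(a_{(k+1)})\sh[b_{\pi(1)},\ldots,b_{\pi(k)}].
\]
Since $f \in C_{\nu}^k(\R)$, we have $f^{[k]} \in C_{\nu}(\R^{k+1})$. Formula \eqref{eq.MOILFC} with $m = k+1$, $\mathbf{a} = a_{(k+1)}$ and $b = (b_{\pi(1)},\ldots,b_{\pi(k)})$ then turns each summand into the claimed MOI.

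\textbf{Second part.} Theorem \ref{thm.nufam} also says that $\nu$ controls polynomial LFC in $\cM$ with an insertion from $\cI$. By the observation in Definition \ref{def.controlLFC}\ref{item.controlLFCinAiwthI}, it therefore controls polynomial LFC in $\cI$. Theorem \ref{thm.derform}, applied with $\cA = \cM$, gives $f_{a,\scI} \in C^k(\cI_{\sa};\cI)$ together with the LFC derivative formula evaluated at $(a+b)_{(k+1)}$. The $\cI$-version of \eqref{eq.MOILFC} in Theorem \ref{thm.nufam} rewrites each term $f^{[k]}_{\scI,\nu}((a+b)_{(k+1)})\sh[b_{\pi(1)},\ldots,b_{\pi(k)}]$ as the stated MOI over $\sigma(a+b)^{k+1}$. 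This applies because $b_{\pi(i)} \in \cI^k$ and $a+b \in \cM_{\sa}$.

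The only point requiring care is confirming that the hypotheses match exactly: that $\nu_{\cdot,1} = u_{\cdot,1}$, and that Theorem \ref{thm.derform} needs only control of polynomial LFC in $\cI$ rather than the full ``ideal for LFC'' property. Everything else is direct substitution.
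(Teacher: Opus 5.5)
Your proposal is correct and matches the paper's proof, which simply combines Theorem \ref{thm.derform} with Theorem \ref{thm.nufam} using $C_{\nu}^k(\R) = NC^k(\R)$. Your two checks are exactly the hypotheses that make this combination valid: that $\nu_{\cdot,1} = u_{\cdot,1}$ is part of ``controls polynomial LFC,'' and that control with an insertion from $\cI$ implies control in $\cI$.
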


\begin{proof}
Combine Theorems \ref{thm.derform} and \ref{thm.nufam}.
(Recall that $C_{\nu}^k(\R) = NC^k(\R)$.)
\end{proof}

The first result in Corollary \ref{cor.ISNIcase} is precisely \cite[Thm.\ 1.2.3]{Nikitopoulos2023n}.
The second recovers and slightly generalizes \cite[Thm.\ 1.2.2]{Nikitopoulos2023h} restricted to the case of bounded operators.
\pagebreak

As we have seen, Theorem \ref{thm.derform} recovers, improves, or generalizes a large number of results from the literature about the higher differentiability of $f_{a,\scI}$.
There is one important result it does not immediately recover.
Let $H$ be a separable complex Hilbert space and $\cS_q(H)$ be the ideal of Schatten $q$-class operators on $H$ endowed with the Schatten $q$-norm $\norm{\cdot}_q$.
In \cite{LMS2020}, it is proven that if $1 < q < \infty$, $a \in B(H)_{\sa}$, and $f \in C^k(\R)$, then $f_{a,\mathsmaller{\cS_q(H)}} \in  C^k(\cS_q(H)_{\sa};\cS_q(H))$.
Theorem \ref{thm.derform} does not obviously recover this result because I am not sure whether a scaling of the uniform family controls polynomial LFC in $\cS_q(H)$.
A weaker fact, however, \emph{is} true and is the key to the quoted result from \cite{LMS2020}.
Indeed, as a consequence of \cite[Thm.\ 5.6]{PST2013}, there exists a constant $c_{k,q} < \infty$ such that for all $p \in \cP$ and $r > 0$,
\[
\sup_{\mathbf{a} \in B(H)_{\sa,r}^{k+1}}\Big\|\ev_{\mathsmaller{\cS_q(H)}}^{k+1}\big(p^{[k]}\big)(\mathbf{a})\Big\|_{B_k(\cS_q(H)^k;\cS_q(H))} \leq c_{k,q}\big\|p^{(k)}\big\|_{\ell^{\infty}([-r,r])} = k!\,c_{k,q}u_{r,k+1}\big(p^{[k]}\big).
\]
In particular, if one were to change the entire formalism of LFC to require only control of $\ev_{\scI}^{k+1}(P)$ for multivariate polynomials $P \in \cP_{k+1}$ of the form $P = p^{[k]}$ for some single-variate polynomial $p \in \cP$, then the formalism would recover the quoted result from \cite{LMS2020}.
It would be interesting to know whether a scaling of the uniform family does, in fact, control polynomial LFC in $\cS_q(H)$ when $1 < q < \infty$.

Finally, it is worth noting that there is related work, e.g., \cite{dPS2004,CCGP2026}, on the (higher) Gateaux differentiability of maps on noncommutative $L^p$ spaces induced by functional calculus.
Though very important, the results in these works are of a fundamentally different flavor than those in the present paper.
For example, Fr\'echet differentiability is simply false in the settings of \cite{dPS2004,CCGP2026}.

\appendix
\section{Holomorphic functional calculus calculus}\label{app.HFC}

The purpose of this appendix is to prove Theorem \ref{thm.derivinidealHFC}, a holomorphic version of the main result of subsection \ref{subsec.derivatives} generalizing Theorems \ref{thm.HFCintro} and \ref{thm.HFCderformintro}.
In doing so, I shall use material from subsections \ref{subsec.HFC} and \ref{subsec.SNI} as well as the theory of strong/Bochner integrals in Fr\'echet spaces (vid.\ \cite[\S1.1]{Nikitopoulos2024}).
Throughout, $\cB$ is a unital Banach algebra with norm $\norm{\cdot}$, $k \in \N_0$, $m \in \N$, $U,U_1,\ldots,U_m \subseteq \C$ are open sets, and $V \coloneqq U_1 \times \cdots \times U_m$.

\subsection{``Baby'' multivariate holomorphic functional calculus}\label{sec.multivarHFC}

This subsection develops a ``baby'' multivariate holomorphic functional calculus sufficient to formulate and prove Theorem \ref{thm.derivinidealHFC} in the next subsection.
A proper treatment of ``adult'' multivariate holomorphic functional calculi would take us needlessly far afield;
I refer the interested reader to \cite{Curto1988} for more information and references.

Here and throughout, if $W \subseteq \C^m$ is open, then $\Hol\left(W\right)$ is given the topology of locally uniform convergence.

\begin{notation}\label{nota.RUHol0}
$\cR_U \subseteq \Hol\left(U\right)$ is the set of rational functions with poles outside of $U$.
Also,
\[
\cR_V \coloneqq \spn\left\{r_1 \otimes \cdots \otimes r_m : r_1 \in \cR_{U_1},\ldots,r_m \in \cR_{U_m}\right\} \subseteq \Hol \left(V\right),
\]
and $\Hol_0\left(V\right)$ is the closure of $\cR_V$ in $\Hol\left(V\right)$.
\end{notation}

By Runge's theorem, $\cR_U$ is dense in $\Hol\left(U\right)$, i.e., $\Hol_0\left(U\right) = \Hol\left(U\right)$.
Consequently,
\[
\spn\{f_1 \otimes \cdots \otimes f_m : f_1 \in \Hol\left(U_1\right),\ldots,f_m \in \Hol\left(U_m\right)\} \subseteq \Hol_0\left(V\right).
\]
If $m \geq 2$, then $\Hol_0\left(V\right) \subsetneq \Hol\left(V\right)$ in general.
This is part of what complicates holomorphic functional calculus in the multivariate case.
To avoid this and other complications, I construct a functional calculus defined only on $\Hol_0\left(V\right)$.

\begin{theorem}[``Baby'' multivariate holomorphic functional calculus]\label{thm.babymultivarHFC}
Suppose
\[
\mathbf{a} = (a_1,\ldots,a_m) \in \cB_{U_1} \times \cdots \times \cB_{U_m} \; \text{ and } \; a_ia_j=a_ja_i \qquad (i,j \in \{1,\ldots,m\}).
\]
There exists a unique continuous, unital algebra homomorphism $H_{\mathbf{a}}^V \colon \Hol_0\left(V\right) \to \cB$ that maps the coordinate function $1^{\otimes(i-1)} \otimes \iota_{\mathsmaller{U_i}} \otimes 1^{\otimes(m-i)}$ to $a_i$ for each $i = 1,\ldots,m$.
The map $H_{\mathbf{a}}^V$ is called the (\textbf{baby}) \textbf{holomorphic functional calculus for $\mathbf{a}$}, and $\varphi(\mathbf{a}) \coloneqq H_{\mathbf{a}}^V(\varphi)$ for all $\varphi \in \Hol_0\left(V\right)$.
\end{theorem}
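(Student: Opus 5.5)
Uniqueness is the easy half, so I dispose of it first. A unital homomorphism sending the coordinate functions to $a_i$ is determined on every polynomial in the coordinates. Moreover, for $\lambda \notin U_i$ the function $\lambda - \iota_{\mathsmaller{U_i}}$ (placed in slot $i$) is invertible in $\Hol_0(V)$, so any such homomorphism must send its inverse to $(\lambda - a_i)^{-1}$. Since $\cR_{U_i}$ is generated as an algebra by polynomials and the functions $(\lambda-\iota_{\mathsmaller{U_i}})^{-1}$ with $\lambda\notin U_i$, the homomorphism is determined on all of $\cR_V$. Continuity and density of $\cR_V$ in $\Hol_0(V)$ then give uniqueness.

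For existence, I define $H$ by an iterated Cauchy integral. For each $i$, choose a cycle $\Gamma_i$ surrounding $\sigma(a_i)$ in $U_i$. For $\varphi \in \Hol(V)$ set
\[
H(\varphi) \coloneqq \frac{1}{(2\pi i)^m}\int_{\Gamma_1}\cdots\int_{\Gamma_m} \varphi(z_1,\ldots,z_m)\,(z_1-a_1)^{-1}\cdots(z_m-a_m)^{-1}\,\d z_m\cdots\d z_1.
\]
This is well defined because the resolvents are continuous on the compact traces $\Gamma_i^*$. It is independent of the choice of cycles: in each variable separately the integrand is holomorphic, so Cauchy's theorem applies one variable at a time, exactly as in the single-variable theory (vid.\ \cite[Ch.\ 10]{Rudin1991}). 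It is also plainly linear. It is continuous for locally uniform convergence, since
\[
\norm{H(\varphi)} \leq C\sup_{\Gamma_1^*\times\cdots\times\Gamma_m^*}|\varphi|
\]
and $\Gamma_1^*\times\cdots\times\Gamma_m^*$ is a compact subset of $V$.

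On elementary tensors the integral factors. Because the $a_i$ commute, their resolvents commute as well, so
\[
H(f_1\otimes\cdots\otimes f_m) = H_{a_1}^{U_1}(f_1)\cdots H_{a_m}^{U_m}(f_m) = f_1(a_1)\cdots f_m(a_m).
\]
This shows that $H$ is unital and sends each coordinate function to $a_i$.

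The step requiring care is multiplicativity on $\Hol_0(V)$; direct manipulation of the iterated integral, as in the univariate resolvent-identity proof, is what I want to avoid. I will first verify it on $\cR_V$. For elementary tensors,
\[
H\bigl((f_1\otimes\cdots\otimes f_m)(g_1\otimes\cdots\otimes g_m)\bigr) = \prod_i (f_ig_i)(a_i).
\]
Using multiplicativity of each univariate calculus $H_{a_i}^{U_i}$ together with the fact that all the elements $f_i(a_i)$ and $g_j(a_j)$ commute, this equals $H(f_1\otimes\cdots\otimes f_m)\,H(g_1\otimes\cdots\otimes g_m)$. The commutation holds because $f_i(a_i)$ is a limit of rational functions of $a_i$, and those commute with $a_j$. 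Bilinearity then extends multiplicativity to all of $\cR_V$.

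Finally, multiplication on $\Hol(V)$ is jointly continuous for locally uniform convergence, and $H$ is continuous. Hence the set of pairs $(\varphi,\psi)$ with $H(\varphi\psi)=H(\varphi)H(\psi)$ is closed in $\Hol_0(V)\times\Hol_0(V)$; since it contains $\cR_V\times\cR_V$, it is everything. Here $\Hol_0(V)$ is a subalgebra because it is the closure of the subalgebra $\cR_V$. Restricting $H$ to $\Hol_0(V)$ gives $H_{\mathbf{a}}^V$.
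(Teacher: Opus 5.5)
Your proposal is correct and follows the paper's proof essentially step for step: uniqueness from agreement on $\cR_V$ plus density, existence via the iterated Cauchy integral, factorization on elementary tensors, multiplicativity on $\cR_V$ from the commutation of $f_i(a_i)$ with $g_j(a_j)$, and extension to $\Hol_0\left(V\right)$ by continuity and density. You spell out details the paper leaves implicit, namely the inverse/partial-fraction argument behind its ``simple algebraic arguments'' for uniqueness and the joint continuity of multiplication used in the final density step; your claim that $H$ is independent of the choice of cycles is true but not needed.
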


\begin{proof}
By simple algebraic arguments, if $\Phi,\Psi \colon \Hol_0\left(V\right) \to \cB$ are two unital algebra homomorphisms sending the function $\mathbf{z} \mapsto z_i$ to $a_i$ for each $i  = 1,\ldots,m$, then $\Phi = \Psi$ on $\cR_V$.
Since $\cR_V$ is dense in $\Hol_0\left(V\right)$, if $\Phi$ and $\Psi$ are also continuous, then $\Phi = \Psi$ on $\Hol_0\left(V\right)$.

Now, for each $i = 1,\ldots,m$, let $\Gamma_i$ be a cycle surrounding $\sigma(a_i)$ in $U_i$, and define
\[
H(\varphi) \coloneqq \frac{1}{(2\pi i)^m}\int_{\Gamma_m}\cdots\int_{\Gamma_1}\varphi(\mathbf{z})\,(z_1-a_1)^{-1}\cdots (z_m-a_m)^{-1}\,\d z_1 \cdots \d z_m \in \cB
\]
for all $\varphi \in \Hol_0\left(V\right)$.
Clearly, $H \colon \Hol_0\left(V\right) \to \cB$ is linear.
By the dominated convergence theorem, $H$ is continuous.
By the construction of the single-variate holomorphic functional calculus, if $f_i \in \Hol\left(U_i\right)$ for all $i = 1,\ldots,m$, then
\begin{align*}
    H&(f_1 \otimes \cdots \otimes f_m) \\
    & = \frac{1}{(2\pi i)^m}\int_{\Gamma_m}\cdots\int_{\Gamma_1} f_1(z_1)\cdots f_m(z_m)\,(z_1-a_1)^{-1}\cdots (z_m-a_m)^{-1}\,\d z_1 \cdots \d z_m \\
    & = \left(\frac{1}{2\pi i}\int_{\Gamma_1} f_1(z)\,(z-a_1)^{-1}\,\d z\right)\cdots \left(\frac{1}{2\pi i}\int_{\Gamma_m} f_m(z)\,(z-a_m)^{-1}\,\d z\right) \\
    & = f_1(a_1)\cdots f_m(a_m).\numberthis\label{eq.tensorHFC}
\end{align*}
In particular, $H$ is unital and maps $1^{\otimes (i-1)} \otimes \iota_{\mathsmaller{U_i}} \otimes 1^{\otimes(m-i)}$ to $a_i$ for each $i = 1,\ldots,m$.

Finally, let us show that $H$ is multiplicative.
To this end, recall that if $a \in \cB_{U_1}$, $b \in \cB_{U_2}$, and $ab=ba$, then $f(a)\,g(b) = g(b)\,f(a)$ for all $f \in \Hol\left(U_1\right)$ and $g \in \Hol\left(U_2\right)$.
Consequently, if $f_i,g_i \in \Hol\left(U_i\right)$ for all $i = 1,\ldots,m$, $\varphi \coloneqq f_1 \otimes \cdots \otimes f_m$, and $\psi \coloneqq g_1 \otimes \cdots \otimes g_m$, then
\begin{align*}
    H(\varphi\psi) & = H(f_1g_1 \otimes \cdots \otimes f_mg_m) = (f_1g_1)(a_1)\cdots (f_mg_m)(a_m) \\
    & = f_1(a_1)\,g_1(a_1)\cdots f_m(a_m)\,g_m(a_m) = f_1(a_1)\cdots f_m(a_m)\,g_1(a_1)\cdots g_m(a_m) \\
    & = H(f_1 \otimes \cdots \otimes f_m)\,H(g_1 \otimes \cdots \otimes g_m) = H(\varphi)\,H(\psi)
\end{align*}
by \eqref{eq.tensorHFC} and the properties of the single-variate holomorphic functional calculus.
By the linearity of $H$, it follows that $H$ is multiplicative on the subalgebra $\cR_V \subseteq \Hol_0\left(V\right)$.
Since $\cR_V$ is dense in $\Hol_0\left(V\right)$ and $H$ is continuous, we are done.
\end{proof}

\begin{remark}\label{rem.adultmultivarHFC}
It is actually the case that if
\[
\Phi(\varphi) \coloneqq \frac{1}{(2\pi i)^m}\int_{\Gamma_m}\cdots\int_{\Gamma_1} \varphi(\mathbf{z})\,(z_1-a_1)^{-1}\cdots (z_m-a_m)^{-1}\,\d z_1 \cdots \d z_m
\]
for all $\varphi \in \Hol\left(V\right)$, then $\Phi \colon \Hol\left(V\right) \to \cB$ is a unital, continuous algebra homomorphism sending the coordinate function $\mathbf{z} \mapsto z_i$ to $a_i$ for each $i = 1,\ldots,m$, but it takes slightly more work to prove that $\Phi$ is multiplicative.
More seriously, uniqueness is a delicate issue for functional calculi defined on all of $\Hol\left(V\right)$.
Unlike the single-variate case, the proper formulation of uniqueness results, e.g., \cite[Thm.\ 1]{Putinar1983}, requires the introduction of more refined notions of joint spectrum for $m$-tuples of commuting elements, e.g., the Taylor joint spectrum from \cite{Taylor1970s,Taylor1970f} or the Harte spectrum from \cite{Harte1972a,Harte1972b}.
Once again, I refer the interested reader to \cite{Curto1988} for more information and references.
\end{remark}

Note that by the uniqueness part of Theorem \ref{thm.babymultivarHFC}, if $V_i \subseteq U_i$ is open and $\sigma(a_i) \subseteq V_i$ for each $i=1,\ldots,m$, then
\[
H_{U_1 \times \cdots \times U_m}^{\mathbf{a}}(\varphi) = H_{V_1 \times \cdots \times V_m}^{\mathbf{a}}\big(\varphi|_{V_1 \times \cdots \times V_m}\big) \qquad \big(\varphi \in \Hol_0\left(U_1 \times \cdots \times U_m\right)\big).
\]
Consequently, the absence of $V$ in the notation $\varphi(\mathbf{a}) = H_{\mathbf{a}}^V(\varphi)$ is justified.

Let us wrap up this subsection with an important calculation, that of $f^{[k]}(\mathbf{a}) \in \cB$ for $f \in \Hol\left(U\right)$ and $\mathbf{a} = (a_1,\ldots,a_{k+1}) \in \cB_U^{k+1}$ such that $a_ia_j=a_ja_i$ for all $i,j \in \{1,\ldots,k+1\}$.

\begin{lemma}\label{lem.divdiffinHol0}
If $f \in \Hol\left(U\right)$, then $f^{[k]} \in \Hol_0\big(U^{k+1}\big)$.
\end{lemma}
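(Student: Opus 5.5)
The plan is to use the Cauchy-type representation \eqref{eq.divdiffCIF} and to realize $f^{[k]}$, locally on $U^{k+1}$, as a Bochner integral of elementary tensors of rational functions with poles outside $U$. The functions being integrated lie in $\cR_{U^{k+1}}$, so the integral will lie in the closure $\Hol_0(U^{k+1})$. Holomorphy of $f^{[k]}$ on $U^{k+1}$ is already recorded in subsection \ref{subsec.HFC}, so only membership in the closure has to be shown.

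Fix a compact $K \subseteq U^{k+1}$. Enlarging it if necessary, take $K = L^{k+1}$, where $L \subseteq U$ is compact. Choose a cycle $\Gamma$ in $U \setminus L$ that surrounds $L$ in $U$. Then $L \subseteq V \coloneqq \{z \in U \setminus \Gamma^* : W_\Gamma(z)=1\}$, and by \eqref{eq.divdiffCIF}
\[
f^{[k]}(\blambda) = \frac{1}{2\pi i}\int_\Gamma f(z)\,\prod_{i=1}^{k+1}(z-\lambda_i)^{-1}\,\d z \qquad (\blambda \in V^{k+1}).
\]
For each $z \in \Gamma^*$, the integrand $\blambda \mapsto \prod_i (z-\lambda_i)^{-1}$ is the elementary tensor $r_z^{\otimes(k+1)}$ with $r_z(\lambda) = (z-\lambda)^{-1}$. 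However, $r_z$ has its pole at $z \in U$, so it is \emph{not} in $\cR_U$. This is the main obstacle, and it is handled by Runge approximation. For each $z \in \Gamma^*$, approximate $r_z$ uniformly on $L$ by elements of $\cR_U$, or more directly by Runge's theorem in the density form stated after Notation \ref{nota.RUHol0}, $\Hol(U)=\Hol_0(U)$. That form cannot be applied to $r_z$ as it stands, since $r_z \notin \Hol(U)$.

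Instead, discretize the integral. First, the map $z \mapsto r_z^{\otimes(k+1)}$ is uniformly continuous from $\Gamma^*$ into $C(L^{k+1})$ with the sup norm, because $\operatorname{dist}(\Gamma^*,L)>0$. Hence Riemann sums $\sum_j c_j\, r_{z_j}^{\otimes(k+1)}$, with $c_j = (2\pi i)^{-1} f(z_j)\,\Delta z_j$, converge to $f^{[k]}$ uniformly on $L^{k+1}$. Second, for each node $z_j$, the function $r_{z_j}$ is holomorphic on a neighbourhood of $L$, namely on $\C\setminus\{z_j\}$. Every bounded component of $\C\setminus L$ that contains a point of $\Gamma^*$ meets $\C\setminus U$; otherwise $\Gamma$ could not surround $L$ in $U$ while remaining in $U\setminus L$. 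One can arrange this by the standard choice of $\Gamma$ as the grid cycle from \cite[Thm.\ 13.5]{Rudin1991}-type constructions, after enlarging $L$ to absorb the bounded components of $\C\setminus L$ that lie in $U$. Runge's theorem then gives, for each $j$, a rational $s_j \in \cR_U$ with $\|s_j - r_{z_j}\|_{\ell^\infty(L)}$ arbitrarily small. Since finite products of uniformly close bounded functions are uniformly close, $\sum_j c_j\, s_j^{\otimes(k+1)} \in \cR_{U^{k+1}}$ approximates $f^{[k]}$ uniformly on $L^{k+1}$.

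Because $K$ was an arbitrary compact set of the form $L^{k+1}$, and such sets exhaust $U^{k+1}$, this shows $f^{[k]}$ lies in the locally uniform closure of $\cR_{U^{k+1}}$, that is, $f^{[k]}\in\Hol_0(U^{k+1})$. The delicate point is the choice of $L$ (taking the hull of $L$ relative to $U$) so that Runge's theorem applies with poles outside $U$. Everything else is routine uniform-continuity bookkeeping.
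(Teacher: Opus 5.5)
Your argument is correct and takes a different route from the paper. However, one sentence in it is false, and the hull step you mention at the end is essential rather than optional.

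\textbf{The false sentence.} You claim every bounded component of $\C\setminus L$ meeting $\Gamma^*$ meets $\C\setminus U$, ``otherwise $\Gamma$ could not surround $L$ in $U$.'' This fails. Take $U=\C$, $L=\{1\le|\lambda|\le 2\}$, and let $\Gamma$ be the circle $|z|=3$ together with the circle $|z|=1/2$. This $\Gamma$ lies in $U\setminus L$ and surrounds $L$ in $U$. But for a node with $|z_j|=1/2$, $r_{z_j}$ is not a uniform limit on $L$ of polynomials (which make up $\cR_{\C}$): its integral over $|\lambda|=1$ is $-2\pi i$, while polynomials integrate to $0$. A small-mesh grid cycle around this annulus also has a piece in the hole, so choosing $\Gamma$ as a grid cycle does not help by itself.

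\textbf{The fix.} What makes the Runge step work is replacing $L$ by
$\hat L\coloneqq L\cup\bigcup\{C : C \text{ a bounded component of } \C\setminus L,\ C\subseteq U\}$.
\begin{itemize}
\item Its complement in $\C$ is a union of components of $\C\setminus L$, so $\hat L$ is closed.
\item It misses the unbounded component, so it is compact, and $\hat L\subseteq U$.
\item Every component of $(\C\cup\{\infty\})\setminus\hat L$ meets $(\C\cup\{\infty\})\setminus U$.
\end{itemize}
Then for \emph{any} cycle $\Gamma$ in $U\setminus\hat L$ surrounding $\hat L$ in $U$, each node $z_j$ lies in such a component. Its pole can be pushed within that component to a point outside $U$ (possibly $\infty$), giving $s_j\in\cR_U$ uniformly close to $r_{z_j}$ on $\hat L$. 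The rest of your Riemann-sum bookkeeping is fine.

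\textbf{Comparison with the paper.} The paper argues algebraically and never works on compact sets directly. It uses \eqref{eq.divdiffresolv} (that is, $r_{z_0}^{[k]}=r_{z_0}^{\otimes(k+1)}$ for $z_0\notin U$), the fact that divided differences of polynomials are polynomials, the product rule, and the fundamental theorem of algebra (partial fractions). From these it shows $\cD_k\cR_U\subseteq\cR_{U^{k+1}}$ exactly. It then reads off continuity of $\cD_k\colon\Hol(U)\to\Hol(U^{k+1})$ from \eqref{eq.divdiffCIF} and uses density of $\cR_U$ in $\Hol(U)$ to get $\cD_k\Hol(U)\subseteq\overline{\cD_k\cR_U}\subseteq\Hol_0(U^{k+1})$.

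So the paper uses Runge only as the black box $\Hol_0(U)=\Hol(U)$, and as a by-product gets the closure-free statement for rational $f$. Your route skips the divided-difference algebra. But the density $\Hol_0(U)=\Hol(U)$ is itself proved from compact-set Runge plus exactly this hull construction, so you are in effect inlining that proof and applying it to the resolvents $r_{z_j}$ instead of to $f$. In return you get that $f^{[k]}$ is a locally uniform limit of the especially simple symmetric sums $\sum_j c_j\,s_j^{\otimes(k+1)}$ with $s_j\in\cR_U$.
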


\begin{proof}[Sketch of proof]
Let $z_0 \in \C \setminus U$, and define $r_{z_0}(\lambda) \coloneqq (z_0-\lambda)^{-1}$ for all $\lambda \in U$.
Of course, $r_{z_0} \in \cR_U$.
By a brief calculation,
\begin{equation}
    r_{z_0}^{[k]}(\blambda) = \frac{1}{(z_0-\lambda_1)\cdots(z_0-\lambda_{k+1})} \qquad \big(\blambda \in U^{k+1}\big).\label{eq.divdiffresolv}
\end{equation}
By combining \eqref{eq.divdiffresolv} with the fact that divided differences of polynomials are polynomials (vid.\ \cite[Ex.\ 2.1.5]{Nikitopoulos2023n}), the product rule for divided differences (vid.\ \cite[Prop.\ 2.1.3(iii)]{Nikitopoulos2023n} for the real-$C^k$ case---the holomorphic case is the same), and the fundamental theorem of algebra, it is not difficult to see that if $r \in \cR_U$, then there exist polynomials $p_1,\ldots,p_{k+1} \in \cP$ with roots outside of $U$ and a multivariate polynomial $P\in \cP_{k+1}$ such that
\[
r^{[k]}(\blambda) = \frac{P(\blambda)}{p_1(\lambda_1) \cdots p_{k+1}(\lambda_{k+1})} \qquad \big(\blambda \in U^{k+1}\big).
\]
In particular, $r^{[k]} \in \cR_{U^{k+1}}$.

Now, write $\cD_k \colon \Hol\left(U\right) \to \Hol\big(U^{k+1}\big)$ for the $k^{\text{th}}$ divided difference map $f \mapsto f^{[k]}$.
By the previous paragraph, $\cD_k\cR_U \subseteq \cR_{U^{k+1}} \subseteq \Hol_0\big(U^{k+1}\big)$.
Also, it is not difficult to see from \eqref{eq.divdiffCIF} that $\cD_k \colon \Hol\left(U\right) \to \Hol\big(U^{k+1}\big)$ is continuous.
Since $\cR_U$ is dense in $\Hol\left(U\right)$ and $\Hol_0\big(U^{k+1}\big)$ is closed in $\Hol\big(U^{k+1}\big)$, we conclude that $\cD_k \Hol\left(U\right) \subseteq \overline{\cD_k\cR_U} \subseteq \Hol_0\big(U^{k+1}\big)$, as desired.
\end{proof}

\begin{theorem}[Calculating $f^{[k]}(\mathbf{a})$]\label{thm.mvarHFCdivdiff}
Let $\mathbf{a} = (a_1,\ldots,a_{k+1}) \in \cB_U^{k+1}$ be such that $a_ia_j = a_ja_i$ for all $i,j \in \{1,\ldots,k+1\}$.
If $f \in \Hol\left(U\right)$ and $\Gamma$ is a cycle surrounding $\sigma(a_1) \cup \cdots \cup \sigma(a_{k+1})$ in $U$, then
\[
f^{[k]}(\mathbf{a}) = H_{\mathbf{a}}^{U^{k+1}}\big(f^{[k]}\big) = \frac{1}{2\pi i}\int_{\Gamma} f(z) \, (z-a_1)^{-1}\cdots (z-a_{k+1})^{-1} \,\d z.
\]
\end{theorem}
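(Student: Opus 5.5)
By Lemma \ref{lem.divdiffinHol0}, $f^{[k]} \in \Hol_0\big(U^{k+1}\big)$. Hence the left-hand side $f^{[k]}(\mathbf{a}) = H_{\mathbf{a}}^{U^{k+1}}(f^{[k]})$ is defined via Theorem \ref{thm.babymultivarHFC}. Let $\Gamma$ be a cycle surrounding $K \coloneqq \sigma(a_1)\cup\cdots\cup\sigma(a_{k+1})$ in $U$, and put $W \coloneqq \{z \in U\setminus\Gamma^* : W_\Gamma(z)=1\}$, an open set with $K \subseteq W$. By the restriction remark following Remark \ref{rem.adultmultivarHFC}, we may compute $H_{\mathbf{a}}^{U^{k+1}}(f^{[k]})$ as $H_{\mathbf{a}}^{W^{k+1}}(f^{[k]}|_{W^{k+1}})$. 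On $W^{k+1}$, formula \eqref{eq.divdiffCIF} gives
\[
f^{[k]}(\blambda)=\frac{1}{2\pi i}\int_\Gamma f(z)\,r_z(\lambda_1)\cdots r_z(\lambda_{k+1})\,\d z,\qquad r_z(\lambda)\coloneqq (z-\lambda)^{-1}.
\]
For $z \in \Gamma^*$ we have $z\notin W$, so $r_z \in \cR_W$ and $r_z^{\otimes(k+1)}\in \cR_{W^{k+1}}$.

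The plan is to push $H_{\mathbf{a}}^{W^{k+1}}$ inside the $\d z$ integral. The map $\Gamma^*\ni z\mapsto f(z)\,r_z^{\otimes(k+1)}\in\Hol(W^{k+1})$ is continuous, because $(z,\blambda)\mapsto r_z(\lambda_1)\cdots r_z(\lambda_{k+1})$ is jointly continuous on $\Gamma^*\times W^{k+1}$ and therefore uniformly continuous on $\Gamma^*\times L$ for each compact $L \subseteq W^{k+1}$. It takes values in the closed subspace $\Hol_0(W^{k+1})$ of the Fréchet space $\Hol(W^{k+1})$. Consequently the Riemann/Bochner integral $\frac{1}{2\pi i}\int_\Gamma f(z)\,r_z^{\otimes(k+1)}\,\d z$ exists in $\Hol_0(W^{k+1})$. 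Applying the point evaluations, which are continuous for the locally uniform topology, shows that this integral equals $f^{[k]}|_{W^{k+1}}$.

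Since $H_{\mathbf{a}}^{W^{k+1}}$ is continuous and linear, it commutes with this integral. It therefore remains to evaluate $H_{\mathbf{a}}^{W^{k+1}}\big(r_z^{\otimes(k+1)}\big)$ for each $z\in\Gamma^*$. By \eqref{eq.tensorHFC}, this equals $r_z(a_1)\cdots r_z(a_{k+1})$. In the single-variate calculus, $r_z(a_i)=(z-a_i)^{-1}$, because $H_{a_i}$ is a unital homomorphism with $(z-\iota)\,r_z = 1$. Substituting yields
\[
f^{[k]}(\mathbf{a})=\frac{1}{2\pi i}\int_\Gamma f(z)\,(z-a_1)^{-1}\cdots(z-a_{k+1})^{-1}\,\d z.
\]

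The main technical step is interchanging $H_{\mathbf{a}}^{W^{k+1}}$ with the contour integral. This needs the continuity of the $\Hol(W^{k+1})$-valued integrand, which holds for the reasons above. As a fallback, one can avoid vector-valued integration altogether by approximating the contour integral with Riemann sums. Each Riemann sum lies in $\cR_{W^{k+1}}$, the sums converge locally uniformly on $W^{k+1}$, and applying $H_{\mathbf{a}}^{W^{k+1}}$ to each sum gives exactly the corresponding Riemann sums of the $\cB$-valued integral on the right-hand side.
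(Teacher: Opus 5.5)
Your proposal is correct and follows the paper's proof essentially verbatim. Like the paper, you restrict to $W^{k+1}$ via the observation after Remark \ref{rem.adultmultivarHFC}, write $f^{[k]}|_{W^{k+1}}$ through \eqref{eq.divdiffCIF} as a Bochner integral in the Fr\'echet space $\Hol_0(W^{k+1})$, and pass the continuous homomorphism $H_{\mathbf{a}}^{W^{k+1}}$ under the integral, while supplying details the paper leaves implicit: continuity of the integrand, and the evaluation $H_{\mathbf{a}}^{W^{k+1}}(r_z^{\otimes(k+1)}) = (z-a_1)^{-1}\cdots(z-a_{k+1})^{-1}$ via \eqref{eq.tensorHFC}.
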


\begin{proof}
Let $f \in \Hol\left(U\right)$ and $W \coloneqq \{z \in U \setminus \Gamma^* : W_{\Gamma}(z) = 1\}$.
By \eqref{eq.divdiffCIF},
\begin{equation}
    f^{[k]}\big|_{W^{k+1}} = \frac{1}{2\pi i}\int_{\Gamma} f(z) \, \big((z-\iota_{\mathsmaller{W}})^{-1}\big)^{\otimes(k+1)}\,\d z \in \Hol_0\big(W^{k+1}\big),\label{eq.divdiffholoHol0}
\end{equation}
where $\iota_{\mathsmaller{W}} \colon W \to \C$ is the inclusion and the right-hand side is a Bochner integral in the Fr\'echet space $\Hol_0\big(W^{k+1}\big)$.
Applying the continuous homomorphism $H_{\mathbf{a}}^{W^{k+1}}$ to both sides of \eqref{eq.divdiffholoHol0} and appealing to the observation following Remark \ref{rem.adultmultivarHFC} complete the proof.
\end{proof}

\subsection{Perturbation and derivative formulas}\label{sec.pertHFC}

For the duration of this subsection, let $(\cI,\norm{\cdot}_{\cI})$ be a symmetrically normed ideal of $\cB$ and $a \in \cB_U$.
Also, write
\[
\cI_{U,a} \coloneqq \{b \in \cI : a+b \in \cB_U\},
\]
and observe that $\cI_{U,a}$ is open in the space $(\cI,\norm{\cdot}_{\cI})$ because it is the inverse image of the open set $\cB_U \subseteq \cB$ under the map $\cI \ni b \mapsto a+b \in \cB$, which is continuous because the inclusion $\iota_{\scI} \colon (\cI, \norm{\cdot}_{\cI}) \to (\cB,\norm{\cdot})$ is continuous.

\begin{notation}\label{nota.prepertform}
Recall that $V = U_1 \times \cdots \times U_m$.
\begin{enumerate}[font=\normalfont,label=(\roman*)]
    \item If $\mathbf{a} = (a_1,\ldots,a_m) \in \cB_{U_1} \times \cdots \times \cB_{U_m}$, then
    \[
    \tilde{a}_i \coloneqq 1^{\otimes (i-1)} \otimes a_i \otimes 1^{\otimes (m-i)} \in \cB^{\potimes m} \qquad (i=1,\ldots,m),
    \]
    and for all $\varphi \in \Hol_0\left(V\right)$, define
    \[
    \varphi_{\sotimes}(\mathbf{a}) \coloneqq \varphi(\tilde{a}_1,\ldots,\tilde{a}_m) \in \cB^{\potimes m}
    \]
    via the ``baby'' multivariate holomorphic functional calculus (HFC).\label{item.varphiotimesHFC}
    \item For each $i=1,\ldots,k$, write $\sh_{k,i} \colon \cB^{\potimes (k+1)} \to B\big(\cB;\cB^{\potimes k}\big)$ for the bounded linear map determined by
    \[
    \sh_{k,i}(a_1 \otimes \cdots \otimes a_{k+1})c = a_1\otimes \cdots \otimes a_{i-1} \otimes a_ica_{i+1} \otimes a_{i+2} \otimes \cdots \otimes a_{k+1}
    \]
    for all $a_1,\ldots,a_{k+1},c \in \cB$.
    (Replace the $i^{\text{th}}$ tensor sign with $c$.)
    Also, for $u \in \cB^{\potimes(k+1)}$ and $c \in \cB$, write $u\sh_{k,i}c \coloneqq \sh_{k,i}(u)[c] \in \cB^{\potimes k}$.
    Observe that $\sh_{k,i}$ has operator norm at most one.
\end{enumerate}
\end{notation}

Let $f \in \Hol\left(U\right)$.
The goal of this subsection is to use ``perturbation formulas'' to compute the derivatives of the map
\[
\cI_{U,a} \ni b \mapsto f(a+b) - f(a) \in \cI.
\]
In the present context, perturbation formulas are formulas for $f_{\sotimes}^{[i]}(a_1,\ldots,a_{i+1})$ resembling the recursive definition of $f^{[k+1]}$ in terms of $f^{[k]}$.

\begin{proposition}[Perturbation formulas]\label{prop.pertformholo}
If $f \in \Hol\left(U\right)$, then
\[
f(a) - f(b) = f_{\sotimes}^{[1]}(a,b) \sh[a-b] \qquad (a,b \in \cB_U).\pagebreak
\]
Also, if $k \geq 1$, then
\[
f_{\sotimes}^{[k]}(\mathbf{a}) - f_{\sotimes}^{[k]}(\mathbf{b}) = \sum_{i=1}^{k+1} f_{\sotimes}^{[k+1]}(a_1,\ldots,a_i,b_i,\ldots,b_{k+1})\sh_{k+1,i}[a_i-b_i] \qquad \big(\mathbf{a},\mathbf{b} \in \cB_U^{k+1}\big),
\]
where $\mathbf{a} = (a_1,\ldots,a_{k+1})$ and $\mathbf{b} = (b_1,\ldots,b_{k+1})$, as usual.
\end{proposition}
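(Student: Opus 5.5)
The plan is to reduce everything to the Cauchy-integral representation of $f_{\sotimes}^{[k]}$ from Theorem \ref{thm.mvarHFCdivdiff}, applied in the unital Banach algebra $\cB^{\potimes(k+1)}$, combined with resolvent identities. Fix a cycle $\Gamma$ surrounding $\sigma(a_1)\cup\cdots\cup\sigma(a_{k+1})\cup\sigma(b_1)\cup\cdots\cup\sigma(b_{k+1})$ in $U$. Write $R_z(c)\coloneqq(z-c)^{-1}$. The elements $\tilde a_i = 1^{\otimes(i-1)}\otimes a_i\otimes 1^{\otimes(k+1-i)}$ commute pairwise, and $\sigma(\tilde a_i)\subseteq\sigma(a_i)$. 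Moreover, $(z-\tilde a_i)^{-1}=1^{\otimes(i-1)}\otimes R_z(a_i)\otimes 1^{\otimes(k+1-i)}$. Hence Theorem \ref{thm.mvarHFCdivdiff} gives
\[
f_{\sotimes}^{[k]}(\mathbf{a}) = \frac{1}{2\pi i}\int_{\Gamma} f(z)\,R_z(a_1)\otimes\cdots\otimes R_z(a_{k+1})\,\d z,
\]
and the same formula holds for $\mathbf{b}$ and for the $(k+2)$-tuples appearing on the right-hand side.

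For the first formula ($k=0$), I apply $\sh_1$ to the integral for $f_{\sotimes}^{[1]}(a,b)$. Since $\sh_1$ is bounded, it passes through the Bochner integral, giving
\[
\frac{1}{2\pi i}\int_\Gamma f(z)\,R_z(a)(a-b)R_z(b)\,\d z.
\]
The resolvent identity $R_z(a)-R_z(b)=R_z(a)(a-b)R_z(b)$ turns this into $f(a)-f(b)$ by \eqref{eq.HFC}.

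For general $k$, I telescope the integrand:
\[
\bigotimes_{j} R_z(a_j)-\bigotimes_j R_z(b_j)=\sum_{i=1}^{k+1} R_z(a_1)\otimes\cdots\otimes R_z(a_{i-1})\otimes\big(R_z(a_i)-R_z(b_i)\big)\otimes R_z(b_{i+1})\otimes\cdots\otimes R_z(b_{k+1}).
\]
In the $i$-th term I replace $R_z(a_i)-R_z(b_i)$ by $R_z(a_i)(a_i-b_i)R_z(b_i)$. This is exactly $\big(R_z(a_1)\otimes\cdots\otimes R_z(a_i)\otimes R_z(b_i)\otimes\cdots\otimes R_z(b_{k+1})\big)\sh_{k+1,i}[a_i-b_i]$, with $k+2$ factors. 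Integrating against $f(z)\,\d z/2\pi i$ and pulling the bounded operator $\sh_{k+1,i}(\cdot)[a_i-b_i]$ out of the Bochner integral identifies each term with $f_{\sotimes}^{[k+1]}(a_1,\ldots,a_i,b_i,\ldots,b_{k+1})\sh_{k+1,i}[a_i-b_i]$, again by Theorem \ref{thm.mvarHFCdivdiff}.

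The main technical point is justifying that all the integrals may be taken over a single cycle $\Gamma$. This holds because Theorem \ref{thm.mvarHFCdivdiff} allows any cycle surrounding the union of the relevant spectra, and the union for each mixed tuple is contained in the one $\Gamma$ surrounds. The other point is checking that $z\mapsto\bigotimes_j R_z(c_j)$ is continuous into $\cB^{\potimes(k+1)}$ on $\Gamma^*$, so the Bochner integrals make sense and the bounded maps $\sh$ commute with them. Both are routine, so the telescoping plus resolvent identity is the real content.
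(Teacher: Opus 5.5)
Your proposal is correct and follows essentially the same route as the paper. Both represent each $f_{\sotimes}^{[j]}$ by the Cauchy integral from Theorem~\ref{thm.mvarHFCdivdiff} over a single cycle $\Gamma$ surrounding all the relevant spectra. Both then telescope the tensor product of resolvents, apply the resolvent identity $R_z(a_i)-R_z(b_i)=R_z(a_i)(a_i-b_i)R_z(b_i)$, and pull the bounded maps $\sh$ and $\sh_{k+1,i}$ out of the Bochner integral.
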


\begin{proof}
Let $a,b \in \cB_U$ and $\mathbf{a},\mathbf{b} \in \cB_U^{k+1}$.
Also, let $\Gamma$ be a cycle surrounding the compact set
\[
\sigma(a) \cup \sigma(b) \cup \bigcup_{i=1}^{k+1}(\sigma(a_i) \cup \sigma(b_i))
\]
in $U$.
Finally, write
\[
R_z(a) \coloneqq (z-a)^{-1} \qquad ( z \in \rho(a) = \C \setminus \sigma(a)).
\]
By construction of the single-variate holomorphic functional calculus, the resolvent identity, and Theorem \ref{thm.mvarHFCdivdiff},
\begin{align*}
    f(a) - f(b) & = \frac{1}{2\pi i}\int_{\Gamma} f(z) \, (R_z(a) - R_z(b))\,\d z \\
    & = \frac{1}{2\pi i}\int_{\Gamma} f(z) \, R_z(a)(a-b)R_z(b)\,\d z \\
    & = \frac{1}{2\pi i}\int_{\Gamma} f(z) \, (R_z(a) \otimes R_z(b))\sh[a-b]\,\d z \\
    & = \Bigg(\frac{1}{2\pi i}\int_{\Gamma} f(z) \, R_z(a) \otimes R_z(b)\,\d z\Bigg)\sh[a-b] \\
    & = f_{\sotimes}^{[1]}(a,b)\sh[a-b].
\end{align*}
This is the first desired formula.
Next, by Theorem \ref{thm.mvarHFCdivdiff} (twice) and the resolvent identity once again,
{\small
\begin{align*}
    &f_{\sotimes}^{[k]}(\mathbf{a}) - f_{\sotimes}^{[k]}(\mathbf{b}) = \sum_{j=1}^{k+1}\big(f_{\sotimes}^{[k]}(a_1,\ldots,a_j,b_{j+1},\ldots,b_{k+1}) - f_{\sotimes}^{[k]}(a_1,\ldots,a_{j-1},b_j,\ldots,b_{k+1})\big) \\
    & = \frac{1}{2\pi i}\sum_{j=1}^{k+1}\int_{\Gamma} f(z) \, R_z(a_1) \otimes \cdots \otimes R_z(a_{j-1}) \otimes (R_z(a_j) - R_z(b_j)) \otimes R_z(b_{j+1}) \otimes \cdots \otimes R_z(b_{k+1}) \, \d z \\
    & = \frac{1}{2\pi i}\sum_{j=1}^{k+1}\int_{\Gamma} f(z) \, R_z(a_1) \otimes \cdots \otimes R_z(a_{j-1}) \otimes R_z(a_j)(a_j-b_j)R_z(b_j) \otimes R_z(b_{j+1}) \otimes \cdots \otimes R_z(b_{k+1}) \, \d z \\
    & = \frac{1}{2\pi i}\sum_{j=1}^{k+1}\int_{\Gamma} f(z) \, (R_z(a_1) \otimes \cdots \otimes R_z(a_j) \otimes R_z(b_j) \otimes \cdots \otimes R_z(b_{k+1}))\sh_{k+1,j}[a_j-b_j] \, \d z \\
    & = \sum_{j=1}^{k+1}\Bigg(\frac{1}{2\pi i}\int_{\Gamma} f(z) \, R_z(a_1) \otimes \cdots \otimes R_z(a_j) \otimes R_z(b_j) \otimes \cdots \otimes R_z(b_{k+1}) \, \d z\Bigg)\sh_{k+1,j}[a_j-b_j] \\
    & = \sum_{j=1}^{k+1} f_{\sotimes}^{[k+1]}(a_1,\ldots,a_j,b_j,\ldots,b_{k+1})\sh_{k+1,j}[a_j-b_j],
\end{align*}
}as desired.
\end{proof}

\begin{corollary}\label{cor.pertopfuncholowelldef}
If $a,b \in \cB_U$, $f \in \Hol\left(U\right)$, and $a-b \in \cI$, then $f(a) - f(b) \in \cI$. 
\end{corollary}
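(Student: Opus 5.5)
This follows directly from the first perturbation formula in Proposition \ref{prop.pertformholo}, combined with Proposition \ref{prop.hashonI}. Let $a,b \in \cB_U$ with $a-b \in \cI$, and let $f \in \Hol\left(U\right)$. Proposition \ref{prop.pertformholo} gives
\[
f(a) - f(b) = f_{\sotimes}^{[1]}(a,b)\sh[a-b],
\]
where $f_{\sotimes}^{[1]}(a,b) \in \cB^{\potimes 2}$ is defined via the baby multivariate holomorphic functional calculus. This element makes sense because $f^{[1]} \in \Hol_0(U^2)$ by Lemma \ref{lem.divdiffinHol0}, and because $\tilde a_1 = a\otimes 1$ and $\tilde a_2 = 1 \otimes b$ commute and have spectra contained in $\sigma(a)$ and $\sigma(b)$, respectively. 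We only need that this element exists and lies in $\cB^{\potimes 2}$.

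Next, apply Proposition \ref{prop.hashonI} with $k=1$, $i=1$, $u = f_{\sotimes}^{[1]}(a,b)$, and $b_1 = a-b \in \cI$. It yields $u\sh[a-b] \in \cI$, together with the bound $\norm{f(a)-f(b)}_{\cI} \le \norm{f_{\sotimes}^{[1]}(a,b)}_{\cB^{\potimes 2}}\norm{a-b}_{\cI}$. This proves the claim.

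The only point needing any care is the step just taken: $\sh$ applied to a general element of the projective tensor product, with an insertion from $\cI$, lands in $\cI$. This is not merely an algebraic statement about elementary tensors. It relies on the boundedness estimate in Proposition \ref{prop.hashonI}, which extends $\sh$ from $\cB\otimes\cB$ to $\cB\potimes\cB$, and on the completeness of $(\cI,\norm{\cdot}_{\cI})$, which guarantees the limit of the resulting series stays in $\cI$. Both facts are already established, so the argument is essentially a two-line combination.
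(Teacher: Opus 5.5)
Your proof is correct and is essentially the paper's own argument: the first formula of Proposition \ref{prop.pertformholo} gives $f(a)-f(b) = f_{\sotimes}^{[1]}(a,b)\sh[a-b]$, and Proposition \ref{prop.hashonI} applied to $u = f_{\sotimes}^{[1]}(a,b) \in \cB \potimes \cB$ and $a-b \in \cI$ puts this element in $\cI$. Your extra remarks simply spell out details the paper leaves implicit, namely that $f^{[1]} \in \Hol_0(U^2)$ and that $\sigma(a\otimes 1)\subseteq\sigma(a)$ and $\sigma(1\otimes b)\subseteq\sigma(b)$, so that the baby functional calculus applies.
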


\begin{proof}
By Proposition \ref{prop.hashonI}, $u \sh c \in \cI$ whenever $u \in \cB \potimes \cB$ and $c \in \cI$, so the conclusion follows from the first formula in Proposition \ref{prop.pertformholo}.
\end{proof}

\begin{proposition}[Continuity of $\varphi_{\sotimes}$]\label{prop.contpertholo}
If $\varphi \in \Hol_0\left(V\right)$, then $\varphi_{\sotimes} \colon \cB_{U_1} \times \cdots \times \cB_{U_m} \to \cB^{\potimes m}$ is continuous.
\end{proposition}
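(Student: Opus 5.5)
The plan is to express $\varphi_{\sotimes}$ through a Cauchy-type integral and then use continuity of resolvents. Fix $\mathbf{a}^0 = (a_1^0,\ldots,a_m^0) \in \cB_{U_1}\times\cdots\times\cB_{U_m}$. For each $i$, choose a cycle $\Gamma_i$ in $U_i$ that surrounds a compact neighborhood $K_i \subseteq U_i$ of $\sigma(a_i^0)$. By upper semicontinuity of the spectrum, there is $\delta>0$ such that $\norm{a_i - a_i^0}<\delta$ implies $\sigma(a_i)\subseteq \operatorname{int} K_i$. For such $a_i$, the cycle $\Gamma_i$ therefore surrounds $\sigma(a_i)$ as well, and hence it surrounds $\sigma(\tilde a_i)$ in $\cB^{\potimes m}$. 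Here I use $\sigma(\tilde a_i)\subseteq\sigma(a_i)$, which holds because $(z-a_i)^{-1}$ tensored with units inverts $z-\tilde a_i$.

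Next, I apply the formula for $H$ from the proof of Theorem \ref{thm.babymultivarHFC} in the unital Banach algebra $\cB^{\potimes m}$ to the commuting tuple $(\tilde a_1,\ldots,\tilde a_m)$. Using $(z_i-\tilde a_i)^{-1} = 1^{\otimes(i-1)}\otimes (z_i-a_i)^{-1}\otimes 1^{\otimes(m-i)}$, this gives
\[
\varphi_{\sotimes}(\mathbf{a}) = \frac{1}{(2\pi i)^m}\int_{\Gamma_m}\cdots\int_{\Gamma_1}\varphi(\mathbf{z})\,(z_1-a_1)^{-1}\otimes\cdots\otimes(z_m-a_m)^{-1}\,\d z_1\cdots\d z_m
\]
for all $\mathbf{a}$ in the $\delta$-neighborhood of $\mathbf{a}^0$. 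This representation is justified by the uniqueness part of Theorem \ref{thm.babymultivarHFC} together with the remark following Remark \ref{rem.adultmultivarHFC}, which lets us shrink the domain if needed.

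To conclude, I write
\[
\norm{\varphi_{\sotimes}(\mathbf{a})-\varphi_{\sotimes}(\mathbf{a}^0)}_{\cB^{\potimes m}} \le C\sup_{\mathbf{z}\in\Gamma_1^*\times\cdots\times\Gamma_m^*}|\varphi(\mathbf{z})|\,\Big\|\bigotimes_i R_{z_i}(a_i)-\bigotimes_i R_{z_i}(a_i^0)\Big\|,
\]
where $C$ depends only on the lengths of the $\Gamma_i$. I then telescope the difference of elementary tensors, using the cross-norm property $\norm{x_1\otimes\cdots\otimes x_m}=\prod_i\norm{x_i}$. This reduces the claim to showing that $\sup_{z\in\Gamma_i^*}\norm{R_z(a_i)-R_z(a_i^0)}\to0$ as $a_i\to a_i^0$ and that these resolvents are uniformly bounded.

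The main obstacle is obtaining this uniformity in $z$. I plan to handle it with the standard joint continuity of $(z,b)\mapsto (z-b)^{-1}$ on the open set $\{(z,b): z\notin\sigma(b)\}$, combined with compactness of $\Gamma_i^*$. Alternatively, one can use the resolvent identity $R_z(a)-R_z(a^0)=R_z(a)(a-a^0)R_z(a^0)$ and a Neumann-series bound on $\norm{R_z(a)}$, which is valid once $\norm{a-a^0}<1/(2\sup_{z\in\Gamma_i^*}\norm{R_z(a^0)})$.
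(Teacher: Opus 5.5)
Your proof is correct and follows essentially the same route as the paper. Like the paper, you fix cycles $\Gamma_i$ surrounding a neighborhood $K_i$ of $\sigma(a_i)$ that also contains the spectra of nearby elements, write $\varphi_{\sotimes}$ as the Cauchy-type integral of $\varphi(\mathbf{z})\,R_{z_1}(b_1)\otimes\cdots\otimes R_{z_m}(b_m)$, and control the resolvents on $\Gamma_i^*$ with a Neumann-series bound; the only difference in the final step is minor. The paper combines pointwise convergence of the resolvents with that uniform bound and the dominated convergence theorem for Bochner integrals, whereas you get uniform convergence on $\Gamma_i^*$ from the resolvent identity and then telescope using the cross-norm property.
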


\begin{proof}
Let $\mathbf{a} \in \cB_{U_1} \times \cdots \times \cB_{U_m}$, and define
\[
\e(\mathbf{b}) \coloneqq \max_{i=1,\ldots,m}\norm{a_i-b_i} \qquad \big(\mathbf{b} \in \cB_{U_1} \times \cdots \times \cB_{U_m}\big).
\]
Also, let $\delta > 0$ be such that $K_i \coloneqq \{z \in \C : \operatorname{dist}\left(z,\sigma(a_i)\right) \leq \delta\} \subseteq U_i$ for all $i=1,\ldots,m$.
Observe that if $\e(\mathbf{b})$ is sufficiently small, then $\sigma(b_i) \subseteq K_i$ for all $i=1,\ldots,m$.

We shall make use of the construction of $H_{\mathbf{a}}^V$ from the proof of Theorem \ref{thm.babymultivarHFC}.
Let $i=1,\ldots,m$ and $\Gamma_i$ be a cycle surrounding $K_i$ in $U_i$.
In particular, if $\e(\mathbf{b})$ is sufficiently small, then $\Gamma_i$ surrounds $\sigma(a_i) \cup \sigma(b_i)$ in $U_i$.
In this case,
\[
R_z(b_i) = R_z(a_i)(1-(b_i-a_i)R_z(a_i))^{-1} \qquad (z \in \Gamma_i^*).
\]
As a result, recalling that $\|(1-a)^{-1}\| < (1-\norm{a})^{-1}$ whenever $\norm{a} < 1$ and writing
\[
c_i \coloneqq \sup_{z \in \Gamma_i^*}\norm{R_z(a_i)}  < \infty,
\]
if $\e(\mathbf{b}) < 1/c_i$ and $\e(\mathbf{b})$ is small enough that $\Gamma_i$ surrounds $K_i$ in $U_i$, then
\begin{align*}
    \sup_{z \in \Gamma_i^*}\norm{R_z(b_i)} & = \sup_{z \in \Gamma_i^*} \norm{R_z(a_i)(1-(b_i-a_i)R_z(a_i))^{-1}} \\
    & \leq \sup_{z \in \Gamma_i^*} \frac{\norm{R_z(a_i)}}{1-\norm{(b_i-a_i)R_z(a_i)}} \\
    & \leq \frac{c_i}{1-c_i\norm{a_i-b_i}} \leq \frac{c_i}{1-c_i\e(\mathbf{b})}.
\end{align*}
Also, for all fixed $z_i \in \Gamma_i^*$, $R_{z_i}(b_i) \to R_{z_i}(a_i)$ in $\cB$ as $b_i \to a_i$ by the bound above and the resolvent identity.
Finally, recall that
\begin{align*}
    \varphi_{\sotimes}(\mathbf{b}) & = \frac{1}{(2\pi i)^m}\int_{\Gamma_m}\cdots\int_{\Gamma_1} \varphi(\mathbf{z})\,R_{z_1}\big(\tilde{b}_1\big) \cdots R_{z_m}\big(\tilde{b}_m\big)\,\d z_1 \cdots \d z_m \\
    & = \frac{1}{(2\pi i)^m}\int_{\Gamma_m}\cdots\int_{\Gamma_1} \varphi(\mathbf{z})\,R_{z_1}(b_1) \otimes \cdots \otimes R_{z_m}(b_m)\,\d z_1 \cdots \d z_m.
\end{align*}
Consequently, we conclude from another application of the bound above and the dominated convergence theorem for Bochner integrals that $\varphi_{\sotimes}(\mathbf{b}) \to \varphi_{\sotimes}(\mathbf{a})$ in $\cB^{\potimes m}$ as $\mathbf{b} \to \mathbf{a}$.
\end{proof}

\begin{theorem}[Derivatives of maps induced by HFC]\label{thm.derivinidealHFC}
Let $(\cI,\norm{\cdot}_{\cI})$ be a symmetrically normed ideal of $\cB$ and $a \in \cB_U$.
If $f \in \Hol\left(U\right)$, then the map
\[
\cI_{U,a} \ni b \mapsto f_{a,\scI}(b) \coloneqq f(a+b)-f(a) \in \cI
\]
is holomorphic with respect to $\norm{\cdot}_{\cI}$.
(This map is well defined by Corollary \ref{cor.pertopfuncholowelldef}.)
Furthermore, if $b \in \cI_{U,a}$ and $b_1,\ldots,b_k \in \cI$, then
\begin{align*}
    \partial_{b_k}\cdots\partial_{b_1}f_{a,\scI}(b) & = \sum_{\pi \in S_k} f_{\sotimes}^{[k]}(\underbrace{a+b,\ldots,a+b}_{\mathsmaller{k+1\,\mathrm{times}}})\sh \big[b_{\pi(1)},\ldots,b_{\pi(k)}\big] \\
    & = \frac{1}{2\pi i}\sum_{\pi \in S_k} \int_{\Gamma} f(z)\,(z-a-b)^{-1}b_{\pi(1)}\cdots (z-a-b)^{-1}b_{\pi(k)}(z-a-b)^{-1}\,\d z,
\end{align*}
where $\Gamma$ is any cycle surrounding $\sigma(a+b)$ in $U$.
\end{theorem}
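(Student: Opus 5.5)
I will prove Theorem \ref{thm.derivinidealHFC} by the method of perturbation formulas, by induction on $k$. The target is the following claim: for each $k\ge 0$, the map
\[
\cI_{U,a} \ni b \mapsto T_k(b) \coloneqq f_{\sotimes}^{[k]}\big((a+b)_{(k+1)}\big) \in \cB^{\potimes(k+1)}
\]
is continuous (Proposition \ref{prop.contpertholo} together with Lemma \ref{lem.divdiffinHol0}, composed with the continuous map $b\mapsto a+b$). The $k$th Fréchet derivative of $f_{a,\scI}$, viewed as a map into $\cI$, should then be
\[
D^kf_{a,\scI}(b)[b_1,\ldots,b_k] = \sum_{\pi\in S_k} T_k(b)\sh[b_{\pi(1)},\ldots,b_{\pi(k)}].
\]
By Proposition \ref{prop.hashonI}, the map $u\mapsto(\mathbf{c}\mapsto u\sh\mathbf{c})$ is bounded from $\cB^{\potimes(k+1)}$ into $B_k(\cI^k;\cI)$, with norm at most $C_{\cI}^{k-1}$. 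Hence continuity of $T_k$ gives continuity of the putative derivative into $B_k(\cI^k;\cI)$. Well-definedness of $f_{a,\scI}$ is Corollary \ref{cor.pertopfuncholowelldef}.

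The base step is the first-derivative computation. For $b,c\in\cI$ with $b,b+c\in\cI_{U,a}$, the first formula of Proposition \ref{prop.pertformholo} gives
\[
f_{a,\scI}(b+c)-f_{a,\scI}(b)=f_{\sotimes}^{[1]}(a+b+c,a+b)\sh c.
\]
Subtracting $f_{\sotimes}^{[1]}(a+b,a+b)\sh c$ and using Proposition \ref{prop.hashonI}, the remainder has $\cI$-norm at most
\[
\norm{f_{\sotimes}^{[1]}(a+b+c,a+b)-f_{\sotimes}^{[1]}(a+b,a+b)}_{\cB^{\potimes 2}}\norm{c}_{\cI}.
\]
This is $o(\norm{c}_{\cI})$ by continuity of $f_{\sotimes}^{[1]}$ (Proposition \ref{prop.contpertholo}) and the bound $\norm{c}\le C_{\cI}\norm{c}_{\cI}$.

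For the inductive step, I differentiate $b\mapsto T_k(b)\sh[b_1,\ldots,b_k]$, which is linear in $T_k(b)$. I use the second formula of Proposition \ref{prop.pertformholo} with $\mathbf{a}=(a+b+c)_{(k+1)}$ and $\mathbf{b}=(a+b)_{(k+1)}$:
\[
T_k(b+c)-T_k(b)=\sum_{i=1}^{k+1} f_{\sotimes}^{[k+1]}\big((a+b+c)_{(i)},(a+b)_{(k+2-i)}\big)\sh_{k+1,i}[c].
\]
Each term is within $o(\norm{c})$ in $\cB^{\potimes(k+1)}$ of $T_{k+1}(b)\sh_{k+1,i}c$. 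Here I use continuity of $f_{\sotimes}^{[k+1]}$ and $\norm{\sh_{k+1,i}}\le1$. Applying $\sh[b_1,\ldots,b_k]$ to $T_{k+1}(b)\sh_{k+1,i}c$ yields $T_{k+1}(b)\sh[b_1,\ldots,b_{i-1},c,b_i,\ldots,b_k]$; this is the bookkeeping identity, which I check on elementary tensors. Summing over $i$ and over $\pi\in S_k$ produces exactly the sum over $S_{k+1}$ with $c$ playing the role of $b_{k+1}$. The remainder is controlled in $B_k(\cI^k;\cI)$-norm by Proposition \ref{prop.hashonI}. So $D^kf_{a,\scI}$ is Fréchet differentiable with the asserted derivative, which is continuous, and by induction $f_{a,\scI}$ is $C^\infty$.

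It remains to show holomorphy. The first derivative $c\mapsto f_{\sotimes}^{[1]}(a+b,a+b)\sh c$ is complex-linear, so $f_{a,\scI}$ is complex Fréchet differentiable on the open set $\cI_{U,a}$, hence holomorphic. The integral expression follows from Theorem \ref{thm.mvarHFCdivdiff} applied in $\cB^{\potimes(k+1)}$ to $\tilde a_i$ built from $a+b$. I pass $\sh$ through the Bochner integral, as in the proof of Proposition \ref{prop.pertformholo}.

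The main obstacle is the inductive bookkeeping identity relating $\sh_{k+1,i}$ followed by $\sh_k$ to $\sh_{k+1}$ with $c$ inserted in slot $i$. I will verify it first on elementary tensors and extend by boundedness and density. The remaining work is routine uniformity: the $o(\norm{c})$ estimates only need continuity of $f_{\sotimes}^{[k+1]}$ at the diagonal point, which Proposition \ref{prop.contpertholo} provides.
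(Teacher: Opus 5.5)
Your proposal is correct and follows essentially the same route as the paper. Both argue by induction on $k$, using the two perturbation formulas of Proposition \ref{prop.pertformholo}, the $\sh$-bound of Proposition \ref{prop.hashonI}, and the joint continuity of $f_{\sotimes}^{[k+1]}$ from Proposition \ref{prop.contpertholo}, with the pairs $(\pi,i)$ reassembled into a sum over $S_{k+1}$. The differences are cosmetic: you route the remainder through $\sh_{k+1,i}$ in $\cB^{\potimes(k+1)}$ before applying $\sh$, and you spell out holomorphy (complex-linearity of the derivative) and the Cauchy-integral form via Theorem \ref{thm.mvarHFCdivdiff}, both of which the paper leaves implicit.
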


\begin{proof}
Let $b \in \cI_{U,a}$, and let $h \in \cI$ be such that $b+h \in \cI_{U,a}$.
The proof of the derivative formula proceeds by induction on $k$.
For the base case, note that
\begin{align*}
    \e(h) & \coloneqq \frac{1}{\norm{h}_{\cI}}\big\|f_{a,\scI}(b+h) - f_{a,\scI}(b) - f_{\sotimes}^{[1]}(a+b,a+b)\sh h\big\|_{\cI} \\
    & = \frac{1}{\norm{h}_{\cI}}\norm{f(a+b+h) - f(a+b) - f_{\sotimes}^{[1]}(a+b,a+b)\sh h}_{\cI} \\
    & = \frac{1}{\norm{h}_{\cI}}\norm{f_{\sotimes}^{[1]}(a+b+h,a+b)\sh h - f_{\sotimes}^{[1]}(a+b,a+b)\sh h}_{\cI} \\
    & \leq \norm{f_{\sotimes}^{[1]}(a+b+h,a+b) - f_{\sotimes}^{[1]}(a+b,a+b)}_{\cB \potimes \cB} \xrightarrow{\norm{h}_{\cI} \to 0} 0
\end{align*}
by Propositions \ref{prop.pertformholo}, \ref{prop.hashonI}, and \ref{prop.contpertholo} and the fact that $\iota_{\scI} \colon (\cI,\norm{\cdot}_{\cI}) \to (\cB,\norm{\cdot})$ is continuous.
Now, assume the claimed derivative formula for the $k^{\text{th}}$ derivative.
Also, if $S$ is a set and $s \in S$, write $s_{(m)}$ for the member of $S^m$ with all components equal to $s$.
If $b_1,\ldots,b_k \in \cI$ and $b_{k+1} \coloneqq h$, then
{\small
\begin{align*}
    \e&(b_1,\ldots,b_{k+1}) \coloneqq \frac{1}{\norm{h}_{\cI}}\Bigg\|\partial_{b_k}\cdots\partial_{b_1}f_{a,\scI}(b+h) - \partial_{b_k}\cdots\partial_{b_1}f_{a,\scI}(b) \\
    & \hspace{45mm} - \sum_{\sigma \in S_{k+1}} f_{\sotimes}^{[k+1]}\big((a+b)_{(k+2)}\big)\sh\big[b_{\sigma(1)},\ldots,b_{\sigma(k+1)}\big]\Bigg\|_{\cI} \\
    & = \frac{1}{\norm{h}_{\cI}}\Bigg\|\sum_{\pi \in S_k} \big(f_{\sotimes}^{[k]}\big((a+b+h)_{(k+1)}\big) - f_{\sotimes}^{[k]}\big((a+b)_{(k+1)}\big)\big)\sh \big[b_{\pi(1)},\ldots,b_{\pi(k)}\big] \\
    & \hspace{35mm} - \sum_{\sigma \in S_{k+1}} f_{\sotimes}^{[k+1]}\big((a+b)_{(k+2)}\big)\sh\big[b_{\sigma(1)},\ldots,b_{\sigma(k+1)}\big]\Bigg\|_{\cI} \\
    & = \frac{1}{\norm{h}_{\cI}}\Bigg\|\sum_{\pi \in S_k}\sum_{i=1}^{k+1} f_{\sotimes}^{[k+1]}\big((a+b+h)_{(i)}, (a+b)_{(k+2-i)}\big)\sh\big[b_{\pi(1)},\ldots,b_{\pi(i-1)},h,b_{\pi(i)},\ldots,b_{\pi(k)}\big] \\
    & \hspace{35mm} - \sum_{\pi \in S_k}\sum_{i=1}^{k+1} f_{\sotimes}^{[k+1]}\big((a+b)_{(k+2)}\big)\sh\big[b_{\pi(1)},\ldots,b_{\pi(i-1)},h,b_{\pi(i)},\ldots,b_{\pi(k)}\big]\Bigg\|_{\cI} \\
    & \leq k! \, C_{\cI}^k\prod_{i=1}^k\norm{b_i}_{\cI} \sum_{i=1}^{k+1} \norm{f_{\sotimes}^{[k+1]}\big((a+b+h)_{(i)}, (a+b)_{(k+2-i)}\big) - f_{\sotimes}^{[k+1]}\big((a+b)_{(k+2)}\big)}_{\cB^{\potimes(k+2)}}
\end{align*}
}by the induction hypothesis and Propositions \ref{prop.pertformholo} and \ref{prop.hashonI}.
Writing $D^k$ for the $k^{\text{th}}$ Fr\'echet derivative---please consult \cite[Ch.\ 1]{HJ2014} for background on Fr\'echet derivatives---and
\[
F(a)[b_1,\ldots,b_{k+1}] \coloneqq \sum_{\sigma \in S_{k+1}} f_{\sotimes}^{[k+1]}\big(a_{(k+2)}\big)\sh\big[b_{\sigma(1)},\ldots,b_{\sigma(k+1)}\big] \quad (a \in \cB_U, \; b_1,\ldots,b_{k+1} \in \cI),
\]
we then conclude from Proposition \ref{prop.contpertholo} that
{\small
\begin{align*}
    &\frac{1}{\norm{h}_{\cI}} \Big\|D^kf_{a,\scI}(b+h) - D^kf_{a,\scI}(b) - F(a+b)[h,\cdot]\Big\|_{B_k(\cI^k;\cI)} \\
    & \leq k!\,C_{\cI}^k \sum_{i=1}^{k+1} \big\|f_{\sotimes}^{[k+1]}\big((a+b+h)_{(i)}, (a+b)_{(k+2-i)}\big) - f_{\sotimes}^{[k+1]}\big((a+b)_{(k+2)}\big)\big\|_{\cB^{\potimes(k+2)}} \xrightarrow{\norm{h}_{\cI} \to 0} 0.
\end{align*}
}This completes the proof.
\end{proof}

If $(\cI,\norm{\cdot}_{\cI}) = (\cB,\norm{\cdot})$ in Theorem \ref{thm.derivinidealHFC}, then $f_{a,\scI} = f_{a,\scB} = f_{\scB}(a+\cdot) - f(a)$, so using Theorem \ref{thm.derivinidealHFC} to differentiate at $b=0$ repeatedly yields the conclusion:
If $f \in \Hol\left(U\right)$, then $f_{\scB} \in \Hol\left(\cB_U;\cB\right)$, and
\[
\partial_{b_k}\cdots\partial_{b_1}f_{\scB}(a) = \sum_{\pi \in S_k} f_{\sotimes}^{[k]}(\underbrace{a,\ldots,a}_{\mathsmaller{k+1\,\mathrm{times}}})\sh\big[b_{\pi(1)},\ldots,b_{\pi(k)}\big] 
\]
for all $a \in \cB_U$ and $b_1,\ldots,b_k \in \cB$.
Thus, Theorems \ref{thm.mvarHFCdivdiff} and \ref{thm.derivinidealHFC} together generalize Theorems \ref{thm.HFCintro} and \ref{thm.HFCderformintro} from the introduction.

\phantomsection
\addcontentsline{toc}{section}{Acknowledgments}
\begin{acknowledgments}
I acknowledge the use of Gemini for help with proofreading.

I am indebted to Bruce Driver, Todd Kemp, and Michael Hartz for inspiring discussions and guidance.
Special thanks go to Bruce Driver for discussions of ``holomorphic functional calculus calculus'' that led to the development of the results in the appendix and to Michael Hartz for helping me understand the (``adult'') multivariate holomorphic functional calculus and bringing \cite{Curto1988} to my attention.
I am also grateful to Roland Speicher and the Universit\"at des Saarlandes for their financial support and hospitality during my 2023 visit that, among other things, facilitated the aforementioned discussions with Michael Hartz.
Finally, I greatly appreciate Ian Charlesworth's encouragement to lean into the ``functional calculus calculus'' play on words.
\end{acknowledgments}

\phantomsection
\small
\addcontentsline{toc}{section}{References}
\bibliographystyle{amsplain}
\bibliography{LFC.bib}
\end{document}